\DeclareMathAlphabet{\mathbbold}{U}{bbold}{m}{n}	% nuovo alfabeto per i numeri in bold.
\newtheorem{theorem}{Theorem}
\newtheorem{lemma}[theorem]{Lemma}
\newtheorem{proposition}[theorem]{Proposition}
\newtheorem{corollary}[theorem]{Corollary}
\theoremstyle{definition}
\newtheorem{definition}[theorem]{Definition}
\theoremstyle{remark}
\newtheorem{rmk}{Remark}
\newtheorem{example}{Example}
\newtheorem*{example*}{Example}
\newcommand{\distr}{\mathcal{D}}
\newcommand{\g}{\gamma}
\newcommand{\V}{\mathbf{V}}
\newcommand{\R}{\mathbb{R}}
\renewcommand{\H}{\mathbb{H}}
\newcommand{\eps}{\varepsilon}
\newcommand{\J}{\mathcal{J}}
\newcommand{\lam}{\lambda}
\newcommand{\ver}{\mathcal{V}}
\newcommand{\wt}{\widetilde}
\newcommand{\Rcan}{\mathfrak{R}}
\newcommand{\Riccan}{\mathfrak{Ric}}
\newcommand{\Ric}{\mathrm{Ric}}
\newcommand{\f}{\mathsf{f}} % squared distance
\renewcommand{\d}{\mathsf{d}} % distance
\DeclareMathOperator{\spn}{\mathrm{span}}
\DeclareMathOperator{\trace}{\mathrm{Tr}}
\DeclareMathOperator{\Sec}{\mathrm{Sec}}
\DeclareMathOperator{\rank}{\mathrm{rank}}
\DeclareMathOperator{\diam}{\mathrm{diam}}
\DeclareMathOperator{\dive}{\mathrm{div}}
\DeclareMathOperator{\sinc}{\mathrm{sinc}}
\DeclareMathOperator{\grad}{\mathrm{grad}}
\title[Sub-Riemannian curvature and diameter bounds for 3-Sasakian manifolds]{Sub-Riemannian Ricci curvatures and universal diameter bounds for 3-Sasakian manifolds}
\date{\today}
\author[Luca Rizzi]{Luca Rizzi$^{1,2}$}
\address{$^1$Univ. Grenoble Alpes, CNRS, Institut Fourier, F-38000 Grenoble, France}
\address{$^2$Inria, team GECO \& CMAP, \'Ecole Polytechnique, CNRS, Universit\'e Paris-Saclay, Palaiseau, France (past institution)}
\email{\href{mailto:luca.rizzi@univ-grenoble-alpes.fr}{luca.rizzi@univ-grenoble-alpes.fr}}
\author[Pavel Silveira]{Pavel Silveira$^3$}
\address{$^3$Leibniz Universit\"at Hannover, Institut f\"ur analysis}
\email{\href{mailto:psilveir@math.uni-hannover.de}{psilveir@math.uni-hannover.de}}
\subjclass[2010]{53C17, 53C21, 53C22, 53C25, 53C26}
\begin{document}

\begin{abstract}
For a fat sub-Riemannian structure, we introduce three canonical Ricci curvatures in the sense of Agrachev-Zelenko-Li. Under appropriate bounds we prove comparison theorems for conjugate lengths, Bonnet-Myers type results and Laplacian comparison theorems for the intrinsic sub-Laplacian.

As an application, we consider the sub-Riemannian structure of $3$-Sasakian manifolds, for which we provide explicit curvature formulas. We prove that any complete $3$-Sasakian structure of dimension $4d+3$, with $d>1$, has sub-Riemannian diameter bounded by $\pi$. When $d=1$, a similar statement holds under additional Ricci bounds. These results are sharp for the natural sub-Riemannian structure on $\mathbb{S}^{4d+3}$ of the quaternionic Hopf fibrations:
\begin{equation*}
\mathbb{S}^3  \hookrightarrow \mathbb{S}^{4d+3} \to \mathbb{HP}^d,
\end{equation*}
whose exact sub-Riemannian diameter is $\pi$, for all $d \geq 1$.
\end{abstract}

\maketitle

\section{Introduction and results}\label{s:intro}

\subsection{Sub-Riemannian geometry}
A sub-Rieman\-nian manifold is a triple $(M,\distr,g)$, where $M$ is a smooth, connected manifold of dimension $n \geq 3$, $\distr$ is a vector distribution of constant rank $k \leq n$ and $g$ is a smooth metric on $\distr$. The distribution is bracket-generating, that is 
\begin{equation}
%\mathrm{Lie}(\distr)_q = T_q M, \qquad \forall q \in M,
\spn\{[X_{i_1},[X_{i_2},[\ldots,[X_{i_{m-1}},X_{i_m}]]]]\mid m \geq 1\}_q = T_q M, \qquad \forall q \in M,
\end{equation}
for some (and then any) set $X_1,\ldots,X_k \in \Gamma(\distr)$ of local generators for $\distr$. 
%where $\mathrm{Lie}(\distr)$ is the smallest Lie algebra containing all smooth sections of $\distr$.
If $\rank(\distr) =k$ and $\dim M = n$, we say that $(M,\distr,g)$ is a sub-Rieman\-nian structure of type $(k,n)$.

A \emph{horizontal curve} $\gamma : [0,T] \to \R$ is a Lipschitz continuous path such that $\dot\gamma(t) \in \distr_{\gamma(t)}$ for almost any $t$. Horizontal curves have a well defined length
\begin{equation}
\ell(\gamma) = \int_0^T \sqrt{g(\dot\gamma(t),\dot\gamma(t))}dt.
\end{equation}
The \emph{sub-Rieman\-nian distance} is defined by:
\begin{equation}
\d(x,y) = \inf\{\ell(\gamma)\mid \gamma(0) = x,\, \gamma(T) = y,\, \gamma \text{ horizontal} \}.
\end{equation}
By the Chow-Rashevskii theorem, under the bracket-generating condition, $\d$ is finite and continuous. A sub-Rieman\-nian manifold is complete if $(M,\d)$ is complete as a metric space. Sub-Rieman\-nian geometries include the Riemannian one, when $\distr = TM$.

In this paper we focus on \emph{fat} structures, namely we assume that for any non zero section $X$ of $\distr$, $TM$ is (locally) generated by $\distr$ and $[X,\distr]$. The fat condition is open in the $C^1$ topology, however it gives some restriction on the rank $k$ of the distribution (for example $n \leq 2k -1$, \cite[Prop. 5.6.3]{montgomerybook}). This class includes many popular sub-Rieman\-nian structures, such as contact and quaternionic contact structures. 

\begin{example*}
The main example that motivated our study is the quaternionic Hopf fibration
\begin{equation}
\mathbb{S}^3 \hookrightarrow \mathbb{S}^{4d+3} \xrightarrow{\pi} \mathbb{HP}^d, \qquad d \geq 1.
\end{equation}
Here $\distr = (\ker \pi_*)^\perp$ is the orthogonal complement of the kernel of the differential of the Hopf map $\pi$, and the sub-Rieman\-nian metric is the restriction to $\distr$ of the round one of $\mathbb{S}^{4d+3}$. This is a fat structure of type $(4d,4d+3)$. This example is one of the simplest (non-Carnot) sub-Rieman\-nian structures of corank greater than $1$, and is included in the more general class of $3$-Sasakian structures that we study in Section~\ref{s:3-Sas}.

%Its complex counterpart, the standard Hopf fibrations $\mathbb{S}^1 \hookrightarrow \mathbb{S}^{2d+1} \xrightarrow{\pi} \mathbb{CP}^d$, with $d \geq 1$, carries a Sasakian contact structure. Comparison theorems, similar to ours, for Sasakian contact structures appear in \cite{AL-3D-MCP,AL-3D-BisLap} for the 3D case and in \cite{LLZ-Sasakian-MCP,LL-Sasakian-BisLap} for the general Sasakian case. Our results are a generalization of the latter to fat structures, with ideas of general Riccati comparison techniques from \cite{BR-comparison}, and explicit applications to $3$-Sasakian structures.
\end{example*}

\emph{Sub-Rieman\-nian geodesics} are horizontal curves that locally minimize the length between their endpoints. Define the \emph{sub-Rieman\-nian Hamiltonian} $H : T^*M \to \R$ as
\begin{equation}
H(\lambda) := \frac{1}{2}\sum_{i=1}^k \langle \lambda, X_i \rangle^2, \qquad \lambda \in T^*M,
\end{equation}
where $X_1,\ldots,X_k$ is any local orthonormal frame for $\distr$ and $\langle \lambda, \cdot \rangle $ denotes the action of covectors on vectors. Let $\sigma$ be the canonical symplectic $2$-form on $T^*M$. The \emph{Hamiltonian vector field} $\vec{H}$ is defined by $\sigma(\cdot, \vec{H}) = dH$. Then the Hamilton equations are
\begin{equation}\label{eq:Hamiltoneqs}
\dot{\lambda}(t) = \vec{H}(\lambda(t)).
\end{equation}
Solutions of~\eqref{eq:Hamiltoneqs} are called \emph{extremals}, and their projections $\gamma(t) := \pi(\lambda(t))$ on $M$ are geodesics. In the fat setting any (non-trivial) geodesic can be recovered uniquely in this way. This, and many statements that follow, are not true in full generality, as the so-called \emph{abnormal geodesics} can appear. These are minimizing trajectories that might not follow the Hamiltonian dynamic of~\eqref{eq:Hamiltoneqs}, and are related with some challenging open problems in sub-Riemannian geometry \cite{AAA-openproblems}.

The \emph{sub-Rieman\-nian exponential map} $\exp_{q} : T^*_q M \to M$, with base $q \in M$ is
\begin{equation}
\exp_q (\lambda) := \pi \circ e^{\vec{H}}(\lambda), \qquad \lambda \in T_q^*M,
\end{equation}
where $e^{t \vec{H}}(\lambda) : T^*M \to T^*M$ denotes the flow of $\vec{H}$.\footnote{If $(M,\d)$ is complete, $\vec{H}$ is complete, then the domain of $\exp_q$ is the whole $T_q^*M$.} A geodesic then is determined by its initial covector, and its speed $\|\dot\gamma(t)\|= 2H(\lambda)$ is constant. The set of \emph{unit covectors} is
\begin{equation}
U^*M  =\{\lambda  \in T^*M \mid  H(\lambda) = 1/2 \},
\end{equation}
a fiber bundle with fiber $U_q^*M = \mathbb{S}^{k-1}\times \R^{n-k}$. To exclude trivial geodesics, we use the symbol $T^*M \setminus H^{-1}(0)$ to denote the set of covectors with $H(\lambda) \neq 0$.

For $\lambda \in U_q^*M$, the curve $\gamma_\lambda(t)=\exp_q(t \lambda)$ is a \emph{length-parametrized} geodesic with length $\ell(\gamma|_{[0,T]}) = T$. We say that $t_*$ is a \emph{conjugate time} along $\gamma_\lambda$ if $t\lambda$ is a critical point of $\exp_q$. In this case, $\gamma(t_*)$ is a \emph{conjugate point}. The first conjugate time is separated from zero, and geodesics cease to be minimizing after the first conjugate time. Conjugate points are also intertwined with the analytic properties of the underlying structure, for example they affect the behavior of the heat kernel (see \cite{srneel,bbcn} and references therein).

%\begin{rmk}
%In the Riemannian case, due to the canonical identification $TM \simeq T^*M$, the exponential map defined above is just the ``dual'' of the standard one. However the duality fails in the sub-Rieman\-nian case, and only the ``cotangent'' viewpoint remains.
%\end{rmk}

We gave here only the essential ingredients for our analysis; for more details see \cite{nostrolibro,montgomerybook,rifford2014sub}.

\subsection{Canonical Ricci curvatures}

For any fat sub-Rieman\-nian manifold (we assume from now on $k < n$) we introduce three \emph{canonical Ricci curvatures} (see Section~\ref{s:jac})
\begin{equation}
\Riccan^\alpha : U^*M \to \R, \qquad \alpha =a,b,c.
\end{equation}
For any initial covector $\lambda$, the canonical Ricci curvatures computed along the extremal are $\Riccan^\alpha(\lambda(t))$. This is the sub-Rieman\-nian generalization of the classical Ricci curvature tensor $\mathrm{Ric}(\dot\gamma(t))$ evaluated ``along the geodesic'', where the tangent vector $\dot\gamma(t)$ is replaced by its cotangent counterpart $\lambda(t)$. The main theorems we prove are:
\begin{itemize}
\item Bounds for conjugate points along a sub-Rieman\-nian geodesic (Theorems~\ref{t:conjI-intro}, \ref{t:conjII-intro});
\item Bonnet-Myers type results for the sub-Rieman\-nian diameter (Theorems~\ref{t:bmI-intro}, \ref{t:bmII-intro});
\item Laplacian comparison theorems for the canonical sub-Laplacian (Theorem \ref{t:lapl-intro});
\item Formulas for the sub-Riemannian curvature of $3$-Sasakian manifolds (Theorem \ref{t:ricci-3-sas-intro});
\item Sharp bounds for the sub-Riemannian diameter of $3$-Sasakian manifolds (Corollary \ref{c:3sasBM-1}, Proposition~\ref{p:3sasBM-2}) and conjugate distance along a geodesic (Corollary \ref{c:3sas-conj}).
\end{itemize}

\subsection{Two relevant functions} 

We introduce two model functions related with the geodesic flow and their blow-up times. Here $\sqrt{\cdot}$ is the principal value of the square root and, for values of the parameters where a denominator is zero, the functions is understood in the limit. %We stress that all the functions will be with real values and well defined on some maximal real interval $I$.
\subsubsection{The ``Riemannian'' function} The first function we need is $s_{\kappa_c}: I \to \R$, given by
\begin{equation}\label{eq:Riemfunc}
s_{\kappa_c}(t) = \sqrt{\kappa_c} \cot(\sqrt{\kappa_c} t) = \begin{cases}
\sqrt{\kappa_c} \cot(\sqrt{\kappa_c} t) & \kappa_c >0, \\
\frac{1}{t}  & \kappa_c = 0, \\
\sqrt{|\kappa_c|} \coth(\sqrt{|\kappa_c|} t) & \kappa_c <0. \end{cases}
\end{equation}
The function~\eqref{eq:Riemfunc} is defined on a maximal connected interval $I=(0,\bar{t}(\kappa_c))$, where $\bar{t}(\kappa_c) = \pi/\mathrm{Re}(\sqrt{\kappa_c})$ (in particular $\bar{t}(\kappa_c)=+\infty$ if $\kappa_c \leq 0$). This function is the solution of a Cauchy problem of Riccati type with limit initial datum, that is
\begin{equation}
\dot{s} + \kappa_c + s^2 = 0, \qquad \lim_{t \to 0^+} s^{-1} = 0.
\end{equation}

\subsubsection{The ``sub-Rieman\-nian'' function} The second function $s_{\kappa_a,\kappa_b}: I \to \R$ does not appear in Riemannian geometry and depends on two real parameters $\kappa_a,\kappa_b$:
\begin{equation}\label{eq:sRiemanfunc}
s_{\kappa_a,\kappa_b}(t):= \frac{2}{t}\left(\frac{\sinc \left(2 \theta_- t\right)-\sinc \left(2 \theta _+ t\right)}{\sinc\left(\theta _- t\right)^2- \sinc\left(\theta _+ t\right)^2}\right), \qquad \theta_{\pm}= \frac{1}{2}(\sqrt{x+y} \pm \sqrt{x-y}), 
\end{equation}
where $\sinc(a) = \sin(a)/a$ is an entire function, and we have set
\begin{equation}
x = \frac{\kappa_b}{2}, \qquad y = \frac{\sqrt{4\kappa_a + \kappa_b^2}}{2}.
\end{equation}
Also~\eqref{eq:sRiemanfunc} is related with the solution of a matrix Cauchy problem of Riccati type, with limit initial datum (see Sections~\ref{s:comp} and \ref{s:proofs}). In this case, the maximal interval of definition is $I=(0,\bar{t}(\kappa_a,\kappa_b))$, and the time $\bar{t}(\kappa_a,\kappa_b)$ is called the \emph{first blow-up time}.
\begin{proposition}\label{p:stima-intro}
The first blow-up time $\bar{t}(\kappa_a,\kappa_b)$ of~\eqref{eq:sRiemanfunc} is bounded by
\begin{equation}\label{eq:stima-intro}
\bar{t}(\kappa_a,\kappa_b) \leq \frac{2\pi}{\mathrm{Re}(\sqrt{x+y}-\sqrt{x-y})}, \qquad x = \frac{\kappa_b}{2}, \qquad y= \frac{\sqrt{\kappa_b^2 + 4\kappa_a}}{2},
\end{equation}
where the r.h.s. of~\eqref{eq:stima-intro} is $+\infty$ if the denominator is zero. The equality holds if and only if $\kappa_a = 0$, in this case $\bar{t}(0,\kappa_b) = 2\pi/\sqrt{\kappa}_b$. In particular $\bar{t}(\kappa_a,\kappa_b)$ is finite if and only if
\begin{equation}
\begin{cases} \kappa_b \geq 0, & \\
\kappa_b^2 + 4\kappa_a > 0, &
\end{cases} \qquad\text{or} \qquad \begin{cases} \kappa_b < 0, & \\
\kappa_a > 0. &
\end{cases} \tag{$\star$} \label{eq:conditions}
\end{equation}
\end{proposition}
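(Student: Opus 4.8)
The plan is to reduce the blow-up question to locating the first positive zero of an explicit entire function, by means of a factorisation of the denominator of~\eqref{eq:sRiemanfunc}. Write $s_{\kappa_a,\kappa_b}=N/D$ with $D(t)=\sinc(\theta_-t)^2-\sinc(\theta_+t)^2$ and $N(t)=\tfrac{2}{t}(\sinc(2\theta_-t)-\sinc(2\theta_+t))$, and set $\theta:=\tfrac12\sqrt{x+y}=\tfrac12(\theta_++\theta_-)$ and $\delta:=\tfrac12\sqrt{x-y}=\tfrac12(\theta_+-\theta_-)$. Introduce
\begin{equation*}
\Phi(t):=\int_0^t\cos(\theta s)\cos(\delta s)\,ds=\tfrac12\Big(\tfrac{\sin(\theta_-t)}{\theta_-}+\tfrac{\sin(\theta_+t)}{\theta_+}\Big),\qquad \Psi(t):=\int_0^t\sin(\theta s)\sin(\delta s)\,ds=\tfrac12\Big(\tfrac{\sin(\theta_-t)}{\theta_-}-\tfrac{\sin(\theta_+t)}{\theta_+}\Big),
\end{equation*}
where, when $x<y$ (i.e.\ $\kappa_a>0$), $\delta=i\omega$ is purely imaginary and $\cos(\delta s),\sin(\delta s)$ are read as $\cosh(\omega s),i\sinh(\omega s)$. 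A product-to-sum computation gives $t^2D(t)=4\Phi(t)\Psi(t)$, and a direct check using $\sinc'(u)=(\cos u-\sinc u)/u$ yields the identity $N=D'+\tfrac2t D$, so that
\begin{equation*}
s_{\kappa_a,\kappa_b}(t)=\frac{d}{dt}\log\!\big(t^2D(t)\big)=\frac{\Phi'(t)}{\Phi(t)}+\frac{\Psi'(t)}{\Psi(t)},
\end{equation*}
which is also what the Riccati description of Section~\ref{s:comp} gives. In particular there is no removable-singularity issue, and $\bar t(\kappa_a,\kappa_b)=\min\{T_\Phi,T_\Psi\}$, where $T_\Phi$ and $T_\Psi$ are the first positive zeros of $\Phi$ and $\Psi$ (with $T_\Psi:=+\infty$ when $\delta=0$, in which case $\Psi\equiv0$ and one reads $s_{\kappa_a,\kappa_b}$ as a limit). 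The only facts I will use are the two identities $\Phi+\Psi=\sin(\theta_-t)/\theta_-$ and $\Phi-\Psi=\sin(\theta_+t)/\theta_+$, together with $\Phi(0)=0$, $\Phi'(0)=1$ (so $\Phi>0$ near $0$) and $\Psi(t)=\tfrac13\theta\delta\,t^3+o(t^3)$ (so $\Psi>0$ near $0$ when $\delta>0$ is real).

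For the inequality~\eqref{eq:stima-intro} put $t_*:=\pi/\mathrm{Re}(\theta_-)=2\pi/\mathrm{Re}(\sqrt{x+y}-\sqrt{x-y})$; there is nothing to prove unless $\mathrm{Re}(\theta_-)>0$, which I assume. If $\kappa_a\ge0$ then $\delta=i\omega$ with $\omega\ge0$, $\theta>0$ is real, $\mathrm{Re}(\theta_-)=\theta$, $t_*=\pi/\theta$, and $\Phi(t)=\int_0^t\cos(\theta s)\cosh(\omega s)\,ds$; folding $[0,\pi/\theta]$ at its midpoint via $s\mapsto\pi/\theta-s$ and using the monotonicity of $\cosh$ on $[0,\infty)$ gives
\begin{equation*}
\Phi(\pi/\theta)=\int_0^{\pi/(2\theta)}\cos(\theta s)\big[\cosh(\omega s)-\cosh(\omega\pi/\theta-\omega s)\big]\,ds\le 0,
\end{equation*}
with equality if and only if $\omega=0$, i.e.\ $\kappa_a=0$. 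Since $\Phi>0$ near $0$, the intermediate value theorem gives $T_\Phi\le t_*$, hence $\bar t\le t_*$, with strict inequality when $\kappa_a>0$. If $\kappa_a<0$ then $0<\theta_-<\theta_+$ are real and $t_*=\pi/\theta_-$; evaluating the two identities at $t_*$ gives $\Phi(t_*)=-\Psi(t_*)=\sin(\pi\theta_+/\theta_-)/(2\theta_+)$. When $\theta_+/\theta_-\notin\mathbb Z$ exactly one of $\Phi(t_*),\Psi(t_*)$ is negative while the corresponding function ($\Phi$, resp.\ $\Psi$) is positive near $0$, so the intermediate value theorem again gives $\bar t<t_*$; when $\theta_+/\theta_-\in\mathbb Z_{\ge2}$ both $\Phi(t_*)$ and $\Psi(t_*)$ vanish, so $t_*$ is already a zero of $\Phi\Psi$ and $\bar t\le t_*$. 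Thus $\bar t\le t_*$ in all cases.

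The equality claim splits into two parts. If $\kappa_a=0$ (and necessarily $\kappa_b>0$ for $t_*$ to be finite), then $\delta=0$, $\theta=\tfrac12\sqrt{\kappa_b}$, $\Psi\equiv0$, $\Phi(t)=\sin(\theta t)/\theta>0$ on $(0,\pi/\theta)$, and passing to the limit in~\eqref{eq:sRiemanfunc} yields $\bar t(0,\kappa_b)=\pi/\theta=2\pi/\sqrt{\kappa_b}=t_*$; this is the ``if'' direction. For the converse one must rule out $\bar t=t_*$ when $\kappa_a\neq0$: by the previous paragraph this remains open exactly in the range $\theta_+/\theta_-\in\mathbb Z_{\ge2}$, and \emph{this is the step I expect to be the main obstacle} — it requires a separate, finer analysis of $\Phi$ and $\Psi$ on $(0,t_*)$ for those parameters (equivalently, an explicit description of $s_{\kappa_a,\kappa_b}$ there and a look at its first pole).

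Finally, for the characterisation~\eqref{eq:conditions} of finiteness of $\bar t$, one checks directly that $\mathrm{Re}(\theta_-)>0$ is equivalent to~\eqref{eq:conditions} (namely $\kappa_b^2+4\kappa_a>0$ and $\kappa_b+\sqrt{\kappa_b^2+4\kappa_a}>0$), so~\eqref{eq:conditions} gives $\bar t\le t_*<\infty$ via the inequality just proved. Conversely, if $\mathrm{Re}(\theta_-)\le0$ then in every admissible sign configuration either $\theta_\pm$ are both purely imaginary or $\theta_-$ is purely imaginary and $\theta_+$ real; writing $\sinc(iat)=\sinh(at)/(at)$ one sees that $t^2D(t)$ is a difference of two terms of the form $\sinh^2(as)/a^2$ (or one such term and one of the form $\sin^2(as)/a^2$), which by the monotonicity of $a\mapsto\sinh(at)/(at)$ and the elementary bound $\sin^2u\le u^2\le\sinh^2u$ keeps a constant sign on $(0,\infty)$; hence $\Phi\Psi$ has no positive zero and $\bar t(\kappa_a,\kappa_b)=+\infty$.
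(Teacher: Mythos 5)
Your factorisation $t^2D(t)=4\Phi(t)\Psi(t)$ and the resulting identity $s_{\kappa_a,\kappa_b}=\Phi'/\Phi+\Psi'/\Psi$ is correct (one checks $N=D'+\tfrac{2}{t}D$ directly) and is a genuinely different route from the paper's: there the explicit formula for $s_{\kappa_a,\kappa_b}$ is derived by diagonalising the $4\times4$ Hamiltonian matrix and computing its exponential, and the blow-up time is then located by a case analysis of that expression. Your reflection argument for $\kappa_a\ge0$, the intermediate-value argument for $\kappa_a<0$ with $\theta_+/\theta_-\notin\mathbb{Z}$, and the sign arguments behind the finiteness characterisation are all sound, and together they give a clean, complete proof of the inequality $\bar t\le t_*$, of the value $\bar t(0,\kappa_b)=2\pi/\sqrt{\kappa_b}$, and of the characterisation~\eqref{eq:conditions}.

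The step you flag as the ``main obstacle'' --- ruling out $\bar t=t_*$ when $\theta_+/\theta_-\in\mathbb{Z}_{\ge2}$ --- is not a gap in your argument but a flaw in the proposition itself: the ``only if'' direction of the equality claim is false, and the paper's proof of its Case~1 ($\theta_\pm$ real, $\theta_+>\theta_->0$) silently assumes $\theta_+/\theta_-\notin\mathbb{Z}$ when it asserts $\bar t<\pi/\theta_-$. Concretely, take $\theta_-=1$, $\theta_+=2$, i.e.\ $\kappa_a=-9$, $\kappa_b=10$; then $\Delta=64$, $x=5$, $y=4$, the right-hand side of~\eqref{eq:stima-intro} equals $2\pi/(3-1)=\pi$, while $t^2D(t)=\sin^2 t-\tfrac14\sin^2 2t=\sin^4 t$ is strictly positive on $(0,\pi)$, so $s_{\kappa_a,\kappa_b}(t)=4\cot t$ and its first pole is at $t=\pi$; thus $\bar t(-9,10)=\pi$ attains the bound although $\kappa_a\ne0$. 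The same happens for every integer ratio $m=\theta_+/\theta_-\ge2$, since $|\sin(mt)|\le m|\sin t|$ keeps $D>0$ on $(0,\pi/\theta_-)$ with first zero exactly at $\pi/\theta_-$. What is actually used downstream --- the bound $\bar t\le t_*$, the value $\bar t(0,\kappa_b)$, and the finiteness criterion --- is unaffected, and your proof of those parts is complete.
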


\subsection{Conjugate points}
Our first results are bounds for the first conjugate point along a sub-Rieman\-nian geodesic (i.e. the first critical value of the exponential map). %These are included in~\cite{BR-comparison}, but here we go further in studying the explicit bounds.

\begin{theorem}[First conjugate time I]\label{t:conjI-intro} Let $(M,\distr,g)$ be a fat sub-Rieman\-nian manifold of type $(k,n)$. Let $\gamma(t)$ be a geodesic, with initial unit covector $\lambda$. Assume that
\begin{equation}
\Riccan^a(\lambda(t)) \geq (n-k)\kappa_a, \qquad \Riccan^b(\lambda(t)) \geq (n-k)\kappa_b,
\end{equation}
for some $\kappa_a,\kappa_b \in \R$ such that~\eqref{eq:conditions} are satisfied. Then the first conjugate time $t_*(\gamma)$ along the geodesic $\gamma$ is finite and
\begin{equation}
t_*(\gamma) \leq \bar{t}(\kappa_a,\kappa_b).
\end{equation}
\end{theorem}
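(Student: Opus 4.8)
The plan is to translate the conjugate-point problem into a matrix Riccati comparison along the extremal, and then feed in the bounds on $\Riccan^a, \Riccan^b$ to dominate the solution by the model ``sub-Riemannian'' function $s_{\kappa_a,\kappa_b}$ of \eqref{eq:sRiemanfunc}, whose first blow-up time is controlled by Proposition \ref{p:stima-intro}.

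\emph{Step 1: The canonical frame and the Jacobi equation.} Along the extremal $\lambda(t)$ I would invoke the Agrachev--Zelenko--Li canonical (Darboux) frame, which yields a splitting of the Jacobi equation into blocks indexed by the ``$a$'', ``$b$'', ``$c$'' parts, with the curvature entering only through the block operators whose partial traces are the $\Riccan^\alpha$. The first conjugate time $t_*(\gamma)$ is the first time a nontrivial Jacobi field vanishing at $0$ vanishes again; equivalently, it is the first blow-up time of the solution $V(t)$ of the corresponding matrix Riccati equation $\dot V + V^2 + \Rcan(t) = 0$ with the appropriate limit initial condition $\lim_{t\to 0^+} V^{-1} = 0$, where $\Rcan(t)$ is the canonical curvature operator built from the frame. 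This reduction is exactly the content of the comparison machinery set up in Sections~\ref{s:jac}--\ref{s:comp}, so I would cite it rather than rederive it.

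\emph{Step 2: Scalar reduction via traces.} The key structural fact for fat structures with $\dim$ of the ``vertical'' part equal to $m := n-k$ is that one may reduce the matrix Riccati inequality to a scalar one for the trace-type quantity. Testing the matrix Riccati equation against the identity and using Cauchy--Schwarz, $\trace(V^2) \geq \tfrac{1}{m}(\trace V)^2$ on the relevant block, one obtains for $v(t) := \tfrac{1}{m}\trace V|_{\text{block}}$ a differential inequality of the form governing $s_{\kappa_a,\kappa_b}$ once the hypotheses $\Riccan^a(\lambda(t)) \geq m\kappa_a$ and $\Riccan^b(\lambda(t)) \geq m\kappa_b$ are inserted. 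By the Riccati comparison principle (a standard ODE argument: if $\dot v \leq -v^2 - (\text{lower order in } v) $ with the model datum, then $v$ stays below the model solution and blows up no later), $v(t)$ — and hence $t_*(\gamma)$ — blows up at a time $\leq \bar t(\kappa_a,\kappa_b)$. Combined with Proposition \ref{p:stima-intro}, conditions \eqref{eq:conditions} guarantee $\bar t(\kappa_a,\kappa_b) < +\infty$, so $t_*(\gamma)$ is finite and bounded as claimed.

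\emph{Main obstacle.} The delicate point is \emph{Step 2}: the ``$a$'' and ``$b$'' parts of the canonical curvature do not sit in a single symmetric block with a clean scalar Riccati reduction — the canonical frame couples them, and the naive trace inequality loses exactly the information distinguishing the one-parameter Riemannian model from the genuinely two-parameter sub-Riemannian model. So the real work is to identify the correct scalar (or low-dimensional) quantity whose Riccati-type ODE has $s_{\kappa_a,\kappa_b}$ as its extremal solution, verify that the coupling terms have a sign compatible with comparison, and handle the singular (limit) initial condition at $t=0$, where $V$ blows up but the geometrically meaningful ratios remain finite. Once that scalar model is pinned down, the comparison itself and the invocation of Proposition \ref{p:stima-intro} are routine.
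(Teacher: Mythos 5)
Your Step 1 matches the paper's setup exactly (reduce conjugate-time detection to blow-up of the solution of the matrix Riccati Cauchy problem~\eqref{eq:riccati} with limit initial datum). Note, however, that you wrote the model Riccati equation as $\dot V + V^2 + \Rcan(t)=0$; in the fat setting the equation is $\dot V + A^*V + VA + R(t) + VBV = 0$ with the non-trivial block matrices $A,B$ of Theorem~\ref{p:can}. These $A,B$ are exactly what produce the two-parameter model $s_{\kappa_a,\kappa_b}$ rather than the Riemannian $s_{\kappa_c}$, so they cannot be suppressed.

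The real issue is Step~2. You correctly identify the obstacle --- a naive scalar trace reduction destroys the $a$/$b$ coupling --- but you leave it unresolved (``the real work is to identify the correct scalar or low-dimensional quantity\dots''). This is precisely where the theorem is proved or not. The paper's resolution is not a scalar reduction: one first restricts to the $(2n-2k)\times(2n-2k)$ diagonal block $V_{\mathrm I}$ of $V$ (indices $a,b$), shows via Lemma~\ref{l:explosion} that $V_{\mathrm I}$ satisfies its own Riccati Cauchy problem with limit datum, with a curvature term $R_{\mathrm I}(t)$ augmented by the Schur-complement piece $V_{\mathrm{III}}V_{\mathrm{III}}^* \geq 0$; and then takes the \emph{block-wise normalized trace} to get a $2\times 2$ (not $1\times1$) Riccati system for $v_{\mathrm I}(t) = \tfrac{1}{n-k}\bigl(\trace V_{\mu\nu}\bigr)_{\mu,\nu=a,b}$ with constant $2\times 2$ matrices $a_{\mathrm I}, b_{\mathrm I}$. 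Getting $r_{\mathrm I}(t) \geq \mathrm{diag}(\kappa_a,\kappa_b)$ as a $2\times2$ symmetric inequality requires three separate nontrivial facts that your Cauchy--Schwarz sketch does not cover: (i) positivity of the $V_{ac},V_{bc}$ Schur term; (ii) the off-diagonal matrix trace inequality $\|X\|^2\|Y\|^2 - \langle X,Y\rangle^2 + \tfrac{2}{m}\trace(X)\trace(Y)\langle X,Y\rangle \geq \tfrac{1}{m}(\trace(Y)^2\|X\|^2 + \trace(X)^2\|Y\|^2)$, proved by the paper as a standalone lemma~\eqref{eq:ineq-trace}; and (iii) the normal-form condition $R_{ab}(t) = -R_{ab}(t)^*$ of Theorem~\ref{p:can}, which kills $\trace(R_{ab})$ and makes the model curvature diagonal. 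Without these the reduction fails: the off-diagonal terms would not close. So your proposal is a correct plan with the central lemma missing; as written it would not compile into a proof.
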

\begin{theorem}[First conjugate time II]\label{t:conjII-intro} Let $(M,\distr,g)$ be a fat sub-Rieman\-nian manifold of type $(k,n)$, with $2k-n>1$. Let $\gamma(t)$ a geodesic, with initial unit covector $\lambda$. Assume that
\begin{equation}
\Riccan^c(\lambda(t)) \geq (2k-n-1)\kappa_c,
\end{equation}
for some $\kappa_c >0$. Then the first conjugate time $t_*(\gamma)$ along the geodesic $\gamma$ is finite and
\begin{equation}
t_*(\gamma) \leq \bar{t}(\kappa_c) = \frac{\pi}{\sqrt{\kappa_c}}.
\end{equation}
\end{theorem}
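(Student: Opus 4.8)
The plan is a Riccati comparison argument confined to the block of the canonical curvature responsible for $\Riccan^c$. Recall from Section~\ref{s:jac} that, along the extremal $\lambda(t)$, the canonical frame reduces the Jacobi equation to a linear system whose only invariant is the symmetric curvature operator $\Rcan(t)$, and from Section~\ref{s:comp} that the first conjugate time $t_*(\gamma)$ along $\gamma$ is the first blow-up time of the solution $\Sigma(t)$ of the matrix Riccati equation
\begin{equation*}
\dot\Sigma(t) + \Sigma(t)^2 + \Rcan(t) = 0, \qquad \Sigma(t)^{-1} \to 0 \ \text{ as } t \to 0^+ ,
\end{equation*}
which is the datum associated with the exponential map based at $\gamma(0)$. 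In the fat case the canonical frame splits as an orthogonal direct sum of a \emph{Riemannian-like} subspace of dimension $2k-n$, which---exactly as in Riemannian geometry---contains the one-dimensional trivial summand spanned by the direction of the geodesic, and of a complementary \emph{genuinely sub-Riemannian} subspace; the trace of $\Rcan(t)$ on the effective $(2k-n-1)$-dimensional part of the former equals $\Riccan^c(\lambda(t))$ by definition of the latter.

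Let $\Sigma_c(t)$ be the block of $\Sigma(t)$ on this effective subspace, of size $d := 2k-n-1 \ge 1$, and let $N(t)$ be the off-diagonal block coupling it to the rest. Since $\Sigma(t)$ is symmetric, extracting the $(c,c)$-block of the Riccati equation gives
\begin{equation*}
\dot\Sigma_c(t) + \Sigma_c(t)^2 + \Rcan^{cc}(t) = - N(t) N(t)^\top \preceq 0 ,
\end{equation*}
so that, \emph{regardless of the coupling to the $a,b$ blocks}, $\Sigma_c$ is a subsolution of the reduced Riccati equation on the $c$-block. Put $\rho(t) := \tfrac1d \trace\Sigma_c(t)$. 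Taking the trace, using the Cauchy--Schwarz inequality $\trace(\Sigma_c^2) \ge \tfrac1d (\trace\Sigma_c)^2$ and the hypothesis $\trace\Rcan^{cc}(t) = \Riccan^c(\lambda(t)) \ge d\,\kappa_c$, we obtain the scalar Riccati differential inequality
\begin{equation*}
\dot\rho(t) + \rho(t)^2 + \kappa_c \le 0 .
\end{equation*}
Moreover the small-time asymptotics of the canonical frame on the Riemannian-like block give $\Sigma_c(t) = \tfrac1t\,\mathrm{Id} + O(1)$ as $t \to 0^+$, hence $\rho(t)^{-1} \to 0$, matching the limit initial datum of the model function $s_{\kappa_c}$ of~\eqref{eq:Riemfunc}.

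It remains to compare with the model. Since $s_{\kappa_c}$ solves $\dot s + s^2 + \kappa_c = 0$ with the same limit initial condition, and $\rho$ is a subsolution of the same Riccati equation, the standard one-dimensional Riccati comparison yields $\rho(t) \le s_{\kappa_c}(t)$ on their common interval of definition. As $\kappa_c > 0$ we have $s_{\kappa_c}(t) = \sqrt{\kappa_c}\cot(\sqrt{\kappa_c}\,t) \to -\infty$ when $t \to (\pi/\sqrt{\kappa_c})^-$; therefore $\rho(t) \to -\infty$, i.e. $\trace\Sigma_c(t) \to -\infty$, no later than $t = \pi/\sqrt{\kappa_c}$. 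Hence $\Sigma(t)$ itself blows up by that time, and by the correspondence recalled above $\gamma$ has a conjugate point at some $t_*(\gamma) \le \pi/\sqrt{\kappa_c} = \bar t(\kappa_c)$.

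The main work is not in this comparison---which is one-dimensional and elementary---but in the structural input borrowed from Sections~\ref{s:jac}--\ref{s:comp}: the identification of the $c$-block of $\Rcan$, the fact that $\trace\Rcan^{cc} = \Riccan^c$ with the correct dimension $2k-n-1$, and the $\tfrac1t\,\mathrm{Id} + O(1)$ behavior of $\Sigma_c$ at $t = 0^+$, i.e. that the Riemannian-like block of the canonical Jacobi equation genuinely behaves, to the needed order, like a Riemannian one. It is worth stressing that no bound on $\Riccan^a$ or $\Riccan^b$ enters: the coupling term $-NN^\top$ is sign-definite in our favour, which is precisely why controlling a single block suffices here, in contrast with Theorem~\ref{t:conjI-intro}.
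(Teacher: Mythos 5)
Your proof is correct and follows essentially the same route as the paper's: extract the $(c,c)$-block of the matrix Riccati equation for $V = MN^{-1}$, observe that the coupling to the other blocks enters with a favourable sign, split off the trivial one-dimensional direction of motion, take a normalized trace, apply the scalar Cauchy--Schwarz inequality $\trace(M^2) \ge \tfrac1m\trace(M)^2$, and conclude by scalar Riccati comparison against $s_{\kappa_c}$. The paper phrases the coupling as an effective curvature $R_{\mathrm{II}} = R_{cc} + V_{cb}V_{cb}^* \geq R_{cc}$ (an equality with a bigger curvature) where you phrase it as a differential inequality $\dot\Sigma_c + \Sigma_c^2 + \Rcan^{cc} \preceq 0$; these are the same observation. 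Your remark that no bounds on $\Riccan^a, \Riccan^b$ enter because the coupling term is sign-definite is exactly the point the paper is exploiting.

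One small caveat: the small-time asymptotics $\Sigma_c(t) = \tfrac1t\,\mathrm{Id} + O(1)$ is stronger than what you actually need and stronger than what the paper establishes; what the argument requires (and what the paper's Lemma~\ref{l:explosion}, a block-inversion argument, delivers for any diagonal block of $V$) is only $(\Sigma_c)^{-1} \to 0$ with $\Sigma_c > 0$ for small $t > 0$, which suffices for $\rho^{-1} \to 0$. You should invoke that lemma (or reproduce its short proof for the $(2k-n-1)$-block) rather than asserting the sharper expansion.
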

Theorem~\ref{t:conjII-intro} does not apply to ``maximally fat'' structures, namely when $n=2k-1$ (the maximal possible dimension for a given fat distribution of rank $k$). Globalizing the hypotheses, we obtain two sub-Rieman\-nian versions of the Bonnet-Myers theorem.
\begin{theorem}[Bonnet-Myers I]\label{t:bmI-intro} Let $(M,\distr,g)$ be a complete, fat sub-Rieman\-nian manifold of type $(k,n)$. Assume that, for any unit covector $\lambda$ 
\begin{equation}
\Riccan^a(\lambda) \geq (n-k) \kappa_a, \qquad \Riccan^b(\lambda) \geq (n-k) \kappa_b,
\end{equation}
for some $\kappa_a,\kappa_b \in \R$ satisfying~\eqref{eq:conditions}. Then the sub-Rieman\-nian diameter of $M$ is bounded by 
\begin{equation}
\diam(M) \leq \bar{t}(\kappa_a,\kappa_b).
\end{equation}
Moreover, $M$ is compact, and its fundamental group is finite.
\end{theorem}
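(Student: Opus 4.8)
The plan is to derive the Bonnet--Myers statement as a global consequence of the local conjugate-point bound in Theorem~\ref{t:conjI-intro}, following the classical Riemannian scheme. First I would observe that under the global Ricci hypotheses, every length-parametrized geodesic $\gamma_\lambda$ issued from any point satisfies the pointwise bounds $\Riccan^a(\lambda(t)) \ge (n-k)\kappa_a$ and $\Riccan^b(\lambda(t)) \ge (n-k)\kappa_b$ along its whole length, so Theorem~\ref{t:conjI-intro} applies uniformly: the first conjugate time satisfies $t_*(\gamma) \le \bar t(\kappa_a,\kappa_b) =: D$, a finite constant independent of $\lambda$ and of the base point. Since a sub-Riemannian geodesic ceases to be minimizing strictly after its first conjugate time (a fact recalled in the excerpt, valid in the fat setting where there are no nontrivial abnormals), no minimizing geodesic can have length exceeding $D$. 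In the fat case, moreover, every minimizer is the projection of a normal extremal, so by completeness and Hopf--Rinow for sub-Riemannian manifolds, any two points are joined by a length-minimizing geodesic, whose length is therefore $\le D$. Hence $\d(x,y) \le D$ for all $x,y$, i.e. $\diam(M) \le \bar t(\kappa_a,\kappa_b)$.

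Next, compactness: a complete metric space of finite diameter is bounded, and in the sub-Riemannian setting completeness plus bracket-generating implies, via the sub-Riemannian Hopf--Rinow theorem, that closed bounded sets are compact; taking a closed ball of radius $D$ around any point gives all of $M$, so $M$ is compact.

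For the fundamental group, I would pass to the universal cover $\widetilde M$, equipped with the lifted distribution $\wt\distr$ and lifted metric. The covering map is a local sub-Riemannian isometry, so $\widetilde M$ is again complete and fat of the same type, and the canonical Ricci curvatures — being defined intrinsically from the geodesic flow via the structure recalled in Section~\ref{s:jac} — are preserved under local isometry. Thus the same bounds hold on $\widetilde M$, so by the part already proven $\widetilde M$ is compact with $\diam(\widetilde M) \le D$. A compact cover of $M$ has finitely many sheets, which is exactly the statement that $\pi_1(M)$ is finite.

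The main obstacle is making sure the sub-Riemannian analogues of the Riemannian tools are legitimately available in this generality: specifically (i) that completeness of $(M,\d)$ yields existence of minimizers between any pair of points and compactness of closed bounded sets (the sub-Riemannian Hopf--Rinow theorem, which does hold in this setting, e.g. \cite{rifford2014sub}); (ii) that in a fat structure every minimizer is strictly normal and loses minimality past the first conjugate time, so that the conjugate-time bound genuinely caps minimizer length — this is where the fat hypothesis is essential and where one must invoke the absence of abnormal minimizers; and (iii) that the canonical Ricci curvatures $\Riccan^\alpha$ are invariant under local sub-Riemannian isometries, which is immediate from their construction from the Hamiltonian flow but should be stated. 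Once these are in hand, the argument is a verbatim transcription of the Riemannian Bonnet--Myers proof.
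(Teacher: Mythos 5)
Your proposal is correct and takes essentially the same route as the paper: invoke Theorem~\ref{t:conjI-intro} to get a uniform conjugate-time bound $\bar t(\kappa_a,\kappa_b)$, use sub-Riemannian Hopf--Rinow and the absence of abnormal minimizers in the fat setting to deduce every pair of points is joined by a normal minimizing geodesic of length $\le \bar t(\kappa_a,\kappa_b)$, conclude the diameter bound and compactness, and then run the classical Myers universal-cover argument for finiteness of $\pi_1$. The paper compresses the cover argument to a citation of Myers, but the substance is identical.
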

\begin{theorem}[Bonnet-Myers II]\label{t:bmII-intro} Let $(M,\distr,g)$ be a complete, fat sub-Rieman\-nian manifold of type $(k,n)$, with $2k-n>1$. Assume that, for any unit covector $\lambda$
\begin{equation}
\Riccan^c(\lambda) \geq (2k -n-1)\kappa_c,
\end{equation}
for some $\kappa_c >0$. Then the sub-Rieman\-nian diameter of $M$ is bounded by 
\begin{equation}
\diam(M) \leq \bar{t}(\kappa_c) = \frac{\pi}{\sqrt{\kappa_c}}.
\end{equation}
Moreover, $M$ is compact, and its fundamental group is finite.
\end{theorem}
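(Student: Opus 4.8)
The plan is to run the classical Bonnet--Myers argument, with Theorem~\ref{t:conjII-intro} playing the role of the Riemannian index-form estimate. First I would localize the hypothesis. Since $H$ is constant along extremals, if $\lambda\in U^*M$ then $\lambda(t)=e^{t\vec H}(\lambda)\in U^*M$ for every $t$, so the global bound $\Riccan^c(\lambda)\ge(2k-n-1)\kappa_c$ for all unit covectors forces $\Riccan^c(\lambda(t))\ge(2k-n-1)\kappa_c$ along every geodesic $\gamma=\gamma_\lambda$. Theorem~\ref{t:conjII-intro} then applies to every geodesic and yields a conjugate point at time at most $\bar t(\kappa_c)=\pi/\sqrt{\kappa_c}$; since a geodesic ceases to be minimizing strictly after its first conjugate time, no minimizing geodesic can have length exceeding $\pi/\sqrt{\kappa_c}$.

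Next I would bring in completeness via the sub-Riemannian Hopf--Rinow theorem: if $(M,\d)$ is complete, any two points $x,y$ are joined by a length-minimizing horizontal curve, and in the fat setting every such minimizer is a normal geodesic, hence of the form $\gamma_\lambda$ for some $\lambda\in U^*_xM$, with $\ell(\gamma_\lambda)=\d(x,y)$. By the previous step this length is $\le\pi/\sqrt{\kappa_c}$, so $\diam(M)\le\bar t(\kappa_c)=\pi/\sqrt{\kappa_c}$. Completeness together with this diameter bound (again by Hopf--Rinow, closed metric balls are compact) shows that $M=\bar B(x_0,\diam M)$ is compact.

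Finally, for the fundamental group I would pass to the universal cover $p:\wt M\to M$, equipped with the distribution and metric pulled back by $p$. The resulting structure is again fat and of type $(k,n)$ (both conditions are local), it is complete (the lifted Hamiltonian flow is complete because the one on $M$ is, so the lifted metric is complete), and since $p$ is a local sub-Riemannian isometry and the canonical Ricci curvatures $\Riccan^\alpha$ are built out of the germ of the structure along an extremal through the Agrachev--Zelenko--Li canonical frame of Section~\ref{s:jac}, they are invariant under $p$. Hence the bound $\Riccan^c\ge(2k-n-1)\kappa_c$ holds on $\wt M$ as well, and the diameter bound just established makes $\wt M$ compact; a compact covering space has finite fibres, so $\pi_1(M)$ is finite.

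The step I expect to require the most care is the last one: one must check that the $\Riccan^\alpha$ are genuine local invariants, transferable to the universal cover, which rests on the locality of the AZL normalization procedure and on the fact that extremals, conjugate times, and the bundle structure of $U^*M$ all lift through a sub-Riemannian covering. The appeals to Hopf--Rinow (existence of minimizers, compactness of closed balls, completeness of covers) are standard but should be cited precisely, and it is here that fatness is used in an essential way, to guarantee that every distance-realizing curve is a normal geodesic to which Theorem~\ref{t:conjII-intro} applies.
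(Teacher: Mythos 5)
Your proposal is correct and follows the same route as the paper: bound the first conjugate time via Theorem~\ref{t:conjII-intro} (using Remark~\ref{rmk:traslation} to propagate the pointwise bound along extremals), invoke completeness to obtain minimizing normal geodesics between any two points (fatness rules out abnormal minimizers), deduce the diameter bound and compactness, and pass to the universal cover for finiteness of $\pi_1(M)$. Your expanded discussion of why $\Riccan^c$ lifts to the universal cover is a point the paper compresses into ``the argument is the classical one,'' but the substance is identical.
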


\subsection{Sub-Laplacian} For any function $f \in C^\infty(M)$, the \emph{horizontal gradient} $\grad(f) \in \Gamma(\distr)$ is, at each point, the horizontal direction of steepest slope of $f$, that is
\begin{equation}
g(\grad(f), X) = df(X), \qquad \forall X \in \Gamma(\distr).
\end{equation}
Fix any smooth volume form $\omega \in \Lambda^n M$ (or a density, if $M$ is not orientable). The \emph{divergence} of a smooth vector field is defined by the following identity
\begin{equation}
\mathcal{L}_X \omega = \dive_{\omega}(X) \omega, \qquad X \in \Gamma(TM),
\end{equation}
where $\mathcal{L}$ denotes the Lie derivative. We define the \emph{sub-Laplacian} $\Delta_\omega f := \dive_\omega(\grad(f))$ for any $f \in C^\infty(M)$. The sub-Laplacian is symmetric on the space $C^\infty_c(M)$ of smooth functions with compact support with respect to the $L^2(M,\omega)$ product:
\begin{equation}
\int_M f (- \Delta_\omega h) \omega = \int_M g(\grad(f), \grad(h)) \omega, \qquad \forall f,h \in C^\infty_c(M).
\end{equation}
If $(M,\d)$ is complete, then $\Delta_\omega$ is essentially self-adjoint on $C^\infty_c(M)$ and has a positive, smooth heat kernel \cite{strichartz,strichartzerrata}.

The sub-Laplacian is intrinsic if the choice of volume is. A natural choice is Popp volume \cite{montgomerybook,BR-Popp}. For the Hopf fibrations, it is proportional to the Riemannian volume of the corresponding round spheres, and the associated sub-Laplacian coincides with the one studied in \cite{BW-CR,BW-QHF}. See also \cite{ABGR-unimodular} for the case of unimodular 3D Lie groups. %We mention that an alternative approach to intrinsic sub-Laplacians as generators of limit Brownian motions is discussed in \cite{GL1,GL2} and also \cite{BNR-Intrinsic}, where one can also find a more rigorous discussion about ``intrinsic volumes'' in sub-Riemannian geometry.

\subsection{Canonical volume derivative}
A new object appears in relation with the volume. To introduce it, consider a Riemannian manifold $(M,g)$ equipped with a volume $\omega$ (not necessarily the Riemannian one). Then, for all $\lambda \in T^*M \setminus H^{-1}(0)$, we define
\begin{equation}
\rho_\omega(\lambda) := \frac{\nabla_{\lambda^\sharp} \omega}{\omega}, 
\end{equation}
where $\sharp$ is the canonical musical isomorphism and $\nabla$ the Levi-Civita connection. Indeed $\rho_\omega$ is smooth and $\rho_\omega = 0$ if and only if $\omega$ is the Riemannian volume. 

The sub-Rieman\-nian generalization of $\rho_\omega : T^*M \setminus H^{-1}(0) \to \R$ plays an important role in the next theorems and we call it the \emph{canonical volume derivative} (see Section~\ref{s:jac}). In any contact Sasakian manifold equipped with Popp volume, as the ones considered in \cite{AL-3D-MCP,AL-3D-BisLap,LLZ-Sasakian-MCP,LL-Sasakian-BisLap}, $\rho_\omega =0$, similarly to the Riemannian case. We prove that this is true also in the $3$-Sasakian setting. This is not true in general.

\subsection{Sub-Riemannian distance} Assume from now on that $(M,\d)$ is complete. For any point $q_0 \in M$, let $r_{q_0}(\cdot):= \d(q_0,\cdot)$ be the sub-Rieman\-nian distance from $q_0$. By a by-now classical result \cite{agrachevsmooth,rifford2014sub}, $r_{q_0}$ is smooth on an open dense set (on a general sub-Rieman\-nian manifold). In addition, for fat structures, $\d: M \times M \to \R$ is locally Lipschitz in charts outside the diagonal and $r_{q_0}$ is smooth almost everywhere \cite{rifford2014sub,nostrolibro}.

\begin{theorem}[Sub-Laplacian comparison]
Let $(M,\distr,g)$ be a complete, fat sub-Rieman\-nian manifold of type $(k,n)$, equipped with a smooth volume (or density) $\omega$. Assume that for any unit covector $\lambda \in U^*_{q_0} M$
\begin{equation}
\begin{cases} \Riccan^a(\lambda(t))  \geq (n-k) \kappa_a,  \\
\Riccan^b(\lambda(t)) \geq (n-k) \kappa_b,  \\
\Riccan^c(\lambda(t)) \geq  (2k-n-1)\kappa_c,  
\end{cases} \quad \text{and} \qquad \rho_\omega(\lambda(t))  \leq \kappa_\omega,
\end{equation}
for some $\kappa_a,\kappa_b,\kappa_c, \kappa_\omega \in \R$. Then
\begin{equation}
\Delta_\omega r_{q_0}(q) \leq (n-k) s_{\kappa_a,\kappa_b}(r_{q_0}(q)) + (2k-n-1) s_{\kappa_c}(r_{q_0}(q)) + \kappa_\omega,
\end{equation}
almost everywhere.
\end{theorem}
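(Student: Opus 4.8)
The plan is to derive the sub-Laplacian comparison from the Riccati comparison machinery underlying the model functions $s_{\kappa_c}$ and $s_{\kappa_a,\kappa_b}$, exactly as in the Riemannian case but keeping track of the canonical splitting of the Jacobi curve. Fix a point $q$ where $r_{q_0}$ is smooth and let $\gamma:[0,r]\to M$, $r=r_{q_0}(q)$, be the (unique, since the structure is fat and $q$ is outside the cut locus) minimizing geodesic from $q_0$ to $q$, with initial unit covector $\lambda$ and extremal $\lambda(t)$. The horizontal gradient of $r_{q_0}$ along $\gamma$ is $\dot\gamma(t)$, and $\Delta_\omega r_{q_0}(\gamma(r))$ is computed by the classical formula: it equals the logarithmic derivative at $t=r$ of the density, with respect to $\omega$, of the map sending a covector to the corresponding geodesic endpoint — i.e. $\Delta_\omega r_{q_0}$ is (up to the volume-derivative correction) the trace of the symmetric part of the second fundamental form of the level sets, which in the language of the canonical frame is the trace of the matrix $\mathbf{S}(t)$ solving the Riccati equation $\dot{\mathbf S} + \mathbf S^2 + \mathfrak R(t) = 0$ with the appropriate limit initial condition as $t\to 0^+$.

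The key step is to set up this Riccati equation in the canonical (Zelenko–Li) frame along $\lambda(t)$. In that frame the curvature operator $\mathfrak R(t)$ is block-diagonal with an ``$\mathbf a$'' block of size $(n-k)$, a ``$\mathbf b$'' block of size $(n-k)$, and a ``$\mathbf c$'' block of size $(2k-n-1)$ (this is precisely the decomposition that defines $\Riccan^a,\Riccan^b,\Riccan^c$ as the traces of the relevant blocks). Because of the product structure of the model, the scalar Riccati comparison applies blockwise: taking traces in the matrix Riccati equation and using the Ricci lower bounds $\Riccan^a\ge(n-k)\kappa_a$, $\Riccan^b\ge(n-k)\kappa_b$, $\Riccan^c\ge(2k-n-1)\kappa_c$ together with the matrix Cauchy–Schwarz inequality $\trace(\mathbf S^2)\ge \frac1m(\trace\mathbf S)^2$ on each block, one obtains that $\trace$ of the ``$c$''-block of $\mathbf S$ is bounded above by $(2k-n-1)s_{\kappa_c}(t)$, and the combined trace of the ``$a$'' and ``$b$'' blocks is bounded above by $(n-k)s_{\kappa_a,\kappa_b}(t)$ — the latter being exactly the content of the Riccati comparison that produced the function~\eqref{eq:sRiemanfunc} and Proposition~\ref{p:stima-intro}. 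The singular initial condition at $t\to 0^+$ (the geodesic starts at $q_0$) is matched by the limit initial data $\lim_{t\to0^+}s^{-1}=0$ already built into both model functions.

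Finally I would reinsert the volume term. The sub-Laplacian $\Delta_\omega r_{q_0}$ differs from the ``Popp'' or ``canonical-volume'' sub-Laplacian by exactly $\rho_\omega(\lambda(t))$ evaluated at $t=r$ (this is what the canonical volume derivative measures — the logarithmic derivative of $\omega$ against the intrinsic volume along the Hamiltonian flow), so $\Delta_\omega r_{q_0}(q) = \trace\mathbf S(r) + \rho_\omega(\lambda(r))$; combining with the trace bounds above and the hypothesis $\rho_\omega(\lambda(t))\le\kappa_\omega$ gives the claimed inequality
\[
\Delta_\omega r_{q_0}(q) \leq (n-k)\, s_{\kappa_a,\kappa_b}(r_{q_0}(q)) + (2k-n-1)\, s_{\kappa_c}(r_{q_0}(q)) + \kappa_\omega.
\]
Since $r_{q_0}$ is smooth on an open dense set and, for fat structures, almost everywhere, the estimate holds a.e.; no distributional argument is needed for the statement as phrased, though one could upgrade it to hold in the barrier/distributional sense by the usual support-function trick.

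The main obstacle I anticipate is not the Riccati comparison itself — that is standard once the frame is in place — but verifying that the matrix Riccati equation genuinely decouples along the canonical splitting, i.e. that the off-diagonal blocks of $\mathbf R(t)$ do not spoil the blockwise trace estimates, and that the correct singular boundary behaviour of $\mathbf S(t)$ as $t\to 0^+$ reproduces precisely the initial data of $s_{\kappa_c}$ and $s_{\kappa_a,\kappa_b}$ with the stated multiplicities $(2k-n-1)$ and $(n-k)$. This bookkeeping — matching dimensions of blocks, signs, and the normalization of the canonical frame — is where the real care is required, and it is handled by the construction of the canonical frame and Jacobi curve recalled in Section~\ref{s:jac}.
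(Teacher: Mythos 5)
Your plan — set up the matrix Riccati equation in the canonical frame, take blockwise traces, invoke the Riccati comparison that defines $s_{\kappa_a,\kappa_b}$ and $s_{\kappa_c}$, then add the volume correction $\rho_\omega$ — is the right overall strategy and matches the paper's. However, several of the assertions you make along the way are not correct as stated, and those are precisely where the actual proof has to do work.

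First, the Riccati equation is not $\dot{\mathbf S} + \mathbf S^2 + \mathfrak R(t) = 0$; that is the Riemannian special case. In the canonical (Zelenko--Li) frame for a fat structure the equation is
\[
\dot{V} + A^* V + V A + R(t) + V B V = 0,
\]
with the fixed block matrices $A$ (a shift mixing the $a$- and $b$-blocks) and $B$ (the identity on $b\oplus c$ and \emph{zero} on $a$). In particular, $\trace(\mathbf S)$ is not the right quantity: the Laplacian of $\f=\tfrac12 r^2$ is $\trace(B V(1) + A) + \rho_\omega$, so the $a$-block of $V$ never enters directly; it only influences the answer through its coupling with the $b$-block via $A$. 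Your phrase ``the combined trace of the $a$ and $b$ blocks'' is therefore not what is being bounded. The paper passes to the normalized $2\times 2$ trace $v_{\mathrm{I}}$ of the $(a,b)$-part of $V$, shows it satisfies a $2\times 2$ Riccati equation with the same $a_{\mathrm{I}},b_{\mathrm{I}}$, and then uses $s_{\kappa_a,\kappa_b}(t)=\trace(b_{\mathrm{I}}v_{\kappa_a,\kappa_b}(t))$ — a scalar, not a sum of two block traces.

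Second, $R(t)$ is \emph{not} block-diagonal in the canonical frame: $R_{ab}, R_{ac}, R_{bc}$ are in general nonzero. You correctly flag this in your last paragraph as a worry, but earlier you assert block-diagonality as a fact. In the paper the non-decoupling is handled by a nontrivial algebraic argument: the only structural fact used is $R_{ab} = -R_{ab}^*$ (so its trace vanishes), while the cross terms coming from $V_{ac}, V_{bc}$, and $V_{ab}$ in the reduced Riccati equation are shown to contribute a positive semidefinite remainder — this requires the Cauchy--Schwarz estimates on traces, including the $2\times2$ lemma $\|X\|^2\|Y\|^2-\langle X,Y\rangle^2 + \tfrac{2}{m}\trace(X)\trace(Y)\langle X,Y\rangle \ge \tfrac{1}{m}(\trace(Y)^2\|X\|^2 + \trace(X)^2\|Y\|^2)$. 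Without this, your blockwise comparison does not go through.

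Third, your identity $\Delta_\omega r_{q_0} = \trace\mathbf S + \rho_\omega$ is asserted rather than derived. The paper derives the corresponding formula for $\f=\tfrac12 r^2$ by exchanging the flow of $\nabla\f$ with the Hamiltonian flow pushed down by $\pi$, identifying the Jacobi matrix $N(t)$ along the extremal, and obtaining $\Delta_\omega\f = \trace(\dot N(1)N(1)^{-1}) + \rho_\omega(d_q\f)$. It then splits off the ``direction of motion'' inside the $c$-block (giving the $1/t$-behaving scalar $v^0_{\mathrm{II}}$), rescales to a unit covector using the homogeneity of $\Riccan^\mu$ and $\rho_\omega$, and converts to $r$ via $\Delta_\omega\f = r\,\Delta_\omega r + 1$, which absorbs the $v^0_{\mathrm{II}}\le 1/t$ contribution exactly. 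None of these steps appear in your sketch, and the last two are what make the constants $(n-k)$ and $(2k-n-1)$ and the ``$+\kappa_\omega$'' come out right.

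So the high-level route is correct, but the two genuine gaps are: (i) you are using the wrong (Riemannian) Riccati normal form and the wrong traced quantity, which obscures why $s_{\kappa_a,\kappa_b}$ appears with coefficient $n-k$; and (ii) the claimed block-diagonality of $R(t)$ is false, and the decoupling of the trace estimates requires the skew-symmetry of $R_{ab}$ plus the positivity of the cross-term contributions. Both are essential; without them the argument does not close.
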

This theorem can be improved for bounds that depend on the initial covector. If $r_{q_0}$ is smooth at $q$, then there exists a unique length-parametrized geodesic joining $q_0$ with $q$, and its initial covector is $\lambda_{q_0}^{q}= e^{-r_{q_0} \vec{H}}(d_q r_{q_0}) \in U_{q_0}^*M$, where $d_q$ denotes the differential at $q$.
\begin{theorem}[Sub-Laplacian comparison - weak statement] \label{t:lapl-intro}
Let $(M,\distr,g)$ be a complete, fat sub-Rieman\-nian manifold of type $(k,n)$, equipped with a smooth volume (or density) $\omega$. Assume that for any unit covector $\lambda \in U^*_{q_0} M$
\begin{equation}
\begin{cases} \Riccan^a(\lambda(t))  \geq (n-k) \kappa_a(\lambda),  \\
\Riccan^b(\lambda(t)) \geq (n-k) \kappa_b(\lambda),  \\
\Riccan^c(\lambda(t)) \geq  (2k-n-1)\kappa_c(\lambda), 
\end{cases} \quad \text{and} \qquad \rho_\omega(\lambda(t))  \leq \kappa_\omega(\lambda),
\end{equation}
for some $\kappa_a(\lambda),\kappa_b(\lambda),\kappa_c(\lambda), \kappa_\omega(\lambda) \in \R$, possibly depending on the initial covector. Then
\begin{equation}
\Delta r_{q_0} (q) \leq (n-k) s_{\kappa_a(\lambda_{q_0}^q),\kappa_b(\lambda_{q_0}^q)}(r_{q_0}(q)) + (2k-n-1) s_{\kappa_c(\lambda_{q_0}^q)}(r_{q_0}(q)) +  \kappa_\omega(\lambda_{q_0}^q),
\end{equation}
almost everywhere.
\end{theorem}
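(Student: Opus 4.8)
The plan is to reduce the Laplacian comparison to a Riccati comparison along a fixed geodesic, exactly mirroring the Riemannian proof of the Laplacian comparison theorem, but using the canonical (Agrachev--Zelenko--Li) Jacobi machinery developed in Section~\ref{s:jac}. Fix $q_0$ and a point $q$ at which $r_{q_0}$ is smooth; by the cited regularity results this happens almost everywhere. Let $\gamma$ be the unique length-parametrized minimizing geodesic from $q_0$ to $q$, with initial covector $\lambda = \lambda_{q_0}^q \in U_{q_0}^*M$ and $\ell := r_{q_0}(q)$. The key geometric fact is that near $q$ the function $r_{q_0}$ is built from the exponential map, so its Hessian (in the appropriate sub-Riemannian sense) along $\gamma$ is governed by the solution of the matrix Riccati equation attached to the canonical frame, with the ``initial condition at $q_0$'' being the blow-up (limit) datum $\lim_{t\to 0^+}$ that defines the functions $s_{\kappa_c}$ and $s_{\kappa_a,\kappa_b}$.

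First I would recall from Section~\ref{s:jac} the structure of the canonical splitting: along the extremal $\lambda(t)$ the ``vertical'' Jacobi dynamics decompose into blocks of the three types $a$, $b$, $c$, with the relevant curvature invariants being precisely $\Riccan^a,\Riccan^b,\Riccan^c$, and the dimensions of the blocks being $n-k$, $n-k$, and $2k-n-1$ respectively (this is where fatness and the dimension bookkeeping enter). Then $\Delta_\omega r_{q_0}(q)$ can be written as the trace of the Jacobi/shape operator at $t=\ell$ plus the contribution of the volume, which is exactly $\rho_\omega(\lambda(\ell))$ — this is the reason the canonical volume derivative appears as an additive term. Concretely, $\Delta_\omega r_{q_0}(q) = \operatorname{tr}(\text{second fundamental form of the distance sphere}) + \rho_\omega(\lambda(\ell))$, and the trace splits into the three-block contribution.

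Next I would invoke the scalar Riccati comparison. For the $c$-block (dimension $2k-n-1$), the relevant Riccati equation is the genuinely Riemannian one $\dot s + \kappa_c + s^2 = 0$ with limit datum $s^{-1}\to 0$, whose solution is $s_{\kappa_c}$; the lower bound $\Riccan^c \ge (2k-n-1)\kappa_c$ together with the matrix Riccati comparison theorem (trace version) yields that the trace of that block is $\le (2k-n-1)s_{\kappa_c}(\ell)$. For the $a$- and $b$-blocks the Riccati equation is the coupled $2\times 2$ (per vertical direction) system whose trace solution is $s_{\kappa_a,\kappa_b}$, studied in Sections~\ref{s:comp}--\ref{s:proofs}; the two lower bounds $\Riccan^a \ge (n-k)\kappa_a$, $\Riccan^b \ge (n-k)\kappa_b$ give, again by a trace Riccati comparison, a bound of $(n-k)s_{\kappa_a,\kappa_b}(\ell)$ on that contribution. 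Adding $\rho_\omega(\lambda(\ell)) \le \kappa_\omega(\lambda)$ gives the claimed inequality. The passage from the pointwise-at-$q$ statement to ``almost everywhere'' is immediate since $r_{q_0}$ is smooth a.e.; and for the weak statement the only change is that the constants $\kappa_a,\dots,\kappa_\omega$ are allowed to depend on the single covector $\lambda_{q_0}^q$, which is harmless because the comparison is carried out along one fixed geodesic at a time, so no uniformity over $\lambda$ is needed.

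The main obstacle I anticipate is not the Riccati comparison itself (that is standard once the equations are in hand) but the careful justification that the distributional/a.e.\ sub-Laplacian of $r_{q_0}$ is indeed controlled by the pointwise computation on the smooth set — i.e.\ that there is no extra negative singular part. In the Riemannian case this is the classical fact that $\Delta r \le (\text{comparison bound})$ holds in the barrier/distributional sense globally because the cut locus contributes with the ``good'' sign; in the fat sub-Riemannian setting one must use the analogous structure of the cut locus and the fact, cited from \cite{rifford2014sub,nostrolibro,agrachevsmooth}, that $r_{q_0}$ is locally semiconcave (or at least locally Lipschitz with the right one-sided bounds) outside $q_0$. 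I would handle this by first proving the inequality pointwise on the full-measure smooth set via the Jacobi/Riccati argument above, and then remarking that this is exactly the content of the ``almost everywhere'' claim as stated, so that no global distributional upgrade is actually required for the theorem as phrased. If one wanted the stronger distributional statement, the semiconcavity of $r_{q_0}$ together with a standard localization argument would close the gap, but the present formulation deliberately sidesteps this.
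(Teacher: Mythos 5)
Your overall strategy coincides with the paper's: express the sub-Laplacian of the distance along the extremal via the canonical Jacobi/Riccati dynamics, split the Riccati solution according to the canonical $(a,b,c)$ blocks, take normalized traces, compare against the constant-coefficient models whose solutions are $s_{\kappa_a,\kappa_b}$ and $s_{\kappa_c}$, and add the volume derivative contribution. That is exactly the architecture in Section~\ref{s:proofs}.

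However, there is a genuine gap at the central step. You write ``$\Delta_\omega r_{q_0}(q) = \operatorname{tr}(\text{second fundamental form of the distance sphere}) + \rho_\omega(\lambda(\ell))$'' as if it were a known fact, but in the fat sub-Riemannian setting this identity is precisely the nontrivial content of the proof, not a citable input. The paper establishes it for $\f_{q_0} = \tfrac12 r_{q_0}^2$ by a careful computation: one exchanges the flow of $\nabla\f$ with the projected Hamiltonian flow to first order, expands the volume form on the frame $f_i(t)=\pi_*F_i(t)$, identifies the relevant matrix as $N(t)$ of the Jacobi equation, and arrives at $(\Delta_\omega\f)(q)=\trace(\dot N(1)N(1)^{-1})+\rho_\omega(d_q\f)=\trace(BV(1)+A)+\rho_\omega(d_q\f)$. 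Without this derivation there is no bridge from $\Delta_\omega$ to the Riccati solution $V(t)$, and the rest of the argument has nothing to stand on.

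Two further omissions follow from skipping that step. First, the paper works with $\f$ (parametrized on $[0,1]$ with a non-unit covector), which forces a rescaling via the homogeneity of $H$, of $\Riccan^\mu$, of $\rho_\omega$, and of the model functions $s_{\kappa_a,\kappa_b}$ and $s_{\kappa_c}$; the final statement for $r_{q_0}$ is then recovered from $\Delta_\omega\f = r\,\Delta_\omega r + 1$. You propose to work directly with $r$ and a unit covector, which is possible in principle but requires you to redo the identity above for $r$ itself (where $d_q r$ is a unit covector at time $r(q)$, not time $1$) and to verify it does not introduce issues. Second, you never account for the direction of motion inside the $c$-block: $R_{cc}$ has a trivial $1\times1$ zero block corresponding to $\dot\gamma$, and the associated scalar Riccati solution $v^0_{\mathrm{II}}(t)\le 1/t$ contributes a $+1$ to $\Delta_\omega\f$ that is then cancelled exactly by the $\f\to r$ conversion. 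If you work directly with $r$ you must independently explain why the radial direction gives no contribution; the paper does this implicitly through the bookkeeping $(n-k)+(n-k)+(2k-n-1)=n-1$. Your remarks about the a.e.\ statement and semiconcavity are fine but peripheral; the real work is the trace identity.
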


\subsection{3-Sasakian structures}

We pass now to applications. Following~\cite{blair}, a \emph{$3$-Sasakian structure} on a manifold $M$ of dimension $4d+3$, with $d\geq 1$, is a collection $\{\phi_\alpha,\eta_\alpha,\xi_\alpha,g\}_\alpha$, with $\alpha=I,J,K$, of three contact metric structures, where $g$ is a Riemannian metric, $\eta_\alpha$ is a one-form, $\xi_\alpha$ is the Reeb vector field and $\phi_\alpha : \Gamma(TM) \to \Gamma(TM)$ is given by
\begin{equation}
2g(X,\phi_\alpha Y) = d\eta(X,Y).
\end{equation}
The three structures are Sasakian, and $\phi_I,\phi_J,\phi_K$ satisfy quaternionic-like compatibility relations (see Section~\ref{s:3-Sas} for details). A natural sub-Riemannian structure is given by the restriction of the Riemannian metric $g$ to the distribution
\begin{equation}
\distr = \bigcap_{\alpha=I,J,K} \ker \eta_\alpha.
\end{equation}
The three Reeb vector fields $\xi_\alpha$ are an orthonormal triple, orthogonal to $\distr$. Finally, for these structures, Popp volume is proportional to the Riemannian one (Proposition~\ref{p:Popp3Sas}).

\begin{rmk}
Here we are interested in the sub-Riemannian structure $(M,\distr,g|_\distr)$. The Riemannian metric of the $3$-Sasakian structure on the directions transverse to $\distr$ is not relevant. 
\end{rmk}

\begin{example}[Quaternionic Hopf fibration] The quaternionic unit sphere is the real manifold of dimension $4d+3$
\begin{equation}
\mathbb{S}^{4d+3}=\left\lbrace q = (q_1,\ldots,q_{d+1}) \in \H^{d+1} \mid \|q\|=1 \right\rbrace,
\end{equation}
equipped with the standard round metric $g$. Let $\mathbf{n}$ be the inward unit normal vector of $\mathbb{S}^{4d+3} \subset \H^{d+1} \simeq \R^{4d+4}$. The multiplication by $I,J,K$ induces the endomorphisms $\Phi_\alpha : T \H^{d+1} \to T \H^{d+1}$, for $\alpha=I,J,K$. The three vectors $\xi_{\alpha}:= \Phi_\alpha \mathbf{n}$ are tangent to $\mathbb{S}^{4d+3}$. The endomorphisms $\phi_\alpha$ are given by the restrictions of the complex structures $\Phi_\alpha$ to $T\mathbb{S}^{4d+3}$ and the one forms $\eta_\alpha$ are the dual of the vectors $\xi_\alpha$ (w.r.t. the round metric). The sub-Rieman\-nian distribution $\distr$ is given by the orthogonal complement of $\spn\{\xi_I,\xi_J,\xi_K\}$ and the sub-Rieman\-nian metric is the restriction to $\distr$ of the Riemannian one. 
\end{example}

%{\green
%\subsubsection{Alternative definition (\texttt{remove})} Alternatively $(M,g)$ (where $M$ is a smooth manifold of dimension $4d+3$ and $g$ a Riemannian metric)  is a $3$-Sasakian manifold if the metric cone $C(M) = (\R_+ \times M,dr^2 + r^2 g)$ is a hyper-K\"aler structure. Its hyper-complex structure $\{\Phi_I,\Phi_J,\Phi_K\}$ on $C(M)$ defines three vector fields $\xi_\alpha = \Phi_\alpha(-\partial_r)$ (where $\alpha=I,J,K$ and $\partial_r$ is the directrix direction of the cone). The fields $\xi_\alpha$ are tangent to $M$ and are an orthonormal triple of Killing vector fields that satisfies $[\xi_I,\xi_J] = 2\xi_{K}$ (and cyclic permutations). The triple $\{\xi_I,\xi_j,\xi_K\}$ yields $\eta_\alpha(Y) = g(\xi_\alpha,Y)$ and $\phi_\alpha Y = -\nabla_Y \xi_\alpha$ for all vectors $Y$.
%}

\begin{theorem}[Sub-Riemannian Ricci curvatures for $3$-Sasakian manifolds]\label{t:ricci-3-sas-intro}
Let $(M,\distr,g)$ be the sub-Riemannian structure of a $3$-Sasakian manifold of dimension $4d+3$. For any unit covector $\lambda \in U^*M$
\begin{align}
\mathfrak{Ric}^a(\lambda) & =  3\left(\tfrac{3}{4}\varrho^a(v)-\tfrac{7}{2}\|v\|^2-\tfrac{15}{8}\|v\|^4\right),\\
\mathfrak{Ric}^b(\lambda) & = 3(4 +5\|v\|^2), \\
\mathfrak{Ric}^c(\lambda) &  =(4d-4)(1+ \|v\|^2) ,
\end{align}
where $\|v\|^2 := v_I^2 + v_J^2 + v_K^2$ and $v_\alpha=\langle \lambda,\xi_\alpha\rangle$ for $\alpha=I,J,K$. Moreover the canonical volume derivative w.r.t. Popp volume vanishes, i.e. $\rho_\omega = 0$.
\end{theorem}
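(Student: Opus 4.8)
The plan is to specialize the general description of the canonical curvature operator $\Rcan_\lambda$ of a fat sub-Riemannian structure, and hence of the Ricci curvatures $\Riccan^\alpha$ and of the canonical volume derivative $\rho_\omega$ (Section~\ref{s:jac}), to the $3$-Sasakian case, and then to carry out the resulting trace computations using the structure equations of $3$-Sasakian manifolds.

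First I would analyse the geodesic flow. Setting $v_\alpha(\lambda)=\langle\lambda,\xi_\alpha\rangle$ for $\alpha=I,J,K$, the $\mathfrak{su}(2)$-type bracket relations satisfied by the Reeb fields in a $3$-Sasakian manifold imply that the three functions $v_\alpha$ close, under the Poisson bracket, into an $\mathfrak{su}(2)$, so that $\|v\|^2=v_I^2+v_J^2+v_K^2$ Poisson-commutes with each $v_\alpha$; and since each $\xi_\alpha$ is Killing and its flow rotates the forms $\eta_\beta$ among themselves (hence preserves $\distr$), it is an infinitesimal symmetry of the sub-Riemannian structure, so each $v_\alpha$ Poisson-commutes with $H$ and $\|v\|^2$ is a first integral of the geodesic flow. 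This already suggests that the answer is governed by $\|v\|^2$ together with the horizontal invariant $\varrho^a$. Moreover $(M,\distr,g|_\distr)$, and therefore the invariants $\Riccan^\alpha$ and $\rho_\omega$, does not depend on the choice of the orthonormal triple $(\xi_I,\xi_J,\xi_K)$ inside the $2$-sphere of Reeb fields, so to compute $\Riccan^\alpha(\lambda)$ at a fixed $\lambda$ I may rotate the triple at the base point $q=\pi(\lambda)$ so that $v=(\|v\|,0,0)$.

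Next comes the core computation. For a fat structure of type $(k,n)$ the $(n-1)$-dimensional space on which $\Rcan_\lambda$ acts splits, in the canonical frame, into an ``$a$''-block and a ``$b$''-block of dimension $n-k$ each and a ``$c$''-block of dimension $2k-n-1$ (here $3$, $3$ and $4d-4$), and $\Riccan^a,\Riccan^b,\Riccan^c$ are the corresponding partial traces. I would realise the canonical frame along $\lambda(t)$ from the geodesic velocity $\gd$ together with the horizontal vectors $\phi_\alpha\gd$, the Reeb vectors $\xi_\alpha$, and an orthonormal frame of the $(4d-4)$-dimensional orthogonal complement inside $\distr$; the computation uses the Sasakian identities $\nabla_X\xi_\alpha=-\phi_\alpha X$ and $(\nabla_X\phi_\alpha)Y=g(X,Y)\xi_\alpha-\eta_\alpha(Y)X$, the quaternionic relations among the $\phi_\alpha$, and the induced expression for the Riemann curvature tensor of a $3$-Sasakian manifold on horizontal arguments. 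Substituting into the structural equations that define $\Rcan_\lambda$ and taking the three traces produces the stated formulas: the ``$b$''- and ``$c$''-blocks come out depending on $\lambda$ only through $\|v\|^2$ --- the ``$c$''-block being $(1+\|v\|^2)$ times the identity, so $\Riccan^c=(4d-4)(1+\|v\|^2)$, and the trace of the ``$b$''-block being $3(4+5\|v\|^2)$ --- whereas the ``$a$''-block retains the genuinely curvature-dependent term $\varrho^a$ (the partial trace of the horizontal curvature tensor defined in Section~\ref{s:3-Sas}), yielding $\Riccan^a=3\left(\tfrac34\varrho^a(v)-\tfrac72\|v\|^2-\tfrac{15}{8}\|v\|^4\right)$, the rational coefficients arising from the quaternionic bookkeeping.

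For the vanishing $\rho_\omega=0$: by Proposition~\ref{p:Popp3Sas} Popp volume is a constant multiple of the Riemannian volume $\mathrm{vol}_g$, so it is enough to treat $\omega=\mathrm{vol}_g$; since $\rho_\omega(\lambda)$ is (by its definition in Section~\ref{s:jac}) a logarithmic derivative of $\omega$ along the canonical frame, it unwinds to the $\omega$-divergence of a distinguished vector field built from $\lambda$ and the tensors $\phi_\alpha,\xi_\alpha$, and this divergence vanishes because the $\xi_\alpha$ are Killing (hence $\mathrm{vol}_g$-preserving) and the $\phi_\alpha$ are $g$-skew-symmetric and trace-free, exactly as in the contact Sasakian case. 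The hard part will be the core computation: evaluating $\Rcan_\lambda$ amounts to a sizeable tensorial identity that must be organised so that the quaternionic symmetry is used systematically --- via the normalisation $v=(\|v\|,0,0)$ and by tracking how the triple rotates along the flow --- rather than by brute-force expansion in coordinates. The appearance of $\varrho^a$, which reflects the fact that a general $3$-Sasakian manifold is not of constant quaternionic sectional curvature, is precisely what obstructs any further simplification of $\Riccan^a$.
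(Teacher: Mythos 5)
Your plan is broadly aligned with the paper's proof: identify the $v_\alpha$ as first integrals of the Hamiltonian flow, build the canonical splitting with the $b$-space spanned by $\{\phi_\alpha\dot\gamma\}$, the $c$-space as its orthogonal complement in $\distr$, and the $a$-space as a linear combination of the Reeb fields with horizontal corrections, then extract the curvature blocks and trace. One genuine error in your outline: you assert that the $c$-block is $(1+\|v\|^2)$ times the identity. This is false for a general $3$-Sasakian manifold. The paper obtains
\[
R_{cc}(t)=U\big[B+\|v\|^2(\mathbbold{1}-\dot\gamma\dot\gamma^*)\big]U^*,\qquad B_{ij}=R^\nabla(\dot\gamma,X_i,X_j,\dot\gamma)+3\,g(X_i,\Pi_\phi X_j),
\]
which is scalar only when the transverse quaternionic sectional curvature is constant (as for the quaternionic Hopf fibration). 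The trace nevertheless collapses to $(4d-4)(1+\|v\|^2)$ because the Einstein condition fixes $\Ric^\nabla(\dot\gamma)=4d+2$ and the $3$-Sasakian curvature identities give $\Sec(\dot\gamma,\xi_\alpha)=1$ and $\sum_\alpha\Sec(\dot\gamma,\phi_\alpha\dot\gamma)=3$; your plan must make this ``trace, don't diagonalize'' step explicit, since it is the very mechanism that makes $\Riccan^b,\Riccan^c$ curvature-independent while $\Riccan^a$ retains $\varrho^a$.

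You also leave out the single most important technical device: the auxiliary frame along $\gamma$ (a qc-Fermi frame) with $\nabla_{X_i}X_j|_{\gamma(t)}$ purely vertical and $[\xi_\alpha,X_i]=0$ (Lemma~\ref{fermi_frame}), constructed via a Riemannian submersion to the local leaf space plus Fermi normal frames on the quotient. Without this frame the Christoffel terms do not cancel and the Hamiltonian-frame computations (Lemma~\ref{l:fund} onward) do not close. Your proposal to normalize $v=(\|v\|,0,0)$ is legitimate (the $v_\alpha$ are constant along the flow), but the paper instead keeps the full skew matrix $V$ and conjugates by $e^{\frac32 tV}$; since that same rotation still enters the definition of the $a$- and $b$-frames, the normalization does not substantially simplify the argument. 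Finally, for $\rho_\omega=0$ your Killing-field heuristic gestures in the right direction, but the concrete step is to read off the projected canonical frame $f_a(t)=e^{t\V}(\tfrac32 VAX-2v\dot\gamma+2\xi)$, $f_b(t)=e^{t\V}AX$, $f_c(t)=UX$, plug into $\omega(f_a,f_b,f_c)$ with $\omega$ the Riemannian volume (Popp is a constant multiple of it by Proposition~\ref{p:Popp3Sas}), and verify $t$-independence; your sketch does not yet identify the distinguished vector field whose divergence would replace that check.
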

In the above theorem, $\varrho^a(v)$ is a sectional-like curvature invariant, given by
\begin{equation}
\varrho^a(v) := \sum_{\alpha=I,J,K} R^\nabla(\dot\gamma,Z_\alpha,Z_\alpha,\dot\gamma),
\end{equation}
where $R^\nabla$ is the Riemannian curvature of the $3$-Sasakian structure, $\dot\gamma$ is the tangent vector of the sub-Riemannian geodesic associated with $\lambda$, and the vectors $Z_I,Z_J,Z_K \in \distr$ are
\begin{equation*}
Z_I := (v_J \phi_K  - v_K \phi_J) \dot\gamma, \qquad Z_J := (v_K \phi_I  - v_I \phi_K) \dot\gamma, \qquad Z_K := (v_I \phi_J  - v_J \phi_I) \dot\gamma.
\end{equation*}
\
\begin{rmk} Observe that $\mathfrak{Ric}^a$, the most complicated of the sub-Rieman\-nian curvatures, is not even a quadratic form as a function of the covector $\lambda$. The functions $v_\alpha: T^*M \to \R$  are prime integrals of the sub-Riemannian geodesic flow (Lemma~\ref{l:costanza}), hence $\Riccan^a$ is the only curvature that can depend on time, when evaluated along the extremal $\lambda(t)$. This is dramatically different w.r.t. the Sasakian case where $\mathfrak{Ric}^a =0$ (see \cite{LLZ-Sasakian-MCP}).
\end{rmk}

\subsection{Sharp estimates for the sub-Riemannian diameter}
Any complete $3$-Sasakian structure is Einstein, with constant scalar curvature equal to $(4d+2)(4d+3)$ (see Theorem~\ref{t:BGM}). In particular, by the classical Bonnet-Myers theorem, it is compact with Riemannian diameter bounded by $\pi$. Nevertheless, this gives no information on the \emph{sub-Riemannian} diameter that, a priori, could be larger. When $d>1$, Theorem~\ref{t:bmII-intro} yields the following.
\begin{corollary}\label{c:3sasBM-1}
Let $(M,\distr,g)$ be the sub-Riemannian structure of a $3$-Sasakian manifold $M$, of dimension $4d+3$, with $d>1$. The sub-Riemannian diameter is not larger than $\pi$.
\end{corollary}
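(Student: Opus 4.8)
The plan is to deduce this as an immediate consequence of the sub-Riemannian Bonnet--Myers theorem of type II (Theorem~\ref{t:bmII-intro}), fed with the explicit ``vertical'' Ricci curvature $\Riccan^c$ computed in Theorem~\ref{t:ricci-3-sas-intro}.

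First I would fix the numerology. The sub-Riemannian distribution $\distr$ of a $3$-Sasakian manifold of dimension $4d+3$ has rank $k=4d$, since its orthogonal complement is spanned by the three Reeb fields $\xi_I,\xi_J,\xi_K$; thus the structure is of type $(k,n)=(4d,4d+3)$ and
\[
2k-n-1 = 4d-4, \qquad 2k-n>1 \iff d>1 .
\]
So the standing hypothesis $2k-n>1$ of Theorem~\ref{t:bmII-intro} is exactly the assumption $d>1$; when $d=1$ the vertical comparison degenerates, which is precisely why that case is postponed to Proposition~\ref{p:3sasBM-2}.

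Next I would check the curvature hypothesis. By Theorem~\ref{t:ricci-3-sas-intro}, for every unit covector $\lambda\in U^*M$ one has $\Riccan^c(\lambda)=(4d-4)(1+\|v\|^2)$, and since $\|v\|^2=v_I^2+v_J^2+v_K^2\geq 0$ this yields the uniform lower bound
\[
\Riccan^c(\lambda)\ \geq\ 4d-4\ =\ (2k-n-1)\,\kappa_c, \qquad \kappa_c:=1>0 .
\]
Hence the hypothesis of Theorem~\ref{t:bmII-intro} holds with $\kappa_c=1$, and the theorem gives $\diam(M)\leq\bar{t}(\kappa_c)=\pi/\sqrt{\kappa_c}=\pi$, together with compactness and finiteness of $\pi_1(M)$. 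Concerning the completeness required by Theorem~\ref{t:bmII-intro}, I would recall (Theorem~\ref{t:BGM}) that a $3$-Sasakian manifold is Einstein with positive scalar curvature, hence Riemannian-complete and compact by the classical Bonnet--Myers theorem; since the sub-Riemannian distance $\d$ dominates the Riemannian one, closed sub-Riemannian balls are closed subsets of compact Riemannian balls, so $(M,\d)$ is complete as well.

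There is essentially no obstacle here: once Theorems~\ref{t:bmII-intro} and~\ref{t:ricci-3-sas-intro} are available, the corollary is a one-line substitution. The only points worth spelling out are the bookkeeping $2k-n-1=4d-4>0$, which pins down the range $d>1$, and the fact that the lower bound $\Riccan^c\geq 4d-4$ is uniform in $\lambda$ — it does not involve the prime integrals $v_\alpha$ — which is exactly what makes the resulting bound the sharp constant $\pi$, independent of the geodesic.
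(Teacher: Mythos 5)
Your proof is correct and matches the paper's argument exactly: the paper also obtains Corollary~\ref{c:3sasBM-1} by plugging the formula $\Riccan^c(\lambda)=(4d-4)(1+\|v\|^2)\geq 4d-4$ from Theorem~\ref{t:ricci-3-sas-intro} into Theorem~\ref{t:bmII-intro} with $\kappa_c=1$, noting that $2k-n-1=4d-4>0$ requires $d>1$. Your remark on completeness (via Theorem~\ref{t:BGM} and domination of the Riemannian distance by the sub-Riemannian one) is a small but welcome piece of bookkeeping that the paper leaves implicit.
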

Moreover, Theorem~\ref{t:conjII-intro}, with $\kappa_c(\lambda) = 1+ \|v\|^2$, yields the following.
\begin{corollary}\label{c:3sas-conj}
Let $(M,\distr,g)$ be the sub-Riemannian structure of a $3$-Sasakian manifold $M$, of dimension $4d+3$, with $d>1$. Then any sub-Riemannian geodesic with initial covector $\lambda \in U^*M$ has a conjugate point at distance $t_*(\lambda) \leq \tfrac{\pi}{\sqrt{1+\|v\|^2}}$, where $\|v\|$ is as in Theorem~\ref{t:ricci-3-sas-intro}.
\end{corollary}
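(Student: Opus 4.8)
The plan is to apply Theorem~\ref{t:conjII-intro} directly to each geodesic, the only point requiring care being the dependence of the curvature lower bound on the initial covector. First I would record the numerology: the sub-Riemannian structure of a $3$-Sasakian manifold of dimension $4d+3$ is fat of type $(k,n) = (4d,4d+3)$ (see Section~\ref{s:3-Sas}), since $\distr$ has corank $3$. Hence $2k-n = 4d-3$ and $2k-n-1 = 4d-4$, so the hypothesis $2k-n>1$ of Theorem~\ref{t:conjII-intro} is equivalent to $d>1$; the excluded case $d=1$ is precisely the maximally fat one, $n=2k-1$, which explains the restriction in the statement.

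Next I would invoke Theorem~\ref{t:ricci-3-sas-intro}, which gives, for every $\lambda \in U^*M$,
\begin{equation*}
\Riccan^c(\lambda) = (4d-4)(1+\|v\|^2) = (2k-n-1)(1+\|v\|^2),
\end{equation*}
with $\|v\|^2 = v_I^2 + v_J^2 + v_K^2$ and $v_\alpha = \langle \lambda, \xi_\alpha\rangle$. The crucial point is that the functions $v_\alpha$ are prime integrals of the sub-Riemannian geodesic flow (Lemma~\ref{l:costanza}); therefore, along the extremal $\lambda(t)$ with initial covector $\lambda$, the quantity $\|v\|^2$ is constant, and $\Riccan^c(\lambda(t)) = (2k-n-1)(1+\|v\|^2)$ identically in $t$.

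It then suffices to set $\kappa_c := \kappa_c(\lambda) = 1+\|v\|^2$. Since $\|v\|^2 \geq 0$, we have $\kappa_c \geq 1 > 0$, so the hypothesis $\Riccan^c(\lambda(t)) \geq (2k-n-1)\kappa_c$ of Theorem~\ref{t:conjII-intro} holds (with equality). That theorem then produces a finite first conjugate time with
\begin{equation*}
t_*(\lambda) \leq \bar{t}(\kappa_c) = \frac{\pi}{\sqrt{\kappa_c}} = \frac{\pi}{\sqrt{1+\|v\|^2}},
\end{equation*}
which is the assertion; in particular a conjugate point exists along every geodesic.

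There is essentially no analytic obstacle, as all the work has already been carried out in Theorem~\ref{t:conjII-intro}, Theorem~\ref{t:ricci-3-sas-intro} and Lemma~\ref{l:costanza}. The only subtlety is that the lower bound on $\Riccan^c$ is not universal but varies with the initial covector through $\|v\|$; this causes no trouble, because Theorem~\ref{t:conjII-intro} concerns a single geodesic, for which a covector-dependent constant $\kappa_c$ is admissible, and because the prime-integral property of the $v_\alpha$ turns that constant into an exact value along the whole extremal rather than a mere inequality.
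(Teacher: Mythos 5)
Your argument is correct and coincides with the paper's: the corollary is stated as an immediate consequence of Theorem~\ref{t:conjII-intro} applied with $\kappa_c(\lambda)=1+\|v\|^2$, which is exactly what you did, including the dimension count $2k-n-1=4d-4$ and the observation (Lemma~\ref{l:costanza}) that $\|v\|$ is constant along the extremal so that the covector-dependent bound is legitimate.
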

Theorem~\ref{t:bmI-intro} does not apply for $d=1$, as a covector-independent lower bound is not possible. However, Theorem~\ref{t:conjI-intro} and careful estimates give the maximal conjugate distance.
\begin{proposition}\label{p:3sasBM-2}
Let $(M,\distr,g)$ be the sub-Riemannian structure of a $3$-Sasakian manifold $M$ of dimension $4d+3$, with $d \geq 1$. Assume that, for all $q \in M$ and any vector $X \in \distr_q$
\begin{equation}\label{eq:boundrhoa}
\Sec(X,Y) \geq K \geq -1, \qquad \forall\, Y \in \spn\{\phi_I X,\phi_J X,\phi_K X\},
\end{equation}
where $\Sec$ is the Riemannian sectional curvature of the $3$-Sasakian structure. Then the sub-Riemannian diameter is not larger than $\pi$.
\end{proposition}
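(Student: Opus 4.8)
The plan is to combine the curvature formulas of Theorem~\ref{t:ricci-3-sas-intro} with the conjugate-time bound of Theorem~\ref{t:conjI-intro}, using constants that depend on the initial covector, and then to verify that the resulting blow-up time never exceeds $\pi$. First, fix a length-parametrized geodesic $\gamma$ with initial unit covector $\lambda$, so that $\dot\gamma(t)\in\distr$, $\|\dot\gamma(t)\|=1$; write $v_\alpha=\langle\lambda(t),\xi_\alpha\rangle$ and $m=\|v\|^2$. I would first reduce the invariant $\varrho^a$ to genuine sectional curvatures: each $Z_\alpha$ of Theorem~\ref{t:ricci-3-sas-intro} lies in $\spn\{\phi_I\dot\gamma,\phi_J\dot\gamma,\phi_K\dot\gamma\}$ and is $g$-orthogonal to $\dot\gamma$ (the $\phi_\beta$ are $g$-skew), and since each $\phi_\beta$ restricts to an isometry of $\distr$ with $g(\phi_\beta\dot\gamma,\phi_{\beta'}\dot\gamma)=0$ for $\beta\neq\beta'$, one gets $|Z_I|^2=v_J^2+v_K^2$ and cyclically, hence $\sum_\alpha|Z_\alpha|^2=2m$. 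Then $R^\nabla(\dot\gamma,Z_\alpha,Z_\alpha,\dot\gamma)=\Sec(\dot\gamma,Z_\alpha)\,|Z_\alpha|^2\ge K\,|Z_\alpha|^2$ by hypothesis (the term vanishing when $Z_\alpha=0$), so $\varrho^a(v)\ge 2Km$ at every point of $\gamma$. Plugging this into Theorem~\ref{t:ricci-3-sas-intro} and using $n-k=3$, one obtains, along $\gamma$,
\begin{gather*}
\Riccan^a(\lambda(t))\ge 3\kappa_a,\qquad \Riccan^b(\lambda(t))=3\kappa_b,\\
\kappa_a=\big(\tfrac32K-\tfrac72\big)m-\tfrac{15}{8}m^2,\qquad \kappa_b=4+5m,
\end{gather*}
where by Lemma~\ref{l:costanza} the numbers $\kappa_a,\kappa_b$ are constant along $\gamma$.

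Next I would invoke Theorem~\ref{t:conjI-intro}. Condition~\eqref{eq:conditions} holds because $\kappa_b\ge4>0$ and $\kappa_b^2+4\kappa_a=16+(26+6K)m+\tfrac{35}{2}m^2>0$, the last step using that $K\ge-1$ makes $26+6K\ge20$. Hence $t_*(\gamma)\le\bar t(\kappa_a,\kappa_b)$ for every geodesic, and then, exactly as in the proof of Theorem~\ref{t:bmI-intro} — a minimizing geodesic has no conjugate point in its interior, and by completeness and the absence of abnormal minimizers in the fat case there is a minimizer between any two points — it suffices to prove that $\bar t(\kappa_a,\kappa_b)\le\pi$ for every $m\ge0$ and every $K\ge-1$.

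For the blow-up estimate I would distinguish cases. If $\kappa_a\ge0$, Proposition~\ref{p:stima-intro} directly gives $\bar t(\kappa_a,\kappa_b)\le 2\pi/\sqrt{\tfrac{\kappa_b}{2}+\tfrac12\sqrt{\kappa_b^2+4\kappa_a}}\le 2\pi/\sqrt{\kappa_b}\le\pi$, using $\kappa_b\ge4$. If $\kappa_a<0$, the parameters $\theta_\pm$ of~\eqref{eq:sRiemanfunc} are real with $0<\theta_-<\theta_+$ and $\theta_\pm^2=\tfrac14(\kappa_b\pm2\sqrt{-\kappa_a})$, and one checks $\theta_+^2>\tfrac14\kappa_b\ge1$ and $\theta_-\ge1\iff(\kappa_b-4)^2+4\kappa_a\ge0\iff\tfrac{35}{2}m+6K-14\ge0$. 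When $\theta_-\ge1$: the denominator of~\eqref{eq:sRiemanfunc} is positive near $t=0$ and equals $-\sinc(\pi\theta_+/\theta_-)^2\le0$ at $t=\pi/\theta_-\le\pi$, so it vanishes on $(0,\pi/\theta_-]$ and $\bar t(\kappa_a,\kappa_b)\le\pi/\theta_-\le\pi$. In the remaining subcase $\theta_-<1$, i.e.\ $0<m<\tfrac{4(7-3K)}{35}$, one has $\theta_-<1<\theta_+<2$ ($\theta_-<1$ by assumption, $\theta_+>1$ since $\theta_+^2>\tfrac14\kappa_b\ge1$, and $\theta_+<2$, i.e.\ $\kappa_b+2\sqrt{-\kappa_a}<16$, on this range when $K\ge-1$), so writing $\theta_\pm=1\pm\delta_\pm$ with $0<\delta_-<\delta_+<1$ (the middle inequality from $\theta_++\theta_-\ge(\kappa_b^2+4\kappa_a)^{1/4}>2$) the bound $\bar t(\kappa_a,\kappa_b)\le\pi$ reduces to $\sinc(\pi\theta_-)^2\le\sinc(\pi\theta_+)^2$, that is
\begin{equation*}
(1+\delta_+)\sin(\pi\delta_-)\le(1-\delta_-)\sin(\pi\delta_+),
\end{equation*}
since the denominator of~\eqref{eq:sRiemanfunc} is positive near $t=0$ and hence vanishes in $(0,\pi]$. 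Combining with Theorem~\ref{t:conjI-intro} and the diameter argument of Theorem~\ref{t:bmI-intro} then yields $\diam(M)\le\pi$.

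The main obstacle is the last displayed transcendental inequality, which must hold on the whole range $0<m<\tfrac{4(7-3K)}{35}$, $K\ge-1$. Here the elementary estimate of Proposition~\ref{p:stima-intro} is of no help (it is an equality precisely when $\kappa_a=0$, whereas now $\kappa_a<0$), so one is forced to exploit the precise algebraic relations $\theta_-^2+\theta_+^2=\tfrac{\kappa_b}{2}=2+\tfrac52m$ and $\theta_+^2-\theta_-^2=\sqrt{-\kappa_a}$ together with the hypothesis $K\ge-1$; the value $-1$ is sharp for this step, as can be seen by testing specific values of $m$ for $K<-1$. Everything else is routine.
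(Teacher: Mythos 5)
Your overall route matches the paper's: apply Theorem~\ref{t:conjI-intro} with covector-dependent constants, lower-bound $\varrho^a$ by $2K\|v\|^2$ from the sectional-curvature hypothesis (your computation $\sum_\alpha |Z_\alpha|^2 = 2\|v\|^2$ is correct, using $g$-skewness of the $\phi_\alpha$ and the quaternionic relations), verify conditions~\eqref{eq:conditions}, and then estimate $\bar t(\kappa_a,\kappa_b)$ by cases according to whether $\theta_-$ lies above or below $1$. Up to relabeling, your cases $\kappa_a\ge 0$, $\kappa_a<0$ with $\theta_-\ge 1$, and $\kappa_a<0$ with $\theta_-<1$ coincide with the paper's $\|v\|=0$, $\|v\|>\rho$, $0<\|v\|\le\rho$ once $K$ is specialized.

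There are two problems, the second of which is a genuine gap. First, you do not make the paper's key reduction: by Riccati comparison (Corollary~\ref{c:riccaticomparison}), increasing $K$ at fixed $\|v\|$ increases $\kappa_a$ (with $\kappa_b$ unchanged), hence decreases $\bar t(\kappa_a,\kappa_b)$, so it suffices to prove $\bar t\le\pi$ for $K=-1$ only. You observe that $K=-1$ is ``sharp'', which is the right instinct, but without invoking the monotonicity your final estimate still carries a spurious second parameter. Second, and decisively: you reduce the subcase $\theta_-<1$ to the transcendental inequality $(1+\delta_+)\sin(\pi\delta_-)\le(1-\delta_-)\sin(\pi\delta_+)$ (equivalently $\chi_v(\pi)\le 0$) and then declare it ``the main obstacle'' and ``routine'' without proving it. That inequality \emph{is} the content of the proposition in the interesting regime; stopping there gives a reduction, not a proof. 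After the $K=-1$ reduction, what remains is the single-variable check that $\chi_v(\pi)=\sinc(\pi\theta_-(v))^2-\sinc(\pi\theta_+(v))^2\le 0$ for $0<\|v\|\le\sqrt{8/7}$ — together with $\chi_v(0)=0$ and $\chi_v''(0)>0$, this forces the first blow-up time to lie in $(0,\pi]$. That check is what you still need to supply.
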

For any quaternionic Hopf fibration (QHF in the following), Proposition~\ref{p:3sasBM-2} applies with $K =1$, and we obtain $\diam(\mathbb{S}^{4d+3}) \leq \pi$. For any $d \geq 1$, the sub-Riemannian distance of the QHF has been computed in \cite{BW-QHF}, using Ben Arous and L\'eandre formulas and heat kernel expansions, and the sub-Riemannian diameter is equal to $\pi$. Thus our results are sharp.

\subsection*{Open problem} 
The Riemannian diameter of any $3$-Sasakian manifold of dimension $4d+3$ is bounded by $\pi$. Corollary~\ref{c:3sasBM-1} extends this universal bound to the sub-Riemannian diameter, provided that $d>1$. For the case $d=1$, Proposition~\ref{p:3sasBM-2} requires some curvature assumptions that, a priori, might be violated. However, it would be surprising, for us, to find an example of $7$-dimensional $3$-Sasakian manifold with sub-Riemannian diameter larger than $\pi$. Thus, we close with the following question:
\begin{center}
\emph{Is it true that \emph{any} $3$-Sasakian manifold has sub-Riemannian diameter bounded by $\pi$?}
\end{center}

%
%
%\begin{remark}
%The universal estimate $\diam(M) \leq \pi$ of Corollary~\ref{c:3sasBM-1} for the sub-Riemannian diameter of $3$-Sasakian structures of dimension greater then $7$ is interesting, but not surprising, as the metric is determined in a very rigid way by the contact structures $\{\phi_\alpha,\eta_\alpha,\xi_\alpha\}_\alpha$. %Is it true that the \emph{universal estimate} $\diam(M)\leq \pi$ holds also for the sub-Riemannian diameter of $7$-dimensional $3$-Sasakian manifolds?
%However, it is not clear whether the \emph{universal estimate} $\diam(M)\leq \pi$ holds also for the sub-Riemannian diameter of $7$-dimensional $3$-Sasakian manifold. 
%This indeed is true, in any dimension, for the \emph{Riemannian} diameter and it would be surprising, to us, to find an example of $7$-dimensional $3$-Sasakian manifold whose sub-Riemannian diameter is larger then $\pi$.
%\end{remark}

\subsection{Comparison with recent literature}

The curvature employed in this paper arises in a general setting, as a complete set of invariants of the so-called Jacobi curves. It has been introduced by Agrachev and Gamkrelidze in \cite{agrafeedback}, Agrachev and Zelenko in \cite{agzel1,agzel2} and extended by Zelenko and Li in \cite{lizel}. A closely related approach to curvature, valid for a general class of variational problems, is discussed in \cite{ABR-curvature} and in \cite{ABR-curvaturecontact} for contact structures.

This paper is not the first one to discuss comparison-type results on sub-Rieman\-nian manifolds, but it has been inspired by some recent works. The first results for the number of conjugate points along a given geodesics under sectional-type curvature bounds are in~\cite{lizel2}, for corank $1$ structures with transverse symmetries. Comparison theorems based on matrix Riccati techniques appear in \cite{AL-3D-MCP} (with applications to the measure contraction properties of 3D contact sub-Rieman\-nian manifolds) and in the subsequent series of papers~\cite{AL-3D-BisLap,LL-Sasakian-BisLap,LLZ-Sasakian-MCP} for Sasakian sub-Rieman\-nian structures.

The canonical Ricci curvatures, as partial traces of the canonical curvature, have been introduced in \cite{BR-comparison}. The comparison results obtained here for fat sub-Rieman\-nian structures are based on the same machinery. Nevertheless, some key technical results are proved here in a more geometrical fashion. Moreover, the explicit form of the ``bounding functions'' $s_{\kappa_a,\kappa_b}(t)$ is fundamental for proving quantitative results and it is obtained here for the first time.

The canonical curvature does not arise in relation with some linear connection, but with a non-linear Ehresmann one \cite{BR-connection,lizel}. Non-linear connections are not associated with a classical covariant derivative and thus this approach lacks the power of standard tensorial calculus.

Sometimes, a sub-Rieman\-nian structure comes with a ``natural'' Riemannian extension and one might want to express the sub-Rieman\-nian curvatures in terms of the Levi-Civita connection of the extension. The actual computation is a daunting task, as in doing this we are writing an intrinsically sub-Rieman\-nian object (the canonical Ricci curvatures) in terms of an extrinsic Riemannian structure. This task, however, is important, as it provides models in which the curvature is explicit (just as the Riemannian space forms). Results in this sense are interesting \emph{per se} and have been obtained, so far, for corank $1$ structures with symmetries \cite{lizel}, contact Sasakian structures \cite{LL-Sasakian-BisLap} and contact structures~\cite{ABR-curvaturecontact}. Our results are the first explicit expressions for corank greater than $1$.

An alternative approach to curvature in sub-Riemannian geometry is the one based on the so-called generalized Curvature Dimension (CD) inequality, introduced by Baudoin and Garofalo in \cite{garofalob}. These techniques can be applied to sub-Rieman\-nian manifolds with transverse symmetries. In \cite{baudoincontact}, Baudoin and Wang generalize these results removing the symmetries assumption for contact structures. In \cite{BKW-Weitzen} the same techniques are further generalized to Riemannian foliations with totally geodesic leaves. This class include the QHF, and our study has been motivated also by these works. See \cite{garobonneinequalities,baudoinmunivegarofalo,BK-Lich-Obata} for other comparison-type results following from the generalized CD condition.

The universal estimate $\diam(M) \leq \pi$ for the sub-Riemannian diameter of $3$-Sasakian structures of dimension $4d+3$, with $d>1$, is perhaps the most surprising result of this paper. As we already mentioned, the estimate is sharp for the QHF, whose explicit diameter has been obtained in \cite[Remark 2.15]{BW-QHF}. The same estimate holds for the sub-Riemannian structure on the complex Hopf fibration of $\mathbb{S}^{2d+1}$, as proved in \cite[Remark 3.11]{BW-CR}, but clearly it does not hold for general Sasakian structures.

Very recently, in \cite{BI-QC}, Theorem~\ref{t:bmII-intro} has been applied to quaternionic contact structures of dimension $4d+3$, with $d>1$, to yield Bonnet-Myers type results under suitable assumptions on the curvature associated with the Biquard connection.

Finally, we mention that estimates for \emph{Riemannian} diameter of Sasakian structures have been obtained in \cite{HS-remarks,N-bound}, under lower bounds on the transverse part of the Ricci curvature. Furthermore, Bonnet-Myers type theorems for the Riemannian structure of quaternionic contact structures appeared recently in~\cite{Hladky-qc}.

\subsection*{Structure of the paper}
In Section~\ref{s:jac} we present the theory of sub-Riemannian Jacobi fields and the curvature in the sense of Agrachev-Li-Zelenko. In Section~\ref{s:comp} we discuss the matrix Riccati comparison theory that we need in the rest of the paper. Section~\ref{s:proofs} is dedicated to the proofs of the results stated in Section~\ref{s:intro}. In Section~\ref{s:3-Sas} and \ref{s:CanFrame} we discuss the sub-Riemannian structure of $3$-Sasakian manifolds and we compute their sub-Riemannian curvature.

%Finally, Bonnet-Myers type theorems for the Riemannian structure of quaternionic contact structures (that include $3$-Sasakian ones) appeared recently in~\cite{Hladky-qc}.

\section{Sub-Riemannian Jacobi equations and curvature} \label{s:jac}

\subsection{Jacobi equation revisited}
For any vector field $V(t)$ along an extremal $\lambda(t)$ of the sub-Riemannian Hamiltonian flow, a dot denotes the Lie derivative in the direction of $\vec{H}$:
\begin{equation}
\dot{V}(t) := \left.\frac{d}{d\eps}\right|_{\eps=0} e^{-\eps \vec{H}}_* V(t+\eps).
\end{equation}
A vector field $\J(t)$ along $\lam(t)$ is called a \emph{sub-Riemannian Jacobi field} if it satisfies 
\begin{equation}\label{eq:defJF}
\dot{\J} = 0.
\end{equation}
The space of solutions of \eqref{eq:defJF} is a $2n$-dimensional vector space. The projections $\pi_{*}\J(t)$ are vector fields on $M$ corresponding to one-parameter variations of $\g(t)=\pi(\lam(t))$ through geodesics; in the Riemannian case, they coincide with the classical Jacobi fields.

We write \eqref{eq:defJF} using the symplectic structure $\sigma$ of $T^{*}M$.  First, observe that on $T^*M$ there is a natural smooth sub-bundle of Lagrangian\footnote{A subspace $L \subset \Sigma$ of a symplectic vector space $(\Sigma,\sigma)$ is Lagrangian if $\dim L = \dim\Sigma/2$ and $\sigma|_{L} = 0$.} spaces:
\begin{equation}
\ver_{\lambda} := \ker \pi_*|_{\lambda} = T_\lambda(T^*_{\pi(\lambda)} M) \subset T_{\lambda}(T^*M).
\end{equation}
We call this the \emph{vertical subspace}. Then, pick a Darboux frame $\{E_i(t),F_i(t)\}_{i=1}^{n}$ along $\lambda(t)$.  It is natural to assume that $E_1,\ldots,E_n$ belong to the vertical subspace. To fix the ideas, one can think at the frame $\partial_{p_i}|_{\lambda(t)},\partial_{x_i}|_{\lambda(t)}$ induced by coordinates $(x_1,\ldots,x_n)$ on $M$. In terms of this frame, $\J(t)$ has components $(p(t),x(t)) \in \R^{2n}$:
\begin{equation}
\J(t) = \sum_{i=1}^n p_{i}(t) E_{i}(t) + x_{i}(t) F_{i}(t).
\end{equation}
The elements of the frame satisfy\footnote{The notation of~\eqref{eq:Jacobiframe} means that $\dot{E}_i = \sum_{j=1}^n A(t)_{ij} E_j -B(t)_{ij} F_j$, and similarly for $\dot{F}_i$.} 
\begin{equation}\label{eq:Jacobiframe}
\frac{d}{dt}\begin{pmatrix}
E \\
F
\end{pmatrix} = 
\begin{pmatrix}
A(t) & -B(t) \\
R(t) & -A(t)^*
\end{pmatrix} \begin{pmatrix}
E\\
F
\end{pmatrix},
\end{equation}
for some smooth families of $n\times n$ matrices $A(t),B(t),R(t)$, where $B(t) = B(t)^*$ and $R(t)= R(t)^*$. The special structure of~\eqref{eq:Jacobiframe} follows from the fact that the frame is Darboux, that is
\begin{equation}
\sigma(E_i,E_j) = \sigma(F_i,F_j) = \sigma(E_i,F_j) -\delta_{ij} = 0, \qquad i,j=1,\ldots,n.
\end{equation}
For any bi-linear form $\mathcal{B}: V\times V \to \R$ and $n$-tuples $v,w \in V$ let $\mathcal{B}(v,w)$ denote the matrix $\mathcal{B}(v_i,w_j)$. With this notation
\begin{equation}
B(t) = \sigma(\dot{E},E)|_{\lambda(t)} = 2H(E,E)|_{\lambda(t)} \geq 0,
\end{equation}
where we identified $\ver_{\lambda(t)} \simeq T_{\gamma(t)}^*M$ and the Hamiltonian with a symmetric bi-linear form on fibers. In the Riemannian case, $B(t) > 0$. Finally, the components $(p(t),x(t))$ of $\mathcal{J}(t)$ satisfy
\begin{equation}
\frac{d}{dt} \begin{pmatrix}\label{eq:Jacobicoord}
p \\ x
\end{pmatrix} = \begin{pmatrix} - A(t)^* & -R(t) \\ B(t) & A(t)
\end{pmatrix} \begin{pmatrix}
p \\ x
\end{pmatrix}.
\end{equation}	
We want to choose a suitable frame to simplify~\eqref{eq:Jacobicoord} as much as possible.

\subsection{The Riemannian case}
It is instructive to study first the Riemannian setting. Let $f_1,\ldots,f_n$ be a parallel transported frame along the geodesic $\gamma(t)=\pi(\lambda(t))$.  Let $h_i:T^*M \to \R$, defined by $h_i(\lambda):= \langle \lambda, f_i\rangle$. They define coordinates on each fiber and, in turn, the vectors $\partial_{h_{i}}$. We define a moving frame along the extremal $\lambda(t)$ as follows
\begin{equation}
E_i:=\partial_{h_i}, \qquad F_i := -\dot{E}_i, \qquad i=1,\ldots,n.
\end{equation}
One recovers the original parallel transported frame by projection, namely $\pi_* F_i(t) = f_i|_{\g(t)}$. In the following, $\mathbbold{1}$ and $\mathbbold{0}$ denote the identity and zero matrices of the appropriate dimension.
\begin{proposition}\label{p:riemcan}
The smooth moving frame $\{E_i(t),F_i(t)\}_{i=1}^n$ along $\lambda(t)$ satisfies:
\begin{itemize}
\item[(i)] $\spn\{E_1(t),\ldots,E_n(t)\}=\ver_{\lambda(t)}$.
\item[(ii)] It is a Darboux basis, namely
\[
\sigma(E_i,E_j) = \sigma(F_i,F_j) = \sigma(E_i,F_j) - \delta_{ij} = 0, \qquad i,j=1,\ldots,n.
\]
\item[(iii)] The frame satisfies the \emph{structural equations}
\begin{equation}
\frac{d}{dt} \begin{pmatrix}
E \\
F
\end{pmatrix} = \begin{pmatrix}
\mathbbold{0} & - \mathbbold{1} \\
R(t) & \mathbbold{0}
\end{pmatrix} \begin{pmatrix}
E \\
F
\end{pmatrix},
\end{equation}
for some smooth family of $n\times n$ symmetric matrices $R(t)$.
\end{itemize}
If $\{\wt{E}_i,\wt{F}_j\}_{i=1}^n$ is another smooth moving frame along $\lambda(t)$ satisfying (i)-(iii), for some symmetric matrix $\wt{R}(t)$ then there exists a constant, orthogonal matrix $O$ such that 
\begin{equation}\label{eq:orthonormal}
\wt{E}(t) = O E(t), \qquad  \wt{F}(t) = O F(t), \qquad \wt{R}(t) = O R(t) O^*. 
\end{equation}
\end{proposition}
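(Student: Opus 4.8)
\textbf{Proof plan for Proposition~\ref{p:riemcan}.}

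The plan is to verify properties (i)--(iii) directly from the definitions, and then prove the uniqueness-up-to-rotation statement by analyzing which changes of frame preserve all three properties. First I would establish (i): since $h_1,\dots,h_n$ are fiberwise-linear coordinates adapted to the parallel frame $f_1,\dots,f_n$, the vector fields $E_i=\partial_{h_i}$ are by construction tangent to the fibers $T^*_{\gamma(t)}M$, hence lie in $\ver_{\lambda(t)}$; counting dimensions gives equality of spans. For (ii), I would compute the symplectic pairings using the fact that $\sigma = \sum dp_i \wedge dx_i$ in any canonical coordinates and that, with respect to coordinates induced by the $f_i$, the $h_i$ play the role of momenta. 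The key observation is that $\sigma(E_i,E_j)=0$ because the $E_i$ span a Lagrangian (the vertical) subspace, that $\sigma(E_i,F_j)=\sigma(E_i,-\dot E_j)=\delta_{ij}$ follows from the canonical relation $\{h_i, \cdot\,\}$ together with the definition $F_j = -\dot E_j = -\{H, E_j\}$ (so one is really computing $\sigma(\partial_{h_i}, \vec{H}\text{-derivative of }\partial_{h_j})$), and $\sigma(F_i,F_j)=0$ needs the Riemannian structure: in a parallel frame the Hamiltonian has the standard form $H=\tfrac12\sum h_i^2$, which makes the relevant second-order brackets symmetric.

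Next, for (iii), I would differentiate again. By definition $\dot E_i = -F_i$, which gives the top row of the structural equation with the blocks $\mathbbold{0}$ and $-\mathbbold{1}$. For the bottom row, $\dot F_i = -\ddot E_i$ must be expanded in the frame $\{E_j,F_j\}$; the general form~\eqref{eq:Jacobiframe} already tells us $\dot F_i = \sum R_{ij}E_j - \sum (A^*)_{ij}F_j$, so it remains to show $A\equiv \mathbbold{0}$ in this particular frame. This is where the parallel transport enters decisively: because $H=\tfrac12\sum h_i^2$ in the parallel frame, one has $B(t)=2H(E,E)=\mathbbold{1}$, and matching this against the general Darboux evolution~\eqref{eq:Jacobiframe} forces the off-diagonal ``$A$'' block to vanish (differentiating $B\equiv\mathbbold 1$ gives $A+A^*=0$, and a finer look at the Hamilton equations in these coordinates kills the antisymmetric part as well). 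Symmetry of $R(t)$ is then automatic from the general structure of~\eqref{eq:Jacobiframe}. The resulting $R(t)$ is, up to sign conventions, the matrix of the Riemannian curvature operator $\lambda\mapsto R^\nabla(\,\cdot\,,\dot\gamma)\dot\gamma$ in the parallel frame, which is the familiar content that~\eqref{eq:Jacobicoord} reduces to the classical Jacobi equation $\ddot x + R(t)x = 0$.

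For the rigidity statement, suppose $\{\wt E_i,\wt F_i\}$ is another frame satisfying (i)--(iii). By (i) there is a (time-dependent, invertible) matrix $O(t)$ with $\wt E = O E$; then (ii), i.e. $\sigma(\wt E_i,\wt F_j)=\delta_{ij}$ together with $\sigma(\wt E_i,\wt E_j)=0$, plus the structural equation $\dot{\wt E} = -\wt F$, forces $\wt F = (O^*)^{-1} F + (\text{terms in }\dot O)$; imposing $\dot{\wt E}=-\wt F$ on $\wt E = OE$ gives $\dot O E + O\dot E = -\wt F$, i.e. $\dot O E - OF = -\wt F$, and combining with the Darboux conditions yields both $\dot O = 0$ and $O O^* = \mathbbold 1$, so $O$ is a constant orthogonal matrix. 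Finally, substituting into the bottom structural equation gives $\wt R = O R O^*$. The main obstacle, and the step deserving the most care, is the verification that $A\equiv\mathbbold0$ and that $\sigma(F_i,F_j)=0$: both rely on the special form of the Hamiltonian in a \emph{parallel} frame rather than an arbitrary one, and getting the bookkeeping of the brackets $\{h_i,h_j\}$, $\{H,h_i\}$ straight is where a careless computation would go wrong. Everything else is linear algebra and differentiation of the defining relations.
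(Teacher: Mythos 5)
The paper states Proposition~\ref{p:riemcan} without proof (it only cites a long computation in an appendix of another reference for the identification of $R(t)$ with $R^\nabla(\cdot,\dot\gamma)\dot\gamma$), so your plan cannot be compared to a proof that the paper supplies; I will assess it on its own terms. Two points deserve attention.

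First, the verification of $\sigma(F_i,F_j)=0$ is the genuine content of the proposition, and your justification --- ``$H=\tfrac12\sum h_i^2$, which makes the relevant second-order brackets symmetric'' --- is not enough to carry it. The normal form $H=\tfrac12\sum h_i^2$ uses only orthonormality of the frame and would hold for any orthonormal $f_i$, parallel or not; yet for a generic orthonormal frame $\sigma(F_i,F_j)\neq 0$. What actually makes $\sigma(\dot E_i,\dot E_j)$ vanish is a cancellation involving the structure functions $c^{\,l}_{ij}$ of the frame, which requires both that $\nabla_{\dot\gamma}f_i=0$ \emph{and} that $\nabla$ is torsion-free: writing $\{h_i,h_j\}=\sum_l c^l_{ij}h_l$ and expanding $[\vec H,\partial_{h_i}]=\sum_k h_k[\vec h_k,\partial_{h_i}]-\vec h_i$, the terms $\sigma([\vec h_k,\partial_{h_i}],[\vec h_l,\partial_{h_j}])$ drop because both factors are vertical, and the remaining contributions cancel precisely when one substitutes $c^l_{ij}=g(\nabla_{f_i}f_j-\nabla_{f_j}f_i,f_l)$ and uses $\nabla_{\dot\gamma}f_i=0$. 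The heuristic you state is pointing in the right direction (one does end up showing $\sigma(\ddot E_i,E_j)$ is symmetric), but it needs the torsion-free and parallel-transport inputs made explicit, otherwise the cancellation has no reason to occur.

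Second, your discussion of why $A\equiv\mathbbold 0$ in step (iii) is logically off. Once the frame $\{E_i,F_i\}$ is shown to be Darboux with $E$ vertical, the general structural equations~\eqref{eq:Jacobiframe} hold with some $A(t),B(t),R(t)$. Since by construction $\dot E=-F$, comparing with $\dot E=AE-BF$ gives $AE=(B-\mathbbold 1)F$; but $\spn\{E\}$ and $\spn\{F\}$ are transverse (because $\sigma(E_i,F_j)=\delta_{ij}$), so $A=\mathbbold 0$ and $B=\mathbbold 1$ follow at once. There is no differentiation of $B$ involved, and the claim ``differentiating $B\equiv\mathbbold 1$ gives $A+A^*=0$'' is in fact not a consequence of anything here (differentiating that identity is a tautology in the general framework). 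You should simply establish (ii) first and then read off $A=\mathbbold 0$, $B=\mathbbold 1$; the symmetry of $R(t)$ then comes for free from the general Darboux evolution. Your argument for (i) and for the rigidity (uniqueness up to a constant orthogonal $O$) is correct.
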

As a consequence, the matrix $R(t)$ gives a well defined operator $\mathfrak{R}_{\lam(t)}:T_{\gamma(t)}M \to T_{\gamma(t)}M$
\begin{equation}
\Rcan_{\lam(t)} v := \sum_{i,j=1}^n R_{ij}(t) v_{j} f_i|_{\gamma(t)} , \qquad v = \sum_{i=1}^n v_i f_i|_{\gamma(t)}.
\end{equation}
With a routine but long computation (for instance, see \cite[Appendix C]{BR-comparison}) one checks that 
\begin{equation}\label{eq:trc}
\Rcan_{\lam(t)}v = R^\nabla(v,\dot{\gamma})\dot{\gamma}, \qquad v \in T_{\gamma(t)}M,
\end{equation}
where $R^\nabla(X,Y)Z = \nabla_X \nabla_Y Z - \nabla_Y\nabla_X Z - \nabla_{[X,Y]} Z$ is the Riemannian curvature tensor associated with the Levi-Civita connection $\nabla$. Then, in the Jacobi equation \eqref{eq:Jacobicoord}, one has $A(t) =\mathbbold{0}$, $B(t) = \mathbbold{1}$, and the only non-trivial block is the curvature $R(t)$:
\begin{equation}
\dot x=p, \qquad \dot{p} = -R(t) x,
\end{equation}
that is the classical Riemannian Jacobi equation $\ddot{x} + R(t) x = 0$.

\subsection{The fat sub-Riemannian case}\label{sec:sasakiannormalframe}
The normal form of the sub-Riemannian Jacobi equation has been first studied by Agrachev-Zelenko in \cite{agzel1,agzel2} and subsequently completed by Zelenko-Li in \cite{lizel}, in the general setting of curves in the Lagrange Grassmannian. A dramatic simplification, analogue to the Riemannian one, cannot be achieved in general. Nevertheless, it is possible to find a normal form of \eqref{eq:Jacobicoord} where the matrices $A(t)$ and $B(t)$ are constant. The general result, in the language of Proposition~\ref{p:riemcan}, can be found in \cite{BR-connection}. Here we give an ad-hoc statement for fat sub-Riemannian structures.

\textbf{Notation.} It is convenient to split the set of indices $1,\ldots,n$ in the following subsets:
\begin{equation}
\underbrace{1,\ldots,n-k}_{a}, \underbrace{n-k+1,\ldots,2n-2k}_{b}, \underbrace{2n-2k+1,\ldots,n}_{c}.
\end{equation}
The cardinality of the sets of indices are $|a| = |b| = n-k$, $|c| = 2k-n$. 
Accordingly, we write any $n \times n$ matrix $L$ in block form, as follows
\begin{equation}\label{eq:decomposition}
L = \begin{pmatrix}
L_{aa} & L_{ab} & L_{ac} \\
L_{ba} & L_{bb} & L_{bc} \\
L_{ca} & L_{cb} & L_{cc}
\end{pmatrix},
\end{equation}
where $L_{\mu\nu}$, for $\mu,\nu = a,b,c$ is a matrix of dimension $|\mu| \times |\nu|$. Analogously, we split $n$-tuples $Z=(Z_a,Z_b,Z_c)$. Accordingly, for any bi-linear form $Q$, the notation $Q(Z_\mu,Z_\nu)$, with $\mu,\nu =a,b,c$ denotes the associated $|\mu|\times |\nu|$ matrix. 
\begin{rmk}
This splitting is related to the fact that the Lie derivative in the direction of a fixed $X \in \distr$ induces a well defined, surjective linear map $\mathcal{L}_X : \distr_q \to T_q M/\distr_q$. It has a $n-k$-dimensional image (the ``$a$'' space), a $2k-n$-dimensional kernel (the ``$c$'' space, and the orthogonal complement of the latter in $\distr_q$ is a $n-k$-dimensional space (the ``$b$'' space).
\end{rmk}
\begin{theorem}\label{p:can}
Let $\lambda(t)$ be an extremal of a fat sub-Riemannian structure. There exists a smooth moving frame along $\lambda(t)$
\begin{equation}
E(t)  = (E_a(t),E_b(t),E_c(t))^*, \qquad F(t)  = (F_a(t),F_b(t),F_c(t))^*,
\end{equation}
such that the following holds true for any $t$:
\begin{itemize}
\item[(i)] $\spn\{E_a(t),E_b(t),E_c(t)\} = \mathcal{V}_{\lambda(t)}$.
\item[(ii)] It is a Darboux basis, namely
\begin{equation}
\sigma(E_\mu,E_\nu) = \sigma(F_\mu,F_\nu)= \sigma(E_\mu,F_\nu) - \delta_{\mu\nu} \mathbbold{1} = 0, \qquad \mu,\nu =a,b,c.
\end{equation}
\item[(iii)] The frame satisfies the \emph{structural equations}
\begin{equation}
\frac{d}{dt}\begin{pmatrix}
\dot{E} \\
\dot{F}
\end{pmatrix} = \begin{pmatrix}
 A & - B \\
R(t) & - A^*
\end{pmatrix} \begin{pmatrix}
E \\
F
\end{pmatrix},
\end{equation}
where $A$, $B$ are constant, $n \times n$ block matrices defined by 
\begin{equation}
A := \begin{pmatrix}
\mathbbold{0} & \mathbbold{1} & \mathbbold{0} \\ \mathbbold{0} & \mathbbold{0} & \mathbbold{0} \\ \mathbbold{0} & \mathbbold{0} & \mathbbold{0}
\end{pmatrix}, \qquad B := \begin{pmatrix}
\mathbbold{0} & \mathbbold{0} & \mathbbold{0} \\ \mathbbold{0} & \mathbbold{1} & \mathbbold{0} \\ \mathbbold{0} & \mathbbold{0} &\mathbbold{1}
\end{pmatrix}.
\end{equation}
Finally $R(t)$ is a $n \times n $ smooth family of symmetric matrices of the form
\begin{equation}
R(t) = \begin{pmatrix}
R_{aa}(t) & R_{ab}(t) & R_{ac}(t) \\
R_{ba}(t) & R_{bb}(t) & R_{bc}(t) \\
R_{ca}(t) & R_{cb}(t) & R_{cc}(t)
\end{pmatrix},
\end{equation}
with the additional condition
\begin{equation}\label{eq:normalcond}
R_{ab}(t) = - R_{ab}(t)^*.
\end{equation}
\end{itemize}
%Moreover, the projections $f_i(t):=\pi_* F_i(t)$ for $i=1,\ldots,4d$ are an orthonormal frame for $\distr_{\gamma(t)}$ and $f_a(t):=\pi_* F_a(t)$ is a complementary triple transverse to $\distr_{\gamma(t)}$.

If $\{\wt{E}(t),\wt{F}(t)\}$ is another frame that satisfies (i)-(iii) for some matrix $\wt{R}(t)$, then there exists a constant $n \times n$ orthogonal matrix $O$ that preserves the structural equations (i.e.  $O A O^* = A$, $OBO^* = B$) and
\begin{equation}
\wt{E}(t) = O E(t), \qquad \wt{F}(t) = O F(t), \qquad \wt{R}(t) = O R(t) O^*.
\end{equation}
\end{theorem}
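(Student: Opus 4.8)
The plan is to reduce the statement to the general normal-form theorem for curves in the Lagrange Grassmannian of Zelenko--Li \cite{lizel} (or, equivalently, to the canonical-frame construction of \cite{BR-connection}), and then to identify what the abstract ``Young diagram'' data become in the fat setting. First I would recall that the extremal $\lambda(t)$, through the family of Lagrangian subspaces $\mathcal{J}(t) := e^{t\vec H}_* \ver_{\lambda(0)} \subset T_{\lambda(t)}(T^*M)$, defines a smooth curve in the Lagrange Grassmannian, monotone (nonnegative-definite velocity) because $B(t) = 2H(E,E) \geq 0$. The fatness hypothesis enters precisely in computing the \emph{flag of the Jacobi curve}: one shows that for a fat structure the small flag at each point has the three-step form with growth vector $(k, n, n)$, i.e.\ $\dim \mathcal{J}^{(0)} = k$ (the rank of $B$), then the full tangent space is reached after one more step on an $(n-k)$-dimensional complement and stabilises. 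Concretely this follows from the remark after the notation paragraph: the map $\mathcal{L}_X : \distr_q \to T_qM/\distr_q$ induced by a fixed horizontal $X = \dot\gamma$ is surjective with $(2k-n)$-dimensional kernel, so the ``derived'' directions split off an $(n-k)$-dimensional ``$a$'' block that is bracket-generated at the first step, leaving an $(n-k)$-dimensional ``$b$'' block and a $(2k-n)$-dimensional ``$c$'' block on which $B$ is already an isomorphism. This is the computation that pins down the Young diagram: one column of height $2$ with $n-k$ boxes in each row (the $a/b$ pair) and $2k-n$ single boxes of height $1$ (the $c$ part).

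Next I would invoke the Zelenko--Li normalisation: for a monotone curve in the Lagrange Grassmannian with constant flag type there is a moving Darboux frame, unique up to the structure-preserving orthogonal gauge group, in which the ``Hamiltonian'' part of the structural equations is constant and determined solely by the Young diagram, while all the curvature information sits in a symmetric matrix $R(t)$ whose off-diagonal blocks linking boxes in the same column satisfy an antisymmetry normalisation. Translating the abstract block pattern of \cite{lizel} through the $a,b,c$ splitting above gives exactly the displayed $A$ (which records the single ``shift'' map from the $b$-row to the $a$-row of the height-$2$ part, hence the $\mathbbold 1$ in the $(a,b)$ slot and zeros elsewhere) and $B$ (which records that $B$ is the identity on the ``bottom'' boxes $b$ and on the height-$1$ boxes $c$, and zero on the ``top'' boxes $a$). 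The symmetry $R(t) = R(t)^*$ is the Darboux condition $\sigma(\dot F,E) $ symmetric, and the extra condition $R_{ab}(t) = -R_{ab}(t)^*$ is precisely the residual normalisation that Zelenko--Li impose to kill the remaining gauge freedom between the two rows of the height-$2$ part; I would state it as a direct specialisation of their condition rather than rederive it.

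For the uniqueness clause I would argue as in the Riemannian Proposition~\ref{p:riemcan}: any two frames satisfying (i)--(iii) differ by a smooth curve $O(t)$ of symplectic transformations preserving $\ver_{\lambda(t)}$ and the constant matrices $A,B$; writing out $\frac{d}{dt}(OE, OF) $ and matching against the structural equations forces $\dot O = 0$ (because $A,B$ together with the normalisation leave no room for a nonconstant gauge), and preserving the Darboux form forces $O$ orthogonal of the stated block type, with $\wt R = O R O^*$ following immediately. The only genuinely non-routine step is the first one — verifying that the fat condition forces the flag/Young-diagram type to be the uniform $(n-k,n-k,2k-n)$ pattern uniformly along every extremal, with no degeneration of ranks; once that is in hand, everything else is a citation of \cite{lizel,BR-connection} combined with bookkeeping in the $a,b,c$ blocks. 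I therefore expect the flag computation (equivalently, showing $B(t)$ has constant rank $k$ and that the ``derived'' block is exactly $n-k$-dimensional) to be the main obstacle, and I would carry it out by choosing a canonical frame adapted to $\distr$ and $\dot\gamma$ along the geodesic and examining the first Lie bracket, exactly as in the remark following the index splitting.
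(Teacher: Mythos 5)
Your proposal matches the paper's treatment: the paper states the theorem without proof, appealing to \cite{lizel,BR-connection} for the general normal form of monotone curves in the Lagrange Grassmannian and to the remark preceding the theorem (the surjectivity of $\mathcal{L}_X : \distr_q \to T_qM/\distr_q$ for fat structures, with $(n-k)$-dimensional image and $(2k-n)$-dimensional kernel) for the $a/b/c$ block structure — exactly the two ingredients you identify. Your additional Young-diagram bookkeeping ($n-k$ columns of height $2$ and $2k-n$ columns of height $1$) and the sketch of the gauge-uniqueness argument are consistent with, and simply spell out, what the cited references supply.
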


\subsection{Invariant subspaces and curvature}\label{s:invariantspaces}
The projections $f_\mu(t):= \pi_* F_\mu(t)$, with $\mu = a,b,c$, define a smooth frame along $T_{\gamma(t)}M$. The uniqueness part of Theorem~\ref{p:can} implies that this frame is unique up to a constant orthogonal transformation
\begin{equation}
O = \begin{pmatrix}
U_1 & \mathbbold{0} & \mathbbold{0} \\
\mathbbold{0} & U_1 & \mathbbold{0} \\
\mathbbold{0} & \mathbbold{0} & U_2
\end{pmatrix}, \qquad U_1 \in \mathrm{O}(n-k), \quad U_2 \in \mathrm{O}(2k-n).
\end{equation}
Thus, the following definitions are well posed.

\begin{definition}
The \emph{canonical splitting} of $T_{\gamma(t)} M$ is
\begin{equation}
T_{\gamma(t)}M = S_{\gamma(t)}^{a} \oplus S_{\gamma(t)}^{b} \oplus S_{\gamma(t)}^{c},
\end{equation}
where the \emph{invariant subspaces} are defined by
\begin{align}
S_{\gamma(t)}^a & := \spn\{ f_a \}, \qquad \dim S_{\gamma(t)}^a = n-k,\\
S_{\gamma(t)}^b & := \spn\{ f_b \}, \qquad \dim S_{\gamma(t)}^b = n-k,\\
S_{\gamma(t)}^c & := \spn\{ f_c\}, \qquad \dim S_{\gamma(t)}^c = 2k-n,
\end{align}
\end{definition}
\begin{definition}
The \emph{canonical curvature} along the extremal $\lambda(t)$ is the operator $\Rcan_{\lambda(t)}: T_{\gamma(t)} M \to T_{\gamma(t)} M$ that, in terms of the basis $f_a,f_b,f_c$ is represented by the matrix $R(t)$.
\end{definition}
For $\mu,\nu= a,b,c$, we denote by $\mathfrak{R}_{\lambda(t)}^{\mu\nu} : S^\mu_{\gamma(t)} \to S^\nu_{\gamma(t)}$ the restrictions of the canonical curvature to the appropriate invariant subspace.
\begin{definition}
The \emph{canonical Ricci curvatures} along the extremal $\lambda(t)$ are the partial traces
\begin{equation}
\Riccan^\mu(\lambda(t)):= \trace \left( \Rcan_{\lambda(t)}^{\mu\mu} : S_{\gamma(t)}^\mu \to S_{\gamma(t)}^\mu \right), \qquad \mu = a,b,c.
\end{equation}
\end{definition}
\begin{rmk}\label{rmk:traslation}
If $\{E(t),F(t)\}$ is a canonical frame along $\lambda(t)$ with initial covector $\lambda$ and curvature matrix $R(t)$, then $\{E(t+\tau),F(t+\tau)\}$ is a canonical frame along $\lambda_\tau(t)=\lambda(t+\tau)$ with initial covector $\lambda_\tau = \lambda(\tau)$ and curvature matrix $R(t+\tau)$. Therefore $\Riccan^\mu(\lambda_\tau(t)) = \Riccan^\mu(\lambda(t+\tau))$. For this reason, it makes sense to define $\Riccan^\mu : U^*M \to \R$, for any initial unit covector $\lambda \in U^*M$, as $\Riccan^\mu(\lambda):= \Riccan^\mu(\lambda(0))$. In particular, the hypothesis $\Riccan^\mu(\lambda) \geq \kappa$ for all $\lambda \in U^*M$ implies that for any extremal $\lambda(t) = e^{t\vec{H}}(\lambda)$ one has $\Riccan^\mu(\lambda(t)) \geq \kappa$.
\end{rmk}

\begin{rmk}\label{rmk:dimensionalreduction}
One can always choose a canonical frame in such a way that one of the $f_c(t)$'s (e.g., the last one) is the tangent vector of the associated geodesic $\dot\gamma(t)$, and lies in the kernel of the curvature operator. Thus, the $(2k-n)\times (2k-n)$ matrix $R_{cc}(t)$ splits further as
\begin{equation}
R_{cc}(t) = \begin{pmatrix}
R_{cc}^\prime(t) & 0 \\
0 & 0
\end{pmatrix},
\end{equation}
where $R_{cc}^\prime(t)$ is a $(2k-n-1)\times (2k-n-1)$ block. Moreover, $\Riccan^c(\lambda(t)) = \trace(R^\prime_{cc}(t))$.
\end{rmk}
\begin{rmk}\label{rmk:homogeneityproperties}
Let $\lambda \in T^*M \setminus H^{-1}(0)$ be a covector with corresponding extremal $\lambda(t)= e^{t\vec{H}}(\lambda)$. Let $\alpha >0$ and consider the rescaled covector $\alpha \lambda$, with the corresponding extremal $\lambda^\alpha(t) = e^{t\vec{H}}(\alpha \lambda)$. Then the Ricci curvatures have the following homogeneity properties
\begin{align}
\Riccan^a(\lambda^\alpha(t)) & = \alpha^4 \Riccan^a(\lambda(\alpha t)), \\
\Riccan^b(\lambda^\alpha(t)) & = \alpha^2 \Riccan^a(\lambda(\alpha t)), \\
\Riccan^c(\lambda^\alpha(t)) & = \alpha^2 \Riccan^a(\lambda(\alpha t)).
\end{align}
The proof follows from more general homogeneity properties of $\mathfrak{R}$ (see \cite[Theorem 4.7]{BR-connection}).
\end{rmk}

\begin{definition}
Let $\omega \in \Lambda^n M$ be a smooth volume form (or density, if $M$ is not orientable). The \emph{canonical volume derivative} $\rho_\omega: T^*M \setminus H^{-1}(0) \to \R$ is
\begin{equation}
\rho_\omega(\lambda) := \left.\frac{d}{dt}\right|_{t=0} \log |\omega(f_a(t),f_b(t),f_c(t))|, \qquad \lambda \in T^*M  \setminus H^{-1}(0).
\end{equation}
where $f_a(t),f_b(t),f_c(t)$ is a canonical frame associated with the extremal $\lambda(t)=e^{t\vec{H}}(\lambda)$.
\end{definition}
\begin{rmk}\label{r:homgeneity-vol}
The same construction, in the Riemannian setting, gives $\rho_\omega(\lambda) = \frac{\nabla_{\lambda^\sharp}\omega}{\omega}$, where $\nabla$ is the Levi-Civita connection. From the homogeneity properties of the canonical frame (see \cite[Proposition 4.9]{BR-connection}), it follows that $\rho_\omega(\alpha \lambda) = \alpha \rho_\omega(\lambda)$ for all $\alpha > 0$.
\end{rmk}
In \cite{ABP-distortion}, the above definition has been generalized to any sub-Riemannian structure, provided that the extremal $\lambda(t)$ satisfies some regularity conditions which, in the fat case, are verified. We notice that the definition of $\rho_\omega$ in \cite{ABP-distortion}, which the authors call \emph{volume geodesic derivative}, does not require the canonical frame.

\section{Matrix Riccati comparison theory}\label{s:comp}

The next lemma is immediate and follows from the definition of conjugate time.
\begin{lemma}\label{l:conjvert}
Let $\gamma(t)$ be a sub-Riemannian geodesic, associated with an extremal $\lambda(t)$. A time $t_*>0$ is conjugate if and only if there exists a Jacobi field $\J(t)$ along $\lambda(t)$ such that
\begin{equation}
\pi_*\J(0) = \pi_* \J(t_*) = 0,
\end{equation}
or, equivalently, $\J(0) \in \ver_{\lambda(0)}$, and $\J(t_*) \in \ver_{\lambda(t_*)}$. If $t_*$ is the first conjugate time along $\gamma$, any Jacobi field $\J(t)$ along $\lambda(t)$ is transverse to $\ver_{\lambda(t)}$ for all $t \in (0,t_*)$.
\end{lemma}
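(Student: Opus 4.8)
The plan is to reduce the statement to the definition of conjugate time — namely that $t_*$ is conjugate along $\gamma_\lambda$ precisely when $t_*\lambda$ is a critical point of $\exp_q$, with $q=\gamma(0)$ — together with the elementary fact that the Hamiltonian flow transports Jacobi fields.

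First I would record the shape of a Jacobi field: unwinding $\dot\J=0$ shows that $t\mapsto e^{-t\vec H}_*\J(t)$ is constant, so every Jacobi field along $\lambda(t)$ has the form $\J(t)=e^{t\vec H}_*w$ with $w:=\J(0)\in T_{\lambda(0)}(T^*M)$, and it is non-trivial iff $w\neq 0$, because $e^{t\vec H}_*$ is a linear isomorphism. The clause ``$\pi_*\J(t)=0$ iff $\J(t)\in\ver_{\lambda(t)}$'' is merely the definition $\ver_\mu=\ker\pi_*|_\mu$, so the two formulations in the statement are tautologically equivalent and there is nothing to prove there.

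The substance is to identify $d_{t_*\lambda}\exp_q$ with $\pi_*\circ e^{t_*\vec H}_*$. Since $\exp_q(\mu)=\pi(e^{\vec H}(\mu))$ and $T_{t_*\lambda}(T_q^*M)=\ver_{t_*\lambda}$, I would use the fibrewise homogeneity of $H$ — equivalently the reparametrization identity $e^{\vec H}(t_*\mu)=\delta_{t_*}\!\big(e^{t_*\vec H}(\mu)\big)$ for the dilation $\delta_{t_*}$, together with $\pi\circ\delta_{t_*}=\pi$ — to obtain $\exp_q(t_*\mu)=\pi\big(e^{t_*\vec H}(\mu)\big)$ for $\mu$ near $\lambda$. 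Differentiating at $\mu=\lambda$ along $w\in\ver_\lambda$ yields $d_{t_*\lambda}\exp_q(t_*w)=\pi_*\big(e^{t_*\vec H}_*w\big)$. Since $w\mapsto t_*w$ is an isomorphism $\ver_{\lambda(0)}\to\ver_{t_*\lambda}$ (here $t_*>0$), the differential $d_{t_*\lambda}\exp_q$ fails to be invertible — i.e.\ $t_*$ is conjugate — exactly when there is $w\neq 0$ in $\ver_{\lambda(0)}$ with $\pi_*\big(e^{t_*\vec H}_*w\big)=0$. Setting $\J(t):=e^{t\vec H}_*w$, this says precisely that $\J$ is a non-trivial Jacobi field with $\J(0)\in\ver_{\lambda(0)}$ and $\pi_*\J(t_*)=0$, i.e.\ $\J(t_*)\in\ver_{\lambda(t_*)}$; this is the first assertion.

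The ``moreover'' part is then immediate: if $t_*$ is the \emph{first} conjugate time and some non-trivial Jacobi field with $\J(0)\in\ver_{\lambda(0)}$ had $\J(t_0)\in\ver_{\lambda(t_0)}$ for some $t_0\in(0,t_*)$, the first part would force $t_0$ to be conjugate with $t_0<t_*$, a contradiction; hence every such $\J$ is transverse to $\ver_{\lambda(t)}$ throughout $(0,t_*)$. I do not expect a genuine obstacle — the paper itself calls the lemma immediate — and the only step deserving care is the homogeneity identity $e^{\vec H}(t_*\mu)=\delta_{t_*}(e^{t_*\vec H}(\mu))$, which should be quoted from the discussion of the exponential map (it is equivalent to $\gamma_{a\lambda}(t)=\gamma_\lambda(at)$) or verified directly from $H\circ\delta_a=a^2H$; everything else is bookkeeping with the canonical identification $T_\mu(T_q^*M)\simeq\ver_\mu$.
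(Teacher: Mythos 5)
The paper states this lemma is ``immediate and follows from the definition of conjugate time'' and gives no proof, so there is nothing to compare against; your argument is a correct and complete account of that omitted proof. You correctly reduce everything to the flow form $\J(t)=e^{t\vec H}_*\J(0)$ of a Jacobi field, the identification $T_{t_*\lambda}(T_q^*M)\simeq\ver_{t_*\lambda}$, and the quadratic-homogeneity identity $e^{\vec H}(t_*\mu)=\delta_{t_*}\big(e^{t_*\vec H}(\mu)\big)$ (which indeed follows from $H\circ\delta_a=a^2H$), giving $d_{t_*\lambda}\exp_q(t_*w)=\pi_*\big(e^{t_*\vec H}_*w\big)$ and hence the equivalence with a nontrivial Jacobi field vanishing (under $\pi_*$) at both endpoints; the ``moreover'' part follows by applying the equivalence at any intermediate time, exactly as you say. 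One small reading note: in the last sentence of the lemma ``any Jacobi field'' should be understood, as you did, to mean any nontrivial Jacobi field with $\J(0)\in\ver_{\lambda(0)}$, which is how the lemma is used immediately afterwards in the paper when forming the matrix $\bm{\J}(t)$.
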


Choose the canonical moving frame of Theorem~\ref{p:can} along $\lambda(t)$, and consider the Jacobi fields $\J_i(t) \simeq (p_i(t),x_i(t))$, for $i=1,\ldots,n$, specified by the initial conditions
\begin{equation}
p_i(0) = (0,\ldots,1,\ldots,0)^*, \qquad x_i(0) = (0,\ldots,0)^*,
\end{equation}
where the $1$ is in the $i$-th position. We collect the column vectors $\J_i(t)$ in a $2n\times n$ matrix:
\begin{equation}
\bm{\J}(t) := [\J_1(t),\,\cdots,\J_n(t)] = \begin{pmatrix}
M(t) \\
N(t)
\end{pmatrix},
\end{equation}
where $M(t)$ and $N(t)$ are smooth families of $n\times n$ matrices. From~\eqref{eq:Jacobicoord}, we obtain
\begin{equation}\label{eq:Jacobimatrix}
\frac{d}{dt}\begin{pmatrix}
 M(t) \\
 N(t)
\end{pmatrix} = \begin{pmatrix}
-A^* & -R(t) \\
B & A
\end{pmatrix} \begin{pmatrix}
M(t) \\
N(t)
\end{pmatrix}, \qquad M(0) =\mathbbold{1}, \quad N(0) = \mathbbold{0}.
\end{equation}
Observe that, in general, a Jacobi field $\sum p_i(t) E_i(t) +x_i(t)F_i(t) \in \ver_{\lambda(t)}$ if and only if $x(t) =0$. Thus (the rows of) $\bm{\J}(t)$ describe the $n$-dimensional subspace of Jacobi fields $\J(t)$ with initial condition $\J(0) \in \ver_{\lambda(0)}$. Hence, the first conjugate time $t_*$ is precisely the smallest positive time such that $\det N(t_*) = 0$.

The $n\times n$ matrix $V(t):=M(t)N(t)^{-1}$ is well defined and smooth for all $t \in (0,t_*)$. One can check that it is a solution of the following Cauchy problem with limit initial datum
\begin{equation}\label{eq:riccati}
\dot{V} + A^* V + V A +R(t)+ V B V =0, \qquad \lim_{t \to 0^+} V^{-1} = 0,
\end{equation}
in the sense that $V(t)$ is invertible for small $t>0$ and $\lim_{t \to 0^+} V^{-1} =0$.

\subsection{The Matrix Riccati equation}
The nonlinear ODE~\eqref{eq:riccati} is called \emph{matrix Riccati equation}. An extensive literature on comparison theorems is available, see for example \cite{Roydencomp,abou2003matrix,esch}. Comparison theorems for solutions of~\eqref{eq:riccati} with limit initial datum are considered, to our best knowledge, only in \cite[Appendix A]{BR-comparison}. We take from there the results that we need.

\textbf{Assumptions.} In the following, $A,B$ are any pair of $n\times n$ matrices satisfying\footnote{Condition~\eqref{eq:Kalman} is called \emph{Kalman condition} in geometric control theory~\cite{Coronbook,Agrachevbook,Jurdjevicbook}.}
\begin{equation}\label{eq:Kalman}
\spn\{B,AB,\ldots,A^m B\} = \R^n,
\end{equation}
with $B \geq 0$ and $Q(t) = Q(t)^*$ is any smooth family of $n\times n$ matrices defined for $t \in [0,+\infty)$.
\begin{rmk}
The matrices $A$ and $B$ that appear in the Cauchy problem~\eqref{eq:riccati} for the case of fat sub-Riemannian structures (defined in Theorem~\ref{p:can}) verify~\eqref{eq:Kalman} with $m=1$.
\end{rmk}

\begin{lemma}[Well posedness]\label{l:wellpos}
The Cauchy problem with limit initial condition
\begin{equation}\label{eq:wellpos}
\dot{V} + A^* V + V A + Q(t) + VB V = 0, \qquad \lim_{t \to 0^+} V^{-1} = 0,
\end{equation}
is well posed, in the sense that it admits a smooth solution, invertible for small $t>0$, such that $\lim_{t \to 0^+} V^{-1} =0$. The solution is unique on a maximal interval of definition $I = (0,\bar{t})$ and symmetric. In addition, $V(t) > 0$ for small $t>0$.
\end{lemma}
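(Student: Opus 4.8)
The plan is to reduce the Cauchy problem with limit initial datum to a linear problem via the substitution $V = MN^{-1}$, solve the linear system with the ``wrong'' initial condition $M(0)=\mathbbold{1}$, $N(0)=\mathbbold{0}$, and then show that the resulting $N(t)$ is invertible for small $t>0$ so that $V$ is well defined. First I would set up the linear Hamiltonian-type system
\begin{equation*}
\frac{d}{dt}\begin{pmatrix} M \\ N \end{pmatrix} = \begin{pmatrix} -A^* & -Q(t) \\ B & A \end{pmatrix}\begin{pmatrix} M \\ N \end{pmatrix}, \qquad M(0) = \mathbbold{1},\quad N(0) = \mathbbold{0},
\end{equation*}
which has a unique global smooth solution by linear ODE theory. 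The key point is that $\det N(t) \neq 0$ for $t$ in a punctured neighbourhood of $0$. To see this I would expand $N(t)$ in a Taylor series at $t=0$: from the equations, $\dot N(0) = BM(0) = B$, $\ddot N(0) = B\dot M(0) + A\dot N(0) = -BA^* + AB$, and more generally the first few derivatives of $N$ at $0$ are built from $B, AB, A^2B,\dots$. Under the Kalman condition~\eqref{eq:Kalman} the columns of $N(t)$ cannot all collapse to a proper subspace, and a careful analysis of the leading-order behaviour (using that $B\geq 0$) shows $N(t) = t\,W(t)$ with $W(t) = B + O(t)$ on the ``image'' directions and higher-order terms supplying invertibility in the complementary directions; hence $N(t)$ is invertible for $0 < t < \delta$ for some $\delta > 0$.

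Granting invertibility of $N(t)$ on $(0,\delta)$, I would define $V(t) := M(t)N(t)^{-1}$ there and verify by direct differentiation, using $\frac{d}{dt}N^{-1} = -N^{-1}\dot N N^{-1}$, that $V$ solves~\eqref{eq:wellpos}; the limit condition $\lim_{t\to 0^+}V^{-1} = \lim_{t\to 0^+} N M^{-1} = N(0)M(0)^{-1} = \mathbbold{0}$ is immediate since $M(0) = \mathbbold{1}$ is invertible. Symmetry of $V(t)$ follows from the standard symplectic argument: the $2n\times 2n$ coefficient matrix is in the Lie algebra of the symplectic group associated with $J = \begin{pmatrix} \mathbbold{0} & -\mathbbold{1} \\ \mathbbold{1} & \mathbbold{0}\end{pmatrix}$ (using $B = B^*$, $Q = Q^*$), so the quantity $M^*N - N^*M$ is conserved; it vanishes at $t=0$, hence $M^*N = N^*M$ for all $t$, which upon multiplying by $(N^*)^{-1}$ on the left and $N^{-1}$ on the right gives $V = V^*$ on $(0,\delta)$. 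Maximality and uniqueness then come from the general uniqueness theorem for the Riccati Cauchy problem with limit initial datum (as established in \cite[Appendix A]{BR-comparison}): any two solutions agree on a common small interval by the $MN^{-1}$ representation, and each extends to a unique maximal interval $I = (0,\bar t)$, blowing up (losing invertibility of $N$) at $\bar t$.

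Finally, for the positivity $V(t) > 0$ for small $t > 0$, I would use the leading-order expansion: since $N(t)\sim tB$ on the relevant block and $M(t)\to\mathbbold{1}$, one gets $V(t)\sim \frac1t B^{+}$-type behaviour on the image of $B$ and a positive-definite contribution of lower order in $t$ on the complement coming from the higher brackets $AB,A^2B,\dots$; combining, $V(t)$ is positive definite once $t$ is small enough. The main obstacle I anticipate is precisely this fine analysis of the small-$t$ asymptotics of $N(t)$ — controlling how the Kalman condition forces invertibility and positivity when $B$ itself is only positive semidefinite — which requires choosing adapted coordinates (splitting $\R^n$ according to the flag $\spn\{B\}\subset\spn\{B,AB\}\subset\cdots$) and carefully tracking the orders of vanishing of the various blocks of $N(t)$; everything else is either linear ODE theory or the symplectic bookkeeping above.
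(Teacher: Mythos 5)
Your approach via the linear Hamiltonian system $(M,N)$ with $M(0)=\mathbbold{1}$, $N(0)=\mathbbold{0}$ and the substitution $V=MN^{-1}$ is a valid and standard route; note that the paper itself offers no proof of this lemma, citing \cite[Appendix A]{BR-comparison} wholesale, so there is no in-text argument to compare against. Two of your three steps are clean and correct: the symmetry argument (with $B=B^*$, $Q=Q^*$ the coefficient matrix lies in $\mathfrak{sp}(2n)$, so $M^*N-N^*M$ is conserved, vanishes at $t=0$, and hence $V=MN^{-1}$ is symmetric wherever defined), and the limit condition ($V^{-1}=NM^{-1}\to N(0)M(0)^{-1}=0$ because $M(0)=\mathbbold{1}$ is invertible). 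The verification by differentiation that $V$ solves the Riccati equation is routine.

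The genuine gap is exactly where you flag it, but your sketch is misleading in a way worth naming. You write $N(t)=tW(t)$ with $W(t)=B+O(t)$ ``on the image directions''; however, in this setting $B$ is \emph{always} singular (here $\rank B=k<n$), so invertibility of $N(t)$ for small $t>0$ cannot be extracted from a single leading Taylor coefficient. The true picture is that $N(t)$ vanishes to \emph{different} orders along the Kalman flag $\spn\{B\}\subsetneq\spn\{B,AB\}\subsetneq\cdots\subsetneq\R^n$, and $\det N(t)$ behaves like $t^d\cdot(\text{nonzero})$ with $d$ determined by that flag. For $Q\equiv 0$ one has the explicit factorisation $M_0(t)=e^{-tA^*}$, $N_0(t)=e^{tA}\int_0^te^{-sA}Be^{-sA^*}\,ds$, so invertibility of $N_0$ and positivity of $V_0=M_0N_0^{-1}$ reduce to positivity of the controllability Gramian, a one-line consequence of the Kalman condition. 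But passing to general $Q(t)$ is \emph{not} a naive ``the error is $O(t^3)$'' perturbation, because $N_0(t)$ itself vanishes to higher order than $t^3$ in some directions whenever the Kalman index $m>1$; one must track the vanishing order of each block of the error against the corresponding block of $N_0$ adapted to the flag. Your proposal gestures at this but does not carry out the bookkeeping, and that bookkeeping \emph{is} the lemma. A tidier variant worth considering (and implicit in the paper's proof of Lemma~\ref{l:relationjac}) is to substitute $W:=V^{-1}$ up front: $W$ solves the regular Cauchy problem $\dot W=AW+WA^*+WQ(t)W+B$, $W(0)=0$, so existence, uniqueness, smoothness, and symmetry are immediate from Picard--Lindel\"of; only $W(t)>0$ for small $t$ requires the Kalman condition and the same flag analysis. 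Finally, note that invoking ``the general uniqueness theorem ... as established in \cite[Appendix A]{BR-comparison}'' is circular here, since Lemma~\ref{l:wellpos} is one of the results quoted from that very appendix; a self-contained argument should derive uniqueness from the $(M,N)$ representation (any solution with $V^{-1}\to0$ determines the same Lagrangian plane propagated from the vertical).
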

The extrema of the interval of definition $(0,\bar{t})$ are characterized by the blow-up of $V(t)$. To be precise, we say that a one-parameter family $V(t)$ of $n\times n$ symmetric matrices \emph{blows-up} at $\bar{t} \in \R \cup\{\pm\infty\}$ if there exists a $w \in \R$ such that 
\begin{equation}\label{eq:blowup}
\lim_{t \to \bar{t}} |w^* V(t) w| \to +\infty.
\end{equation}
If for all $w$ such that~\eqref{eq:blowup} holds we have that $\lim_{t \to \bar{t}} w^* V(t) w = +\infty$ (resp $-\infty$), we say that $V(t)$ blows-up to $+\infty$ (resp. $-\infty$). 
%and we write
%\begin{equation}
%\lim_{t \to \bar{t}} V(t) \to +\infty,\qquad (\text{resp. } -\infty).
%\end{equation}
The problem~\eqref{eq:wellpos} is related with a Hamiltonian system, similar to Jacobi equation~\eqref{eq:Jacobimatrix}.
\begin{lemma}[Relation with Jacobi]\label{l:relationjac}
Let $M(t),N(t)$ be the solution of the Jacobi equation
\begin{equation}\label{eq:hamsys}
\frac{d}{dt}\begin{pmatrix}
 M \\
 N
\end{pmatrix} = \begin{pmatrix}
-A^* & -Q(t) \\
B & A
\end{pmatrix}\begin{pmatrix}
M \\
N
\end{pmatrix}, \qquad M(0) = \mathbbold{1}, \quad N(0) = \mathbbold{0}.
\end{equation}
Then $N(t)$ is invertible for small $t>0$. Let $\bar{t}$ the first positive time such that $\det N(t) = 0$, and let $V(t)$ be the solution of
\begin{equation}\label{eq:ricsys}
\dot{V} +  A^* V + V A + Q(t) + V B V = 0, \qquad \lim_{t \to 0^+} V^{-1} =0,
\end{equation}
defined on its maximal interval $I$. Then $V(t)=M(t)N(t)^{-1}$ and $I= (0,\bar{t})$.
\end{lemma}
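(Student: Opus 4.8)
The plan is to produce the solution of \eqref{eq:ricsys} explicitly in the form $M(t)N(t)^{-1}$, using the linear Jacobi system \eqref{eq:hamsys}, and then to read off its maximal interval from the first zero of $\det N$. I would proceed in four steps.

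First I would show that $N(t)$ is invertible for small $t>0$; this is the only genuinely technical point, and it is where the Kalman hypothesis \eqref{eq:Kalman} enters. Since $B=B^*$ and $Q=Q^*$, the coefficient matrix of \eqref{eq:hamsys} lies in $\mathfrak{sp}(2n)$, so the columns of $\binom{M(t)}{N(t)}$ stay linearly independent and span a Lagrangian subspace $\Lambda(t)$ with $\Lambda(0)=\ver$; invertibility of $N(t)$ is exactly transversality of $\Lambda(t)$ to $\ver$. Iterating $\dot N=BM+AN$ and $\dot M=-A^*M-QN$ from $N(0)=\mathbbold{0}$, $M(0)=\mathbbold{1}$, one checks inductively that $\mathrm{Im}\,N^{(1)}(0)+\dots+\mathrm{Im}\,N^{(j)}(0)=\mathrm{Im}\,B+\mathrm{Im}\,AB+\dots+\mathrm{Im}\,A^{j-1}B$, which by \eqref{eq:Kalman} equals $\R^n$ once $j=m+1$. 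Hence $\det N(t)$ vanishes to finite order at $t=0$, with leading behavior governed by \eqref{eq:Kalman} as in the flat model $Q\equiv 0$, for which $N(t)=e^{tA}\int_0^t e^{-sA}Be^{-sA^*}\,ds$ is invertible for all $t>0$ by the Kalman condition. (This is the ampleness of the Jacobi curve; compare \cite[Appendix A]{BR-comparison}.) So $N(t)$ is invertible on some interval $(0,\eps)$, and then on all of $(0,\bar t)$ by definition of $\bar t$.

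Next, on $(0,\bar t)$ I would set $\wt V:=MN^{-1}$ and differentiate; using \eqref{eq:hamsys},
\[
\dot{\wt V}=\dot M N^{-1}-MN^{-1}\dot N N^{-1}=(-A^*M-QN)N^{-1}-MN^{-1}(BM+AN)N^{-1}=-A^*\wt V-\wt V A-Q-\wt V B\wt V,
\]
so $\wt V$ solves the Riccati equation. It is symmetric: $M^*N-N^*M$ is constant along \eqref{eq:hamsys} and vanishes at $t=0$, so $M^*N=N^*M$, giving $\wt V^*=(N^*)^{-1}M^*=MN^{-1}=\wt V$. It has the right limit initial datum: $\wt V^{-1}=NM^{-1}\to N(0)M(0)^{-1}=\mathbbold{0}$ (as $M(0)=\mathbbold{1}$), while $\wt V$ is invertible for small $t$ by the first step. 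By the uniqueness part of Lemma~\ref{l:wellpos}, the maximal solution $V$ of \eqref{eq:ricsys}, defined on $I=(0,\bar t_V)$, coincides with $\wt V=MN^{-1}$ on the overlap; in particular $\bar t_V\ge\bar t$ and $V=MN^{-1}$ on $(0,\bar t)$.

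Finally, to pin down $I=(0,\bar t)$ I would argue by contradiction. If $\bar t_V>\bar t$, then on $(0,\bar t)$ one has $M=VN$, hence $\dot N=(BV+A)N$, and Liouville's formula gives $\det N(t)=\det N(s)\exp\!\big(\int_s^t\trace(BV(\tau)+A)\,d\tau\big)$ for $s,t\in(0,\bar t)$. Since $V$ would be smooth across $\bar t$, the integrand is bounded near $\bar t$, so $\det N(t)$ tends to a nonzero limit as $t\to\bar t^-$; but $N$ solves the global linear system \eqref{eq:hamsys}, so $\det N$ is continuous at $\bar t$ with $\det N(\bar t)=0$ — a contradiction. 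Hence $\bar t_V=\bar t$. The main obstacle is the first step: when $B$ is degenerate, which is precisely the fat case (where $B$ has a vanishing $a$-block), invertibility of $N$ near $0$ is not elementary and must be extracted from \eqref{eq:Kalman}; everything else is bookkeeping with the linear ODE together with one use of Liouville's formula.
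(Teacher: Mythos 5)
Your overall strategy is sound and genuinely different from the paper's. Where the paper starts from the Riccati solution $V$ (whose existence, positivity and $\lim V^{-1}=0$ are already packaged in Lemma~\ref{l:wellpos}), builds $\tilde M$, $\tilde N:=V^{-1}\tilde M$ by solving a linear ODE driven by $V^{-1}$, and only then identifies them with $M,N$, you go the other way: take the linear solution $M,N$, prove $N$ invertible for small $t$, and read off $V=MN^{-1}$. Your closing step, excluding $\bar t_V>\bar t$ by Liouville's formula $\det N(t)=\det N(s)\exp\bigl(\int_s^t\trace(BV+A)\,d\tau\bigr)$, is a clean and arguably more elementary alternative to the paper's Lagrange--Grassmannian continuity argument, and the symmetry/initial-datum verifications are correct. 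So steps two and three are fine.

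The weak point is your first step, and it is worth flagging because the paper's route is deliberately chosen to avoid it. The implication you assert --- that $\mathrm{Im}\,\dot N(0)+\dots+\mathrm{Im}\,N^{(m+1)}(0)=\R^n$ forces $\det N$ to vanish to finite order at $t=0$ --- is false for a general smooth matrix curve vanishing at $0$. For instance
\begin{equation*}
N(t)=\begin{pmatrix}t & t^2\\ t^2 & t^3\end{pmatrix}
\end{equation*}
has $\mathrm{Im}\,\dot N(0)+\mathrm{Im}\,\ddot N(0)=\R^2$ while $\det N\equiv 0$. So the image condition alone does not suffice; one really needs the symplectic/monotone structure of the Jacobi curve ($B\ge 0$ plus the Lagrangian constraint $M^*N=N^*M$) to conclude ampleness, which is exactly what~\cite[Appendix A]{BR-comparison} proves and what Lemma~\ref{l:wellpos} encodes. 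As written, your sketch glosses over this; either invoke ampleness as a black box from that reference, or --- closer in spirit to the paper --- deduce invertibility of $N$ near $0$ from the already-established $V>0$, $\lim V^{-1}=0$ of Lemma~\ref{l:wellpos}, by running the construction $\tilde N=V^{-1}\tilde M$ and using uniqueness of the linear system to conclude $\tilde N=N$. With that repaired, your proof is complete and the Liouville step stands as a nice variant of the paper's ending.
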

\begin{proof}
Let $V(t)$ be the solution of~\eqref{eq:ricsys} on $I=(0,a)$. We first show that it must be of the form $M(t)N(t)^{-1}$ on $(0,\bar{t})$ and then we prove that $\bar{t} = a$. By Lemma~\ref{l:wellpos}, $W(t):= V(t)^{-1}$ is well defined on $(0,\epsilon)$ and $\lim_{t \to 0^+} W(t) = 0 =:W(0)$. Consider then the solution $\tilde{M}(t)$ of
\begin{equation}
\dot{\tilde{M}} = -(A^* + Q(t) W(t))\tilde{M}, \qquad \tilde{M}(0) = \mathbbold{1},
\end{equation}
well defined at least on $[0,\epsilon)$. Then set $\tilde{N}(t):= W(t) \tilde{M}(t)$. Again by Lemma~\ref{l:wellpos}, $W(t) >0 $ on $(0,\epsilon)$, hence $\tilde{N}(t)$ is invertible for $t$ sufficiently small and $V(t) = \tilde{M}(t) \tilde{N}(t)^{-1}$ for small $t$. One can check that $\tilde{M}(t),\tilde{N}(t)$ solve~\eqref{eq:hamsys}, with the correct initial condition, hence $\tilde{M}(t),\tilde{N}(t) = M(t),N(t)$ on $[0,\epsilon)$. Then for all $t \in (0,\bar{t})$, the matrix $M(t)N(t)^{-1}$ is well defined and coincides with the solution $V(t)$ of~\eqref{eq:ricsys} on the interval $(0,\bar{t})$. In particular $a \geq \bar{t}$.

By contradiction, assume $a>\bar{t}$. Consider the two $n$-dimensional families of subspaces $L_1(t)$ and $L_2(t)$ of $\R^{2n}$ generated by the columns of
\begin{equation}
L_1(t):=\spn\begin{pmatrix}
M(t) \\
N(t)
\end{pmatrix}, \qquad \text{and} \qquad L_2(t):=\begin{pmatrix}
V(t) \\
\mathbbold{1}
\end{pmatrix},
\end{equation}
respectively. These may be seen as two curves in the Grassmannian of $n$-planes of $\R^{2n}$, both defined at least on $I=(0,a)$. We show that $L_1(t) = L_2(t)$ on $(0,\bar{t})$: indeed if $z_1,\ldots,z_n$ are the columns generating $L_1$ and $z_1',\ldots,z_n'$ are the columns generating $L_2$, then $z_i'= \sum_{j} N^{*-1}_{ij}(t)z_j$. By continuity, $L_1(\bar{t}) = L_2(\bar{t})$. This is absurd, since if $x \in \ker N(\bar{t}) \neq \{0\}$, then the vector $(0,x)^*$ is orthogonal to $L_1(\bar{t})$ but not to $L_2(\bar{t})$.
\end{proof}
%{\red Observe that~\eqref{eq:hamsys} is the matrix version of the Hamilton equations for the time-dependent Hamiltonian on $\R^{2n}$
%\begin{equation}
%H(t,p,x) = \frac{1}{2} x^* B x + p^*Ax + \frac{1}{2}x^*Q(t)x.
%\end{equation}
%In particular, the columns of $\left(\begin{smallmatrix}M(t) \\N(t) \end{smallmatrix}\right)$ generate the $n$-dimensional space of solutions such that $x(0)=0$. This gives also a geometrical interpretation of the first blow-up time of~\eqref{eq:ricsys}. In fact, the columns of the matrix $\left(\begin{smallmatrix}M(t) \\N(t) \end{smallmatrix}\right)$ describe a curve $L(t)$ in the Grassmannian of $n$-planes of $\R^{2n}$, with initial condition $L(0) = \{(p,0)\mid p \in \R^n\}$. The first blow-up time $t_*$ of $V(t)$ is then the first positive time such that $L(t_*) \cap L(0) \neq \{0\}$.} 
\begin{corollary}[Relation with first conjugate time]\label{c:relationwithfirst}
Let $V(t)$ be the solution of the Riccati Cauchy problem~\eqref{eq:riccati} associated with the Jacobi equation along $\lambda(t)$. Then the maximal interval of definition is $I=(0,t_*)$, where $t_*$ is the first conjugate time along the geodesic.
\end{corollary}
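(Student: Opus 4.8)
The plan is to stitch together the two lemmas that immediately precede the corollary. Recall that the Riccati Cauchy problem~\eqref{eq:riccati} along $\lambda(t)$ is precisely the problem~\eqref{eq:ricsys} with $Q(t)=R(t)$, where $R(t)$ is the curvature matrix of the canonical frame of Theorem~\ref{p:can}, and that the matrices $A,B$ appearing there satisfy the Kalman condition~\eqref{eq:Kalman} with $m=1$. Hence Lemma~\ref{l:relationjac} applies verbatim: the solution $V(t)$ of~\eqref{eq:riccati} equals $M(t)N(t)^{-1}$, and its maximal interval of definition is $(0,\bar t)$, where $\bar t$ is the first positive zero of $\det N(t)$. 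So the whole content of the corollary reduces to the identification $\bar t = t_*$, where $t_*$ is the first conjugate time along $\gamma$.

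First I would recall the set-up preceding Lemma~\ref{l:conjvert}: the columns of $\bm{\J}(t)=(M(t),N(t))^*$ are exactly the $n$ Jacobi fields $\J_i(t)$ with $\J_i(0)\in\ver_{\lambda(0)}$ (i.e.\ $x_i(0)=0$), so their span at time $t$ is the full $n$-dimensional space of Jacobi fields starting vertical at $0$. A Jacobi field $\sum p_i(t)E_i(t)+x_i(t)F_i(t)$ lies in $\ver_{\lambda(t)}$ iff its $x$-component vanishes, i.e.\ iff the corresponding column of $N(t)$ vanishes. Therefore there exists a nonzero Jacobi field with $\J(0)\in\ver_{\lambda(0)}$ and $\J(t_\star)\in\ver_{\lambda(t_\star)}$ precisely when $\det N(t_\star)=0$. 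By Lemma~\ref{l:conjvert}, the existence of such a Jacobi field is equivalent to $t_\star$ being a conjugate time. Taking the smallest such positive time on both sides gives $\bar t = t_*$.

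Putting the pieces together: by Lemma~\ref{l:relationjac} the maximal interval of definition of the solution of~\eqref{eq:riccati} is $(0,\bar t)$ with $\bar t$ the first zero of $\det N$, and by the Jacobi-field description together with Lemma~\ref{l:conjvert} this first zero is exactly the first conjugate time $t_*$. Hence $I=(0,t_*)$, which is the claim. There is essentially no obstacle here beyond bookkeeping; the one point to state carefully is the equivalence ``$\J(t)$ vertical $\iff$ the associated column of $N(t)$ is zero'', and the fact that the $n$ fields $\J_i(t)$ span all Jacobi fields starting vertical at $t=0$, so that a zero of $\det N$ genuinely detects the existence of a (possibly different) vertical-to-vertical Jacobi field — both of which have already been observed in the text just before the corollary. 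One should also note, as is implicit in Lemma~\ref{l:conjvert}, that the first conjugate time is strictly positive and that $N(t)$ is invertible for small $t>0$, so the interval $(0,t_*)$ is nonempty.
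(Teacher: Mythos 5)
Your proof is correct and follows essentially the same route as the paper: the paper treats the corollary as an immediate consequence of the discussion preceding Lemma~\ref{l:relationjac} (where it is shown that the first conjugate time is the first positive zero of $\det N(t)$) together with Lemma~\ref{l:relationjac} itself, and you have simply spelled out that stitching, including the correct use of Lemma~\ref{l:conjvert} and the observation that the columns of $\bm{\J}(t)$ span all Jacobi fields vertical at $t=0$.
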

%Thanks to Lemma~\ref{l:relationjac}, and in light of Corollary~\ref{c:relationwithfirst}, we will employ the term ``conjugate time'' to denote, equivalently, the blow-up time of a solution of the Riccati equation~\eqref{eq:ricsys} or the first ``verticality time'' of solutions of the Jacobi equation~\eqref{eq:hamsys}.
The next theorem is a special version of~\cite[Theorem 40]{BR-comparison} for our setting.
\begin{theorem}[Riccati comparison theorem]\label{t:riccaticomparison}
Let $A,B$ be two $n\times n$ matrices satisfying the Kalman condition~\eqref{eq:Kalman}. Let $Q_1(t)$ and $Q_2(t)$ be smooth families of $n\times n$ symmetric matrices. Let $V_1(t)$ and $V_2(t)$ be the solutions of the Riccati Cauchy problems with limit initial data:
\begin{equation}
\dot{V}_i + A^* V_i + V_i A + Q_i(t) + V_i B V_i  = 0, \qquad \lim_{t \to 0^+} V_i^{-1} = 0,
\end{equation}
for $i=1,2$, defined on a common interval $I =(0,a)$. If $Q_1(t) \geq Q_2(t)$ for all $t \in I$, then $V_1(t) \leq V_2(t)$ for all $t \in I$.
\end{theorem}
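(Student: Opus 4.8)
The plan is to prove that $W(t):=V_2(t)-V_1(t)\ge 0$ for every $t\in I=(0,a)$; since $I$ is a common interval of definition, both $V_i$ are smooth and symmetric there, so $W$ is a well-defined smooth symmetric matrix. Subtracting the two Riccati equations and grouping the quadratic terms via the identity $V_2BV_2-V_1BV_1=\tfrac12(V_1+V_2)BW+\tfrac12WB(V_1+V_2)$, one sees that $W$ solves the \emph{linear} Lyapunov-type equation $\dot W+C(t)^*W+WC(t)=Q_1(t)-Q_2(t)$, where $C(t):=A+\tfrac12B\bigl(V_1(t)+V_2(t)\bigr)$ is smooth on $I$ and, crucially, the right-hand side is $\ge 0$ by hypothesis. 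Fixing $t_0\in I$ and letting $\Gamma(t)$ be the (smooth, invertible on $I$) solution of $\dot\Gamma=C(t)\Gamma$, $\Gamma(t_0)=\mathbbold{1}$, one gets $\frac{d}{dt}\bigl[\Gamma(t)^*W(t)\Gamma(t)\bigr]=\Gamma(t)^*\bigl(Q_1(t)-Q_2(t)\bigr)\Gamma(t)\ge 0$, so $t\mapsto\Gamma(t)^*W(t)\Gamma(t)$ is non-decreasing on $I$. Consequently, as soon as one knows $\liminf_{t\to0^+}\Gamma(t)^*W(t)\Gamma(t)\ge 0$, it follows that $\Gamma(t)^*W(t)\Gamma(t)\ge 0$ on all of $I$, hence $W(t)\ge 0$ and $V_1(t)\le V_2(t)$.

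The only real difficulty is this limit as $t\to 0^+$, since both $V_1(t)$ and $V_2(t)$ blow up there (with $V_i^{-1}\to 0$), so the cancellation in $W=V_2-V_1$ must be weighed against the simultaneous blow-up of $C$ and $\Gamma$. To make the transfer of positivity across $t=0$ rigorous I would conjugate not by an arbitrary $\Gamma$ but by $N_1(t)$, where $(M_1,N_1)$ is the Jacobi pair of Lemma~\ref{l:relationjac} attached to $V_1$ (so $V_1=M_1N_1^{-1}$, $M_1(0)=\mathbbold{1}$, $N_1(0)=\mathbbold{0}$, $N_1$ invertible on $I$, $V_1N_1=M_1$, and $\dot N_1=(A+BV_1)N_1$). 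A direct computation using the equations of motion shows that $P(t):=N_1(t)^*W(t)N_1(t)=N_1^*V_2N_1-N_1^*M_1$ obeys a genuine matrix Riccati equation $\dot P=\mathcal Q(t)-P\,\mathcal B(t)\,P$, with $\mathcal Q:=N_1^*(Q_1-Q_2)N_1\ge 0$ and $\mathcal B:=N_1^{-1}B(N_1^{-1})^*\ge 0$. For such an equation the positive semidefinite cone is forward-invariant (at a null direction $v$ of $P$ one has $v^*\dot Pv=v^*\mathcal Qv\ge 0$, and a standard regularization argument turns this into a clean invariance statement), so it suffices to check that $P(t)\ge 0$ for all sufficiently small $t>0$.

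This last point is the technical heart of the matter, and I expect it to be the main obstacle: near $t=0$ the solutions $V_1$ and $V_2$ have the \emph{same} singular asymptotic expansion, determined by $A$ and $B$ alone, with the potentials $Q_i$ entering only at subleading order; hence $W=V_2-V_1$ is strictly less singular than $V_i$, extends continuously to $t=0$ with $W(0)=\mathbbold{0}$, and its first non-vanishing coefficient is a nonnegative multiple of $Q_1-Q_2\ge 0$ (as one sees in the Riemannian model, where $W(t)=\tfrac t3(Q_1(0)-Q_2(0))+O(t^2)$). Thus $W(t)\ge 0$, hence $P(t)=N_1(t)^*W(t)N_1(t)\ge 0$, for $t$ small. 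This asymptotic analysis at the singular initial time is exactly where the limit datum $\lim_{t\to0^+}V^{-1}=0$ has to be exploited, and it is carried out in \cite[Appendix A]{BR-comparison}, on which the present statement rests as a special case of \cite[Theorem 40]{BR-comparison}; once it is in place, cone-invariance propagates $P\ge 0$ to all of $I$ and conjugating back by $N_1^{-1}$ gives $W=(N_1^{-1})^*P\,N_1^{-1}\ge 0$, i.e.\ $V_1\le V_2$ on $I$, as claimed.
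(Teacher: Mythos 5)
The paper itself does not prove this theorem: it is stated as ``a special version of~\cite[Theorem 40]{BR-comparison}'' and the proof is delegated to~\cite[Appendix A]{BR-comparison}. So there is no in-paper proof to compare against; I can only assess whether your argument, on its own terms, is complete.

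Your forward-propagation half is correct and self-contained. The decomposition $V_2BV_2-V_1BV_1=\tfrac12(V_1+V_2)BW+\tfrac12WB(V_1+V_2)$ and the resulting linear Lyapunov equation $\dot W+C^*W+WC=Q_1-Q_2$ with $C=A+\tfrac12B(V_1+V_2)$ are right, and conjugating by a fundamental solution of $\dot\Gamma=C\Gamma$ does make $\Gamma^*W\Gamma$ nondecreasing on any compact subinterval of $I$. The computation of the Riccati $\dot P=\mathcal Q-P\mathcal B\,P$ for $P=N_1^*WN_1$ is also correct (indeed $\dot N_1=(A+BV_1)N_1$, and one can check $\dot P=N_1^*(Q_1-Q_2)N_1-PN_1^{-1}B(N_1^{-1})^*P$).

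The genuine gap is exactly where you flag it: establishing $W(t)\ge 0$ (equivalently $P(t)\ge 0$) for small $t>0$. Conjugating by $N_1$ does not remove the singularity -- $\mathcal B=N_1^{-1}B(N_1^{-1})^*$ blows up as $t\to 0^+$ because $N_1(0)=\mathbbold 0$, so you still have a singular ODE at the endpoint, and the ``forward-invariance of the cone from $t=0$'' cannot be invoked as a standard fact. Your claim that $W(t)$ extends continuously to $0$ with first nonvanishing coefficient a nonnegative multiple of $Q_1-Q_2$ is plausible but not proved; with $B$ degenerate (as it is here), the asymptotics of $V_i$ near $t=0$ contain several distinct orders of singularity governed by the Kalman filtration, and the one Riemannian example ($B=\mathbbold 1$) you give is not representative. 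A cleaner and genuinely elementary way to close this gap, which avoids the asymptotic analysis entirely: pass to the inverses $W_i:=V_i^{-1}$, which by Lemma~\ref{l:wellpos} are defined, positive definite, and solve the \emph{regular} Riccati $\dot W_i=B+AW_i+W_iA^*+W_iQ_iW_i$ with the honest initial datum $W_i(0)=\mathbbold 0$ on some $[0,\epsilon)$. Then $Z:=W_1-W_2$ satisfies $\dot Z=\wt C Z+Z\wt C^*+W_1(Q_1-Q_2)W_1$ with $\wt C=A+\tfrac12(W_1+W_2)Q_2$ smooth on $[0,\epsilon)$ and $Z(0)=\mathbbold 0$; the fundamental-solution trick now runs with no singularity and yields $Z\ge 0$, i.e.\ $W_1\ge W_2>0$, hence $V_1\le V_2$ on $(0,\epsilon)$. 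Your own forward-propagation step then carries $V_1\le V_2$ to all of $I$. As written, however, the proposal rests on the reference for its crucial step and so is incomplete as a stand-alone proof.
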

A crucial property for comparison is the following \cite[Lemma 27]{BR-comparison}.
\begin{lemma}
Let $V(t)$ be a solution of the Cauchy problem~\eqref{eq:wellpos}.
%\begin{equation}
%\dot{V} + A^* V + V A + Q(t) + V B V = 0, \qquad \lim_{t \to 0^+} V^{-1} = 0. \\
%\end{equation}
If $0<\bar{t}<+\infty$ is a blow-up time for $V(t)$, then the latter blows up to $-\infty$.
\end{lemma}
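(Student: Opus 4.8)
Heuristically the statement says that wherever $V^{-1}$ approaches a singular matrix near the blow‑up, it does so with non‑negative derivative $B$ in the degenerating direction — indeed if $W=V^{-1}$ one checks $\dot W = AW + WA^{*} + B + WQW$, so $v^{*}\dot W v = v^{*}Bv\ge 0$ whenever $Wv=0$ — and hence $V=W^{-1}$ passes to $\mp\infty$ from the negative side. To turn this into a proof not requiring $V^{-1}$ to exist on a left‑neighbourhood of $\bar t$, I would use Lemma~\ref{l:relationjac}: on $(0,\bar t)$ one has $V(t)=M(t)N(t)^{-1}$ with $M,N$ solving~\eqref{eq:hamsys}, $M(0)=\mathbbold{1}$, $N(0)=\mathbbold{0}$, and $\bar t$ the first zero of $\det N$. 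The $2n\times 2n$ fundamental solution of~\eqref{eq:hamsys} is invertible, so $\bigl(\begin{smallmatrix}M(t)\\ N(t)\end{smallmatrix}\bigr)$ has full column rank and spans a Lagrangian subspace; consequently $P(t):=N(t)^{*}M(t)=M(t)^{*}N(t)$ is symmetric and smooth for all $t$ (in particular bounded near $\bar t$), and for fixed $w\in\R^{n}$, with $u(t):=N(t)^{-1}w$,
\begin{equation*}
w^{*}V(t)w = u(t)^{*}P(t)u(t).
\end{equation*}

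Let $t\to\bar t^{-}$ and $K:=\ker N(\bar t)\neq\{\mathbbold{0}\}$. If $w^{*}V(t)w$ is unbounded then $|u(t)|\to\infty$, and since $N(t)u(t)=w$ stays bounded, $u(t)/|u(t)|$ accumulates on $K$. As $v^{*}P(\bar t)v'=(N(\bar t)v)^{*}(M(\bar t)v')=0$ for $v,v'\in K$, the form $P(\bar t)$ annihilates $K$, so the behaviour of $w^{*}V(t)w$ is dictated by the Taylor expansion of $P$ at $\bar t$ restricted to the degenerating part of $u$. Using the recursions $\dot N=BM+AN$ and $\dot M=-A^{*}M-QN$ and inserting $N(\bar t)v=0$ repeatedly, one finds that if $t\mapsto N(t)v$ vanishes to order $j$ at $\bar t$, then $N^{(j)}(\bar t)v=\pm B(A^{*})^{j-1}M(\bar t)v$, that $v^{*}P^{(i)}(\bar t)v=0$ for $i\le 2j-2$, and that $v^{*}P^{(2j-1)}(\bar t)v$ is a positive multiple of $\bigl|B^{1/2}(A^{*})^{j-1}M(\bar t)v\bigr|^{2}\ge 0$ (here the symmetry of $P$, i.e.\ the Lagrangian condition, is what kills the intermediate coefficients). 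Since $M(\bar t)v\neq\mathbbold{0}$ by full rank, the Kalman condition~\eqref{eq:Kalman} forces $B(A^{*})^{\ell}M(\bar t)v\neq\mathbbold{0}$ for some $\ell\le m$; as this $\ell$ is exactly $j-1$, the degenerating directions have vanishing order $j\le m+1$ and strictly positive leading coefficient. Counting powers of $\bar t-t$ — the order‑$j$ degenerating part of $u$ scales like $(\bar t-t)^{-j}$, the coefficient like $(t-\bar t)^{2j-1}$ — the singular part of $w^{*}V(t)w$ is $-c/(\bar t-t)+O(1)$ with $c>0$, so $w^{*}V(t)w\to-\infty$; since $w$ was an arbitrary blow‑up direction, $V(t)$ blows up to $-\infty$.

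The main obstacle is precisely this last bookkeeping: organizing $K$ by the vanishing orders, verifying at each order that $v^{*}P^{(2j-1)}(\bar t)v$ has exactly the square form above, and — since cross terms between orders $1$ and $j'$ contribute at the same rate $(\bar t-t)^{-1}$ as the diagonal ones — checking that all contributions assemble into $-(\bar t-t)^{-1}$ times a single non‑negative (perfect‑square type) quadratic form, so that no cancellation spoils the sign. This is much lighter in the fat setting, where only $m=1$ is needed (Theorem~\ref{p:can}): the vanishing order is then at most $2$, and one only treats $BM(\bar t)v\neq\mathbbold{0}$ (surviving term $(t-\bar t)\,|B^{1/2}M(\bar t)v|^{2}$) and $BM(\bar t)v=\mathbbold{0}$ (surviving term a positive multiple of $(t-\bar t)^{3}\,|B^{1/2}A^{*}M(\bar t)v|^{2}$, with $BA^{*}M(\bar t)v\neq\mathbbold{0}$ forced by~\eqref{eq:Kalman} and $M(\bar t)v\neq\mathbbold{0}$), in both cases with coefficient $\ge 0$ and $>0$ exactly on the blow‑up directions.
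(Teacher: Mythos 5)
The paper does not prove this lemma: it is quoted from \cite[Lemma 27]{BR-comparison}, so there is no in-paper argument to compare against. Evaluated on its own, your strategy is the natural one and your intermediate claims are correct. Passing to $V=MN^{-1}$, the symmetry of $P:=N^{*}M=M^{*}N$ follows from $\tfrac{d}{dt}(N^{*}M-M^{*}N)=0$; the identity $\dot P = M^{*}BM - N^{*}QN$ then gives directly that if $N(t)v$ vanishes to order $j$ at $\bar t$, the first $2j-1$ derivatives of $v^{*}P(t)v$ vanish there and $v^{*}P(t)v = -\tfrac{1}{2j-1}\bigl|B^{1/2}(A^{*})^{j-1}M(\bar t)v\bigr|^{2}\tfrac{(\bar t-t)^{2j-1}}{((j-1)!)^{2}}+O((\bar t-t)^{2j})$; Kalman does force $j\le m+1$, with a strictly positive leading coefficient on $K=\ker N(\bar t)$.

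However, the final assembly --- which you yourself flag as ``the main obstacle'' --- is precisely where the content of the lemma lies, and it is not actually carried out; this is a genuine gap. Three things are missing. (i) The scaling ``the order-$j$ degenerating part of $u$ scales like $(\bar t-t)^{-j}$'' requires that the leading-order images $N^{(\nu_i)}(\bar t)e_i$ of a basis of $\mathbb{R}^n$ adapted to the vanishing-order filtration of $K$ be linearly independent (equivalently, $\mathrm{ord}_{\bar t}\det N=\sum_i\nu_i$); this is a nontrivial regularity statement and is not addressed. (ii) The non-degenerating part $u_{\perp}$ of $u=N(t)^{-1}w$ is \emph{not} bounded in general (only $|u_\perp|\lesssim 1+(\bar t-t)|u|$), so the cross terms $u_\perp^{*}Pu_K$ could a priori contribute at the same rate $(\bar t-t)^{-1}$; they do not, but seeing this requires a computation --- e.g.\ $u_{\perp}^{*}\dot P(\bar t)u_{2,0} = (B^{1/2}Mu_\perp)^{*}(B^{1/2}Mu_{2,0})=0$ because $BM(\bar t)u_{2,0}=0$ --- which is absent from your sketch. (iii) Even in the fat case $m=1$, the leading coefficient is $|\mathbf{a}|^{2}+\mathbf{a}^{*}\mathbf{b}+\tfrac{1}{3}|\mathbf{b}|^{2}$ with $\mathbf{a}=B^{1/2}M(\bar t)u_{1,0}$, $\mathbf{b}=B^{1/2}A^{*}M(\bar t)u_{2,0}$; this is indeed $|\mathbf a+\tfrac12\mathbf b|^{2}+\tfrac{1}{12}|\mathbf b|^{2}\ge 0$, but the $\tfrac13$ (from the factor $\tfrac{1}{2j-1}$ with $j=2$) is exactly what makes the completion of the square work, so the ``perfect-square type'' claim must be derived rather than asserted. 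Until (i)--(iii) are supplied the argument is a plausible blueprint, not a proof.
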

\begin{corollary}\label{c:riccaticomparison}
Under the hypotheses of Theorem~\ref{t:riccaticomparison}, let $0<\bar{t}_i\leq +\infty$ be the blow-up time of $V_i$, for $i=1,2$. Then $\bar{t}_1 \leq \bar{t}_2$.
\end{corollary}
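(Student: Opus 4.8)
The plan is to argue by contradiction: suppose $\bar{t}_1 > \bar{t}_2$. If $\bar{t}_2 = +\infty$ the inequality $\bar{t}_1 \leq \bar{t}_2$ is automatic, so I may assume $\bar{t}_2 < +\infty$. Since $V_1$ is smooth on its maximal interval $(0,\bar{t}_1)$ and $(0,\bar{t}_1) \supset (0,\bar{t}_2)$, for every $a$ with $0 < a < \bar{t}_2$ both $V_1$ and $V_2$ are defined on the common interval $(0,a)$. Theorem~\ref{t:riccaticomparison}, applied with $Q_1(t) \geq Q_2(t)$, then gives $V_1(t) \leq V_2(t)$ on $(0,a)$; letting $a \uparrow \bar{t}_2$ yields $V_1(t) \leq V_2(t)$ for all $t \in (0,\bar{t}_2)$.

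Next I would invoke the preceding lemma, namely that a finite blow-up time is necessarily a blow-up to $-\infty$. Since $0 < \bar{t}_2 < +\infty$ is a blow-up time of $V_2$, there is a vector $w$ with $\lim_{t \to \bar{t}_2^-} w^* V_2(t) w = -\infty$. The pointwise inequality $w^* V_1(t) w \leq w^* V_2(t) w$ forces $\lim_{t \to \bar{t}_2^-} w^* V_1(t) w = -\infty$, so in particular $|w^* V_1(t) w| \to +\infty$ as $t \to \bar{t}_2^-$, i.e.\ $\bar{t}_2$ is a blow-up time of $V_1$. By the maximality of the interval of definition in Lemma~\ref{l:wellpos} this forces $\bar{t}_1 \leq \bar{t}_2$, contradicting $\bar{t}_1 > \bar{t}_2$; hence $\bar{t}_1 \leq \bar{t}_2$.

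The argument is short, and its only subtle points are organizational: the reduction to $\bar{t}_2 < +\infty$, the passage from comparison on each truncated interval $(0,a)$ to comparison on all of $(0,\bar{t}_2)$, and --- most importantly --- the use of \emph{blow-up to $-\infty$} rather than plain blow-up, since it is the sign of the divergence that allows the blow-up of $V_2$ to be transferred onto $V_1$ through $V_1 \leq V_2$. I do not anticipate any genuine obstacle beyond this bookkeeping; the content is carried entirely by Theorem~\ref{t:riccaticomparison} and the blow-up lemma.
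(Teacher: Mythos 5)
Your proof is correct and is the natural argument; the paper states this corollary without proof, immediately after the lemma that a finite blow-up time is a blow-up to $-\infty$, and the intended deduction is exactly the one you give. You rightly identify the essential point: the comparison $V_1 \leq V_2$ alone would not transfer blow-up from $V_2$ to $V_1$, but the fact that any finite blow-up is to $-\infty$ makes the one-sided bound $w^*V_1(t)w \leq w^*V_2(t)w \to -\infty$ conclusive.
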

The typical scenario is a bound $Q_1(t) \geq Q_2$ with a constant symmetric matrix. To have a meaningful estimate, it is desirable that $\bar{t}_2 < +\infty$. We reformulate the results of~\cite{ARS-LQ} to give necessary and sufficient conditions for finite blow-up time of Riccati equations with \emph{constant} coefficients.
\begin{theorem}[Finiteness of blow-up times {\cite[Theorem A]{ARS-LQ}}]\label{t:silveira}
The solution of the Riccati Cauchy problem
\begin{equation}
\dot{V} + A^* V + V A + Q + VB V = 0, \qquad \lim_{t \to 0^+} V^{-1} = 0,
\end{equation}
has a finite blow-up time $\bar{t}(A,B,Q)$ if and only if the associated Hamiltonian matrix
\begin{equation}
\begin{pmatrix}
-A^* & -Q \\
B & A
\end{pmatrix}
\end{equation}
has at least one Jordan block of odd dimension, associated with a purely imaginary eigenvalue.
\end{theorem}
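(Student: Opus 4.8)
The plan is to transfer the question about the nonlinear equation to a purely linear-algebraic statement about the Hamiltonian matrix $\mathcal{H}$ appearing in the statement. Denote by $\ver := \R^n\times\{0\}\subset\R^{2n}$ the ``vertical'' subspace. By Lemma~\ref{l:relationjac}, the solution of the Riccati Cauchy problem with limit initial datum is $V(t)=M(t)N(t)^{-1}$, where $\binom{M(t)}{N(t)}=e^{t\mathcal{H}}\binom{\mathbbold{1}}{\mathbbold{0}}$; thus $N(t)$ is exactly the lower-left $n\times n$ block of $e^{t\mathcal{H}}$, and the blow-up time $\bar{t}(A,B,Q)$ coincides with the first positive zero of $t\mapsto \det N(t)$. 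Equivalently, $\bar{t}(A,B,Q)<+\infty$ if and only if the flow of $\mathcal{H}$ carries some nonzero vector of $\ver$ back into $\ver$ at a positive time. Everything therefore reduces to the following claim: \emph{the lower-left block of $e^{t\mathcal{H}}$ is singular for some $t>0$ exactly when $\mathcal{H}$ has a purely imaginary eigenvalue with a Jordan block of odd dimension.}

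To prove the claim I would exploit the Hamiltonian (infinitesimally symplectic) structure of $\mathcal{H}$: its spectrum is invariant under $\mu\mapsto-\mu$ and $\mu\mapsto\bar\mu$, and $\R^{2n}$ splits $\mathcal{H}$-invariantly and $\sigma$-orthogonally as $X_{\mathrm{im}}\oplus X_{\mathrm{hyp}}$, where $X_{\mathrm{im}}$ collects the real generalized eigenspaces of the purely imaginary eigenvalues (the zero eigenvalue included) and $X_{\mathrm{hyp}}$ the remaining ones, with $\sigma$ nondegenerate on each. On $X_{\mathrm{hyp}}$ the flow is asymptotically hyperbolic --- it has no nonzero bounded orbit --- and one shows that the vertical subspace never returns to itself along these directions, so the hyperbolic spectrum alone never produces a finite blow-up. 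On $X_{\mathrm{im}}$ I would pass to the symplectic normal form of Hamiltonian matrices, reducing to a single Jordan block at an eigenvalue $i\omega$: for a block of \emph{odd} size the associated scalar system is a one-dimensional Riccati equation whose coefficients force a finite blow-up --- the prototype being~\eqref{eq:Riemfunc} with $\kappa_c>0$, which blows up at $t=\pi/\sqrt{\kappa_c}$ --- so $\det N$ vanishes at finite positive time, whereas for a block of \emph{even} size the solution is of the non-vanishing type exemplified by $1/t$ (which is~\eqref{eq:Riemfunc} with $\kappa_c=0$) and no positive zero occurs. Reassembling the blocks then yields both implications. Throughout, the needed control on the crossings comes from the fact that, under the Kalman condition~\eqref{eq:Kalman} with $B\geq 0$, every intersection of the Lagrangian curve $t\mapsto e^{t\mathcal{H}}\ver$ with the train of $\ver$ has a definite (positive) sign; the curve is therefore monotone and its first return to $\ver$, if it occurs, is precisely $\bar{t}(A,B,Q)$.

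The main obstacle is the passage from the \emph{global} condition ``$\ver$ returns to $\ver$'' to a \emph{block-by-block} criterion. The vertical subspace is not adapted to the spectral decomposition of $\mathcal{H}$, so one has to exclude conspiracies in which the hyperbolic directions, or several imaginary blocks jointly, create or cancel a return. This is exactly where the monotonicity coming from $B\geq 0$ --- together with the Kalman condition, which makes all crossings of finite order and of definite sign --- is indispensable: it lets one detect a finite blow-up from the asymptotics of the Lagrangian curve, which are in turn dictated by the individual symplectic Jordan blocks. A companion subtlety is that the \emph{parity} of the Jordan blocks is invariant only once one works with the symplectic normal form at imaginary eigenvalues, and that the Kalman condition restricts which normal forms may occur --- for instance it rules out a trivial zero block, which would make $N(t)$ identically singular; this is why the Kalman condition cannot be dropped. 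Since this is precisely the content of~\cite[Theorem~A]{ARS-LQ}, in the body of the paper we invoke that reference; the argument above is the path one would take to reprove it.
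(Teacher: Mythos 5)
The paper does not give a proof of this statement: it is quoted directly as Theorem~A of~\cite{ARS-LQ}, and the text preceding the theorem makes explicit that it is a reformulation of that reference's result. You correctly recognize this and, in the end, invoke the same reference, so at the level of what the paper actually does your proposal matches it. Your accompanying sketch follows the right road map: reduce, via Lemma~\ref{l:relationjac}, to the first positive zero of $\det N(t)$ where $N(t)$ is the lower-left block of $e^{t\mathcal{H}}$, then analyze the Lagrangian curve $t\mapsto e^{t\mathcal{H}}(\R^n\times\{0\})$ using the spectral decomposition of the Hamiltonian matrix together with the monotonicity of the crossings that $B\geq 0$ plus the Kalman condition provide. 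The prototypes you cite ($\cot$-type blow-up for a size-one Jordan block at a nonzero imaginary eigenvalue, $1/t$-type non-blow-up for a size-two block at $0$) are the right ones and agree with the case analysis carried out in the proof of Proposition~\ref{p:stima}.

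The gap you flag is, however, genuine and should not be understated. The vertical subspace is \emph{not} adapted to the splitting $X_{\mathrm{im}}\oplus X_{\mathrm{hyp}}$, so the phrase ``the vertical subspace never returns to itself along [the hyperbolic] directions'' is not yet an argument --- there is no intrinsic hyperbolic component of the vertical to speak of. Making this rigorous is precisely where~\cite{ARS-LQ} does the real work: one has to compute the Maslov index of the curve on all of $[0,\infty)$ from the asymptotics dictated by the symplectic normal form of each block, and it is exactly the Kalman condition that guarantees all crossings have a common sign so that no cancellation between blocks can mask or manufacture a return. As a citation your proposal is complete; as a reproof, the sketch leaves open precisely the step you yourself identify, and as written it would not by itself establish either implication of the iff.
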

If one is able to compute the sub-Riemannian curvature matrix $R(t)$ of~\eqref{eq:Jacobimatrix}, and bound it with a (possibly constant) symmetric matrix $\bar{R}$, then one can apply the comparison theory described so far to estimate the first conjugate time $t_*(\gamma)$ along the sub-Riemannian geodesic with the first blow-up time $t(\bar{R},A,B)$ of the Riccati equation associated with the matrices $A$, $B$ and $Q(t) = \bar{R}$. Theorem~\ref{t:silveira} then provides conditions on $\bar{R}$ such that $t(\bar{R},A,B) < +\infty$.

The advantage of this formulation (in terms of blow-up times for the Riccati equation) is that the latter can be suitably ``traced'', to obtain comparison theorems with weaker assumptions on the average curvature (Ricci-type curvature) instead of the full sectional-type curvature $R(t)$. In the Riemannian case (i.e. when $A = \mathbbold{0}$, $B = \mathbbold{1}$), this is well known. As we show, in the sub-Riemannian case the tracing procedure is much more delicate.

\section{Proof of the results} \label{s:proofs}
Let now $V(t)$ be the solution of the Riccati Cauchy problem~\eqref{eq:riccati} associated with the Jacobi equation~\eqref{eq:Jacobimatrix} along a given extremal $\lambda(t)$. For convenience, we recall that $V(t)$ solves
\begin{equation}\label{eq:riccati2}
\dot{V} + A^* V + V A +R(t)+ V B V =0, \qquad \lim_{t \to 0^+} V^{-1} = 0,
\end{equation}
\begin{equation}
A = \begin{pmatrix}
\mathbbold{0} & \mathbbold{1} & \mathbbold{0} \\ \mathbbold{0} & \mathbbold{0} & \mathbbold{0} \\ \mathbbold{0} & \mathbbold{0} & \mathbbold{0}
\end{pmatrix}, \quad B = \begin{pmatrix}
\mathbbold{0} & \mathbbold{0} & \mathbbold{0} \\ \mathbbold{0} & \mathbbold{1} & \mathbbold{0} \\ \mathbbold{0} & \mathbbold{0} &\mathbbold{1}
\end{pmatrix}, \quad
R(t) = \begin{pmatrix}
R_{aa}(t) & R_{ab}(t) & R_{ac}(t) \\
R_{ba}(t) & R_{bb}(t) & R_{bc}(t) \\
R_{ca}(t) & R_{cb}(t) & R_{cc}(t)
\end{pmatrix},
\end{equation}
with $R(t)$ symmetric and $R_{ab}(t) = - R_{ab}(t)^*$. In the notation of Section~\ref{sec:sasakiannormalframe}, we decompose %the symmetric matrix $V(t)$ as a block-matrix
\begin{equation}
V(t)=\begin{pmatrix}
V_{aa}(t) & V_{ab}(t) & V_{ac}(t) \\
V_{ba}(t) & V_{bb}(t) & V_{bc}(t) \\
V_{ca}(t) & V_{cb}(t) & V_{cc}(t) \\
\end{pmatrix},
\end{equation}
where $V_{\alpha\beta}$ is a $|\alpha|\times |\beta|$ matrix, $\alpha,\beta=a,b,c$. Notice the special structure of $A$ and $B$:
\begin{equation}\label{eq:splitI-IIa}
A = \begin{pmatrix}
A_{\mathrm{I}} & \mathbbold{0} \\
\mathbbold{0} & A_{\mathrm{II}} 
\end{pmatrix}, \qquad B= \begin{pmatrix}
B_{\mathrm{I}} & \mathbbold{0} \\
\mathbbold{0} & B_{\mathrm{II}}
\end{pmatrix},
\end{equation}
where $A_{\mathrm{I}},B_{\mathrm{I}}$ are $(2n-2k)\times (2n-2k)$ blocks and $A_{\mathrm{II}},B_{\mathrm{II}}$ are $(2k-n )\times (2k-n)$ blocks:
\begin{equation}\label{eq:splitI-IIb}
A_{\mathrm{I}} := \begin{pmatrix}
\mathbbold{0} & \mathbbold{1} \\
\mathbbold{0} & \mathbbold{0}
\end{pmatrix}, \qquad B_{\mathrm{I}} := \begin{pmatrix}
\mathbbold{0} & \mathbbold{0} \\
\mathbbold{0} & \mathbbold{1}
\end{pmatrix}, \qquad A_{\mathrm{II}} := \mathbbold{0}, \qquad B_{\mathrm{II}} := \mathbbold{1}.
\end{equation}
Analogously, we consider the two symmetric matrices (recall that $V(t)$ itself is symmetric)
\begin{equation}\label{eq:splitI-IIc}
V_{\mathrm{I}}(t) := \begin{pmatrix}
V_{aa}(t) & V_{ab}(t) \\
V_{ba}(t) & V_{bb}(t)
\end{pmatrix}, \qquad V_{\mathrm{II}}(t) := V_{cc}(t),
\end{equation}
which are $(2n-2k)\times (2n-2k)$ and $(2k-n)\times (2k-n)$ diagonal blocks of $V(t)$, respectively.
\begin{lemma}\label{l:explosion}
The families $V_{\mathrm{I}}(t)$ and $V_{\mathrm{II}}(t)$ are invertible for small $t>0$ and 
\begin{equation}
\lim_{t\to 0^+}(V_{\mathrm{I}})^{-1}=0, \qquad \lim_{t\to 0^+}(V_{\mathrm{II}})^{-1} = 0.
\end{equation}
\end{lemma}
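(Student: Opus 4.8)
The plan is to read off the statement directly from the properties of the full solution $V(t)$ of~\eqref{eq:riccati2} established in Lemma~\ref{l:wellpos}, combined with elementary facts about Schur complements of positive definite matrices, so that essentially no new computation is needed. With respect to the $\{a,b\}$ versus $\{c\}$ splitting of~\eqref{eq:splitI-IIa}--\eqref{eq:splitI-IIc}, decompose
\[
V(t) = \begin{pmatrix} V_{\mathrm{I}}(t) & V_{\mathrm{I},\mathrm{II}}(t) \\ V_{\mathrm{I},\mathrm{II}}(t)^* & V_{\mathrm{II}}(t) \end{pmatrix}, \qquad V_{\mathrm{I},\mathrm{II}}(t) := \begin{pmatrix} V_{ac}(t) \\ V_{bc}(t) \end{pmatrix},
\]
so that $V_{\mathrm{I}}(t)$ and $V_{\mathrm{II}}(t)$ are exactly the two diagonal blocks of $V(t)$. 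By Lemma~\ref{l:wellpos}, $V(t)$ is symmetric, it is invertible for small $t>0$ with $\lim_{t\to 0^+} V(t)^{-1} = 0$, and it is positive definite for all sufficiently small $t>0$.

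The invertibility of $V_{\mathrm{I}}(t)$ and $V_{\mathrm{II}}(t)=V_{cc}(t)$ for small $t>0$ is then immediate, since a principal submatrix of a positive definite matrix is positive definite. For the limits, I would use the block inversion formulas, valid because $V(t)>0$ makes every relevant block invertible:
\[
(V(t)^{-1})_{\mathrm{I}} = (V_{\mathrm{I}} - V_{\mathrm{I},\mathrm{II}} V_{\mathrm{II}}^{-1} V_{\mathrm{I},\mathrm{II}}^*)^{-1}, \qquad (V(t)^{-1})_{\mathrm{II}} = (V_{\mathrm{II}} - V_{\mathrm{I},\mathrm{II}}^* V_{\mathrm{I}}^{-1} V_{\mathrm{I},\mathrm{II}})^{-1},
\]
where both Schur complements on the right are positive definite. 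Since $V_{\mathrm{I},\mathrm{II}} V_{\mathrm{II}}^{-1} V_{\mathrm{I},\mathrm{II}}^* \geq 0$ and $V_{\mathrm{I},\mathrm{II}}^* V_{\mathrm{I}}^{-1} V_{\mathrm{I},\mathrm{II}} \geq 0$, one has $0 < V_{\mathrm{I}} - V_{\mathrm{I},\mathrm{II}} V_{\mathrm{II}}^{-1} V_{\mathrm{I},\mathrm{II}}^* \leq V_{\mathrm{I}}$ and likewise for the other block; inverting these inequalities among positive definite matrices (which reverses the order) gives $0 < V_{\mathrm{I}}(t)^{-1} \leq (V(t)^{-1})_{\mathrm{I}}$ and $0 < V_{\mathrm{II}}(t)^{-1} \leq (V(t)^{-1})_{\mathrm{II}}$ for all small $t>0$. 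The right-hand sides are principal submatrices of $V(t)^{-1}$, hence tend to the zero matrix as $t\to 0^+$ by Lemma~\ref{l:wellpos}; by the squeeze, so do $V_{\mathrm{I}}(t)^{-1}$ and $V_{\mathrm{II}}(t)^{-1}$.

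I do not expect a genuine obstacle: the whole argument is a short consequence of Lemma~\ref{l:wellpos} together with Schur complements. The only things deserving care are bookkeeping --- checking that the block decomposition used for the inversion formulas really is the coordinate splitting~\eqref{eq:splitI-IIc}, so that $V_{\mathrm{I},\mathrm{II}}$ stacks precisely $V_{ac}$ and $V_{bc}$ --- and making sure that the monotonicity $0 < X \leq Y \Rightarrow Y^{-1} \leq X^{-1}$ is invoked only for genuinely positive definite matrices, which is ensured for all small $t>0$ by the positivity of $V(t)$ in Lemma~\ref{l:wellpos}.
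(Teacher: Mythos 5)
Your proof is correct and follows essentially the same route as the paper: positivity of $V(t)$ from Lemma~\ref{l:wellpos}, the Schur-complement block inversion formula, the observation that discarding the nonnegative Schur term gives $V_{\mathrm{I}}\geq (W_{\mathrm{I}})^{-1}$ (equivalently $V_{\mathrm{I}}^{-1}\leq W_{\mathrm{I}}$), and then the limit of the principal block of $V^{-1}$. The only cosmetic difference is that you spell out both blocks symmetrically, whereas the paper proves it for $V_{\mathrm{I}}$ and says ``similarly.''
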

\begin{proof}
We prove it for $V_{\mathrm{I}}(t)$. Suppressing the explicit dependence on $t$, we have
\begin{equation}\label{eq:sblock}
V = \begin{pmatrix}
V_{\mathrm{I}} & V_\mathrm{III} \\
V_\mathrm{III}^* & V_{\mathrm{II}}
\end{pmatrix}, \qquad \text{with} \qquad V_{\mathrm{III}} = \begin{pmatrix} V_{ac} \\ V_{bc} \end{pmatrix}.
\end{equation}
We partition similarly the inverse matrix $W := V^{-1}$. In particular, by block-wise inversion, $W_{\mathrm{I}} = (V^{-1})_{\mathrm{I}} = (V_{\mathrm{I}} - V_{\mathrm{III}} (V_{\mathrm{II}})^{-1}V_{\mathrm{III}}^*)^{-1}$. By Lemma~\ref{l:wellpos}, $V > 0$ for small $t>0$, in particular $V_{\mathrm{II}}>0$  on the same interval. Moreover, also $W >0$ and then $W_{\mathrm{I}}>0$.

Then $V_{\mathrm{I}} - (W_{\mathrm{I}})^{-1} = V_{\mathrm{III}} (V_{\mathrm{II}})^{-1}V_{\mathrm{III}}^* \geq 0$. Thus $V_{\mathrm{I}} \geq (W_{\mathrm{I}})^{-1}>0$. Taking the inverse, by positivity, $0 < (V_{\mathrm{I}})^{-1} \leq W_{\mathrm{I}}$ for small $t>0$. Since $\lim_{t\to 0^+}W_{\mathrm{I}} = \lim_{t \to 0^+} (V^{-1})_{\mathrm{I}} = 0$, we obtain the result. Similarly for $V_{\mathrm{II}}$.
\end{proof}

\subsection{Proof of Theorem~\ref{t:conjII-intro}}

The first conjugate time $t_*(\gamma)$ is the first blow-up time of $V(t)$, solution of~\eqref{eq:riccati2}. Using also Lemma~\ref{l:explosion}, we see that the $(2k-n) \times (2k-n)$ block $V_{\mathrm{II}}$ solves
\begin{equation}
\dot{V}_{\mathrm{II}} + R_{\mathrm{II}}(t) + V_{\mathrm{II}}^2 =0 , \qquad \lim_{t \to 0^+} (V_{\mathrm{II}})^{-1} = 0,
\end{equation}
with $R_{\mathrm{II}}(t) = R_{cc}(t) + V_{cb}(V_{cb})^* \geq R_{cc}(t)$. First, we ``take out the direction of motion'' (this procedure is the classical Riemannian one, see \cite[Chapter 14]{villanibook}). According to Remark~\ref{rmk:dimensionalreduction}, we can assume $R_{cc}(t)$ has the following block structure
\begin{equation}
R_{cc}(t) = \begin{pmatrix}
R_{cc}^\prime(t) & 0 \\
0 & 0
\end{pmatrix},
\end{equation}
where $R_{cc}(t)$ has dimension $2k-n-1$. Accordingly, the solution $V_{\mathrm{II}}$ has the form
\begin{equation}\label{eq:splitII-II'}
V_{\mathrm{II}} = \begin{pmatrix}
V_{\mathrm{II}}^\prime & 0 \\ 
0  & v^0_{\mathrm{II}}  
\end{pmatrix},
\end{equation}
where $V^\prime_{\mathrm{II}}$ is a $(2k-n-1)\times(2k-n-1)$ matrix and $v^0_{\mathrm{II}}$ is a $1\times 1 $ matrix. They satisfy
\begin{align}
\dot{V}^\prime_{\mathrm{II}} + R^\prime_{\mathrm{II}}(t) + (V^\prime_{\mathrm{II}})^2=0, \qquad \lim_{t \to 0^+} (V^\prime_{\mathrm{II}})^{-1} =0, \qquad & \text{with}\quad R^\prime_{\mathrm{II}}(t) \geq R^\prime_{cc}(t), \label{eq:viiorth}\\
\dot{v}^0_{\mathrm{II}} + r^0_{\mathrm{II}}(t) + (v^0_{\mathrm{II}})^2=0, \qquad \lim_{t \to 0^+} (v_{\mathrm{II}}^0)^{-1} =0, \qquad & \text{with} \quad  r^0_{\mathrm{II}}(t)  \geq 0 \label{eq:viipar}.
\end{align}
By Theorem~\ref{t:riccaticomparison}, $v_\mathrm{II}^0$ is controlled by the solution of~\eqref{eq:viipar} with $r^0_{\mathrm{II}}(t)\equiv 0$, that is $v_\mathrm{II}^0(t) \leq 1/t$. This term gives no contribution to conjugate time (indeed $1/t$ has no finite blow-up time for $t>0$) but we will use $v_\mathrm{II}^0(t) \leq 1/t$ in a subsequent proof hence it was worth pointing it out.
Now we turn to~\eqref{eq:viiorth}. Its normalized trace 
\begin{equation}\label{eq:smallv'def}
v^\prime_{\mathrm{II}}(t):=\frac{1}{2k-n-1} \trace (V^\prime_{\mathrm{II}}(t))
\end{equation}
solves
\begin{equation}
\dot{v}^\prime_{\mathrm{II}} + r^\prime_{\mathrm{II}}(t) + (v^\prime_{\mathrm{II}})^2 = 0, \qquad \lim_{t\to 0^+} (v^\prime_{\mathrm{II}})^{-1} = 0,
\end{equation}
with (suppressing the explicit dependence on $t$)
\begin{equation}
r^\prime_{\mathrm{II}}  = \frac{\trace (R^\prime_{\mathrm{II}})}{2k-n-1} + \frac{\left[(2k-n-1)\trace((V^\prime_{\mathrm{II}})^2) -\trace(V^\prime_{\mathrm{II}})^2 \right]}{(2k-n-1)^2} \geq \frac{\mathfrak{Ric}^c}{2k-n-1} \geq \kappa_c,
\end{equation}
where we used that, for an $m \times m$ symmetric matrix $M$, $\trace(M^2) \geq \tfrac{1}{m}\trace(M)^2$ and $\trace( R^\prime_{\mathrm{II}}) \geq \trace (R^\prime_{cc}) = \Riccan^c$.  Then, applying Theorem~\ref{t:riccaticomparison}, $v_{\mathrm{II}}'(t) \leq v_{\kappa_c}(t)$, where $v_{\kappa_c}$ is the solution of
\begin{equation}\label{eq:riccaticomparisonII}
\dot{v}_{\kappa_c} + \kappa_c + v_{\kappa_c}^2 = 0, \qquad \lim_{t\to 0^+} v_{\kappa_c}^{-1} = 0.
\end{equation}
In particular, $t_*(\gamma) \leq \bar{t}(\kappa_c)$, where $\bar{t}(\kappa_c)$ is the first blow-up time of $v_{\kappa_c}$. In this case, we can compute the explicit solution of~\eqref{eq:riccaticomparisonII}, which is
\begin{equation}
v_{\kappa_c}(t) = \begin{cases}
\sqrt{\kappa_c} \cot(\sqrt{\kappa_c} t) & \kappa_c >0, \\
\frac{1}{t}  & \kappa_c = 0, \\
\sqrt{|\kappa_c|} \coth(\sqrt{|\kappa_c|} t) & \kappa_c <0. \end{cases}
\end{equation}
Thus, when $\kappa_c >0$, we have $t_*(\gamma) \leq \bar{t}(\kappa_c) = \pi/\sqrt{\kappa_c}$. \hfill $\qed$
\begin{rmk}\label{rmk:defskc}
For later use, we rename $s_{\kappa_c}(t) := v_{\kappa_c}(t)$ and we observe that
\begin{equation}
s_{\alpha^2 \kappa_c}(t) = \alpha s_{\kappa_c}(\alpha t), \qquad \forall \alpha >0,
\end{equation}
for  all $t>0$ where it makes sense.
\end{rmk}

\subsection{Proof of Theorem~\ref{t:conjI-intro}}

The first conjugate time $t_*(\gamma)$ is the first blow-up time to $V(t)$, solution of~\eqref{eq:riccati2}. The $(2n-2k) \times (2n-2k)$ block $V_{\mathrm{I}}$ solves
\begin{equation}
\dot{V}_{\mathrm{I}} + A^*_{\mathrm{I}}V_{\mathrm{I}} + V_{\mathrm{I}} A_{\mathrm{I}} + R_{\mathrm{I}}(t) + V_{\mathrm{I}} B_\mathrm{I} V_{\mathrm{I}} =0 , \qquad \lim_{t \to 0^+} V_{\mathrm{I}}^{-1} = 0,
\end{equation}
where
\begin{equation*}
A_{\mathrm{I}} = \begin{pmatrix}
\mathbbold{0} & \mathbbold{1} \\
\mathbbold{0} & \mathbbold{0}
\end{pmatrix}, \qquad B_{\mathrm{I}} = \begin{pmatrix}
\mathbbold{0} & \mathbbold{0} \\
\mathbbold{0} & \mathbbold{1}
\end{pmatrix},\qquad
R_{\mathrm{I}}(t) = \begin{pmatrix}
R_{aa}(t) & R_{ab}(t) \\
R_{ba}(t) & R_{bb}(t)
\end{pmatrix}+ \begin{pmatrix}
V_{ac} \\ V_{bc}
\end{pmatrix}\begin{pmatrix}
V_{ac}^* & V_{bc}^*
\end{pmatrix}
\end{equation*}
Taking the normalized block-wise trace, that is
\begin{equation}\label{eq:smallvdef}
v_{\mathrm{I}}(t):=\frac{1}{n-k}\begin{pmatrix}
\trace(V_{aa}(t)) & \trace(V_{ab}(t)) \\
\trace(V_{ba}(t)) & \trace(V_{bb}(t))
\end{pmatrix},
\end{equation}
we observe that $v_{\mathrm{I}}$ solves the following $2 \times 2$ Riccati Cauchy problem
\begin{equation}\label{eq:riccaticomparisonI}
\dot{v}_{\mathrm{I}} + a_{\mathrm{I}}^* v_{\mathrm{I}} + v_{\mathrm{I}} a_{\mathrm{I}} + r_{\mathrm{I}}(t) + v_{\mathrm{I}} b_{\mathrm{I}} v_{\mathrm{I}} = 0, \qquad \lim_{t \to 0^+}  v_{\mathrm{I}}^{-1} = 0,
\end{equation}
with
\begin{equation}\label{eq:smallabdef}
 \qquad a_{\mathrm{I}}:= \begin{pmatrix}
0 & 1 \\
0 & 0
\end{pmatrix}, \qquad b_{\mathrm{I}}:= \begin{pmatrix}
0 & 0 \\
0 & 1
\end{pmatrix}
\end{equation}
and, suppressing the explicit dependence on $t$,
\begin{multline}
r_{\mathrm{I}}(t) := \frac{1}{n-k}\begin{pmatrix}
\trace(R_{aa}) & \trace(R_{ab}) \\
\trace(R_{ba}) & \trace(R_{bb})
\end{pmatrix} + \frac{1}{n-k}\begin{pmatrix}
\trace(V_{ac}V_{ac}^*) & \trace(V_{ac} V_{bc}^*) \\ \trace(V_{bc} V_{ac}^*) & \trace(V_{bc} V_{bc}^*)
\end{pmatrix}+\\ 
+ \frac{1}{n-k}\left[\begin{pmatrix}
\trace(V_{ab} V_{ab}^*) & \trace(V_{ab} V_{bb}) \\
\trace(V_{bb} V_{ab}^*) & \trace(V_{bb}V_{bb})
\end{pmatrix} -\frac{1}{n-k} \begin{pmatrix}
\trace(V_{ab})\trace( V_{ab}^*) & \trace(V_{ab})\trace(  V_{bb}) \\
\trace(V_{bb})\trace(  V_{ab}^*) & \trace(V_{bb})\trace( V_{bb})
\end{pmatrix}   \right].
\end{multline}
The second term is non-negative. In fact the minors $\trace(V_{ac} V_{ac}^*)$, $\trace(V_{bc} V_{bc}^*)$ and the determinant $\trace(V_{ac} V_{ac}^*)\trace(V_{bc}V_{bc}^*) - \trace(V_{ac}V_{bc}^*)^2 \geq 0$ are non-negative, by the Cauchy-Schwarz inequality. Also the last term is non-negative
\begin{equation}\label{eq:ineq-traceV}
\begin{pmatrix}
\trace(V_{ab} V_{ab}^*) & \trace(V_{ab} V_{bb}) \\
\trace(V_{bb} V_{ab}^*) & \trace(V_{bb}V_{bb})
\end{pmatrix} -\frac{1}{n-k} \begin{pmatrix}
\trace(V_{ab})\trace( V_{ab}^*) & \trace(V_{ab})\trace(  V_{bb}) \\
\trace(V_{bb})\trace(  V_{ab}^*) & \trace(V_{bb})\trace( V_{bb})
\end{pmatrix} \geq 0.
\end{equation}
To prove \eqref{eq:ineq-traceV} it is enough to show that the principal determinants are non-negative, i.e.
\begin{equation}\label{eq:principal-det1}
\trace(V_{ab}V_{ab}^*) - \frac{\trace(V_{ab})\trace( V_{ab}^*)}{n-k}\geq 0, \qquad \trace(V_{bb}V_{bb}^*) - \frac{\trace(V_{bb})\trace( V_{bb}^*)}{n-k}\ge 0,
\end{equation}
(that follow from the Cauchy-Schwarz inequality) and the determinant is non-negative:
\begin{multline}\label{eq:principal-det2}
\trace(V_{ab}V_{ab}^*)\trace(V_{bb}V_{bb}^*) - \trace(V_{ab} V_{bb}^*)^2 - \frac{\trace(V_{bb})^2\trace(V_{ab}V_{ab}^*)}{n-k} - \frac{\trace(V_{ab})^2\trace(V_{bb}V_{bb}^*)}{n-k} \\
+ \frac{2 \trace(V_{ab})\trace(V_{bb})\trace(V_{ab}V_{bb}^*)}{n-k}\ge 0.
\end{multline}
Inequality~\eqref{eq:principal-det2} follows from the next lemma (with $X= V_{ab}$, $Y=V_{bb}$ and $m=n-k$).
\begin{lemma}
	Let $M_m(\mathbb{R})$ be the real vector space of real $m\times m$ matrices with scalar product $\langle X,Y\rangle:=\trace(XY^*)$. Then the following inequality holds true for all $X,Y \in M_m(\R)$
	\begin{equation}\label{eq:ineq-trace}
	\|X\|^2\|Y\|^2-\langle X,Y\rangle^2 + \frac{2}{m}\trace(X)\trace(Y)\langle X,Y\rangle \ge \frac{1}{m}\left(\trace(Y)^2\|X\|^2 + \trace(X)^2\|Y\|^2\right).
	\end{equation}
\end{lemma}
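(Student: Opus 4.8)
The plan is to use the orthogonal decomposition of $M_m(\R)$, with respect to the Frobenius scalar product, into scalar multiples of the identity $I$ and trace-free matrices. Write
\[
X = \frac{\trace(X)}{m}\, I + X_0, \qquad Y = \frac{\trace(Y)}{m}\, I + Y_0,
\]
with $\trace(X_0) = \trace(Y_0) = 0$, so that $X_0$ and $Y_0$ are orthogonal to $I$. Since $\|I\|^2 = m$ and $\langle Z, I\rangle = \trace(Z)$ for all $Z \in M_m(\R)$, the Pythagorean identity yields
\[
\|X\|^2 = \frac{\trace(X)^2}{m} + \|X_0\|^2, \qquad \|Y\|^2 = \frac{\trace(Y)^2}{m} + \|Y_0\|^2,
\]
and
\[
\langle X, Y\rangle = \frac{\trace(X)\trace(Y)}{m} + \langle X_0, Y_0\rangle.
\]

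Next I would substitute these three expressions into the difference between the two sides of~\eqref{eq:ineq-trace}. Writing $\xi := \trace(X)$ and $\eta := \trace(Y)$ for brevity, a direct expansion shows that every term containing $\xi$ or $\eta$ cancels: the combination $\|X\|^2\|Y\|^2 - \tfrac1m\big(\eta^2\|X\|^2 + \xi^2\|Y\|^2\big)$ collapses to $\|X_0\|^2\|Y_0\|^2 - \xi^2\eta^2/m^2$, while $-\langle X,Y\rangle^2 + \tfrac2m\,\xi\eta\,\langle X,Y\rangle$ collapses to $-\langle X_0,Y_0\rangle^2 + \xi^2\eta^2/m^2$. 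Summing the two, the spurious terms $\xi^2\eta^2/m^2$ cancel, and one is left with the identity: the left-hand side of~\eqref{eq:ineq-trace} minus its right-hand side equals $\|X_0\|^2\|Y_0\|^2 - \langle X_0, Y_0\rangle^2$.

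Finally, this last quantity is non-negative by the Cauchy--Schwarz inequality applied to $X_0$ and $Y_0$ in the Euclidean space $\big(M_m(\R), \langle\cdot,\cdot\rangle\big)$, which proves~\eqref{eq:ineq-trace}; one also reads off that equality holds precisely when the trace-free parts $X_0$ and $Y_0$ are linearly dependent. I do not expect a genuine obstacle here: the only point needing care is the bookkeeping in the cancellation, which is entirely mechanical, and no positivity or spectral information about $X$ and $Y$ is required beyond Cauchy--Schwarz.
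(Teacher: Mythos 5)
Your proof is correct, and it takes a genuinely different route from the paper's. You decompose $X$ and $Y$ into their trace and trace-free parts (the orthogonal projection onto $\R\cdot I$ and its complement), after which the inequality reduces by pure algebraic cancellation to Cauchy--Schwarz on the trace-free parts, $\|X_0\|^2\|Y_0\|^2 \geq \langle X_0,Y_0\rangle^2$. The paper instead first applies Gram--Schmidt to replace $Y$ by $Z = Y - \frac{\langle X,Y\rangle}{\|X\|^2}X$ (after excluding $\|X\|=0$), then normalizes so that $\trace(X)=\trace(Z)=1$ (after handling the case of vanishing traces), and finally applies Cauchy--Schwarz in the form $m\|W\|^2\ge\trace(W)^2$ to a carefully chosen convex combination $W$ of $X$ and $Z$. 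Both arguments bottom out at Cauchy--Schwarz in $(M_m(\R),\langle\cdot,\cdot\rangle)$, but your decomposition is cleaner: it avoids all the case splits ($\|X\|=0$, $\trace(X)=0$ or $\trace(Z)=0$) and reveals the inequality as literally an identity plus Cauchy--Schwarz, with the equality case ($X_0$, $Y_0$ linearly dependent) falling out for free, a characterization the paper's proof does not make explicit.
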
	

\begin{proof}
	If $\|X\|=0$ the statement is trivially true. Suppose $\|X\|>0$ and write $Z = Y- \frac{\langle X,Y\rangle}{\|X\|^2}X$. One can check that \eqref{eq:ineq-trace} is equivalent to
	\begin{equation}\label{eq:ineq-trace2}
	\|X\|^2\|Z\|^2 \ge \frac{1}{m}\left(\trace(Z)^2\|X\|^2 + \trace(X)^2\|Z\|^2\right).
	\end{equation}
		If $\trace(X)=0$ then \eqref{eq:ineq-trace2} follows from $\|X\|^2\ge \frac{1}{m}\trace(X)^2$. Suppose $\trace(X),\trace(Z)\ne 0$, hence \eqref{eq:ineq-trace2} is equivalent to
	\begin{equation}\label{eq:ineq-trace3}
	\|X\|^2\|Z\|^2 \ge \frac{1}{m}\left(\|X\|^2 + \|Z\|^2\right),
	\end{equation}
	where $\trace(X)=\trace(Z)=1$ and $\langle X,Z \rangle=0$. 
Define the matrix
\begin{equation}
W := \frac{Z \|X\|^2 + X \|Z\|^2}{\|X\|^2+ \|Z\|^2}.
\end{equation}
By Cauchy-Schwarz inequality $m \|W\|^2 \geq \trace(W)^2 =1 $, and this corresponds to~\eqref{eq:ineq-trace3}.
\end{proof}

Finally, by~\eqref{eq:normalcond}, $R_{ab}(t)$ is skew-symmetric, thus (suppressing explicit dependence on $t$)
\begin{equation}
r_{\mathrm{I}}(t) \geq \frac{1}{n-k}\begin{pmatrix}
\trace(R_{aa}) & \trace(R_{ab}) \\
\trace(R_{ba}) & \trace(R_{cc})
\end{pmatrix} = \frac{1}{n-k}\begin{pmatrix}\Riccan^a & 0 \\
0 & \Riccan^b
\end{pmatrix} \geq \begin{pmatrix}
\kappa_a & 0 \\
0 & \kappa_b
\end{pmatrix}.
\end{equation}
By Theorem~\ref{t:riccaticomparison}, $v_{\mathrm{I}}(t) \leq v_{\kappa_a,\kappa_b}(t)$, where $v_{\kappa_a,\kappa_b}(t)$ is the solution of~\eqref{eq:riccaticomparisonI} with $r_{\mathrm{I}}(t)$ replaced by the constant $2\times 2$ matrix $q_{\mathrm{I}} = \mathrm{diag}(\kappa_a,\kappa_b)$. The blow-up time of $v_{\kappa_a,\kappa_b}(t)$ is $\bar{t}(\kappa_a,\kappa_b)$. A blow-up of $v_{\mathrm{I}}$ implies a blow-up of $V_{\mathrm{I}}$ and $V$. Then, $t_*(\gamma) \leq \bar{t}(\kappa_a,\kappa_b)$. The next proposition characterizes $\bar{t}(\kappa_a,\kappa_b)$ and, in particular, it shows that under conditions~\eqref{eq:conditions}, $\bar{t}(\kappa_a,\kappa_b)$ is finite (this proves also Proposition~\ref{p:stima-intro}). \hfill $\qed$

\begin{proposition}\label{p:stima}
Consider the following Cauchy problem with a $2 \times  2$ matrix Riccati equation
\begin{equation}\label{eq:RiccatiTypeI}
\dot{v}_{\kappa_a,\kappa_b} + a^*_{\mathrm{I}} v_{\kappa_a,\kappa_b} + v_{\kappa_a,\kappa_b} a_{\mathrm{I}} + q_{\mathrm{I}} + v_{\kappa_a,\kappa_b} b_{\mathrm{I}} v_{\kappa_a,\kappa_b} = 0, \qquad \lim_{t \to 0^+} v_{\kappa_a,\kappa_b}^{-1} = 0,
\end{equation}
with constant matrix coefficients
\begin{equation}
a_{\mathrm{I}} = \begin{pmatrix}
0 & 1 \\
0 & 0
\end{pmatrix}, \qquad b_{\mathrm{I}} = \begin{pmatrix}
0 & 0 \\
0 & 1
\end{pmatrix}, \qquad q_{\mathrm{I}}= \begin{pmatrix}
\kappa_a & 0 \\
0 & \kappa_b
\end{pmatrix}, \qquad \kappa_a,\kappa_b \in \R.
\end{equation}
The first blow-up time $\bar{t}(\kappa_a,\kappa_b)$ of the solution of~\eqref{eq:RiccatiTypeI} is the first blow-up time of the function $s_{\kappa_a,\kappa_b}: (0,\bar{t}(\kappa_a,\kappa_b)) \to \R$, given by
\begin{equation}
s_{\kappa_a,\kappa_b}(t):= \frac{2}{t}\left(\frac{\sinc \left(2 \theta_+ t\right)-\sinc \left(2 \theta _- t\right)}{\sinc\left(\theta _+ t\right)^2- \sinc\left(\theta _- t\right)^2}\right), \qquad \theta_{\pm}= \frac{1}{2}(\sqrt{x+y} \pm \sqrt{x-y}), 
\end{equation}
where $\sinc(a) = \sin(a)/a$ and we set $x = \frac{\kappa_b}{2}$ and $y = \frac{\sqrt{4\kappa_a + \kappa_b^2}}{2}$.
Moreover 
\begin{equation}\label{eq:stima}
\bar{t}(\kappa_a,\kappa_b) \leq \frac{\pi}{\mathrm{Re}(\theta_-)} = \frac{2 \pi}{\mathrm{Re}(\sqrt{x+y}-\sqrt{x-y})},
\end{equation}
where the r.h.s. of~\eqref{eq:stima} is $+\infty$ if the denominator is zero and $\sqrt{\cdot}$ is the principal value of the square root. The equality holds if and only if $\kappa_a = 0$, in this case $\bar{t}(0,\kappa_b) = 2\pi/\sqrt{\kappa}_b$. In particular $\bar{t}(\kappa_a,\kappa_b)$ is finite if and only if
\begin{equation}
\begin{cases} \kappa_b \geq 0, & \\
\kappa_b^2 + 4\kappa_a > 0, &
\end{cases} \qquad\text{or} \qquad \begin{cases} \kappa_b < 0, & \\
\kappa_a > 0. &
\end{cases}
\end{equation}
\end{proposition}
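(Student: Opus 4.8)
The plan is to linearise the matrix Riccati equation~\eqref{eq:RiccatiTypeI}. By Lemma~\ref{l:relationjac}, applied with $A=a_{\mathrm{I}}$, $B=b_{\mathrm{I}}$, $Q=q_{\mathrm{I}}$, its solution is $v_{\kappa_a,\kappa_b}(t)=M(t)N(t)^{-1}$, where $M,N$ are the $2\times 2$ blocks of the constant-coefficient system $\dot M=-a_{\mathrm{I}}^{*}M-q_{\mathrm{I}}N$, $\dot N=b_{\mathrm{I}}M+a_{\mathrm{I}}N$, $M(0)=\mathbbold{1}$, $N(0)=\mathbbold{0}$, and $\bar t(\kappa_a,\kappa_b)$ is the first positive zero of $\det N(t)$. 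Thus everything reduces to an explicit computation of $\det N$ and of its first zero.

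Writing the linear system in the coordinates $(p_a,p_b,x_a,x_b)$ gives the closed scalar equation $\phi^{(4)}+\kappa_b\ddot\phi-\kappa_a\phi=0$ for $\phi=x_a$, with characteristic roots $\pm i\omega_1,\pm i\omega_2$ where $\omega_1=\sqrt{x+y}=\theta_++\theta_-$ and $\omega_2=\sqrt{x-y}=\theta_+-\theta_-$. The two columns of $N$ come from the solutions $\phi_1,\phi_2$ of this ODE with $3$-jets $(0,0,0,-1)$ and $(0,0,1,0)$ at $t=0$, i.e.\ $\phi_1=\frac{1}{\omega_1^{2}-\omega_2^{2}}\big(\frac{\sin\omega_1 t}{\omega_1}-\frac{\sin\omega_2 t}{\omega_2}\big)$ and $\phi_2=\frac{\cos\omega_2 t-\cos\omega_1 t}{\omega_1^{2}-\omega_2^{2}}$, so that $N$ has rows $(\phi_1,\phi_2)$ and $(\dot\phi_1,\dot\phi_2)$ (since $x_b=\dot x_a$). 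A computation with the product-to-sum formulae, re-expressed through $\theta_\pm=\tfrac12(\omega_1\pm\omega_2)$, then gives
\[
\det N(t)=\phi_1\dot\phi_2-\phi_2\dot\phi_1=\frac{t^{2}\big(\sinc^{2}(\theta_- t)-\sinc^{2}(\theta_+ t)\big)}{4(\theta_+^{2}-\theta_-^{2})},
\]
a real-analytic function (the apparent poles at $\theta_+^{2}=\theta_-^{2}$ and $\theta_\pm t=0$ are removable) with $\det N(t)\sim t^{4}/12>0$ as $t\to 0^{+}$. Comparing with~\eqref{eq:sRiemanfunc} one checks the identity $s_{\kappa_a,\kappa_b}(t)=\tfrac{d}{dt}\log|\det N(t)|$; since $\det N$ is non-vanishing on $(0,\bar t)$ and, when $\bar t<\infty$, has a zero of finite order at $\bar t$, the function $s_{\kappa_a,\kappa_b}$ is smooth on $(0,\bar t)$ and blows up (to $-\infty$) exactly at $\bar t$. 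This proves the first assertion.

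It remains to estimate the first positive zero of $\Psi(t):=\theta_+^{2}\sin^{2}(\theta_- t)-\theta_-^{2}\sin^{2}(\theta_+ t)$. Under~\eqref{eq:conditions} one has $\mathrm{Re}(\theta_-)>0$, and (apart from the degenerate locus $\kappa_a=0$, where $\theta_+=\theta_-$) the pair $\{\theta_+,\theta_-\}$ is either a pair of distinct positive reals $\theta_+>\theta_->0$ (when $\kappa_a\le 0$) or a complex-conjugate pair $r\pm i s$ with $r=\mathrm{Re}(\theta_-)>0$ (when $\kappa_a>0$). In all cases $\det N(0^{+})>0$, while at $t=\pi/\mathrm{Re}(\theta_-)$ one has $\det N\le 0$ in the real case (where $\sin(\theta_- t)=0$) and, by a direct evaluation, $\det N\big(\pi/r\big)=-\sinh^{2}(\pi s)/\big(4(r^{2}+s^{2})^{2}\big)<0$ in the complex case. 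By the intermediate value theorem $\det N$ vanishes somewhere in $(0,\pi/\mathrm{Re}(\theta_-)]$, whence $\bar t(\kappa_a,\kappa_b)\le \pi/\mathrm{Re}(\theta_-)$; when $\kappa_a=0$ the explicit formula $\det N(t)=\big(2-2\cos(\sqrt{\kappa_b}\,t)-\sqrt{\kappa_b}\,t\sin(\sqrt{\kappa_b}\,t)\big)/\kappa_b^{2}$ is positive on $(0,2\pi/\sqrt{\kappa_b})$ and vanishes at the endpoint, so $\bar t(0,\kappa_b)=2\pi/\sqrt{\kappa_b}=\pi/\mathrm{Re}(\theta_-)$. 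For the finiteness dichotomy I would invoke Theorem~\ref{t:silveira}: the $4\times 4$ Hamiltonian matrix built from $a_{\mathrm{I}},b_{\mathrm{I}},q_{\mathrm{I}}$ has characteristic polynomial $\mu^{4}+\kappa_b\mu^{2}-\kappa_a$, and analysing its roots shows it has a Jordan block of odd dimension with purely imaginary eigenvalue exactly when~\eqref{eq:conditions} holds; when~\eqref{eq:conditions} fails, $\theta_-$ (or $\theta_+$) is purely imaginary and, using $|\sin u/u|\le 1\le \sinh u/u$ for real $u$, one checks $\det N(t)>0$ for all $t>0$, so $\bar t=+\infty$.

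The main obstacle is the last step: one must carefully separate the regimes where $\{\theta_+,\theta_-\}$ is real versus complex-conjugate, treat the degenerate loci $\theta_+^{2}=\theta_-^{2}$ and $\kappa_b^{2}+4\kappa_a=0$ by continuity, and—for the sharp constant and the equality discussion—establish that $\det N$ has no zero strictly before $\pi/\mathrm{Re}(\theta_-)$; the sign evaluation of $\det N$ at that endpoint and the elementary estimates for $\sin$ and $\sinh$ it requires are where the real work is concentrated.
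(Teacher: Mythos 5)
Your linearisation via Lemma~\ref{l:relationjac} is exactly the route the paper takes, and your reduction of the constant-coefficient Hamiltonian system to the scalar ODE $\phi^{(4)}+\kappa_b\ddot\phi-\kappa_a\phi=0$ is an elementary and transparent alternative to the paper's computation of the $4\times4$ matrix exponential via eigenvectors. Your closed formula $\det N(t)=\tfrac{t^{2}\bigl(\sinc^{2}(\theta_- t)-\sinc^{2}(\theta_+ t)\bigr)}{4(\theta_+^{2}-\theta_-^{2})}$ is correct and so is the identity $s_{\kappa_a,\kappa_b}=\tfrac{d}{dt}\log|\det N|$, matching the paper's definition of $s_{\kappa_a,\kappa_b}$ as $\tfrac{d}{dt}\log|\det n(t)|$. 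The paper then proceeds by a three-way case analysis on the Jordan type of the Hamiltonian matrix, while your proof of the inequality $\bar t\le \pi/\mathrm{Re}(\theta_-)$ compresses all cases into a single intermediate-value-theorem argument based on $\det N(0^{+})>0$ and $\det N\bigl(\pi/\mathrm{Re}(\theta_-)\bigr)\le 0$. That is cleaner, and your endpoint computation is correct up to a typo: the argument of $\sinh$ should be $s\pi/r$, not $\pi s$, in $\det N(\pi/r)=-\sinh^{2}(s\pi/r)/\bigl(4(r^{2}+s^{2})^{2}\bigr)$. The $\kappa_a=0$ case and the use of Theorem~\ref{t:silveira} for the finiteness dichotomy match what the paper does.

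The genuine gap, which you yourself flag as the remaining obstacle, is the claim that equality $\bar t=\pi/\mathrm{Re}(\theta_-)$ holds \emph{only if} $\kappa_a=0$: you would need $\det N$ to vanish strictly before $\pi/\mathrm{Re}(\theta_-)$ whenever $\kappa_a\ne 0$, and your IVT argument gives no such strict inequality. Be aware that this is not a gap you can fill: the claim as stated is false. Take $\theta_+=2$, $\theta_-=1$, i.e.\ $\kappa_b=2(\theta_+^{2}+\theta_-^{2})=10$ and $\kappa_a=-(\theta_+^{2}-\theta_-^{2})^{2}=-9$ (so $\kappa_a\ne 0$ and conditions~\eqref{eq:conditions} hold). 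Then
\begin{equation*}
\sinc^{2}(t)-\sinc^{2}(2t)=\frac{\sin^{2}t}{t^{2}}-\frac{4\sin^{2}t\cos^{2}t}{4t^{2}}=\frac{\sin^{4}t}{t^{2}},
\qquad
\det N(t)=\frac{\sin^{4}t}{12},
\end{equation*}
which is strictly positive on $(0,\pi)$ and vanishes at $t=\pi=\pi/\mathrm{Re}(\theta_-)$, so $\bar t(-9,10)=\pi/\mathrm{Re}(\theta_-)$ with $\kappa_a\ne 0$. The same phenomenon occurs whenever $\theta_+/\theta_-$ is a positive integer. The paper's own proof of Case~1 asserts the strict upper bound $\bar t<\pi/\theta_-$ by noting $\sinc(\theta_+\pi/\theta_-)\ne 0$, which fails precisely on this integer locus, so the paper's proof of the ``only if'' direction has the same defect. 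This does not affect the non-strict inequality~\eqref{eq:stima}, which is what is actually used downstream (notably in the proof of Proposition~\ref{p:3sasBM-2}); both your argument and the paper's establish that inequality correctly.
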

\begin{proof}
To compute $\bar{t}(\kappa_a,\kappa_b)$ we use Lemma~\ref{l:relationjac}. Then $v_{\kappa_a,\kappa_b}(t) = m(t)n(t)^{-1}$ with
\begin{equation}
\begin{pmatrix}
m(t) \\
n(t)
\end{pmatrix} = \exp\left(t \begin{pmatrix}
-a_{\mathrm{I}}^* & -q_{\mathrm{I}} \\
b_{\mathrm{I}} & a_{\mathrm{I}}
\end{pmatrix}\right) \begin{pmatrix}
\mathbbold{1} \\ 
\mathbbold{0}
\end{pmatrix},
\end{equation}
where $\exp$ is the matrix exponential. Thus $\bar{t}(\kappa_a,\kappa_b)$ is the first positive zero of $\det n(t)$ or. For reasons that will be clear later, it is more convenient to study, equivalently, the first blow-up time of
\begin{equation}
s_{\kappa_a,\kappa_b}(t):= \frac{d}{d t}\log |\det n(t)|, \qquad t \in (0,\bar{t}(\kappa_a,\kappa_b)).
\end{equation}
\begin{rmk}\label{rmk:defskakb}
For later use, observe that
\begin{equation}
s_{\kappa_a,\kappa_b}(t) = \trace(\dot n(t) n(t)^{-1}) = \trace(b_{\mathrm{I}}(t) v_{\kappa_a,\kappa_b}(t)),
\end{equation}
and the function $s_{\kappa_a,\kappa_b}(t)$ has the following homogeneity property:
\begin{equation}
s_{\alpha^4 \kappa_a,\alpha^2 \kappa_b}(t) = \alpha s_{\kappa_a,\kappa_b}(\alpha t), \qquad \forall \alpha >0,
\end{equation}
for  all $t>0$ where it makes sense.
\end{rmk}
We compute $s_{\kappa_a,\kappa_b}(t)$. The characteristic polynomial of
\begin{equation}\label{eq:hammat}
\begin{pmatrix}
-a_{\mathrm{I}}^* & -q_{\mathrm{I}} \\
b_{\mathrm{I}} & a_{\mathrm{I}}
\end{pmatrix} = \begin{pmatrix}
0 & 0 & -\kappa_a & 0 \\
-1 & 0 & 0 & -\kappa_b \\
0 & 0 & 0 & 1 \\
0 & 1 & 0 & 0
\end{pmatrix}
\end{equation}
is $P(x) = x^4 + \kappa_b x^2 -\kappa_a$. Recall that~\eqref{eq:hammat} is a Hamiltonian matrix, hence if $\lambda \in \mathbb{C}$ is an eigenvalue, then also $\pm \lambda$ and $\pm \bar\lambda$ are eigenvalues (the bar denotes complex conjugation). Its Jordan form depends on the value of $\Delta := \kappa_b^2 + 4\kappa_a$:
\begin{itemize}
\item[(i)] If $\Delta = 0$ there are two Jordan blocks (of size $2$) associated with eigenvalues $\pm \sqrt{-\kappa_b/2}$,
\item[(ii)] If $\Delta < 0$ then~\eqref{eq:hammat} has $4$ distinct simple eigenvalues $\pm \lambda, \pm \bar\lambda \in \mathbb{C}$,
\item[(iii)] If $\Delta > 0$ then~\eqref{eq:hammat} has $2$ pairs $\pm \lambda_1$ and $\pm \lambda_2$, with $\lambda_1 \neq \pm \lambda_2$ of simple eigenvalues.
\end{itemize}
In the cases (i) and (ii) $\bar{t}(\kappa_a,\kappa_b) = +\infty$ by Theorem~\ref{t:silveira}. In the remaining case, set:
\begin{equation}
x= \frac{\kappa_b}{2}, \qquad y=\frac{\sqrt{\Delta}}{2}, \qquad \theta_{\pm}=\frac{1}{2}(\sqrt{x+y}\pm \sqrt{x-y}).
\end{equation}
%The map $(\kappa_a,\kappa_b) \mapsto (x,y)$ is one-to-one from the set $\Delta >0 \subset \R^2$ on the set $\{(x,y) \in \R^2 \mid y>0\}$. 
In particular we recover $\kappa_b =2(\theta_+^2+\theta_-^2)$ and $\kappa_a = -(\theta_+^2-\theta_-^2)^2$. The eigenvalues of~\eqref{eq:hammat} are given then by the two distinct pairs
\begin{equation}
\pm \lambda_1:=\pm i(\theta_++\theta_-), \qquad \pm\lambda_2:=\pm i(\theta_+-\theta_-).
\end{equation}
This encompasses different cases ($2$ distinct imaginary pairs, $2$ distinct real pairs, $1$ imaginary and $1$ real pair). The corresponding eigenvectors are
\begin{equation}
\xi_1^\pm = \begin{pmatrix}
-(\theta_--\theta_+)^2 \\ \pm i(\theta_-+\theta_+) \\ \frac{1}{\pm i(\theta_-+\theta_+)} \\ 1 
\end{pmatrix},  \qquad
\xi_2^\pm = \begin{pmatrix}
-(\theta_-+\theta_+)^2 \\ \pm i(\theta_+-\theta_-) \\ \frac{1}{\pm i(\theta_+-\theta_-)} \\ 1 
\end{pmatrix}.
\end{equation}
After some routine computations for the matrix exponential of~\eqref{eq:hammat} one obtains
\begin{equation}\label{eq:derlogdetN}
s_{\kappa_a,\kappa_b}(t)=\frac{2}{t}\left(\frac{\sinc \left(2 \theta_+ t\right)-\sinc \left(2 \theta _- t\right)}{\sinc\left(\theta _+ t\right)^2-\sinc\left(\theta _- t\right)^2}\right),
\end{equation}
where, if $\theta_+ = \pm \theta_-$, the result must be understood in the limit $\theta_+ \to \pm \theta_-$.
\paragraph{Case 1} The two pairs of eigenvalues are pure imaginary, that is $\theta_+ > \theta_- >0$ are reals. Then the first blow-up time of $s_{\kappa_a,\kappa_b}(t)$ is at the first positive root of
\begin{equation}\label{eq:sinc-sinc}
\sinc(\theta_+ t)^2 = \sinc(\theta_- t)^2.
\end{equation}
In particular, since $\theta_+ > \theta_- > 0$, and the first zero of $\sinc(a)$ is at $a = \pi$, we have
\begin{equation}
\frac{\pi}{\theta_+} < \bar{t}(\kappa_a,\kappa_b) < \frac{\pi}{\theta_-}.
\end{equation}

\paragraph{Case 2} The two pairs of eigenvalues are both real, that is $\theta_+,\theta_-$ are pure imaginary. We already know from Theorem~\ref{t:silveira} that in this case $\bar{t}(\kappa_a,\kappa_b) = + \infty$. We prove it directly.  If $|\theta_+| \neq |\theta_-|$, the first blow-up time of $s_{\kappa_a,\kappa_b}(t)$ is the first positive root of
\begin{equation}
\frac{\sinh(|\theta_+| t)^2}{|\theta_+|^2} = \frac{\sinh(|\theta_-| t)^2}{|\theta_-|^2},
\end{equation}
and since $|\theta_+|\neq |\theta_-|$ the above equation has no positive solutions. If $|\theta_+|= |\theta_-|$, then~\eqref{eq:derlogdetN} must be considered in the limit $\theta_+ \to \pm \theta_-$. After taking the limit, we obtain that the first blow-up time is the first positive root of $\tanh(|\theta_+| t) = |\theta_+| t$, that has no solution for $t \neq 0$.
\paragraph{Case 3} One pair is pure imaginary and the other is real. This means that $\theta_+ = \alpha+i \beta$ and $\theta_- = \alpha-i\beta$, with $\alpha > 0$ and $\beta \geq 0$. In this case~\eqref{eq:derlogdetN} becomes
\begin{equation}
s_{\kappa_a,\kappa_b}(t) = \frac{(\alpha^2+\beta^2)}{2 \cosh(\beta t)^2}\frac{\beta\sin(2\alpha t)\cosh(2\beta t )-\alpha\cos(2\alpha t)\sinh(2\beta t)}{[\beta\sin(\alpha t)-\alpha \cos(\alpha t)\tanh(\beta t)][\alpha\sin(\alpha t)+\beta\cos(\alpha t)\tanh(\beta t)]}.
\end{equation}
Assume first $\beta >0$. In this case, if $\cos(\alpha t) = 0$, then $s_{\kappa_a,\kappa_b}(t)$ is finite. Hence, assuming $\cos(\alpha t) \neq 0$, the first blow-up of $s_{\kappa_a,\kappa_b}(t)$ is given by the first positive root of
\begin{equation}
[\beta \tan(\alpha t)-\alpha \tanh(\beta t)][\alpha \tan(\alpha t)+ \beta \tanh(\beta t)] = 0.
\end{equation}
A rapid inspection shows that the first positive root occurs thanks to the second factor, and
\begin{equation}
\bar{t}(\kappa_a,\kappa_b) < \frac{\pi}{\alpha} = \frac{\pi}{\mathrm{Re}(\theta_-)}.
\end{equation}
The case $\beta =0$ corresponds to $\theta_+ = \theta_-$ and~\eqref{eq:derlogdetN} must be taken in the limit. We obtain
\begin{equation}
s_{\kappa_a,\kappa_b}(t) = \alpha  \left(\frac{\alpha  t}{1-\alpha  t \cot (\alpha  t)}+\cot (\alpha  t)\right),
\end{equation}
whose first blow-up time is $\bar{t}(\kappa_a,\kappa_b)= \frac{\pi}{\alpha} = \frac{\pi}{\mathrm{Re}(\theta_-)}$. This completes all the cases.
\end{proof}

\subsection{Proof of Theorems~\ref{t:bmI-intro} and~\ref{t:bmII-intro}}

By Theorem~\ref{t:conjI-intro} (or~\ref{t:conjII-intro}), under conditions~\ref{eq:conditions}, any length-parametrized sub-Rieman\-nian geodesic $\gamma$ has a conjugate time $t_*(\gamma) \leq \bar{t}(\kappa_a,\kappa_b)$ (resp. $\leq \bar{t}(\kappa_c)$). In particular, no geodesic can be optimal after such a length.

The sub-Riemannian structure is complete, hence for any pair $q,p \in M$ there exists a (possibly not-unique) minimizing trajectory joining $q$ and $p$ (see \cite{nostrolibro,montgomerybook,rifford2014sub}). This trajectory is a geodesic $\gamma_{p,q}$ (the structure is fat and there are no abnormal minimizers).
\begin{equation*}
\diam( M) = \inf\{\d(p,q) \mid p,q \in M\} = \inf\{\ell(\gamma_{p,q}) \mid p,q \in M \} \leq \bar{t}(\kappa_a,\kappa_b)\quad (\text{resp. } \bar{t}(\kappa_c)).
\end{equation*}
By completeness, closed balls are compact, hence $M$ is compact. The argument for the fundamental group is the classical one, considering the universal cover $\tilde{M}$ (see \cite{Myers}). \hfill $\qed$

\subsection{Proof of Theorem~\ref{t:lapl-intro}}

Fix $q_0 \in M$. The function $\f_{q_0}:=\frac{1}{2}\d(q_0,\cdot)^2$ on a complete, fat sub-Riemannian structure has the following properties (see \cite{nostrolibro,rifford2014sub}):
\begin{itemize}
\item is smooth on a maximal open dense set $\Sigma_{q_0}$, whose complement has zero measure;
\item for any point $q \in \Sigma_{q_0}$, there exists a unique minimizing geodesic $\gamma :[0,1] \to M$ such that $\gamma(0) = q_0$ and $\gamma(1)=q$. The corresponding final covector is given by
\begin{equation}
\lambda(1) = d_q \f_{q_0} \in T_{q}^*M,
\end{equation}
\end{itemize}
Notice that the initial covector $\lambda= e^{-\vec{H}} (d_q \f_{q_0})$ is not unit; the associated geodesic is not length-parametrized and has speed $\|\dot\gamma(t)\|^2 = 2H(\lambda) = \d(q_0,q)^2$. In this proof, with no risk of confusion, we use the symbol $\nabla h$ to denote the horizontal gradient $\grad(h)$ of $h \in C^\infty(M)$.

We drop $q_0$ from the notation of $\f_{q_0}$, since it is fixed. For any $p \in \Sigma_{q_0}$, the two curves
\begin{equation}
e^{\varepsilon \nabla \f}(p), \qquad \text{and} \qquad \pi \circ e^{\varepsilon \vec{H}}(d_p \f),
\end{equation}
define the same tangent vector at $p$. Hence we can exchange them at first order in $\varepsilon$. Let $d \f : \Sigma_{q_0} \to T^*M$ be the smooth map $p \mapsto d_p \f$. In particular, for any tensor $\eta$
\begin{equation}\label{eq:scambio}
\left.\frac{d}{d \varepsilon}\right|_{\varepsilon = 0} (e^{\varepsilon \nabla \f})^* \eta = \left.\frac{d}{d \varepsilon}\right|_{\varepsilon = 0} (\pi \circ e^{\vec{H}} \circ d\f)^* \eta.
\end{equation}
By definition of sub-Laplacian associated with a smooth volume $\omega$ we have
\begin{equation}
\Delta_{\omega}h =\frac{1}{\omega} \mathcal{L}_{\nabla h} \omega= \frac{1}{\omega} \left.\frac{d}{d\varepsilon}\right|_{\varepsilon=0} (e^{\varepsilon \nabla h})^* \omega, \qquad h \in C^\infty(M),
\end{equation}
where $e^{\tau X}$ denotes the flow of the vector field $X$. For $h = \f$, and using~\eqref{eq:scambio}, we obtain
\begin{equation}\label{eq:laplacianformula}
(\Delta_{\omega} \f)(q) =\frac{1}{\omega(W_1,\ldots,W_n)} \left.\frac{d}{d\varepsilon}\right|_{\varepsilon=0}\omega(\pi_* \circ e^{\varepsilon \vec{H}}_*\circ (d \f)_* (W_1,\ldots,W_n)),
\end{equation}
for any set of vectors $W_1,\ldots,W_n \in T_q M$. Consider a canonical frame $\{E_i(t),F_i(t)\}_{i=1}^n$ along the extremal $\lambda(t)$ as in Sec.~\ref{s:jac}, and the corresponding frame $f_i(t)= \pi_*F_i(t)$ along $\gamma(t)$. We will soon set $W_i = f_i(1)$ in~\eqref{eq:laplacianformula}. For any $q \in \Sigma_{q_0}$, we have $\pi \circ e^{-\vec{H}} (d_q \f) = q_0$, hence
\begin{equation}
\pi_* \circ e^{-\vec{H}}_* \circ (d \f)_*  = 0.
\end{equation}
In particular, since $\ker \pi_*|_{\lambda} = \spn\{E_1(0),\ldots,E_n(0)\}$, for all $i=1,\ldots,n$ we have
\begin{equation}
e^{-\vec{H}}_* \circ (d \f)_* f_i(1) =  \sum_{j=1}^n \Theta_{ji} E_j(0), \quad \Rightarrow \quad (d\f)_* f_i(1) = \sum_{j=1}^n \Theta_{ji} e^{\vec{H}}_* E_j(0),
\end{equation}
for some $n\times n$ matrix $\Theta$. The vector field $\J_j(t) = e^{t\vec{H}}_* E_j(0)$ is a Jacobi field along $\lambda(t)$ with initial condition $\J_j(0) = E_j(0)$. In particular, its components $\J_j(t) =\sum_{\ell=1}^n M_{\ell j}(t)E_\ell(t) + N_{\ell j}(t) F_\ell(t)$ solve~\eqref{eq:Jacobimatrix}. Moreover, since $\pi \circ d\f = \mathbb{I}$ on $\Sigma_{q_0}$, we have
\begin{align}
f_i(1) & = \pi_* \circ (d\f)_* f_i(1)  \\
& = \pi_* \sum_{\ell,j=1}^n \Theta_{j i}\left( M_{\ell j}(1)E_\ell (1) + N_{\ell j}(1) F_\ell(1)\right)  = \sum_{\ell=1}^n [N(1)\Theta]_{\ell i} f_\ell(1).
\end{align}
In particular $\Theta = N(1)^{-1}$. Hence
\begin{align}
\pi_* \circ e^{\varepsilon \vec{H}}_* \circ (d\f)_* f_i(1) =\sum_{\ell =1}^n [N(1+\varepsilon)\Theta]_{\ell i} f_\ell(1+ \varepsilon)  =\sum_{\ell=1}^n  [N(1+ \varepsilon) N(1)^{-1}]_{\ell i}  f_\ell(1+ \varepsilon).
\end{align}
Plugging this back into~\eqref{eq:laplacianformula}, we obtain
\begin{align}
(\Delta_{\omega} \f)(q) &= \frac{1}{\omega(f_1(1),\ldots,f_n(1))}\left.\frac{d}{d\varepsilon}\right|_{\varepsilon=0} \frac{\det(N(1+\varepsilon))}{\det N(1)} \omega(f_1(1+\varepsilon),\ldots,f_n(1+\varepsilon)) \\
& = \left.\frac{d}{d t}\right|_{t=1} \log (|\det N(t) \omega(f_1(t),\ldots,f_n(t))|) \\
& = \trace(\dot{N}(1) N(1)^{-1}) + \rho_\omega(d_q \f),
\end{align}
where we used the definition of canonical volume derivative, and Remark~\ref{rmk:traslation}. The matrix $N(t)$ solves~\eqref{eq:Jacobimatrix}, thus by the same splitting and notation of the previous proofs
\begin{align*}
(\Delta_\omega \f)(q) & = \trace(B V(1) + A) + \rho_\omega(d_q \f) & \text{by \eqref{eq:Jacobimatrix}}\\
& = \trace(B_{\mathrm{I}}V_{\mathrm{I}}(1)) + \trace(V_{\mathrm{II}}(1)) + \rho_\omega(d_q \f) & \text{by \eqref{eq:splitI-IIa} \eqref{eq:splitI-IIb} \eqref{eq:splitI-IIc}} \\
& = \trace(B_{\mathrm{I}}V_{\mathrm{I}}(1)) + \trace(V^\prime_{\mathrm{II}}(1)) + v^0_{\mathrm{II}}(1) + \rho_\omega(d_q \f) & \text{by  \eqref{eq:splitII-II'}}\\
& =(n-k) \trace(b_{\mathrm{I}} v_{\mathrm{I}}(1)) + (2k-n-1)v^\prime_{\mathrm{II}}(1) + v^0_{\mathrm{II}}(1)+ \rho_\omega(d_q \f), & \text{by \eqref{eq:smallvdef} \eqref{eq:smallabdef} \eqref{eq:smallv'def}}
\end{align*}
where $V(t)$ is the solution of~\eqref{eq:riccati} with curvature matrix associated with the extremal $\lambda(t)=e^{(t -1)\vec{H}}(d_q \f)$. We rescale $\lambda(t)$. Set $t_q := \d(q_0,q)$ and denote with $\bar{\lambda}(t) := e^{t \vec{H}}(\bar\lambda)$ the extremal with unit initial covector $\bar\lambda := \lambda / t_q$. By homogeneity of the Hamiltonian we have
\begin{equation}
\lambda(t) = e^{t \vec{H}}(\lambda) = e^{t \vec{H}}(t_q \bar{\lambda}) = t_q \bar\lambda(t_q t).
\end{equation}
By Remark~\ref{r:homgeneity-vol}, and the hypothesis on the canonical volume derivative, we have
\begin{equation}
\rho_\omega(d_q \f) = \rho_\omega(\lambda(1)) = \rho_\omega(t_q \bar\lambda(t_q)) = t_q \rho_\omega(\bar\lambda(t_q)) \leq t_q \kappa_\omega(\bar\lambda).
\end{equation}
By hypothesis $\Riccan^\alpha(\bar{\lambda}(t)) \geq \kappa_\alpha(\bar{\lambda})$ for all unit covectors $\bar{\lambda}$, and $\alpha=a,b,c$. Then by Remark~\ref{rmk:homogeneityproperties}
\begin{align}
\Riccan^a(\lambda(t)) & = t_q^{4} \Riccan^a(\bar\lambda(t_q t)) \geq t_q^4\kappa_a(\bar\lambda), \\
\Riccan^b(\lambda(t)) & = t_q^{2} \Riccan^b(\bar\lambda(t_q t)) \geq t_q^2\kappa_b(\bar\lambda),\\
\Riccan^c(\lambda(t)) & = t_q^{2} \Riccan^c(\bar\lambda(t_q t)) \geq t_q^2\kappa_c(\bar\lambda).
\end{align}
By Riccati comparison, as in the previous sections (and taking in account rescaling) we have 
\begin{equation}
v_{\mathrm{I}}(t)  \leq v_{t_q^4\kappa_a(\bar{\lambda}),t_q^2\kappa_b(\bar{\lambda})}(t), \qquad v^\prime_{\mathrm{II}}(t) \leq v_{t_q^2 \kappa_c(\bar{\lambda})}(t), \qquad v^0_{\mathrm{II}}(t)  \leq 1/t,
\end{equation}
for at least all $t \leq 1$. From the definition of the functions $s_{\kappa_a,\kappa_b}(t)$, $s_{\kappa_c}(t)$ and their homogeneity properties (see Remarks ~\ref{rmk:defskc} and~\ref{rmk:defskakb}) we obtain
\begin{align}
(\Delta_\omega \f)(q) & \leq (n-k)s_{t_q^4 \kappa_a(\bar\lambda),t_q^2 \kappa_b(\bar\lambda)}(1) + (2k-n-1)s_{t_q^2 \kappa_c(\bar\lambda)}(1) + 1 +t_q \kappa_\omega(\bar\lambda) \\
&  \leq  (n-k) t_q s_{\kappa_a(\bar\lambda),\kappa_b(\bar\lambda)}(t_q) + (2k-n-1)t_q  s_{\kappa_c(\bar\lambda)}(t_q) + 1 +t_q \kappa_\omega(\bar\lambda).
\end{align}
To recover an analogous result for $r = \d(q_0,\cdot)$ notice that $\nabla \f = r \nabla r$. Hence
\begin{equation}
\Delta_\omega \f = \dive_\omega(\nabla \f) = \dive_\omega(r \nabla r) = r \dive_\omega(\nabla r) + dr (\nabla r) = r \Delta_\omega r + 1.
\end{equation}
In particular, observing that $t_q = r(q)$, we have
\begin{equation}
(\Delta_\omega r)(q) =\frac{(\Delta_\omega \f)(q)-1}{r(q)} \leq (n-k) s_{\kappa_a(\bar{\lambda}),\kappa_b(\bar\lambda)}(r(q))+(2k-n-1) s_{\kappa_c(\bar\lambda)}(r(q)) + \kappa_\omega(\bar\lambda).
\end{equation}
To obtain the exact statement of Theorem~\ref{t:lapl-intro}, observe that the covector 
\begin{equation}
\bar{\lambda} = \lambda/t_q = e^{-\vec{H}}(d_q \f)/t_q = e^{-\vec{H}}(t_q d_q r)/t_q = e^{-t_q \vec{H}}(d_q r) = \lambda_{q_0}^q,
\end{equation}
is the initial covector of the unique length-parametrized geodesic joining $q_0$ with $q$. \hfill $\qed$

\subsection{Proof of Proposition \ref{p:3sasBM-2}}

We consider a sub-Riemannian length-parametrized geodesic $\gamma(t)$ and apply Theorem \ref{t:conjI-intro}. Then we study the maximum of $\bar{t}(\kappa_a,\kappa_b)$ over all geodesics. We use the expressions for the Ricci curvature of $3$-Sasakian manifold of Theorem~\ref{t:ricci-3-sas-intro}. 

In particular, under the assumption~\eqref{eq:boundrhoa}, $\varrho^a(v) \geq \sum_{\alpha} K \|Z_\alpha\|^2 = 2 K \|v\|^2$. Then set
\begin{align}
\kappa_b(v) & := 4+5\|v\|^2 = \tfrac{1}{3}\mathfrak{Ric}^b, \\
\kappa_a(v) & :=\|v\|^2\left(\tfrac{3}{2}K-\tfrac{7}{2}-\tfrac{15}{8}\|v\|^2\right) \le \tfrac{1}{3}\mathfrak{Ric}^a.  
\end{align}
Since $\kappa_b(v)>0$, conditions \eqref{eq:conditions} are equivalent to
\begin{equation}
	\frac{35}{2}\|v\|^4+(26+6K)\|v\|^2+16>0,
\end{equation}
which is positive for all $\|v\|\ge 0$ if $K\ge -1$. From Theorem \ref{t:conjI-intro} we get $t_*(\gamma)\le\bar{t}(v):=\bar{t}(\kappa_a(v),\kappa_b(v))$. Observe that to larger values of $K$ (and fixed $v$) correspond larger values of $\kappa_a(v)$, hence smaller blow-up times (by Riccati comparison, see Corollary \ref{c:riccaticomparison}). Thus, it is sufficient to prove the bound $\bar{t}(v) \leq \pi$ for fixed $K = -1$. From Proposition \ref{p:stima} we get 
\begin{equation}\label{eq:estime-t(v)}
\bar{t}(v)=\bar{t}(\kappa_a(v),\kappa_b(v))\le \frac{\pi}{\theta_-(v)}, \qquad \theta_\pm(v):=\frac{1}{2}(\sqrt{x+y}\pm\sqrt{x-y}),
\end{equation}
where $x=\frac{\kappa_b(v)}{2}$ and $y=\frac{\sqrt{\kappa_b^2(v)+4\kappa_a(v)}}{2}$. If $\|v\|=0$ then $\kappa_a(v) = 0$, $\kappa_b(v) =4$ and $\bar{t}(0,4) = \pi$. If $\|v\| >0$, one can check that we are in the case 1 of the proof of Proposition~\ref{p:stima}. In particular $\theta_+>\theta_->0$ are reals. Thus, as in~\eqref{eq:sinc-sinc}, $\bar{t}(v)$ is the first positive zero of
\begin{equation}
\chi_v(t):=\sinc(\theta_-(v)t)^2 - \sinc(\theta_+(v)t)^2.
\end{equation}
When $\theta_-(v)>1$, that is $\|v\|>\rho:=\sqrt{8/7}$, then $\bar{t}(v)<\pi$, by \eqref{eq:estime-t(v)}. 

On the other hand, one can check that if $0<\|v\|\le\rho$ then $\chi_v(\pi)\le 0$.  Since $\chi_v'(0)=0$ and $\chi_v''(0)=\tfrac{2}{3}(\theta_+^2-\theta_-^2)>0$, we conclude that also in this case $\bar{t}(v) \leq \pi$.
%{\blue Notice that $\chi_v(\pi)\le 0$ is equivalent to
%	\[\sinc(\theta_-(v)\pi) + \sinc(\theta_+(v)\pi)\le 0,\]
%	since $0<\theta_-(v)\le 1$ and $1\le \theta_+(v) < 2$ for $0<\|v\|\le\rho$. The function $\sinc(\tau)$ is decreasing for $0\le \tau\le \pi$, then it is enough to prove that $\theta_+(v)-\theta_-(v)\le 1$ which is true if $0< \|v\|\le \rho$.}  
 \hfill $\qed$

\section{Sub-Riemannian geometry of 3-Sasakian manifolds} \label{s:3-Sas}

\subsection{Contact structures}
We collect here some results from the monograph \cite[Chapters 3,4,6,14]{blair} to which we refer for further details. Let $M$ be an odd-dimensional manifold, $\phi : \Gamma(TM) \to \Gamma(TM)$ be a $(1,1)$ tensor, $\xi \in \Gamma(TM)$ be a vector field and $\eta \in \Lambda^1 M$ be a one-form.	We say that $(\phi,\xi,\eta)$ is an \emph{almost contact structure} on $M$ if
	\begin{equation}\label{eq:acs}
		\phi^2 = -\mathbb{I} +\eta \otimes \xi, \qquad \eta(\xi) = 1.
	\end{equation}
This implies $\phi \xi = 0$ and $\eta \circ \phi = 0$.	We say that $g$ is a \emph{compatible metric} if
	\begin{equation}
		g(\phi X, \phi Y) = g(X,Y) - \eta(X)\eta(Y).
	\end{equation}
In this case, $(\phi,\xi,\eta,g)$ defines an \emph{almost contact metric structure} on $M$. Moreover, a compatible metric $g$ is an \emph{associated metric} if\footnote{The exterior differential is defined with the convention $d\eta(X,Y) = X(\eta(Y)) - Y(\eta(X)) - \eta([X,Y])$ for any one-form $\eta$. In \cite{blair}, the author uses a different convention, i.e. $2d\eta(X,Y) = X(\eta(Y)) - Y(\eta(X)) - \eta([X,Y])$, but there is no factor $2$ in~\eqref{eq:associated}. For this reason, our definitions agree with the ones of \cite{blair}.}  
\begin{equation}\label{eq:associated}
2g(X,\phi Y) = d\eta(X,Y).
\end{equation}	
In this case, $(\phi,\xi,\eta,g)$ is called a \emph{contact metric structure} on $M$.

\subsubsection{Sasakian structures}
Let $(\phi,\xi,\eta,g)$ be a (almost) contact metric structure on $M$, and consider the manifold $M\times\R$. We denote vector fields on $M\times \R$ by $(X,f\partial_t)$, where $X$ is tangent to $M$ and $t$ is the coordinate on $\R$. Define the $(1,1)$ tensor
\begin{equation}
\mathbf{J}(X,f\partial_t) = (\phi X-f\xi,\eta(X)\partial_t).
\end{equation}
Indeed $\mathbf{J}^2 = -\mathbb{I}$ and it defines an almost complex structure on $M\times \R$ (this was not possible on the odd-dimensional manifold $M$). We say that the (almost) contact metric structure $(\phi,\xi,\eta,g)$ is \emph{Sasakian} if the almost complex structure $\textbf{J}$ is a complex one. A celebrated theorem by Newlander and Nirenberg states that this is equivalent to the vanishing of the Nijenhuis tensor of $\mathbf{J}$. For a $(1,1)$ tensor $T$, the Nijenhuis $(2,1)$ tensor $[T,T]$ is
\begin{equation}
[T,T](X,Y) := T^2[X,Y] + [TX,TY] - T[TX,Y] - T[X,T Y].
\end{equation}
In terms of the original structure, the integrability condition $[\mathbf{J},\mathbf{J}] =0$ is equivalent to
\begin{equation}
[\phi,\phi](X,Y) + d\eta(X,Y) \xi = 0.
\end{equation}
Any Sasakian structure is \emph{$K$-type}, i.e. the \emph{Reeb vector field} $\xi$ is Killing: $\mathcal{L}_\xi g = 0$. The converse, however, is not true (except for $\dim M = 3$). Moreover, Sasakian structures are automatically contact metric structures, i.e. Sasakian implies~\eqref{eq:associated}. In particular the following is an equivalent characterization of Sasakian structures.
\begin{theorem}
An almost contact metric structure $(\phi,\xi,\eta,g)$ is Sasakian if and only if
\begin{equation}
(\nabla_X \phi) Y = g(X,Y)\xi - \eta(Y)X
\end{equation}
for all vector fields $X,Y \in \Gamma(TM)$. This directly implies
\begin{equation}
\nabla_Y \xi = - \phi Y.
\end{equation}
\end{theorem}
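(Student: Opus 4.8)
The plan is to establish the equivalence in the two directions, using the characterisation recalled just above that $(\phi,\xi,\eta,g)$ is Sasakian exactly when the induced almost complex structure $\mathbf{J}$ on $M\times\R$ is integrable, i.e.\ when $[\phi,\phi](X,Y)+d\eta(X,Y)\xi=0$ for all $X,Y$.

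I would first treat the implication ``formula $\Rightarrow$ Sasakian''. Assuming $(\nabla_X\phi)Y=g(X,Y)\xi-\eta(Y)X$, put $Y=\xi$: since $\phi\xi=0$ one gets $(\nabla_X\phi)\xi=-\phi(\nabla_X\xi)$, hence $\phi(\nabla_X\xi)=X-\eta(X)\xi$; applying $\phi$ and using $\phi^2=-\mathbb{I}+\eta\otimes\xi$ together with $\eta(\nabla_X\xi)=\tfrac{1}{2}X(g(\xi,\xi))=0$, this collapses to $\nabla_X\xi=-\phi X$ — which in particular proves the last clause of the theorem. Next, from the compatibility identity $\eta(\cdot)=g(\cdot,\xi)$, the relation $\nabla_X\xi=-\phi X$ just obtained, the $g$-skew-symmetry of $\phi$ and torsion-freeness of $\nabla$, a short computation gives $d\eta(X,Y)=-2g(\phi X,Y)=2g(X,\phi Y)$, so the structure is automatically a contact metric structure. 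Finally I would expand $[\phi,\phi](X,Y)=\phi^2[X,Y]+[\phi X,\phi Y]-\phi[\phi X,Y]-\phi[X,\phi Y]$, rewrite each Lie bracket $[A,B]$ as $\nabla_A B-\nabla_B A$ and each $\nabla_A(\phi B)$ as $(\nabla_A\phi)B+\phi\nabla_A B$, and insert $(\nabla_A\phi)B=g(A,B)\xi-\eta(B)A$ throughout; the covariant-derivative terms cancel in pairs, leaving $[\phi,\phi](X,Y)=2g(\phi X,Y)\xi=-d\eta(X,Y)\xi$, i.e.\ $[\phi,\phi]+d\eta\otimes\xi=0$, which is the integrability of $\mathbf{J}$.

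For the converse I would invoke the general identity, valid on any almost contact metric manifold and obtainable from the Koszul formula for the Levi-Civita connection (see \cite{blair}), expressing $2g\big((\nabla_X\phi)Y,Z\big)$ in terms of the differential $d\Phi$ of the fundamental two-form $\Phi(X,Y)=g(X,\phi Y)$, the Nijenhuis tensor $N^{(1)}=[\phi,\phi]+d\eta\otimes\xi$, the auxiliary tensor $N^{(2)}$ built from the $\mathcal{L}_\bullet\eta$'s, and $d\eta$ itself. The Sasakian hypothesis gives $N^{(1)}=0$, whence, as is classical, $N^{(2)}=0$ as well, while the contact metric relation $2g(X,\phi Y)=d\eta(X,Y)$ forces $d\Phi=0$; every term then vanishes except the $d\eta$-ones, and these simplify through $d\eta(\phi Y,X)=2g(\phi Y,\phi X)=2\big(g(X,Y)-\eta(X)\eta(Y)\big)$ to leave precisely $(\nabla_X\phi)Y=g(X,Y)\xi-\eta(Y)X$. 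Alternatively, one may pass to the metric cone $(\R^+\times M,\,dr^2+r^2g)$: integrability of $\mathbf{J}$ combined with the contact metric condition makes it Kähler, so its Levi-Civita connection parallelises the induced complex structure, and pushing that identity down to $M$ via the warped-product connection formulas yields $(\nabla_X\phi)Y=g(X,Y)\xi-\eta(Y)X$ and again $\nabla_X\xi=-\phi X$.

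The delicate point is the converse: having the general $\nabla\phi$ formula at hand in the right normalisation and carefully tracking the convention-dependent numerical factors (the footnote in the excerpt already signals how treacherous these are), so that the surviving $d\eta$-terms collapse to the stated right-hand side with coefficient exactly $1$ rather than $2$ or $\tfrac12$. The forward implication is, by contrast, a purely mechanical tensor computation whose only subtlety is bookkeeping the cancellations in the Nijenhuis expansion.
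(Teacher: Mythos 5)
The paper gives no proof of this theorem: it is a recalled classical result from Blair's monograph (cf.\ the remark at the start of Section~6.1, ``We collect here some results from the monograph \cite[Chapters 3,4,6,14]{blair} to which we refer for further details''). So there is no ``paper's own proof'' to compare against; the relevant question is simply whether your reconstruction is correct, and it essentially is.

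Your forward direction (formula $\Rightarrow$ Sasakian) is complete and correct. Setting $Y=\xi$ and applying $\phi$ together with $\eta(\nabla_X\xi)=\tfrac12 X(g(\xi,\xi))=0$ does give $\nabla_X\xi=-\phi X$, and the torsion-freeness computation does give $d\eta(X,Y)=2g(X,\phi Y)$ in the paper's convention, recovering the contact metric condition. The cancellation in the Nijenhuis expansion is right: after substituting $[A,B]=\nabla_AB-\nabla_BA$ and $\nabla_A(\phi B)=(\nabla_A\phi)B+\phi\nabla_AB$, the terms $\phi^2\nabla_XY$, $\phi^2\nabla_YX$, $\phi\nabla_{\phi X}Y$, $\phi\nabla_{\phi Y}X$ all cancel, leaving $(\nabla_{\phi X}\phi)Y-(\nabla_{\phi Y}\phi)X+\phi(\nabla_Y\phi)X-\phi(\nabla_X\phi)Y=2g(\phi X,Y)\xi=-d\eta(X,Y)\xi$, i.e.\ $N^{(1)}=0$.

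Your converse direction is stated only as a plan, not a computation, and is the one direction the paper actually uses downstream (Sasakian $\Rightarrow$ $\nabla\phi$-formula). The route you propose --- plugging the normality hypothesis into the general Koszul-type identity for $2g((\nabla_X\phi)Y,Z)$ in terms of $d\Phi$, $N^{(1)}$, $N^{(2)}$ and $d\eta$, or alternatively passing to the metric cone and using that it is K\"ahler --- is the standard textbook argument (Blair, Chapter~6) and would close the proof, but as written it outsources the hard bookkeeping (including the normality $\Rightarrow N^{(2)}=0$ step and the convention-tracking you yourself flag) to a reference rather than carrying it out. Given that the paper itself does exactly this by pointing to \cite{blair}, your proposal matches the level of detail the paper intends; just be aware that, as a standalone proof, the converse half is only an outline.
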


\subsection{Contact 3-structures}

Let $\dim M = 4d+3$. An \emph{almost contact $3$-structure} on $M$ is a collection of three distinct almost contact structures $(\phi_\alpha,\eta_\alpha,\xi_\alpha)$, where $\alpha = I,J,K$, that satisfy the following quaternionic-like compatibility relations 
	\begin{gather}
	\phi_K  = \phi_I\phi_J - \eta_J\otimes \xi_I = -\phi_J\phi_I + \eta_I\otimes \xi_J,  \label{eq:quat1}\\
	\xi_K  = \phi_I\xi_J = -\phi_J\xi_I, \qquad \eta_K = \eta_I\circ \phi_J = -\eta_J \circ \phi_I, \label{eq:quat2}
	\end{gather}
	for any even permutation of $I,J,K$. There always exists a metric $g$ on $M$ compatible with each structure. In this case $\{\phi_\alpha,\eta_\alpha,\xi_\alpha,g\}_{\alpha}$ is called an \emph{almost contact metric $3$-structure} on $M$. In particular $\xi_I,\xi_J,\xi_K$ are an orthonormal triple and
	\begin{equation}
	[\xi_I,\xi_J] = 2\xi_K,
	\end{equation}
and  analogously for cyclic permutations.
\begin{rmk}
Why $3$-structures? Given two almost contact structures satisfying (partial) qua\-ter\-nionic relations as~\eqref{eq:quat1}-\eqref{eq:quat2}, one can always define a third one to complete it to a almost contact $3$-structure. On the other hand an almost contact $3$-structure cannot be extended to include a fourth one (see \cite[Chapter 14]{blair}).
\end{rmk}

\subsection{3-Sasakian manifolds}
If each almost contact metric structure $(\phi_\alpha,\eta_\alpha,\xi_\alpha,g)$ is actually a contact metric structure (i.e.~\eqref{eq:associated} holds), we say that $\{\phi_\alpha,\eta_\alpha,\xi_\alpha,g\}_{\alpha}$ is a \emph{contact metric $3$-structure}. By a result of Kashiwada~\cite{Kashiwada-contact3}, each $(\phi_\alpha,\eta_\alpha,\xi_\alpha,g)$ is actually Sasakian. In this case, we say that $M$ with the structure $\{\phi_\alpha,\eta_\alpha,\xi_\alpha,g\}$ is a \emph{$3$-Sasakian manifold}.

\subsubsection{Quaternionic indices notation}\label{s:quatindices}
We can collect all the relations on a $3$-Sasakian structure with the following notation. If $\alpha,\beta  =I,J,K$
\begin{gather}
\phi_{\alpha\beta} = \phi_\alpha\phi_\beta - \eta_\beta \otimes \xi_\alpha, \\
\xi_{\alpha\beta}  = \phi_\alpha \xi_\beta , \qquad \eta_{\alpha\beta} = \eta_\alpha \circ \phi_{\beta},
\end{gather}
where the product $\alpha\beta$ denotes the quaternionic product and we use the conventions $\phi_{\pm 1} = \pm \mathbb{I}$, $\eta_1 =0$, $\xi_1 = 0$ and $\phi_{-\alpha} = - \phi_{\alpha}$. 
Moreover, we recall the Sasakian properties
\begin{align}
(\nabla_Y \phi_\alpha) Z  = g (Y,Z) \xi_\alpha - \eta_\alpha(Z) Y, \qquad \text{and} \qquad \nabla_Y \xi_\alpha  = - \phi_\alpha Y.
\end{align}
for all $X,Y,Z \in \Gamma(TM)$ and $\alpha = I,J,K$.

The following result is proved in \cite[Thm. A]{BGM-3-Sasakian}, to which we refer for details.
\begin{theorem}\label{t:BGM}
Let $\{\phi_\alpha,\eta_\alpha,\xi_\alpha,g\}_\alpha$ be a $3$-Sasakian structure on a smooth manifold $M$ of dimension $4d+3$. Assume that the Killing vector fields $\xi_\alpha$ are complete for $\alpha=1,2,3$. Then
\begin{itemize}
\item[(i)] $(M,g)$ is an Einstein manifold of positive scalar curvature equal to $(4d+2)(4d+3)$;
\item[(ii)] The metric $g$ is bundle-like with respect to the foliation $\mathcal{F}$ defined by $\{\xi_I,\xi_J,\xi_K\}$;
\item[(iii)] Each leaf of the foliation $\mathcal{F}$ is a $3$-dimensional homogeneous spherical space form;
\item[(iv)] The space of leaves $M/\mathcal{F}$ is a quaternionic K\"ahler orbifold of dimension $4d$ with positive scalar curvature equal to $16d(d+2)$.
\end{itemize}
Hence, every complete $3$-Sasakian manifold is compact with finite fundamental group and Riemannian diameter less than or equal to $\pi$.
\end{theorem}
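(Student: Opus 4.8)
The statement packages together several facts that are classical in the theory of $3$-Sasakian manifolds; a full proof is in \cite{BGM-3-Sasakian}, and the plan is to reconstruct it in four steps, of which the last is just an application of Bonnet--Myers to part (i). The conceptual content is: (i) is a pointwise curvature computation using only the Sasakian identities of Section~\ref{s:quatindices}; (ii)--(iv) come from the structure theory of Riemannian foliations with totally geodesic leaves; and the concluding sentence is immediate from (i).

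First I would prove the Einstein property. Starting from $\nabla_Y\xi_\alpha=-\phi_\alpha Y$ and $(\nabla_Y\phi_\alpha)Z=g(Y,Z)\xi_\alpha-\eta_\alpha(Z)Y$, the same computation as in the single Sasakian case gives $R^\nabla(X,Y)\xi_\alpha=\eta_\alpha(Y)X-\eta_\alpha(X)Y$ for $\alpha=I,J,K$; tracing this yields $\Ric(\xi_\alpha,\xi_\beta)=(4d+2)\delta_{\alpha\beta}$ and $\Ric(\xi_\alpha,X)=0$ for horizontal $X\in\distr$. For the horizontal--horizontal part one instead differentiates the second identity, invokes the first Bianchi identity, and sums the three expressions obtained by cycling $\alpha$; using the quaternionic compatibility \eqref{eq:quat1}--\eqref{eq:quat2} of $\phi_I,\phi_J,\phi_K$, the curvature contributions of $\distr$ recombine so that $\Ric(X,X)=(4d+2)g(X,X)$ as well. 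Hence $\Ric=(4d+2)g$, so the scalar curvature equals $(4d+2)(4d+3)$, which is (i).

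Next I would treat the foliation. The relations $[\xi_I,\xi_J]=2\xi_K$ and cyclic show that $\{\xi_I,\xi_J,\xi_K\}$ spans an integrable distribution defining $\mathcal{F}$, and completeness of the $\xi_\alpha$ integrates the infinitesimal $\mathfrak{su}(2)$-action to an action of the compact group $SU(2)$, so the leaves are compact; since each $\xi_\alpha$ is Killing ($\mathcal{L}_{\xi_\alpha}g=0$), the metric is bundle-like, giving (ii). From $\nabla_{\xi_\alpha}\xi_\beta=-\phi_\alpha\xi_\beta=-\xi_{\alpha\beta}$, which is again tangent to $\mathcal{F}$, the leaves are totally geodesic; the bracket relations together with the orthonormality of the $\xi_\alpha$ then identify each leaf with $SU(2)\cong\mathbb{S}^3$ carrying a bi-invariant metric of constant curvature $1$, modulo the (finite) stabilizer of the action, i.e.\ a homogeneous spherical space form, giving (iii). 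For (iv), ``bundle-like $+$ totally geodesic compact leaves'' makes $M\to M/\mathcal{F}$ a Riemannian orbifold submersion onto a $4d$-dimensional orbifold (the orbifold locus coming from non-trivial stabilizers); the tensors $\phi_\alpha|_\distr$ descend to a parallel almost quaternionic structure, so $M/\mathcal{F}$ is quaternionic K\"ahler, and O'Neill's formulas for totally geodesic fibres, fed with the Einstein constant from Step~1 and the unit curvature of the fibres, pin down its scalar curvature as $16d(d+2)$.

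Finally, the concluding sentence is immediate: by (i), $M$ is a complete Einstein manifold with $\Ric\geq(n-1)g$, $n=\dim M=4d+3$, so the classical Bonnet--Myers theorem \cite{Myers} forces $M$ to be compact with finite fundamental group and Riemannian diameter at most $\pi$. I expect the main obstacles to be technical rather than structural: the bookkeeping that makes the horizontal curvature terms recombine in Step~1, and, more delicately, verifying that the leaf space is a genuine orbifold and that the O'Neill submersion machinery (hence the exact constant $16d(d+2)$) is legitimate in the orbifold category; both points I would carry out following \cite{BGM-3-Sasakian}.
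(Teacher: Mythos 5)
The paper does not prove Theorem~\ref{t:BGM}: immediately before the statement it says ``The following result is proved in \cite[Thm.\ A]{BGM-3-Sasakian}, to which we refer for details,'' and then uses it as a black box (in particular, $\Ric^\nabla(\dot\gamma)=4d+2$ is invoked in the proof of Theorem~\ref{t:ricci-3-sas-intro}). There is therefore no in-paper argument to compare against; your proposal is a reconstruction of the Boyer--Galicki--Mann proof, and it is organized in the same way as that reference, so it is the ``same route'' in the only meaningful sense available.

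On the mathematics itself: your outline is sound. The Sasakian identity $R^\nabla(X,Y)\xi_\alpha=\eta_\alpha(Y)X-\eta_\alpha(X)Y$ does give $\Ric(\xi_\alpha,\xi_\beta)=(4d+2)\delta_{\alpha\beta}$ and $\Ric(\xi_\alpha,X)=0$ for $X\in\distr$; the genuinely $3$-Sasakian input is exactly the horizontal--horizontal recombination you flag as bookkeeping, since a single Sasakian structure is not Einstein in general, so you are right to single that step out as the one requiring the quaternionic relations \eqref{eq:quat1}--\eqref{eq:quat2}. One tiny sharpening: after (i) you have $\Ric=(4d+2)g=(n-1)g$ with $n=4d+3$ exactly, not merely $\Ric\geq(n-1)g$; this equality is what makes the normalization $\diam(M)\leq\pi$ come out on the nose, matching the statement. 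The remaining steps (integrability of $\spn\{\xi_I,\xi_J,\xi_K\}$ from $[\xi_I,\xi_J]=2\xi_K$, totally geodesic leaves from $\nabla_{\xi_\alpha}\xi_\beta=-\xi_{\alpha\beta}$, bundle-like metric from the Killing property, leaf space as a quaternionic K\"ahler orbifold via the O'Neill submersion formulas, and Bonnet--Myers \cite{Myers} for the conclusion) follow \cite{BGM-3-Sasakian} as you say. Since the paper itself only cites the result, your sketch is an appropriate (and correct) elaboration, but it is not testing anything the authors chose to reprove.
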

We stress that, even if the \emph{Riemannian} diameter of a $3$-Sasakian manifold is bounded by the classical Bonnet-Myers theorem, nothing is known about the \emph{sub-Riemannian} one. In fact, a priori, sub-Riemannian distances are larger then Riemannian ones. 

\subsubsection{Some curvature properties of 3-Sasakian manifolds}
We will need the following results about the Riemannian curvature of $3$-Sasakian structures, proved in \cite[Prop. 3.2]{Tanno} and \cite[Prop. 2.17]{BGM-3-Sasakian}, respectively. Here $\distr$ is the orthogonal complement to $\spn\{\xi_I,\xi_J,\xi_K\}$ w.r.t. the Riemannian metric $g$ and $\Sec$ is the sectional curvature of the Riemannian structure.
\begin{proposition}\label{p:sumholo}
For any $X \in \distr_q$, the sum of the $\phi_\alpha$-sectional curvatures is constant:
\begin{equation}
\Sec(X,\phi_I X)+\Sec(X,\phi_J X)+\Sec(X,\phi_K X)= 3.
\end{equation}
\end{proposition}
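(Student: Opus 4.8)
The plan is to reduce the identity to a curvature relation on the quaternionic line $\spn\{X,\phi_I X,\phi_J X,\phi_K X\}$ and to extract it from the Sasakian structure equations together with the first Bianchi identity. Since sectional curvature is scale-invariant we may assume $g(X,X)=1$; on $\distr$ each $\phi_\alpha$ is $g$-skew with $\phi_\alpha^2=-\mathrm{Id}$, and $\phi_I\phi_J=\phi_K$ (and cyclically), so $X,\phi_I X,\phi_J X,\phi_K X$ form an orthonormal $4$-frame and, with $R^\nabla(A,B,C,D):=g(R^\nabla(A,B)C,D)$,
\[
\Sec(X,\phi_\alpha X)=R^\nabla(X,\phi_\alpha X,\phi_\alpha X,X).
\]
Thus the claim becomes $\sum_{\alpha=I,J,K}R^\nabla(X,\phi_\alpha X,\phi_\alpha X,X)=3$.

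The first ingredient I would record is the curvature identity obtained by commuting two covariant derivatives of the $(1,1)$-tensor $\phi_\alpha$ and inserting the Sasakian relations $(\nabla_A\phi_\alpha)C=g(A,C)\xi_\alpha-\eta_\alpha(C)A$ and $\nabla_A\xi_\alpha=-\phi_\alpha A$: for all vector fields $A,B,C$ and each $\alpha=I,J,K$,
\[
R^\nabla(A,B)(\phi_\alpha C)-\phi_\alpha\big(R^\nabla(A,B)C\big)=-g(B,C)\,\phi_\alpha A+g(A,C)\,\phi_\alpha B+g(\phi_\alpha A,C)\,B-g(\phi_\alpha B,C)\,A.
\]
Next, fix a cyclic permutation $(\alpha,\beta,\gamma)$ of $(I,J,K)$, so that $\phi_\alpha=\phi_\beta\phi_\gamma$ and $\phi_\beta\phi_\alpha=-\phi_\gamma$ on $\distr$, and evaluate the $\phi_\beta$-instance of the identity at $q$ with $A=X$, $B=\phi_\alpha X$, $C=\phi_\gamma X$. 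By orthonormality of the $4$-frame every scalar on the right vanishes except $g(\phi_\beta\phi_\alpha X,\phi_\gamma X)=-1$, while on the left $R^\nabla(X,\phi_\alpha X)(\phi_\beta\phi_\gamma X)=R^\nabla(X,\phi_\alpha X)\phi_\alpha X$; pairing the resulting vector identity with $X$ and using that $\phi_\beta$ is $g$-skew yields
\[
R^\nabla(X,\phi_\alpha X,\phi_\alpha X,X)+R^\nabla(X,\phi_\alpha X,\phi_\gamma X,\phi_\beta X)=1.
\]

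Summing this over the three cyclic permutations of $(I,J,K)$ gives $\sum_\alpha R^\nabla(X,\phi_\alpha X,\phi_\alpha X,X)+T=3$, where $T$ is the sum of the three ``off-diagonal'' terms $R^\nabla(X,\phi_\alpha X,\phi_\gamma X,\phi_\beta X)$, so it remains to show $T=0$. Writing $e_0=X$, $e_1=\phi_I X$, $e_2=\phi_J X$, $e_3=\phi_K X$ and using antisymmetry of $R^\nabla$ in its last two slots, $T=-\big(R^\nabla(e_0,e_1,e_2,e_3)+R^\nabla(e_0,e_2,e_3,e_1)+R^\nabla(e_0,e_3,e_1,e_2)\big)$, and this bracket vanishes by the first Bianchi identity applied to $e_1,e_2,e_3$ in the first three arguments, combined with the pair-exchange and antisymmetry relations of $R^\nabla$. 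Hence the sum equals $3$. The main obstacle is the middle step: the arguments $A,B,C$ must be chosen so that exactly one term on the right survives and the left side recombines into a $\phi_\alpha$-sectional component, and this is where the quaternionic compatibility relations enter in an essential way; the sign bookkeeping there should be done with care. A more conceptual but less self-contained route is via Theorem~\ref{t:BGM}: the Riemannian submersion onto the quaternionic K\"ahler orbifold $M/\mathcal{F}$ has totally geodesic $3$-dimensional fibres, so O'Neill's curvature formula rewrites $\sum_\alpha\Sec(X,\phi_\alpha X)$ as the quaternionic sectional curvature of $M/\mathcal{F}$ --- pointwise constant and fixed by the scalar curvature $16d(d+2)$ --- minus the O'Neill correction $3\sum_\alpha|\mathcal{A}_X\phi_\alpha X|^2=9$, which again gives $3$, at the price of invoking the structure theory of quaternionic K\"ahler manifolds.
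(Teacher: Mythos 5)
Your proof is correct, and it fills in a step the paper simply outsources: Proposition~\ref{p:sumholo} is stated with a reference to Tanno \cite[Prop.~3.2]{Tanno}, so there is no in-paper argument to compare against. Your route is a clean, self-contained derivation.

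The central lemma you use --- the commutation identity
\[
R^\nabla(A,B)(\phi_\alpha C)-\phi_\alpha\bigl(R^\nabla(A,B)C\bigr)
=-g(B,C)\,\phi_\alpha A+g(A,C)\,\phi_\alpha B+g(\phi_\alpha A,C)\,B-g(\phi_\alpha B,C)\,A
\]
--- follows by a direct second-covariant-derivative computation from $(\nabla_A\phi_\alpha)C=g(A,C)\xi_\alpha-\eta_\alpha(C)A$ and $\nabla_A\xi_\alpha=-\phi_\alpha A$, and I checked the cancellations; it is correct. The substitution $A=X$, $B=\phi_\alpha X$, $C=\phi_\gamma X$ in the $\phi_\beta$-instance does exactly what you say: by orthonormality of $X,\phi_I X,\phi_J X,\phi_K X$ every term on the right but $-g(\phi_\beta\phi_\alpha X,\phi_\gamma X)X = X$ drops, and on the left $\phi_\beta\phi_\gamma=\phi_\alpha$ on $\distr$ turns the first term into a $\phi_\alpha$-sectional piece, so pairing with $X$ and using skewness of $\phi_\beta$ gives
\[
R^\nabla(X,\phi_\alpha X,\phi_\alpha X,X)+R^\nabla(X,\phi_\alpha X,\phi_\gamma X,\phi_\beta X)=1,
\]
exactly as claimed. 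Summing and reindexing the ``off-diagonal'' residue as $-\bigl(R^\nabla(e_0,e_1,e_2,e_3)+R^\nabla(e_0,e_2,e_3,e_1)+R^\nabla(e_0,e_3,e_1,e_2)\bigr)$, the pair-exchange symmetry turns each term into a Bianchi-cycle term in the first three slots with $e_0$ fixed, so the residue vanishes. This is all tight.

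Your second route via Theorem~\ref{t:BGM} and O'Neill is also sound (the O'Neill tensor is $\mathcal{A}_X\phi_\alpha X=\xi_\alpha$, so the correction is $9$, and the base has constant quaternionic sectional curvature $12$), but as you rightly note it is not self-contained, since it imports the structure theory of quaternionic K\"ahler orbifolds and the precise normalization of the $Q$-sectional curvature. The first argument is the one worth keeping: it uses nothing beyond the Sasakian structure equations, the quaternionic compatibility relations on $\distr$, and the first Bianchi identity, and so it genuinely reproves Tanno's result from scratch in the exact conventions of this paper.
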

\begin{proposition}\label{p:transversecurv}
For any $X \in \distr_q$ we have $\Sec(X,\xi_\alpha)=1$ for all $\alpha=I,J,K$.
\end{proposition}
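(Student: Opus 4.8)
The plan is to reduce everything to a standard computation on a single Sasakian structure. By the theorem of Kashiwada cited above, each $(\phi_\alpha,\eta_\alpha,\xi_\alpha,g)$ is Sasakian, so the quaternionic compatibility relations \eqref{eq:quat1}--\eqref{eq:quat2} play no role here; I would fix one value of $\alpha$, a point $q$, and a vector $X \in \distr_q$, so that $\eta_\alpha(X) = 0$ and $g(X,\xi_\alpha) = 0$. Since $\Sec(X,\xi_\alpha)$ is tensorial, the choice of local extensions of $X$ and $\xi_\alpha$ is irrelevant.

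First I would compute the curvature endomorphism $R^\nabla(X,\xi_\alpha)\xi_\alpha$ using only the two Sasakian identities recalled in Section~\ref{s:quatindices}, namely $\nabla_Y\xi_\alpha = -\phi_\alpha Y$ and $(\nabla_Y\phi_\alpha)Z = g(Y,Z)\xi_\alpha - \eta_\alpha(Z)Y$. Applying the first identity twice and substituting the second, and keeping track of the convention $R^\nabla(X,Y)Z = \nabla_X\nabla_Y Z - \nabla_Y\nabla_X Z - \nabla_{[X,Y]}Z$, the terms involving $\phi_\alpha\nabla_X Y$, $\phi_\alpha\nabla_Y X$ and $\phi_\alpha[X,Y]$ cancel and the terms $g(X,Y)\xi_\alpha$ cancel as well, leaving the general formula
\[
R^\nabla(X,Y)\xi_\alpha = \eta_\alpha(Y)X - \eta_\alpha(X)Y ,
\]
valid for all vector fields $X,Y$. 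Setting $Y = \xi_\alpha$ and using $\eta_\alpha(\xi_\alpha) = 1$ together with $\eta_\alpha(X) = 0$ gives $R^\nabla(X,\xi_\alpha)\xi_\alpha = X$.

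Finally I would insert this into the definition of the sectional curvature. Since $\xi_\alpha$ is a unit vector and $X \perp \xi_\alpha$,
\[
\Sec(X,\xi_\alpha) = \frac{g(R^\nabla(X,\xi_\alpha)\xi_\alpha, X)}{\|X\|^2\|\xi_\alpha\|^2 - g(X,\xi_\alpha)^2} = \frac{g(X,X)}{\|X\|^2} = 1 ,
\]
which is the claim.

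There is essentially no real obstacle here beyond bookkeeping: the only delicate point is tracking signs in the derivation of the formula for $R^\nabla(X,Y)\xi_\alpha$, where the three $\phi_\alpha$-terms must cancel exactly. Alternatively one could simply invoke \cite[Prop.~2.17]{BGM-3-Sasakian} (or the corresponding statement in \cite{blair}), but the self-contained two-line computation above is short enough to include and makes the constant $1$ transparent.
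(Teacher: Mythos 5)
Your proof is correct. The paper itself does not prove Proposition~\ref{p:transversecurv}; it only cites \cite[Prop.~2.17]{BGM-3-Sasakian}. Your argument fills in a self-contained derivation, and the key identity you derive,
\begin{equation*}
R^\nabla(X,Y)\xi_\alpha = \eta_\alpha(Y)X - \eta_\alpha(X)Y,
\end{equation*}
is exactly the standard curvature identity for a single Sasakian structure (it appears, e.g., in Blair's monograph). Your observation that the quaternionic relations are irrelevant here is accurate: the computation uses only $\nabla_Y\xi_\alpha = -\phi_\alpha Y$ and $(\nabla_Y\phi_\alpha)Z = g(Y,Z)\xi_\alpha - \eta_\alpha(Z)Y$, the $g(X,Y)\xi_\alpha$ terms cancel by symmetry, and the three $\phi_\alpha$-terms combine as $\phi_\alpha(-\nabla_X Y + \nabla_Y X + [X,Y]) = 0$ because the Levi-Civita connection is torsion-free. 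Plugging $Y=\xi_\alpha$ with $\eta_\alpha(X)=0$, $\eta_\alpha(\xi_\alpha)=1$, and $\|\xi_\alpha\|=1$ into the definition of sectional curvature (with the paper's sign convention $R^\nabla(X,Y)Z = \nabla_X\nabla_Y Z - \nabla_Y\nabla_X Z - \nabla_{[X,Y]}Z$) gives $\Sec(X,\xi_\alpha)=1$ as claimed. So what you do differently from the paper is simply replace a citation by a short computation; what this buys is transparency about where the constant $1$ comes from, namely the normalization built into the definition of a Sasakian structure, at the cost of a few extra lines.
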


\subsection{Sub-Riemannian geometry of 3-Sasakian manifolds}\label{s:srgeo3sas}
Any $3$-Sasakian structure $\{\phi_\alpha,\eta_\alpha,\xi_\alpha,g\}_\alpha$ carries a natural sub-Riemannian structure. The distribution $\distr \subset TM$ is
\begin{equation}
\distr := \bigcap_{\alpha =I,J,K} \ker \eta_{\alpha}.
\end{equation}
Indeed $\distr$ is a corank $3$ sub-bundle, orthogonal to $\xi_I,\xi_J,\xi_K$. One can check that $\distr$ is a fat distribution, thus the restriction of $g$ to $\distr$ is a fat sub-Riemannian structure on $M$.

%The sub-Riemannian geodesic flow of $3$-Sasakian structures has always three first integrals.
\begin{lemma}\label{l:costanza}
Let $\lambda \in T^*M$ be the initial covector of the extremal $\lambda(t) = e^{t\vec{H}}(\lambda)$. Let $v_\alpha(\lambda):=\langle \lambda, \xi_\alpha\rangle$ smooth functions on $T^*M$ for $\alpha = I,J,K$. Then $v_\alpha$ is constant along $\lambda(t)$.
\end{lemma}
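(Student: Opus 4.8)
The plan is to show that each function $v_\alpha(\lambda) = \langle \lambda, \xi_\alpha\rangle$ Poisson-commutes with the sub-Riemannian Hamiltonian $H$, since a function is constant along the Hamiltonian flow of $\vec H$ if and only if its Poisson bracket with $H$ vanishes. Recall that $H(\lambda) = \tfrac12\sum_{i=1}^{4d}\langle\lambda, X_i\rangle^2$ for any local orthonormal frame $X_1,\dots,X_{4d}$ of $\distr$, and that for linear-on-fibers functions $h_X(\lambda) = \langle\lambda, X\rangle$ associated to vector fields $X$ one has the standard identity $\{h_X, h_Y\} = h_{[X,Y]} = \langle\lambda, [X,Y]\rangle$. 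Writing $v_\alpha = h_{\xi_\alpha}$, bilinearity of the Poisson bracket gives
\begin{equation*}
\{H, v_\alpha\} = \sum_{i=1}^{4d} h_{X_i}\,\{h_{X_i}, h_{\xi_\alpha}\} = \sum_{i=1}^{4d} \langle\lambda, X_i\rangle\,\langle\lambda, [X_i,\xi_\alpha]\rangle .
\end{equation*}

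So the whole statement reduces to the geometric fact that $[X_i, \xi_\alpha] \in \distr$ for every horizontal vector field $X_i \in \Gamma(\distr)$, because then each $\langle\lambda, [X_i,\xi_\alpha]\rangle$ contributes a term that, once re-expanded in the orthonormal frame, produces an antisymmetric quadratic form in the $\langle\lambda, X_j\rangle$ and hence sums to zero. Concretely: since $\xi_\alpha$ is a Killing field (each Sasakian structure is $K$-type) and $\nabla_Y\xi_\alpha = -\phi_\alpha Y$ for all $Y$, we can compute $[\xi_\alpha, X_i] = \nabla_{\xi_\alpha}X_i - \nabla_{X_i}\xi_\alpha = \nabla_{\xi_\alpha}X_i + \phi_\alpha X_i$. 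The term $\phi_\alpha X_i$ lies in $\distr$ because $\phi_\alpha$ preserves $\distr$ (from $\eta_\beta(\phi_\alpha X) = \pm\eta_\gamma(X) = 0$ and $\eta_\alpha(\phi_\alpha X)=0$ for $X\in\distr$, using the quaternionic relations~\eqref{eq:quat1}--\eqref{eq:quat2}). The remaining term needs the horizontal component of $\nabla_{\xi_\alpha}X_i$; alternatively one argues directly that $\langle\lambda, [\xi_\alpha, X]\rangle$, viewed as $\lambda$-linear and bilinear-ish in the frame, reduces to $g([\xi_\alpha, X_i], X_j)$ contracted against $\langle\lambda,X_i\rangle\langle\lambda,X_j\rangle$, which is the symmetric part, while the Killing property of $\xi_\alpha$ forces $g(\nabla_{X_i}\xi_\alpha, X_j) + g(X_i, \nabla_{X_j}\xi_\alpha) = g(-\phi_\alpha X_i, X_j) + g(X_i, -\phi_\alpha X_j) = 0$ since $\phi_\alpha$ is skew-symmetric.

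Putting it together: $\sum_i \langle\lambda, X_i\rangle\langle\lambda, [X_i,\xi_\alpha]\rangle = -\sum_{i,j} \langle\lambda,X_i\rangle\langle\lambda,X_j\rangle\, g(\nabla_{X_i}\xi_\alpha, X_j) = \sum_{i,j}\langle\lambda,X_i\rangle\langle\lambda,X_j\rangle\, g(\phi_\alpha X_i, X_j)$, which vanishes by skew-symmetry of $\phi_\alpha$ (note $g(\phi_\alpha X,Y) = -g(X,\phi_\alpha Y)$, immediate from compatibility of $g$ together with $\phi_\alpha^2 = -\mathbb{I}$ on $\distr$). Hence $\{H, v_\alpha\} = 0$ and $v_\alpha$ is a first integral. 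The one delicate point worth spelling out carefully is the reduction of $\{H,v_\alpha\}$ to the contraction $g(\phi_\alpha X_i, X_j)\langle\lambda,X_i\rangle\langle\lambda,X_j\rangle$: one should note that only the horizontal part of $[X_i,\xi_\alpha]$ matters because $\langle\lambda, \cdot\rangle$ is contracted against the horizontal vectors $\langle\lambda,X_i\rangle X_i$ only through the symmetric expression, so the possibly-vertical components of $[X_i,\xi_\alpha]$ — of the form $g([X_i,\xi_\alpha],\xi_\beta)\xi_\beta$ — are multiplied by $\langle\lambda,X_i\rangle$ and summed, and must be shown to cancel; but in fact they cancel for free because they appear symmetrized in $i$ against the antisymmetric structure constants, or more simply because the projection onto $\distr$ is all that survives after pairing with $\sum_i\langle\lambda,X_i\rangle X_i$ in the Poisson bracket formula. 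The main obstacle is thus bookkeeping the frame-dependence cleanly; no hard analysis is involved.
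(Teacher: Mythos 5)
Your overall strategy is the same as the paper's: expand $\{H,v_\alpha\}=\sum_i u_i\,\langle\lambda,[X_i,\xi_\alpha]\rangle$ and show the sum vanishes. The part of your argument that handles the \emph{horizontal} components of $[X_i,\xi_\alpha]$ is correct and matches the paper in substance: contracting against $u_iu_j$, these give $\sum_{i,j}u_iu_j\,g([X_i,\xi_\alpha],X_j)$, which vanishes because the matrix $g([\xi_\alpha,X_i],X_j)$ is skew-symmetric (equivalently, because $\nabla_{X_i}\xi_\alpha=-\phi_\alpha X_i$ with $\phi_\alpha$ skew, via the Killing property).

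The gap is in your treatment of the \emph{vertical} components. Writing $\lambda$ in the momentum coordinates $(u,v)$, the pairing $\langle\lambda,[X_i,\xi_\alpha]\rangle$ picks up a term
\begin{equation*}
\sum_{i}\sum_{\beta} u_i\, v_\beta\, g\bigl([X_i,\xi_\alpha],\xi_\beta\bigr),
\end{equation*}
which is linear (not quadratic) in the $u_i$'s and involves the vertical coordinates $v_\beta$ of $\lambda$. Your claim that this ``cancels for free because it appears symmetrized in $i$ against antisymmetric structure constants'' does not apply: there is no such symmetrization in a term linear in $u$. Your alternative claim that ``the projection onto $\distr$ is all that survives after pairing with $\sum_i\langle\lambda,X_i\rangle X_i$'' is also not true: $\langle\lambda,\cdot\rangle$ does not annihilate vertical vectors, since in general $v_\beta\neq 0$. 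You acknowledge that the horizontal component of $\nabla_{\xi_\alpha}X_i$ is what is ``needed'' and then pivot to an argument that silently discards the vertical part; that discard is precisely what needs to be justified.

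What is actually true, and what the paper proves cleanly, is that $[X_i,\xi_\alpha]$ is \emph{horizontal}, so the vertical term above vanishes identically. The one-line argument uses the contact relation $d\eta_\beta(X,Y)=2g(X,\phi_\beta Y)$ together with $\eta_\beta(X_i)=0$ and $\eta_\beta(\xi_\alpha)=\delta_{\alpha\beta}$ constant:
\begin{equation*}
\eta_\beta\bigl([X_i,\xi_\alpha]\bigr) \;=\; -\,d\eta_\beta(X_i,\xi_\alpha) \;=\; -2\,g\bigl(X_i,\phi_\beta\xi_\alpha\bigr) \;=\; 0,
\end{equation*}
since $\phi_\beta\xi_\alpha\in\spn\{\xi_I,\xi_J,\xi_K\}$ is orthogonal to $X_i\in\distr$. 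With this horizontality in hand, your Killing/skew-symmetry computation finishes the proof. So the idea is right, but you need to supply this step (or an equivalent one, e.g.\ showing $g(\nabla_{\xi_\alpha}X_i,\xi_\beta)=0$ directly from $\nabla_{\xi_\alpha}\xi_\beta$ being vertical) rather than waving it away.
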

\begin{proof}
Let $X_1,\ldots,X_{4d}$ be a local orthonormal frame for $\distr$ around $\gamma(t)$. The Hamiltonian is $H = \tfrac{1}{2}\sum_{i=1}^{d} u_i^2$, where $u_i(\lambda):= \langle \lambda, X_i\rangle$, for $i=1,\ldots,4d$. Using Hamilton equations
\begin{equation}\label{eq:costanza}
\dot{v}_\alpha = \{H,v_\alpha \} = \sum_{i=1}^{d} u_i \{u_i, v_\alpha \} = \sum_{i,j=1}^{d} u_i u_j g([X_i,\xi_\alpha],X_j) + \sum_{i=1}^{d} \sum_{\beta} v_\beta u_i g([X_i,\xi_\alpha],\xi_\beta).
\end{equation}
Observe that
\begin{equation}
\eta_{\beta}([X_i,\xi_\alpha]) = -d\eta_{\beta}(X_i,\xi_\alpha) = -2g(X_i,\phi_\beta \xi_\alpha) = 0.
\end{equation}
Hence $[X_i,\xi_\alpha] \in \distr$ and the second term in~\eqref{eq:costanza}  vanishes. Moreover each contact structure $(\eta_\alpha,\phi_\alpha,\xi_\alpha)$ is K-type, that is $\mathcal{L}_{\xi_\alpha} g = 0$. This implies that the matrix $g([\xi_\alpha,X_i],X_j)$ is skew-symmetric (for any fixed $\alpha$). Then also first term of~\eqref{eq:costanza} vanishes.
\end{proof}

The next proposition can serve, alternatively, as the definition of Popp volume on $3$-Sasakian structures. We refer the reader interested in the general definition to~\cite{BR-Popp}.
\begin{proposition}\label{p:Popp3Sas}
Up to a constant factor, the Popp volume of the sub-Riemannian structure of a $3$-Sasakian manifold is proportional to the Riemannian one.
\end{proposition}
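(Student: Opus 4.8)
The plan is to reduce the statement to a short computation of a Gram matrix, using the explicit formula for Popp's volume of a step-$2$ equiregular sub-Riemannian structure from \cite{BR-Popp}. Concretely: fix a point $q\in M$ and a local $g$-orthonormal frame $X_1,\dots,X_{4d}$ of $\distr$, complete it to a local frame of $TM$ with the three Reeb fields $\xi_I,\xi_J,\xi_K$, and define functions $b^\alpha_{ij}$ (for $\alpha=I,J,K$ and $1\le i,j\le 4d$) by $[X_i,X_j]\equiv\sum_\alpha b^\alpha_{ij}\,\xi_\alpha\pmod{\distr}$. Then \cite{BR-Popp} gives
\begin{equation*}
\mathcal{P}=\frac{1}{\sqrt{\det B}}\;X_1^*\wedge\cdots\wedge X_{4d}^*\wedge\xi_I^*\wedge\xi_J^*\wedge\xi_K^*,\qquad B_{\alpha\beta}:=\sum_{i<j}b^\alpha_{ij}\,b^\beta_{ij},
\end{equation*}
where $\{X_i^*,\xi_\alpha^*\}$ is the coframe dual to $\{X_i,\xi_\alpha\}$. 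Since $\{X_1,\dots,X_{4d},\xi_I,\xi_J,\xi_K\}$ is $g$-orthonormal, this dual coframe is precisely the $g$-orthonormal coframe, so $X_1^*\wedge\cdots\wedge\xi_K^*$ is the Riemannian volume form up to sign; hence it suffices to prove that $\det B$ is constant, and I will in fact show that $B$ is a constant multiple of the identity.

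First I would compute the $b^\alpha_{ij}$. As $X_i,X_j\in\Gamma(\distr)\subseteq\ker\eta_\alpha$, the definition of the exterior differential together with \eqref{eq:associated} gives $b^\alpha_{ij}=\eta_\alpha([X_i,X_j])=-d\eta_\alpha(X_i,X_j)=-2g(X_i,\phi_\alpha X_j)$, so $B_{\alpha\beta}=2\sum_{i,j}g(X_i,\phi_\alpha X_j)\,g(X_i,\phi_\beta X_j)$. By the relations \eqref{eq:quat1}--\eqref{eq:quat2} and the identity $\eta_\alpha\circ\phi_\alpha=0$, each $\phi_\alpha$ preserves $\distr$; expanding $\phi_\alpha X_j$ in the orthonormal frame $X_1,\dots,X_{4d}$ of $\distr$ then yields $\sum_i g(X_i,\phi_\alpha X_j)g(X_i,\phi_\beta X_j)=g(\phi_\alpha X_j,\phi_\beta X_j)$, so that $B_{\alpha\beta}=2\sum_{j=1}^{4d}g(\phi_\alpha X_j,\phi_\beta X_j)$.

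Finally I would evaluate this sum using that the restriction of each $\phi_\alpha$ to $\distr$ is a $g$-compatible almost complex structure: $\phi_\alpha^2|_\distr=-\mathbbold{1}$ and $\phi_\alpha|_\distr$ is a $g$-isometry, hence $g$-skew-adjoint. For $\alpha=\beta$ this gives $g(\phi_\alpha X_j,\phi_\alpha X_j)=g(X_j,X_j)=1$, so each diagonal entry of $B$ equals $8d$. For $\alpha\neq\beta$, writing $\gamma$ for the remaining index, skew-adjointness and the quaternionic relation $\phi_\alpha\phi_\beta|_\distr=\pm\phi_\gamma|_\distr$ give $g(\phi_\alpha X_j,\phi_\beta X_j)=-g(X_j,\phi_\alpha\phi_\beta X_j)=\mp g(X_j,\phi_\gamma X_j)=0$, the last step again by skew-adjointness of $\phi_\gamma$ on $\distr$. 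Therefore $B=8d\,\mathbbold{1}$, $\det B=(8d)^3$, and $\mathcal{P}$ is proportional to the Riemannian volume form with constant $(8d)^{-3/2}$. The argument is elementary; what little care is needed lies in reconciling conventions (the factor $2$ in \eqref{eq:associated} and the sign convention for $d\eta$) between this paper and \cite{blair,BR-Popp}, which is harmless since $B$ enters $\mathcal{P}$ only through $\det B$. The real content is the observation that the three ``vertical'' two-forms $X\wedge Y\mapsto g(X,\phi_\alpha Y)$ on $\distr$ are mutually orthogonal and of equal, constant norm.
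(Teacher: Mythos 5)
Your proof is correct and takes the same route as the paper: apply the explicit Barilari--Rizzi formula for Popp's volume of a step-$2$ equiregular structure, and show that the Gram matrix $B$ is a constant multiple of the identity by computing $\eta_\alpha([X_i,X_j])=-2g(X_i,\phi_\alpha X_j)$ and then using orthonormality of $\{X_i\}$ together with the quaternionic relations on $\distr$. Your version is in fact more explicit than the paper's, which merely asserts $B_{\alpha\beta}=4\trace(\phi_\alpha\phi_\beta^*)$ and $\det B=12^3$; incidentally, the latter constant appears to be a slip (summing over all $i,j$ one gets $B=16d\cdot\mathrm{Id}$, i.e.\ $\det B=(16d)^3$, consistent with your $(8d)^3$ under the $i<j$ normalization), but this is immaterial since the proposition asserts proportionality only up to a constant.
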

\begin{proof}
Let $\omega \in \Lambda^{n}(M)$ be the Popp volume. The explicit formula in~\cite{BR-Popp} gives
\begin{equation}
\omega(X_1,\ldots,X_{4d},\xi_I,\xi_J,\xi_K) = \frac{1}{\sqrt{\det (B)}},
\end{equation}
for any local orthonormal frame $X_1,\ldots,X_{4d}$ of $\distr$, where $B$ is the matrix with components
\begin{equation}
B_{\alpha\beta} := \sum_{i,j=1}^{4d}\eta_\alpha([X_i,X_j]) \eta_\beta([X_i,X_j]) = 4\trace(\phi_\alpha \phi_\beta^*), \qquad \alpha,\beta = I,J,K,
\end{equation}
where we used the properties of $3$-Sasakian structures. In particular $\det (B) = 12^3$.
\end{proof}
\begin{rmk}
Scaling a volume by a constant factor does not change the associated divergence operator. Hence the sub-Laplacian associated with Popp volume coincides, up to a sign, with the sub-Laplacian used in quaternionic contact geometry (see, for example, \cite{IMV-QCYamabe,iv15}). %\cite{IMV-QCYamabe}
\end{rmk}

\begin{example}[The quaternionic Hopf fibration]
The field of quaternions is
\begin{equation}
\H = \{q = x+ Iy + Jz + K w \mid (x,y,z,w) \in \R^4 \},
\end{equation}
with norm $\|q\|^2 = x^2 + y^2 + z^2 + w^2$. The tangent spaces $T_q \H \simeq \H$ have a natural structure of $\H$-module. With this identification, the multiplication by $I,J,K$ induces the complex structures $\Phi_I,\Phi_J,\Phi_K : T \H \to T \H$. In real coordinates
\begin{align}
\Phi_I \partial_x & = + \partial_y, & \Phi_J \partial_x & = +\partial_z, &  \Phi_K \partial_x & = +\partial_w, \\
\Phi_I \partial_y & = -\partial_x, & \Phi_J \partial_y & = -\partial_w, &  \Phi_K \partial_y & = + \partial_z, \\
\Phi_I \partial_z & = +\partial_w, & \Phi_J \partial_z & = -\partial_x, &  \Phi_K \partial_z & = -\partial_y, \\
\Phi_I \partial_w & = -\partial_z, & \Phi_J \partial_w & = +\partial_y, &  \Phi_K \partial_w & = -\partial_x. 
\end{align}
The \emph{quaternionic unit sphere} is the real manifold of dimension $4d+3$
\begin{equation}
\mathbb{S}^{4d+3}=\left\lbrace q = (q_1,\ldots,q_{d+1}) \in \H^{d+1} \mid \|q\|=1 \right\rbrace,
\end{equation}
equipped with the standard round metric $g$. The inward unit normal vector is
\begin{equation}
\mathbf{n} = -\sum_{i=1}^{d+1} x_i\partial_{x_i} +y_i\partial_{y_i} + z_i\partial_{z_i} + w_i\partial_{w_i}.
\end{equation}
The vectors $\xi_{\alpha}:= \Phi_\alpha \mathbf{n}$ are tangent to $\mathbb{S}^{4d+3}$ and are given by
%\begin{gather}
%\xi_I   = \sum_{i=1}^{d+1}  y_i\partial_{x_i} - x_i\partial_{y_i}  +w_i\partial_{z_i} -z_i\partial_{w_i}, \qquad
%\xi_J  = \sum_{i=1}^{d+1}    z_i\partial_{x_i} -w_i\partial_{y_i} -x_i\partial_{z_i} +y_i\partial_{w_i},\\
%\xi_K  = \sum_{i=1}^{d+1} w_i\partial_{x_i} + z_i\partial_{y_i}  -y_i\partial_{z_i} - x_i\partial_{w_i}.   
%\end{gather}
\begin{align}
\xi_I  & = \sum_{i=1}^{d+1}  y_i\partial_{x_i} - x_i\partial_{y_i}  +w_i\partial_{z_i} -z_i\partial_{w_i},\\
\xi_J & = \sum_{i=1}^{d+1}    z_i\partial_{x_i} -w_i\partial_{y_i} -x_i\partial_{z_i} +y_i\partial_{w_i},\\
\xi_K & = \sum_{i=1}^{d+1} w_i\partial_{x_i} + z_i\partial_{y_i}  -y_i\partial_{z_i} - x_i\partial_{w_i}.   
\end{align}
Consider the three one-forms
%\begin{gather}
%\eta_I   = \sum_{i=1}^{d+1}  y_id x_i  -x_i d y_i  + w_id z_i -z_id w_i, \qquad
%\eta_J  = \sum_{i=1}^{d+1}    z_id x_i - w_id y_i-x_id z_i +y_id w_i,\\
%\eta_K  = \sum_{i=1}^{d+1}  w_id x_i + z_id y_i  -y_id z_i - x_id w_i.   
%\end{gather}
\begin{align}
\eta_I  & = \sum_{i=1}^{d+1}  y_id x_i  -x_i d y_i  + w_id z_i -z_id w_i,\\
\eta_J & = \sum_{i=1}^{d+1}    z_id x_i - w_id y_i-x_id z_i +y_id w_i,\\
\eta_K & = \sum_{i=1}^{d+1}  w_id x_i + z_id y_i  -y_id z_i - x_id w_i.   
\end{align}
The three almost complex structures on $\mathbb{S}^{4d+3}$ are defined as $\phi_\alpha:= \mathrm{pr} \circ\Phi_\alpha$, for $\alpha = I,J,K$, where $\mathrm{pr}$ is the orthogonal projection on the sphere. One can check that the restrictions of $(\phi_\alpha,\eta_\alpha,\xi_\alpha,g)$ to $\mathbb{S}^{4d+3}$ define a $3$-Sasakian structure on it.

The natural action of the unit quaternions $\{p \in \H \mid \|p \|=1\}=\mathbb{S}^3 \simeq \mathrm{SU}(2)$ on $\mathbb{S}^{4d+3}$ is
\begin{equation}
p \cdot (q_1,\ldots,q_{d+1}) = (p q_1,\ldots, p q_{d+1}).
\end{equation}
The projection $\pi$ on the quotient $\mathbb{HP}^d$ is the so-called \emph{quaternionic Hopf fibration}:
\begin{equation}
\mathbb{S}^3 \hookrightarrow \mathbb{S}^{4d+3} \xrightarrow{\pi} \mathbb{HP}^{d}.
\end{equation}
The vector fields $\xi_\alpha$ generated by the action of $e^{\varepsilon I},e^{\varepsilon J},e^{\varepsilon K}$ on $\mathbb{S}^{4d+3}$ are tangent to the fibers.
\end{example}

\section{Computation of curvature and canonical frame for 3-Sasakian manifolds} \label{s:CanFrame}

Fix a $3$-Sasakian manifold $M$ of dimension $n=4d+3$, and consider its sub-Riemannian structure as in Section~\ref{s:srgeo3sas}, with $k = \rank\distr = 3$. We compute the canonical frame along an extremal $\lambda(t)$ (for small $t$) with initial covector $\lambda \in U^*M$, and the Ricci curvatures $\Riccan^\mu(\lambda(t))$. To do this, we exploit the auxiliary Riemannian structure $g$ of the $3$-Sasakian manifold. Hence $\nabla$ denotes the covariant derivative and $R^\nabla$ the Riemann curvature tensor w.r.t. the Levi-Civita connection. The formulas for the sub-Riemannian curvature will only depend on the sub-Riemannian structure $(M,\distr,g|_\distr)$. In the notation of Section~\ref{s:jac}, we split
\begin{equation}
E(t) = (E_a(t),E_b(t),E_c(t))^*,\qquad F(t) = (F_a(t),F_b(t),F_c(t))^*.
\end{equation}
where $E_\mu(t)$ is a $|\mu|$-tuple, with $\mu=a,b,c$, with $|a|=|b|=3$ and $|c| = 4d-3$. Moreover, we express the structural equations (Proposition \ref{p:can}) in the following explicit form:
\begin{align}
\dot{E}_a & = E_b,  	& \dot{E}_b & = -F_b,  	& \dot{E}_c & = -F_c,  \\
\dot{F}_a & =  \sum_{\mu=a,b,c}R_{a \mu}(t)E_{\mu}, &  \dot{F}_b & =  \sum_{\mu=a,b,c}R_{b\mu}(t)E_{\mu} - F_a & \dot{F}_c, & =  \sum_{\mu=a,b,c}R_{c\mu}(t)E_{\mu}.
\end{align}
%\begin{align}
%\dot{E}_a & = E_b,  	& \dot{F}_a & =  \sum_{\mu=a,b,c}R_{a \mu}(t)E_{\mu}, \\
%\dot{E}_b & = -F_b,  	& \dot{F}_b & =  \sum_{\mu=a,b,c}R_{b\mu}(t)E_{\mu} - F_a, \\
%\dot{E}_c & = -F_c,  & \dot{F}_c & =  \sum_{\mu=a,b,c}R_{c\mu}(t)E_{\mu},
%\end{align}
where the curvature matrix $R(t) = R(t)^*$ is
\begin{equation}
R(t) = \begin{pmatrix}
R_{aa}(t) & R_{ab}(t) & R_{ac}(t) \\
R_{ba}(t) & R_{bb}(t) & R_{bc}(t) \\
R_{ca}(t) & R_{cb}(t) & R_{cc}(t)
\end{pmatrix},
\end{equation}
and satisfies the additional condition $R_{ab}(t) = - R_{ab}(t)^*$. We stress that $R(t)$ is a matrix representation of the curvature operator in the basis given by the projections $f_\mu(t)=\pi_* F_\mu(t)$ for $\mu=a,b,c$, but the Ricci curvatures do not depend on such a representation.

\subsection{Auxiliary frame}
We build a convenient local frame on $M$, associated with a given trajectory (the geodesic $\gamma(t) = \pi(\lambda(t))$, in our case).

\begin{lemma}\label{fermi_frame}
There exists a horizontal frame $X_i$, $i \in \{1,\dots,4d\}$, in a neighborhood of $\gamma(0)$, such that for all $\alpha\in\{I,J,K\}$ and $i,j\in\{1,\dots,4d\}$,
	\begin{itemize}
		\item the frame is orthonormal,
		\item $\nabla_{X_i} X_j|_{\gamma(t)} = \frac{1}{2}[X_i,X_j]^v_{\gamma(t)}$, where $^v$ denotes the orthogonal projection on $\distr^\perp$,
		\item $[\xi_\alpha, X_i]=0$.
	\end{itemize}
\end{lemma}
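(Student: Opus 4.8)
## Proof plan for Lemma~\ref{fermi_frame}

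The plan is to construct the frame by first fixing its behaviour along the geodesic $\gamma(t)$ and then extending it off the curve using the flows of the Reeb fields, exploiting the fact that the $\xi_\alpha$ are Killing and the foliation $\mathcal{F}$ they generate is, by Theorem~\ref{t:BGM}, bundle-like (indeed totally geodesic with $3$-dimensional spherical leaves). Concretely, I would start at the point $\gamma(0)$ with any orthonormal basis $X_1(0),\dots,X_{4d}(0)$ of $\distr_{\gamma(0)}$, and then transport it along $\gamma$ by the \emph{partial connection} associated with the choice $\nabla_{X_i}X_j = \tfrac12[X_i,X_j]^v$; equivalently, declare $X_i$ along $\gamma$ to solve $\nabla_{\dot\gamma}X_i = -\tfrac12[\dot\gamma, X_i]^v$ plus a horizontal correction chosen so that the $X_i$ stay orthonormal and horizontal. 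The key point making this consistent is that on a $3$-Sasakian manifold $\nabla_Y\xi_\alpha=-\phi_\alpha Y$, so the vertical part of $[X_i,X_j]$ along $\distr$ is controlled by the $\phi_\alpha$ and $g([X_i,X_j],\xi_\alpha) = -2g(X_i,\phi_\alpha X_j)$; in particular $\tfrac12[X_i,X_j]^v = -\sum_\alpha g(X_i,\phi_\alpha X_j)\,\xi_\alpha$ is manifestly skew in $i,j$, so prescribing $\nabla_{X_i}X_j = \tfrac12[X_i,X_j]^v$ is compatible with metric orthonormality of the frame (the symmetric part of the connection coefficients vanishes).

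Next I would extend the frame off $\gamma$ by pushing it with the commuting flows of $\xi_I,\xi_J,\xi_K$. There is a subtlety: $[\xi_I,\xi_J]=2\xi_K$, so the three flows do \emph{not} commute; however the distribution $\distr^\perp=\spn\{\xi_I,\xi_J,\xi_K\}$ is integrable with $3$-dimensional leaves, and I can instead take a small $3$-dimensional transversal $\Sigma$ to $\mathcal{F}$ through $\gamma(0)$ containing (a piece of) $\gamma$, define $X_i$ on $\Sigma$ first (extending along $\gamma\subset\Sigma$ by the transport above, and along the remaining transversal directions arbitrarily but smoothly and orthonormally), and then require $[\xi_\alpha,X_i]=0$ by declaring $X_i$ to be invariant under the (local) $\mathrm{SU}(2)$-action generated by the $\xi_\alpha$ on the saturation of $\Sigma$. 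Because the $\xi_\alpha$ are Killing and preserve $\distr$ (as $\eta_\beta([\xi_\alpha,X_i]) = -d\eta_\beta(\xi_\alpha,X_i) = -2g(\xi_\alpha,\phi_\beta X_i)=0$, cf. the computation in Lemma~\ref{l:costanza}), this invariant extension keeps the frame horizontal and orthonormal on a full neighbourhood of $\gamma(0)$, and by construction $[\xi_\alpha,X_i]=0$ identically there.

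Finally I would check that the condition $\nabla_{X_i}X_j|_{\gamma(t)} = \tfrac12[X_i,X_j]^v_{\gamma(t)}$ persists along all of $\gamma$, not merely at $\gamma(0)$. Along $\gamma$ the horizontal part of $\nabla_{X_i}X_j$ is governed by the transport ODE we imposed, which is designed precisely so that $(\nabla_{X_i}X_j)^h|_\gamma=0$; for the vertical part one uses $g(\nabla_{X_i}X_j,\xi_\alpha) = -g(X_j,\nabla_{X_i}\xi_\alpha) = g(X_j,\phi_\alpha X_i)$, which equals $\tfrac12 g([X_i,X_j]^v,\xi_\alpha)$ by the identity above — and this holds pointwise everywhere, needing no transport. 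So the vertical part is automatic from the $3$-Sasakian identities, and only the horizontal part requires the choice of transport along $\gamma$. The main obstacle is bookkeeping the non-commutativity of the Reeb flows when extending off the curve: one must phrase the extension as $\mathcal{F}$-leafwise invariance (i.e. invariance under the local group action) rather than as iterated single-flow pushforwards, and verify that leafwise-invariance together with the prescribed data on one transversal determines a well-defined smooth frame on a neighbourhood — this is where the bundle-like/totally-geodesic structure from Theorem~\ref{t:BGM} is used.
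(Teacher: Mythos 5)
Your overall framework is close to the paper's — horizontal lift from the local quotient of the foliation $\mathcal{F}$ (equivalently, invariance under the local group action of the $\xi_\alpha$) yields $[\xi_\alpha,X_i]=0$, and the $3$-Sasakian identities pin down the vertical part of $\nabla_{X_i}X_j$ — but the central claim in your last paragraph has a genuine gap. The transport ODE you impose along $\gamma$ controls only $\nabla_{\dot\gamma}X_j|_{\gamma(t)}$. For an index $i$ with $X_i(\gamma(t))$ transverse to $\dot\gamma(t)$, the quantity $\nabla_{X_i}X_j|_{\gamma(t)}$ depends on the first-order behaviour of the frame \emph{off} the curve, which your construction leaves ``arbitrary'' on the transversal $\Sigma$. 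Generically such an arbitrary extension produces a nonzero horizontal component $(\nabla_{X_i}X_j)^h|_{\gamma(t)} \ne 0$, so the second bullet of the lemma is not achieved. (Also, $\Sigma$ must be $4d$-dimensional, not $3$-dimensional: the leaves of $\mathcal{F}$ have dimension $3$.)

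The paper closes exactly this gap by choosing on the local quotient $\bar{\mathcal{O}}$ a \emph{Fermi normal frame} along the projected curve $\bar\gamma$: an orthonormal frame $\{\bar X_i\}$ with $\bar\nabla_{\bar X_i}\bar X_j|_{\bar\gamma(t)} = 0$ for \emph{all} $i,j$, citing \cite[Thm. 3.1]{HandbookNORMALFRAMES}. This is strictly stronger than a parallel frame along $\bar\gamma$; it is a normal-coordinate-type construction that also controls derivatives transverse to the curve. With this choice, O'Neill's submersion formula $\nabla_{X_i}X_j = \widetilde{\bar\nabla_{\bar X_i}\bar X_j} + \tfrac{1}{2}[X_i,X_j]^v$ immediately gives the second bullet, and the third follows since $[\xi_\alpha,X_i]$ is horizontal and $\pi$-related to $[0,\bar X_i]=0$. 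So the missing ingredient in your plan is precisely the Fermi normal frame on the base: replacing your ``arbitrary extension on $\Sigma$'' with the horizontal lift of such a frame would repair the argument.
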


\begin{proof}
The transverse distribution generated by $\xi_I,\xi_J,\xi_K$ is involutive. Hence by Frobenius theorem there exists a neighborhood $\mathcal{O}$ of $\gamma(0)$ and a smooth submersion $\pi:\mathcal{O}\subset M \to \mathbb{R}^{4d}$ such that the fibers are the integral manifolds of the transverse distribution. We give $\bar{\mathcal{O}} = \pi(\mathcal{O})$ the Riemannian metric such that $\pi:\mathcal{O}\to \bar{\mathcal{O}}$ is a Riemannian submersion (w.r.t. the Riemannian structure of the $3$-Sasakian manifold). Let $\bar{\nabla}$ be the covariant derivative on $\bar{\mathcal{O}}$.

We consider on $\bar{\mathcal{O}}$, an orthonormal frame $\{\bar{X}_1,\dots,\bar{X}_{4d}\}$, such that $\bar\nabla_{\bar{X}_i}\bar X_j|_{\bar\gamma(t)} = 0$ for $t$ small enough. The existence of this frame is proved in \cite[Thm. 3.1]{HandbookNORMALFRAMES}, with a construction inspired by Fermi normal coordinates \cite{Mis-FermiNormal}. Since $\pi : \mathcal{O} \to \bar{\mathcal{O}}$ is a Riemannian submersion, we can lift the frame $\bar{X}_i$ to a horizontal orthonormal frame $X_i \in \Gamma(\distr)$ on $\mathcal{O}$. Then by standard formulas \cite[Chap. 3.D]{GHL-RiemannianGeom} relating the covariant derivatives of a submersion, we obtain
	\begin{equation}
	\nabla_{X_i} X_j = \widetilde{\bar{\nabla}_{\bar X_i} \bar X_j} + \frac{1}{2}[X_i,X_j]^v,
	\end{equation}
	where the tilde denotes the horizontal lift. Finally, notice that $[\xi_\alpha, X_i] \in \Gamma(\distr)$ and also $\pi_*[\xi_\alpha, X_i]=[\pi_*\xi,\bar{X}_i]=0$, so $[\xi_\alpha,X_i]=0$.
\end{proof}
\begin{rmk}
The frame of Lemma~\ref{fermi_frame} is closely related with qc-normal frames. Qc-normal frames are defined for the general class of quaternionic contact (qc) manifold, and satisfy -- at a single point $q_0$ -- a series of conditions formulated in terms of of Biquard connection. Their existence is proved in \cite[Lemma 4.5]{IMV-QCYamabe}. Using the relation between Biquard and Levi-Civita connection \cite[Eq. 6.3]{IMV-QCEinsten}, one can show that, in the case of a 3-Sasakian manifold, the conditions satisfied by the frame of Lemma \ref{fermi_frame} are equivalent to the conditions defining a qc-frame at each point along the curve $\gamma(t)$ in a neighborhood of $\gamma(0)$. For this reason, one might call the frame of Lemma~\ref{fermi_frame} a \emph{qc-Fermi frame}. We also mention that the existence of a qc-Fermi frame (i.e. a qc-frame along a curve) has been proved for a general qc manifold in the recent paper \cite[Lemma 24]{BI-QC}.
\end{rmk}
\textbf{Notation and conventions:} 
\begin{itemize}
\item Latin indices $i,j,k,\dots$ belong to $\{1,\dots,4d\}$ and Greek ones $\alpha,\beta,\tau\dots$ are quaternions $\{I,J,K\}$. Repeated indices are summed over their maximal range;
\item We use the same quaternionic indices notation of Section~\ref{s:quatindices};
\item The dot denotes the Lie derivative in the direction of $\vec{H}$;
\item For $n$-tuples $v,w$ of vector fields along $\lambda(t)$, the symbol $\sigma(v,w)$ denotes the matrix $\sigma(v_i,w_j)$. Notice that $\sigma(v,w)^* = -\sigma(w,v)$ and that $\frac{d}{dt} \sigma(v,w) = \sigma(\dot{v},w) + \sigma(v,\dot{w})$;
\item For $n$-tuples of vectors $v$, and matrices $L$, the juxtaposition $Lv$ denotes the $n$-tuple of vectors obtained by matrix multiplication;
\item For functions $f,g \in C^\infty(T^*M)$, the symbol $\{f,g\}$ denotes the Poisson bracket. We make systematic use of symplectic calculus (see \cite{Agrachevbook} for reference).
\end{itemize}

\subsection{Hamiltonian frame}
Let us consider the \textit{momentum functions} $u_i,v_\alpha: T^*M \to \R$
\begin{align}
u_i & =  \langle \lambda, X_i \rangle , \qquad i = 1,\dots,4d, \\
v_\alpha & = \langle \lambda, \xi_\alpha\rangle , \qquad \alpha = I,J,K.
\end{align}
The momentum functions define coordinates $(u,v)$ on each fiber of $T^*M$. In turn, they define local vector fields $\partial_{v_\alpha}$ and $\partial_{u_i}$ on $T^*M$ (with the property that $\pi_* \partial_{v_\alpha} = \pi_*\partial_{u_i} = 0$). Moreover, they define also the Hamiltonian vector fields $\vec{u}_i$ and $\vec{v}_\alpha$. The \emph{hamiltonian frame} associated with $\{\xi_\alpha,X_i\}$ is the local frame on $T^*M$ around $\lambda(0)$ given by $\{\partial_{u_i},\partial_{v_\alpha},\vec{u}_i,\vec{v}_\alpha\}$. 

The following $3\times 3$ skew-symmetric matrix contains the ``vertical'' part of the covector:
\begin{equation}\label{eq:Vdef}
V_{\alpha\beta}:= v_{\alpha\beta}, \qquad \alpha,\beta = I,J,K.
\end{equation}
In the r.h.s. of~\eqref{eq:Vdef}, the notation $\alpha\beta$ denotes the product of quaternions with the convention $v_{\alpha^2}=-v_1=0$. Thus,~\eqref{eq:Vdef} is the standard identification $\mathbb{R}^3 \simeq \mathfrak{so}(3)$. The sub-Riemannian Hamiltonian and the corresponding Hamiltonian vector field are
\begin{equation}
H = \frac{1}{2} u_i u_i, \qquad \vec{H} = u_i \vec{u}_i.
\end{equation}

\begin{lemma} The momentum functions $u_i,v_\alpha$ have the following properties:
	\begin{itemize}
		\item[(1)] $\{u_i,v_\alpha\}=0$,
		\item[(2)] $\{v_\alpha,v_\beta\} = 2 v_{\alpha\beta}$,
		\item[(3)] $\{u_i,u_j\} = 2v_\alpha g(\phi_\alpha X_i,X_j) +u_k g( X_k,[X_i,X_j]) $.
	\end{itemize}
	Moreover, along the extremal $\lambda(t)$, we have
	\begin{itemize}
		\item[(4)] $\{u_i,u_j\} =  2v_\alpha g(\phi_\alpha X_i,X_j)$,
		\item[(5)] $\partial_{u_k} \{u_i,u_j\}=0$,
		\item[(6)] $\partial_{v_\alpha} \{u_i,u_j\}= 2g(\phi_\alpha X_i,X_j)$,
		\item[(7)] $\overrightarrow{\{u_i,u_j\}} = 2g(\phi_\alpha X_i,X_j)\vec{v}_\alpha +2v_\alpha g((\phi_{\alpha\tau}-\phi_{\tau\alpha}) X_i, X_j)\partial_{v_\tau} -u_k X_\ell g( X_k,[X_i,X_j]) \partial_{u_\ell}$.
	\end{itemize}
\end{lemma}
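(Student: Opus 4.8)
I would split the seven identities into the three ``structural'' ones (1)--(3), which hold on a whole neighbourhood of $\lambda(0)$ in $T^*M$, and the four ``along $\lambda(t)$'' ones (4)--(7). For (1)--(3) the only input is that the Poisson bracket of two fibre-linear functions $h_X(\lambda)=\langle\lambda,X\rangle$ is again fibre-linear, with $\{h_X,h_Y\}=h_{[X,Y]}$ (the sign being the one compatible with the conventions $\sigma(\cdot,\vec H)=dH$, $\vec H=u_i\vec u_i$; it is in any case pinned down by (2) itself). Then: (1) is exactly $[X_i,\xi_\alpha]=0$ from Lemma~\ref{fermi_frame}; (2) is the $3$-Sasakian relation $[\xi_\alpha,\xi_\beta]=2\xi_{\alpha\beta}$; and (3) follows by expanding $[X_i,X_j]$ in the orthonormal frame $\{X_k,\xi_\alpha\}$, the horizontal coefficients being $g(X_k,[X_i,X_j])$ and the vertical ones $\eta_\alpha([X_i,X_j])=-d\eta_\alpha(X_i,X_j)=-2g(X_i,\phi_\alpha X_j)=2g(\phi_\alpha X_i,X_j)$, using~\eqref{eq:associated} and the $g$-skew-symmetry of $\phi_\alpha$.

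The whole point of restricting to $\lambda(t)$ is one geometric fact: \emph{$[X_i,X_j]$ is vertical along $\gamma(t)$}. Indeed, by torsion-freeness $[X_i,X_j]=\nabla_{X_i}X_j-\nabla_{X_j}X_i$, and the second property of the qc-Fermi frame gives $\nabla_{X_i}X_j|_{\gamma(t)}=\tfrac12[X_i,X_j]^v_{\gamma(t)}$, whence $g(X_k,[X_i,X_j])|_{\gamma(t)}=0$. Substituting into (3) gives (4) at once. For (5) and (6) I would differentiate the (globally valid) identity (3) by the vertical fields $\partial_{u_k}$ and $\partial_{v_\alpha}$: since these kill functions constant on the fibres, one gets $\partial_{u_k}\{u_i,u_j\}=g(X_k,[X_i,X_j])$, which vanishes along $\gamma$ by the previous fact, and $\partial_{v_\alpha}\{u_i,u_j\}=2g(\phi_\alpha X_i,X_j)$.

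For (7) I would exploit that $\{u_i,u_j\}=h_{[X_i,X_j]}$ is itself fibre-linear, so $\overrightarrow{\{u_i,u_j\}}$ is the Hamiltonian lift of $[X_i,X_j]$ and $\pi_*\overrightarrow{\{u_i,u_j\}}=[X_i,X_j]$; along $\gamma$ this equals $\sum_\alpha 2g(\phi_\alpha X_i,X_j)\xi_\alpha$ by (4), which gives the $\vec v_\alpha$-coefficients $2g(\phi_\alpha X_i,X_j)$ and makes the $\vec u_i$-coefficients vanish. The two vertical coefficients are read off from the actions $\overrightarrow{\{u_i,u_j\}}(v_\tau)=\{h_{[X_i,X_j]},v_\tau\}=h_{[[X_i,X_j],\xi_\tau]}$, which vanishes identically by the Jacobi identity together with $[\xi_\tau,X_i]=[\xi_\tau,X_j]=0$, and $\overrightarrow{\{u_i,u_j\}}(u_\ell)=h_{[[X_i,X_j],X_\ell]}$, whose value along $\gamma$ I would compute by writing $[X_i,X_j]=\sum_\beta 2g(\phi_\beta X_i,X_j)\xi_\beta+\sum_k g(X_k,[X_i,X_j])X_k$ near $\gamma$ and bracketing with $X_\ell$: the vertical part contributes $-2\sum_\beta\big(X_\ell g(\phi_\beta X_i,X_j)\big)\xi_\beta$, and $X_\ell g(\phi_\beta X_i,X_j)|_{\gamma(t)}=0$ (again by the Sasakian formula $(\nabla_{X_\ell}\phi_\beta)X_i=g(X_\ell,X_i)\xi_\beta-\eta_\beta(X_i)X_\ell$, by $\nabla_{X_\ell}X_i|_{\gamma(t)}$ being vertical, and by $\phi_\beta$ preserving $\distr$), while the horizontal part contributes $-\sum_k\big(X_\ell g(X_k,[X_i,X_j])\big)X_k$ along $\gamma$. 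Solving the resulting linear system in the Hamiltonian frame, where $\vec v_\alpha(v_\tau)=2v_{\alpha\tau}$, $\vec u_m(v_\tau)=0$, $\vec v_\alpha(u_\ell)=0$, yields the $\partial_{u_\ell}$-coefficient $-u_k X_\ell g(X_k,[X_i,X_j])$ and the $\partial_{v_\tau}$-coefficient $-4g(\phi_\alpha X_i,X_j)v_{\alpha\tau}$, which a short computation with the quaternionic relations rewrites as $2v_\alpha g\big((\phi_{\alpha\tau}-\phi_{\tau\alpha})X_i,X_j\big)$; alternatively this last coefficient follows directly from $\xi_\tau g(\phi_\alpha X_i,X_j)|_{\gamma(t)}=-g\big((\phi_{\alpha\tau}-\phi_{\tau\alpha})X_i,X_j\big)$, obtained from $(\nabla_{\xi_\tau}\phi_\alpha)X_i=0$, $\nabla_{\xi_\tau}X_i=-\phi_\tau X_i$, the quaternionic identities, and skew-symmetry of $\phi_\tau$.

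The main obstacle is the bookkeeping in (7). Since the Hamiltonian-frame fields $\vec u_i,\vec v_\alpha$ do not commute with the fibre coordinates $u_\ell,v_\tau$, extracting a vertical coefficient by evaluating on a coordinate function mixes in the $\vec v_\alpha$-coefficients already determined, so the computation only closes up after one checks that the ``spurious'' contributions — the $v_{\alpha\tau}$ terms and the $v_\beta\,X_\ell\eta_\beta([X_i,X_j])$ terms — either cancel along $\gamma(t)$ or reassemble exactly into the antisymmetrized quaternionic combination $\phi_{\alpha\tau}-\phi_{\tau\alpha}$. This is precisely where all three defining properties of the qc-Fermi frame of Lemma~\ref{fermi_frame} and the Sasakian differentiation rules for $\phi_\alpha$ and $\xi_\alpha$ are used in an essential way; everything else is routine symplectic calculus.
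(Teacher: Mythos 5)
Your proof is correct. For (1)--(6) you take the same route as the paper: the bracket relations of the qc-Fermi frame (in particular $[\xi_\alpha,X_i]=0$), the associated-metric identity $\eta_\alpha([X_i,X_j])=2g(\phi_\alpha X_i,X_j)$, and — for the ``along $\gamma$'' statements — the vanishing of $g(X_k,[X_i,X_j])$ along $\gamma$, which you correctly extract from the $\nabla_{X_i}X_j$ condition of Lemma~\ref{fermi_frame}. For (7) your bookkeeping diverges from the paper's but is equally valid. The paper applies the Leibniz rule $\overrightarrow{fg}=f\vec{g}+g\vec{f}$ directly to the global expression from (3), producing a sum whose spurious terms are then killed by the derivative identities. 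You instead exploit that $\{u_i,u_j\}=h_{[X_i,X_j]}$ is fibre-linear, so its Hamiltonian vector field is the Hamiltonian lift of $[X_i,X_j]$: the $\vec{u},\vec{v}$ coefficients follow from $\pi_*$ (using that $[X_i,X_j]$ is vertical along $\gamma$), while the $\partial_u,\partial_v$ coefficients are recovered by evaluating on the fibre coordinates $u_\ell,v_\tau$ and solving the small linear system created by the non-commutation $\{v_\alpha,v_\beta\}=2v_{\alpha\beta}$. Both routes reduce to the same geometric inputs — the Jacobi identity with $[\xi_\tau,X_i]=0$, the vanishing of $X_\ell g(\phi_\alpha X_i,X_j)$ along $\gamma$, and the identity $\xi_\tau g(\phi_\alpha X_i,X_j)\big|_\gamma=-g\bigl((\phi_{\alpha\tau}-\phi_{\tau\alpha})X_i,X_j\bigr)$, whose sign you state correctly. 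Your version makes the decomposition in the Hamiltonian frame explicit, at the price of the linear system; the paper's Leibniz expansion is shorter once one sees which terms to cancel.
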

\begin{proof}
	Properties (1) and (2) follow from the definition of Poisson bracket and the fact that $[\xi_\alpha,X_i]=0$ and $[\xi_\alpha,\xi_\beta] = 2\xi_{\alpha\beta}$. For (3) we compute
\begin{align*}
	\{u_i,u_j\} & = v_\alpha g( \xi_\alpha,[X_i,X_j])  + u_k  g( X_k,[X_i,X_j])   \\
		& = -v_\alpha d\eta_\alpha(X_i,X_j) + u_k  g( X_k,[X_i,X_j])  = 2v_\alpha g(\phi_\alpha X_i, X_j) + u_k  g( X_k,[X_i,X_j]) .
\end{align*}
Point (4) follows from (3) and Lemma~\ref{fermi_frame}. Points (4)-(5) follow from (3). For (7)
\begin{align*}
\overrightarrow{\{u_i,u_j\}} & = 2\vec{v_\alpha} g(\phi_\alpha X_i,X_j) +\vec{u_k}\cancel{ g( X_k,[X_i,X_j]) } + 2v_\alpha \overrightarrow{g(\phi_\alpha X_i,X_j)} +u_k\overrightarrow{  g( X_k,[X_i,X_j]) }\\
& = 2g(\phi_\alpha X_i,X_j)\vec{v_\alpha} -2v_\alpha \cancel{X_\ell g(\phi_\alpha X_i,X_j)}\partial_{u_\ell} -2v_\alpha \xi_\tau g(\phi_\alpha X_i,X_j)\partial_{v_\tau} \\
& \quad -u_k X_\ell g( X_k,[X_i,X_j]) \partial_{u_\ell} -u_k \bcancel{\xi_\tau g( X_k,[X_i,X_j])} \partial_{v_\tau},
\end{align*}
where the first barred term vanishes by Lemma~\ref{fermi_frame}, the second one by direct computation and the last one by Jacobi identity and Lemma~\ref{fermi_frame}. To conclude, we observe that
\begin{align}
\xi_\tau g(\phi_\alpha X_i,  X_j) & =  \cancel{g((\nabla_{\xi_\tau} \phi_\alpha) X_i,  X_j)} +g( \phi_\alpha \nabla_{\xi_\tau} X_i,  X_j) + g( \phi_\alpha X_i,   \nabla_{\xi_\tau} X_j) \\
& = -g(\phi_\alpha  X_j,\nabla_{X_i} \xi_\tau) + g( \phi_\alpha X_i,   \nabla_{X_j} \xi_\tau) \\
& =  g(\phi_\alpha  X_j,\phi_\tau X_i) - g( \phi_\alpha X_i,   \phi_\tau X_j)  = g((\phi_{\alpha\tau}-\phi_{\tau\alpha}) X_i, X_j),
\end{align}
where the first barred term vanishes by Lemma~\ref{fermi_frame}.
\end{proof}

\begin{lemma}\label{l:secder} Let  $v_\alpha(t) =  \langle \lambda(t),\xi_\alpha|_{\gamma(t)}\rangle $, for $\alpha =I,J,K$. Then, along the geodesic, we have
	\begin{equation}
	\nabla_{\dot\gamma}\dot\gamma = 2 v_\alpha \phi_\alpha \dot\gamma,
	\end{equation}
\end{lemma}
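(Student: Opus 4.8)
The plan is to compute the covariant acceleration of the geodesic $\gamma(t)=\pi(\lambda(t))$ by expanding $\dot\gamma$ in the qc-Fermi frame of Lemma~\ref{fermi_frame} and converting the time derivatives of the momentum functions into Poisson brackets. Since $\vec H = u_i \vec u_i$ and $\pi_*\vec u_i = X_i$, the geodesic velocity is $\dot\gamma(t) = \pi_*\vec H(\lambda(t)) = \sum_i u_i(\lambda(t))\,X_i|_{\gamma(t)}$ (equivalently, $g(\dot\gamma,X_i)=\langle\lambda,X_i\rangle=u_i$ for the normal extremal). Differentiating covariantly along $\gamma$,
\[
\nabla_{\dot\gamma}\dot\gamma \;=\; \sum_i \dot u_i\,X_i \;+\; \sum_{i,j} u_i u_j\,\nabla_{X_i}X_j,
\]
where $\dot u_i = \tfrac{d}{dt}u_i(\lambda(t)) = \{H,u_i\}$ is the Lie derivative along $\vec H$.

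For the Christoffel term I would invoke Lemma~\ref{fermi_frame}: along $\gamma(t)$ one has $\nabla_{X_i}X_j = \tfrac12[X_i,X_j]^v$, which is antisymmetric in $i,j$; since $u_iu_j$ is symmetric, the double sum vanishes. For the first term, expand $\dot u_i = \{H,u_i\} = u_j\{u_j,u_i\}$ and use property~(4) of the momentum functions, $\{u_j,u_i\} = 2v_\alpha g(\phi_\alpha X_j,X_i)$ along the extremal, which gives $\dot u_i = 2v_\alpha g(\phi_\alpha\dot\gamma,X_i)$. It remains to note that $\phi_\alpha$ preserves $\distr$: $\phi_\alpha$ is $g$-skew-adjoint (because $2g(X,\phi_\alpha Y)=d\eta_\alpha(X,Y)$ is antisymmetric), so for $X\in\distr$ we get $g(\phi_\alpha X,\xi_\beta) = -g(X,\phi_\alpha\xi_\beta) = -g(X,\xi_{\alpha\beta})=0$, using $\phi_\alpha\xi_\alpha=0$ in the case $\beta=\alpha$ and $\xi_{\alpha\beta}\perp\distr$ otherwise. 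Hence $\phi_\alpha\dot\gamma\in\distr$ and, $\{X_i\}$ being an orthonormal frame of $\distr$, $\sum_i g(\phi_\alpha\dot\gamma,X_i)X_i = \phi_\alpha\dot\gamma$. Therefore $\sum_i \dot u_i X_i = 2v_\alpha\phi_\alpha\dot\gamma$, and combining with the vanishing of the Christoffel term yields $\nabla_{\dot\gamma}\dot\gamma = 2v_\alpha\phi_\alpha\dot\gamma$.

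The computation is essentially routine once the right frame is fixed; the only two points needing a little care are the antisymmetry cancellation of the $\nabla_{X_i}X_j$ contribution (which is precisely what the adapted frame of Lemma~\ref{fermi_frame} is designed to produce) and the observation that each $\phi_\alpha$ maps $\distr$ to itself. I do not anticipate a genuine obstacle.
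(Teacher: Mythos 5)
Your proof is correct and follows essentially the same route as the paper: expand $\dot\gamma=u_iX_i$ in the qc-Fermi frame, kill the Christoffel term by the antisymmetry of $\tfrac12[X_i,X_j]^v$ against the symmetric $u_iu_j$, and convert $\dot u_i$ to the Poisson bracket $u_j\{u_j,u_i\}=2v_\alpha g(\phi_\alpha\dot\gamma,X_i)$. The only difference is that you spell out the (true, but left implicit in the paper) fact that $\phi_\alpha$ preserves $\distr$, which is needed to rewrite $\sum_ig(\phi_\alpha\dot\gamma,X_i)X_i$ as $\phi_\alpha\dot\gamma$.
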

\begin{proof}
	Indeed $\gamma(t) = u_i(t) X_i|_{\gamma(t)}$, with $u_i(t) = \langle \lambda(t),X_i|_{\gamma(t)}\rangle $. Then, suppressing $t$
	\begin{align}
	\nabla_{\dot\gamma}\dot\gamma & = \dot{u}_i X_i + u_i u_k \nabla_{X_k} X_i  = \{H,u_i\} X_i + \cancel{u_i u_k \tfrac{1}{2}[X_i,X_k]^v} \\
	& = u_k \{u_k ,u_i\} X_i  = 2u_k v_\alpha  g( \phi_\alpha X_k,X_i)  X_i = 2v_\alpha \phi_\alpha \dot\gamma,
	\end{align}
where the barred term vanishes by skew-symmetry.
\end{proof}

\begin{lemma}[Fundamental computations]\label{l:fund} Along the extremal, we have
	\begin{align}
	\dot{\partial}_v & = 2A\partial_u, & 
	\dot{\vec{u}} & = 2C\vec{u}-2A^*\vec{v}+B\partial_u+2D\partial_v, \\
	\dot{\partial}_u & = - \vec{u}, &
	\dot{\vec{v}} & = 0,
	\end{align}
	where we defined the following matrices, computed along the extremal:
	\begin{align}
	A_{\beta i} & := - g( \phi_\beta\dot{\gamma},X_i) ,   & \text{$3 \times 4d$ matrix}, \\
	B_{ij} & := R^{\nabla}(\dot{\gamma},X_i,X_j,\dot{\gamma}) + 3  g( X_i,\Pi_\phi X_j) , & \text{$4d\times 4d$ symmetric matrix},\\
	C_{ij} & := -v_\alpha g( \phi_\alpha X_i,X_j) , & \text{ $4d\times 4d$ skew-symmetric matrix}, \\
	D_{i\tau} &  :=  v_\alpha g( X_i, (\phi_{\alpha\tau}-\phi_{\tau\alpha})\dot{\gamma}) , & \text{$4d \times 3$ matrix},
	\end{align}
and $\Pi_\phi: \Gamma(\distr) \to \Gamma(\distr)$ is the orthogonal projection on $\spn\{\phi_I\dot\gamma,\phi_J\dot\gamma,\phi_K\dot\gamma\}.$
\end{lemma}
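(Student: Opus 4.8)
The proof is a direct symplectic computation. Throughout, recall that the dot denotes the Lie derivative along $\vec H$, so for a vector field $W$ defined near $\lambda(0)$ one has $\dot W = [\vec H, W]$, and that $\vec H = u_i\vec u_i$; hence every bracket can be expanded by Leibniz, $[u_i\vec u_i, W] = u_i[\vec u_i, W] - W(u_i)\vec u_i$, and then evaluated through the momentum bracket relations of the preceding lemma together with the usual rules $\vec f(g) = \{f,g\}$ and $[\vec f,\vec g] = \overrightarrow{\{f,g\}}$. The one structural observation that organizes the ``easy'' cases is that if $W$ is vertical then, since $\pi_*\vec u_i = X_i$ and $\pi_*W = 0$, the bracket $[\vec u_i, W]$ is again vertical, and a vertical vector is determined by its action on the fibre coordinates $u_k,v_\alpha$.

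The three easy identities then fall out. Property (1) ($\{u_i,v_\alpha\}=0$) gives $[\vec u_i,\vec v_\alpha]=0$ and $\vec v_\alpha(u_i)=0$, whence $\dot{\vec v}_\alpha = u_i[\vec u_i,\vec v_\alpha] - \vec v_\alpha(u_i)\vec u_i = 0$. For $\dot{\partial}_{u_i}$, the vertical vector $[\vec u_j,\partial_{u_i}]$ acts on $u_k$ by $-\partial_{u_i}\{u_j,u_k\}$, which vanishes along the extremal by property (5), and on $v_\alpha$ by $0$ (property (1)), so it vanishes and $\dot{\partial}_{u_i} = -\partial_{u_i}(u_j)\vec u_j = -\vec u_i$. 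Similarly $[\vec u_i,\partial_{v_\alpha}]$ is vertical, acts on $u_k$ by $-\partial_{v_\alpha}\{u_i,u_k\} = -2g(\phi_\alpha X_i, X_k)$ (property (6)) and on $v_\beta$ by $0$, hence equals $2g(\phi_\alpha X_k, X_i)\partial_{u_k}$; contracting with $u_i$ and using $\dot\gamma = u_iX_i$ and the $g$-skewness of $\phi_\alpha$ on $\distr$ yields $\dot{\partial}_{v_\alpha} = 2A_{\alpha k}\partial_{u_k}$, i.e.\ $\dot{\partial}_v = 2A\partial_u$.

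The substantive identity is $\dot{\vec u}_i = u_j\overrightarrow{\{u_j,u_i\}} - \{u_i,u_j\}\vec u_j$. Substituting property (7) for $\overrightarrow{\{u_i,u_j\}}$ (and antisymmetry) and property (4) for $\{u_i,u_j\}$, both valid along the extremal, then contracting with $u_j$ and using $\dot\gamma = u_jX_j$ with the $g$-skewness of the $\phi$'s, the $\vec v$-terms collect into $-2A^*\vec v$, the $\partial_v$-terms into $2D\partial_v$ (via the Sasakian identity $\xi_\tau g(\phi_\alpha X_i,X_j) = g((\phi_{\alpha\tau}-\phi_{\tau\alpha})X_i,X_j)$ proved inside the preceding lemma), and the $\vec u$-terms into $2C\vec u$. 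What remains is a single $\partial_u$-term $B_{i\ell}\partial_{u_\ell}$ with $B_{i\ell} = u_ju_k\, X_\ell\!\big(g(X_k,[X_i,X_j])\big)$ — manifestly independent of $v$, as it must be — and the only genuine difficulty of the proof is to identify this with $R^\nabla(\dot\gamma,X_i,X_\ell,\dot\gamma) + 3\,g(X_i,\Pi_\phi X_\ell)$.

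For that last step I would expand $X_\ell\!\big(g(X_k,[X_i,X_j])\big)$ via Koszul's formula together with $R^\nabla(X,Y)Z = \nabla_X\nabla_YZ - \nabla_Y\nabla_XZ - \nabla_{[X,Y]}Z$, repeatedly invoking the qc-Fermi frame properties of Lemma~\ref{fermi_frame} — $\nabla_{X_i}X_j|_\gamma = \tfrac12[X_i,X_j]^v$ (equivalently, $[X_i,X_j]^h$ vanishes on $\gamma$) and $[\xi_\alpha,X_i]=0$, the latter giving $\nabla_{\xi_\alpha}X_j = -\phi_\alpha X_j$ — the Sasakian structure equations $\nabla_Y\xi_\alpha = -\phi_\alpha Y$ and $(\nabla_Y\phi_\alpha)Z = g(Y,Z)\xi_\alpha - \eta_\alpha(Z)Y$, and $[X_i,X_j]^v = -2g(X_i,\phi_\alpha X_j)\xi_\alpha$. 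After contracting with $u_ju_k$, replacing $\nabla_{\dot\gamma}\dot\gamma$ by $2v_\alpha\phi_\alpha\dot\gamma$ (Lemma~\ref{l:secder}) wherever it arises, and summing the three quaternionic contributions, the derivatives of the vertical structure functions assemble — through products of the form $g(X_i,\phi_\alpha\dot\gamma)g(X_\ell,\phi_\alpha\dot\gamma)$ summed over $\alpha$, with $\phi_\alpha^2 = -\mathbb{I}+\eta_\alpha\otimes\xi_\alpha$ — into the term $3\,g(X_i,\Pi_\phi X_\ell)$, leaving $R^\nabla(\dot\gamma,X_i,X_\ell,\dot\gamma)$ from the genuine curvature; symmetry of $B$ serves as an internal check. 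The main obstacle is precisely this bookkeeping: organizing the (routine but long) computation so that the purely Riemannian curvature cleanly separates from the $3\Pi_\phi$ contact correction and the factor $3$ is correctly accounted for.
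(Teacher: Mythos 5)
Your proof is correct and follows exactly the paper's route: Leibniz-expand $[\vec H,\cdot]$ against the momentum Poisson brackets of the preceding lemma, read off $-2A^*$, $2D$, $2C$ for the $\vec v$, $\partial_v$, $\vec u$ coefficients respectively, and isolate the residual $\partial_u$-coefficient $u_ju_kX_\ell\,g(X_k,[X_i,X_j])$, which the paper then identifies with $B_{i\ell}$ by expanding $R^\nabla(\dot\gamma,X_i,X_\ell,\dot\gamma)$ via Koszul, the qc-Fermi frame of Lemma~\ref{fermi_frame}, and the Sasakian identities. Your description of where the factor $3\,g(X_i,\Pi_\phi X_\ell)$ comes from is slightly off (it is the sum $1+2$ of the contributions from $u_ku_j\,g(\nabla_{X_k}X_\ell,\nabla_{X_i}X_j)$ and from $-u_ku_j\,g([X_k,X_i],\xi_\tau)\,g(\nabla_{\xi_\tau}X_\ell,X_j)$, not from $\phi_\alpha^2$ or an appearance of $\nabla_{\dot\gamma}\dot\gamma$), but you correctly flag this bookkeeping as the genuine work and the overall strategy matches the paper's.
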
	

\begin{proof} 
	By direct computations (along the extremal) we get
\begin{align*}
	\dot{\partial}_{v_\beta} &= 
	[u_j\vec{u}_j,\partial_{v_\beta}] =  -\partial_{v_\beta}(u_j)\vec{u}_j + u_j[\vec{u}_j,\partial_{v_\beta}] = u_j[\vec{u}_j,\partial_{v_\beta}](u_i)\partial_{u_i} + u_j[\vec{u}_j,\partial_{v_\beta}](v_\alpha)\partial_{v_\alpha}\\
	&= -u_j\partial_{v_\beta}\{u_j,u_i\}\partial_{u_i} -u_j \partial_{v_\beta}\cancel{\{u_j,v_\alpha\}} \partial_{v_\alpha}= -u_j g( 2\phi_\beta X_j,X_i) \partial_{u_i} = -2 g( \phi_\beta\dot\gamma,X_i) \partial_{u_i}.\\
	\dot{\partial}_{u_i} &= 
	[u_j\vec{u}_j,\partial_{u_i}] = -\partial_{u_i}(u_j)\vec{u}_j + u_j[\vec{u}_j,\partial_{u_i}] = -\vec{u}_i -u_j\cancel{\partial_{u_i}\{u_j,u_\ell\}}\partial_{u_\ell} = -\vec{u}_i.\\
	\dot{\vec{v}}_\beta &=	
	[u_j\vec{u}_j,\vec{v}_\beta] = -\vec{v}_\beta(u_j)\vec{u}_j + u_j[\vec{u}_j,\vec{v}_\beta] = -\{v_\beta,u_j\}\vec{u}_j + u_j\overrightarrow{\{u_j,v_\beta\}} = 0.\\
	\dot{\vec{u}}_i &=
	[u_j\vec{u}_j,\vec{u}_i] = -\vec{u}_i(u_j)\vec{u}_j + u_j[\vec{u}_j,\vec{u}_i] = -\{u_i,u_j\} \vec{u}_j + u_j\overrightarrow{\{u_j,u_i\}}\\
 &= -2v_\alpha g(\phi_\alpha X_i,X_j )  \vec{u}_j + 2 g(\phi_\alpha \dot\gamma,X_i)\vec{v}_\alpha +2  v_\alpha g((\phi_{\alpha\tau}-\phi_{\tau\alpha}) \dot\gamma, X_i)\partial_{v_\tau} \\
 & \quad - u_j u_k X_\ell g( X_k,[X_j,X_i]) \partial_{u_\ell}.
\end{align*}
To complete the proof, we show that $u_j u_k X_\ell g( X_k,[X_i,X_j])  = R(\dot\gamma,X_i,X_\ell,\dot\gamma) + 3 g(X_i, \Pi_\phi X_\ell)$.	From the definition of the Riemann curvature tensor, and Lemma~\ref{fermi_frame}, we have,
	\begin{align*}
	R^{\nabla}(\dot\gamma,X_i,X_\ell,\dot\gamma) & = u_k u_j g(\nabla_{X_k}\nabla_{X_i}X_\ell - \nabla_{X_i}\nabla_{X_k}X_\ell - \nabla_{[X_k,X_i]}X_\ell , X_j) \\
	& =  u_k u_j X_k g(\nabla_{X_i}X_\ell,X_j)  - u_k u_j g(\nabla_{X_i}X_\ell,\nabla_{X_k}X_j)  - u_k u_j X_i g(\nabla_{X_k}X_\ell,X_j) \\
	&\quad  + u_k u_j g(\nabla_{X_k}X_\ell,\nabla_{X_i}X_j)  - u_k u_j g([X_k,X_i],\xi_\tau)   g(\nabla_{\xi_\tau}X_\ell , X_j) .
	\end{align*}
	Notice that $u_k X_k g(\nabla_{X_i}X_\ell,X_i) =0$ since it is the derivative in the direction of $\dot\gamma(t)$ of $ g(\nabla_{X_i}X_\ell,X_j) |_{\gamma(t)} = 0$. On the other hand,	$ g(\nabla_{X_i}X_\ell,\nabla_{X_k}X_j) |_{\gamma(t)}$ is skew-symmetric w.r.t $k$ and $j$. Hence $u_k u_j  g(\nabla_{X_i}X_\ell,\nabla_{X_k}X_j) =0$. Thus
	\begin{align*}
	R^{\nabla}(\dot\gamma,X_i,X_\ell,\dot\gamma) & =
	- u_k u_j X_i  g(\nabla_{X_k}X_\ell,X_j)  +  u_k u_j  g(\nabla_{X_k}X_\ell,\nabla_{X_i}X_j)  \\
	& \quad - u_k u_j g([X_k,X_i],\xi_\tau)   g(\nabla_{\xi_\tau}X_\ell , X_j)  \\
	& = -\tfrac{1}{2}u_k u_j X_i \left(g([X_k,X_\ell],X_j)  +  g([X_j,X_k],X_\ell)  +  g([X_j,X_\ell],X_k) \right)\\
	&\quad + \tfrac{1}{4}u_k u_j g([X_k,X_\ell],\xi_\tau)   g(\xi_\tau,[X_i,X_j])  -2u_k u_j g( X_k,\phi_\tau X_i)   g( \phi_\tau X_\ell,X_j) \\
	& =-u_k u_j X_i g([X_k,X_\ell],X_j)  + u_k u_j g( X_k,\phi_\tau X_\ell)   g( X_i,\phi_\tau X_j)  \\
	& \quad - 2 g( \dot{\gamma},\phi_\tau X_i)   g(\phi_\tau X_\ell,\dot{\gamma}) \\
	& =-u_k u_j X_i g([X_k,X_\ell],X_j)  - 3 g( \dot{\gamma},\phi_\tau X_i)   g(\phi_\tau X_\ell,\dot{\gamma}) \\
	& =-u_k u_j X_i g([X_k,X_\ell],X_j)  - 3 g( X_i,\Pi_\phi X_\ell) , \qedhere
	\end{align*}	
	where we used Koszul formula, Lemma~\ref{fermi_frame} and the properties of $3$-Sasakian manifolds.
\end{proof}
In the next two lemmas, for reference, we provide many identities that will be used throughout this section. They follow from routine computations, that we omit.
\begin{lemma} We have the following identities (along the extremal):\label{l:matrixId}
\begin{align}
AA^* & = \mathbbold{1},& A^*A &= \Pi_{\phi},& \dot{A}\dot{A}^* & = 4 \|v\|^2\mathbbold{1},\\ 
A\dot{A}^* &= -\dot{A}A^*  =  2 V,& \ddot{A} &= -4 \|v\|^2A,& AC& = \tfrac{1}{2}\dot{A} - 2 v \dot\gamma^*,\\
\dot{A}C& = \left(2\|v\|^2 + 4 V^2\right)A,& ACA^*& = -V, & C^2& = -\|v\|^2\mathbbold{1},\\
AD& = 2 V, & A\dot{D}& =  4 V^2, &\dot{A}D& = -4V^2,\\
\dot{A} & = -2 VA + 2v\dot{\gamma}^*, & vv^* & = V^2 + \|v\|^2\mathbbold{1}, & V^3 &= - \|v\|^2 V, \\
B\dot{\gamma} &  = 0, & A\dot{\gamma} & = 0, & A\ddot{\gamma} & = -2v, \\
\dot{A}\dot{\gamma} & = 2v, & \dot{A} \ddot{\gamma} & = 0, & 2 C\dot{\gamma} & = \ddot{\gamma},\\
\ddot{\gamma} & = -2A^*v,& \dot{\gamma}^* D & = 0, & Vv & = 0,
\end{align}
where here $\dot\gamma$, $\ddot\gamma$ and $\Pi_\phi$ are the column vectors and the matrix that represent, respectively, the horizontal vectors $\dot\gamma$, $\ddot\gamma=\nabla_{\dot\gamma}\dot\gamma$ and the orthogonal projection $\Pi_\phi$ in the frame $\{X_i\}$.
\end{lemma}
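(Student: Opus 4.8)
\subsection*{Proof proposal for Lemma~\ref{l:matrixId}}

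The plan is to reduce every entry appearing in these identities to an inner product of genuine vector fields along $\gamma(t)$, and then to evaluate it with the $3$-Sasakian algebra of Section~\ref{s:quatindices}. The fact used over and over is that $\dot\gamma$, $\ddot\gamma$ and every $\phi_\alpha\dot\gamma$ lie in $\distr$ (since $\dot\gamma\in\distr$ and, by the quaternionic relations, $\phi_\alpha$ preserves $\distr$), so that any sum $\sum_i g(Y,X_i)g(Z,X_i)$ collapses to $g(Y,Z)$ when $Y,Z\in\distr$, while cross terms such as $g(\phi_\alpha\dot\gamma,\xi_\tau)=-g(\dot\gamma,\xi_{\alpha\tau})$ or $\eta_\alpha(\dot\gamma)$ vanish. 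Combined with the skew-symmetry $g(\phi_\alpha X,Y)=-g(X,\phi_\alpha Y)$, the compatibility of $g$, and the rules $\phi_\alpha\phi_\beta=\phi_{\alpha\beta}+\eta_\beta\otimes\xi_\alpha$, $\phi_\alpha\xi_\beta=\xi_{\alpha\beta}$, this disposes of all the purely algebraic identities (those with no dot): e.g.\ $(AA^*)_{\alpha\beta}=g(\phi_\alpha\dot\gamma,\phi_\beta\dot\gamma)=-g(\dot\gamma,\phi_{\alpha\beta}\dot\gamma)$ equals $\|\dot\gamma\|^2=1$ for $\alpha=\beta$ and vanishes otherwise; $A^*A=\Pi_\phi$ is the matching projection formula; $C^2=-\|v\|^2\mathbbold{1}$, $ACA^*=-V$, $AC=\tfrac12\dot A-2v\dot\gamma^*$, $AD=2V$, $\dot\gamma^*D=0$ come from the same expansions; $vv^*=V^2+\|v\|^2\mathbbold{1}$, $V^3=-\|v\|^2V$, $Vv=0$ are the usual cross-product identities under $\R^3\simeq\mathfrak{so}(3)$; $B\dot\gamma=0$ uses the symmetries of $R^\nabla$ together with $\Pi_\phi\dot\gamma=0$; and $\ddot\gamma=-2A^*v$, $A\ddot\gamma=-2v$, $2C\dot\gamma=\ddot\gamma$, $A\dot\gamma=0$ are direct rewritings of Lemma~\ref{l:secder} and skew-symmetry.

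The next layer is a single master identity, $\dot A=-2VA+2v\dot\gamma^*$, from which all remaining first-order relations are read off. Differentiating $A_{\beta i}=-g(\phi_\beta\dot\gamma,X_i)$ along $\dot\gamma$ yields three terms: $-g((\nabla_{\dot\gamma}\phi_\beta)\dot\gamma,X_i)=-g(\xi_\beta,X_i)=0$ by the Sasakian formula and $X_i\in\distr$; $-g(\phi_\beta\nabla_{\dot\gamma}\dot\gamma,X_i)=-2v_\alpha g(\phi_{\beta\alpha}\dot\gamma,X_i)$ by Lemma~\ref{l:secder} and the quaternionic rule; and $-g(\phi_\beta\dot\gamma,\nabla_{\dot\gamma}X_i)$, which vanishes because Lemma~\ref{fermi_frame} forces $\nabla_{\dot\gamma}X_i$ to be vertical along $\gamma$ while $\phi_\beta\dot\gamma$ is horizontal. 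Re-summing $-2v_\alpha\phi_{\beta\alpha}\dot\gamma$ over $\alpha$ in $\R^3\simeq\mathfrak{so}(3)$ — the $\alpha=\beta$ term equalling $2v_\beta\dot\gamma$ and the off-diagonal part reassembling as $-2(VA)_{\beta i}$ — gives $-2VA+2v\dot\gamma^*$. Substituting this and the zeroth-order identities yields $A\dot A^*=-\dot AA^*=2V$ and $\dot A\dot A^*=4\|v\|^2\mathbbold{1}$ (using $AA^*=\mathbbold{1}$, $A\dot\gamma=0$), $\dot A C=(2\|v\|^2+4V^2)A$, $A\dot D=4V^2$, $\dot A D=-4V^2$ (using $C^2=-\|v\|^2\mathbbold{1}$, $ACA^*=-V$, $AD=2V$), and $\dot A\dot\gamma=2v$, $\dot A\ddot\gamma=0$; an entirely parallel differentiation of $D_{i\tau}$ and of $\ddot\gamma=-2A^*v$ (using $\dot v=0$, i.e.\ Lemma~\ref{l:costanza}, hence $\dot V=0$) supplies the rest.

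Finally the only second-order identity, $\ddot A=-4\|v\|^2A$, follows by differentiating the master identity once more: $\ddot A=-2V\dot A+2v\ddot\gamma^*=-2V(-2VA+2v\dot\gamma^*)-4vv^*A=4V^2A-4(Vv)\dot\gamma^*-4vv^*A$, and inserting $Vv=0$ and $vv^*=V^2+\|v\|^2\mathbbold{1}$ collapses the right-hand side to $-4\|v\|^2A$.

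There is no genuine conceptual obstacle: the content is exactly the $3$-Sasakian relations of Section~\ref{s:quatindices} combined with the frame of Lemma~\ref{fermi_frame}, and the work is bookkeeping. The only real hazards are keeping the quaternionic sign conventions consistent throughout (the indices in $\phi_{\alpha\beta}$ and $v_{\alpha\beta}$, and the conventions $\phi_{\pm1}=\pm\mathbb{I}$, $v_{\alpha^2}=0$), and — exactly as in Lemma~\ref{l:fund} — remembering that $\nabla_{X_i}X_j$ vanishes \emph{only along} $\gamma$, so its derivative in the direction $\dot\gamma$ need not vanish, which is precisely the mechanism producing the curvature term in $B$. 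For this reason the computation is best organized in the three layers above, each built on the preceding one.
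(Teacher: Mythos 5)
The paper itself gives no proof of Lemma~\ref{l:matrixId}: it states that the identities ``follow from routine computations, that we omit.'' So there is no authorial argument to compare against; your proposal supplies the missing bookkeeping, and it does so correctly.

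Your organization is sound and, as far as I can see, the steps all check out. The zeroth-order identities reduce, as you say, to $g(\phi_\alpha\dot\gamma,\phi_\beta\dot\gamma)=-g(\dot\gamma,\phi_{\alpha\beta}\dot\gamma)$ together with the facts that $\phi_\alpha$ is skew w.r.t.\ $g$, preserves $\distr$, and satisfies $\phi_\alpha\phi_\beta=\phi_{\alpha\beta}$ on $\distr$; and $vv^*=V^2+\|v\|^2\mathbbold{1}$, $V^3=-\|v\|^2V$, $Vv=0$ are just the $\R^3\simeq\mathfrak{so}(3)$ cross-product algebra built into the convention $V_{\alpha\beta}=v_{\alpha\beta}$. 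The derivation of the master identity $\dot A=-2VA+2v\dot\gamma^*$ is correct: the $(\nabla_{\dot\gamma}\phi_\beta)\dot\gamma$ term produces $-\xi_\beta$, which is annihilated by pairing with $X_i$; $\nabla_{\dot\gamma}X_i$ is vertical along $\gamma$ by Lemma~\ref{fermi_frame} while $\phi_\beta\dot\gamma$ is horizontal; and the surviving term $-2v_\alpha\phi_{\beta\alpha}\dot\gamma$, split into the $\alpha=\beta$ piece and the rest, reassembles exactly as $2v_\beta\dot\gamma-2(VA)_{\beta\cdot}$. Given $\dot V=\dot v=0$ (Lemma~\ref{l:costanza}), iterating once more and inserting $Vv=0$, $vv^*=V^2+\|v\|^2\mathbbold{1}$ gives $\ddot A=-4\|v\|^2A$, and the remaining dotted identities follow by multiplying the master identity against the undotted ones (for instance $\dot A D+A\dot D=\frac{d}{dt}(AD)=0$ with $\dot A D=-2V(AD)+2v(\dot\gamma^*D)=-4V^2$ gives both $\dot A D$ and $A\dot D$ at once). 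One small remark: you invoke a ``parallel differentiation of $D_{i\tau}$''; as the computation just indicated shows, this is not needed, since those two identities already follow from the master identity and the algebraic ones. Otherwise there is nothing to fix.
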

\begin{lemma}\label{l:dots} Along the extremal, we have
	\begin{align}
	\dot\partial_v & = 2 A \partial_u, \\
	\ddot\partial_v & = 2 \dot{A} \partial_u - 2 A\vec{u}, \\
	\dddot{\partial_v} & = -8V\partial_v -2A(4\|v\|^2 +B)\partial_u +4\vec{v} +4(3VA -v\dot\gamma^*)\vec{u}, \\
	\ddddot{\partial_v} & = 
	48V^2 \partial_v -2\left[8VA(1-\|v\|^2-B) + A\dot{B} + 8\|v\|^2v \dot\gamma^* \right] \partial_u \\
	&  \quad - 24 V \vec{v} + 2\left(4\|v\|^2A +AB - 24 V^2 A\right)\vec{u} .
	\end{align}
Moreover all the non-zero brackets between $\partial_{u_i},\partial_{v_\alpha},\vec{u}_i,\vec{v}_\alpha$ are
	\begin{align}
	\sigma(\partial_u,\vec{u}) = \mathbbold{1}, \qquad \sigma(\partial_v,\vec{v}) = \mathbbold{1}, \qquad \sigma(\vec{u},\vec{u})  = -2 C, \qquad \sigma(\vec{v},\vec{v}) = 2 V .
	\end{align}
As a consequence we have
\begin{align}
\sigma(\partial_v,\partial_v) & = \sigma(\dot\partial_v,\partial_v) =0,& \sigma(\dot\partial_v,\dot\partial_v) & = \sigma(\ddot\partial_v,\partial_v) = 0,\\ 
\sigma(\ddot\partial_v,\dot\partial_v) &= 4 \mathbbold{1},& \sigma(\ddot\partial_v,\ddot\partial_v) & =  24 V,  \\
\sigma(\dddot\partial_v,\partial_v) & = - 4 \mathbbold{1},& \sigma(\dddot\partial_v,\dot\partial_v) & = -24 V,
\end{align}
\begin{align}
G:=\sigma(\dddot{\partial}_v,\ddot{\partial}_v) & = 4\left(ABA^* + 4\|v\|^2 \mathbbold{1} -24V^2\right), \\
P:=\sigma(\dddot{\partial}_v,\dddot{\partial}_v) & = 4\left( 6(VABA^* + ABA^*V) -8V + 120\|v\|^2V \right),\\
\sigma(\ddddot\partial_v,\partial_v) & = 24 V,\\
\sigma(\ddddot\partial_v,\dot\partial_v) & = - G, \\
\sigma(\ddddot\partial_v,\ddot\partial_v) & = \dot{G} - P,\\
S:=\sigma(\ddddot{\partial}_v,\dddot{\partial}_v) & = 4\left( 16\|v\|^4 + 96V^2 -480\|v\|^2V^2 - 24V^2ABA^* - 12ABA^*V^2\right.\\
& \quad \left. - 48VABA^*V  + 8\|v\|^2 ABA^* +  AB^2A^* + 6A\dot{B}A^*V\right).
\end{align}
\end{lemma}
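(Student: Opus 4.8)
The plan is to establish the three groups of formulas in order, each built on the previous one, on Lemma~\ref{l:fund}, on the matrix identities of Lemma~\ref{l:matrixId}, and on the fact (Lemma~\ref{l:costanza}) that $\|v\|^2$ and the skew matrix $V$ are constant along $\lambda(t)$, so that $\dot V=0$ and $\tfrac{d}{dt}\|v\|^2=0$.

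\textbf{Expansions of $\dot\partial_v,\ddot\partial_v,\dddot{\partial_v},\ddddot{\partial_v}$.} The identity $\dot\partial_v=2A\partial_u$ is part of Lemma~\ref{l:fund}, which also records $\dot\partial_u=-\vec u$, $\dot{\vec v}=0$ and $\dot{\vec u}=2C\vec u-2A^*\vec v+B\partial_u+2D\partial_v$ along the extremal. I would then differentiate repeatedly: at each step one differentiates the coefficient matrices (using $\ddot A=-4\|v\|^2A$, $\dot A=-2VA+2v\dot\gamma^*$, and the derivatives of $B,C,D$), substitutes the expressions for $\dot\partial_u,\dot{\vec u},\dot{\vec v}$, and collapses the resulting products of structure matrices with the relations of Lemma~\ref{l:matrixId} ($AA^*=\mathbbold{1}$, $AC=\tfrac12\dot A-2v\dot\gamma^*$, $AD=2V$, $\dot AD=-4V^2$, $A\dot D=4V^2$, $ACA^*=-V$, $\dot AA^*=-A\dot A^*=-2V$, and so on). This is mechanical and yields $\ddot\partial_v$, then $\dddot{\partial_v}$, then $\ddddot{\partial_v}$ in the stated form; for example $\ddot\partial_v=2\dot A\partial_u+2A\dot\partial_u=2\dot A\partial_u-2A\vec u$, and one more differentiation together with $\ddot A=-4\|v\|^2A$, $AC=\tfrac12\dot A-2v\dot\gamma^*$, $AD=2V$, $AA^*=\mathbbold 1$ gives $\dddot{\partial_v}=-8V\partial_v-2A(4\|v\|^2+B)\partial_u+4\vec v+4(3VA-v\dot\gamma^*)\vec u$.

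\textbf{The elementary brackets.} These follow from symplectic calculus on $T^*M$. The fields $\partial_{u_i},\partial_{v_\alpha}$ are vertical and $(u,v)$ are fibre coordinates, so $\sigma(\partial_{u_i},\partial_{u_j})=\sigma(\partial_{v_\alpha},\partial_{v_\beta})=\sigma(\partial_{u_i},\partial_{v_\alpha})=0$; pairing a vertical field with the Hamiltonian field of a function produces the vertical derivative of that function, whence $\sigma(\partial_{u_i},\vec u_j)=\partial_{u_i}u_j=\delta_{ij}$, $\sigma(\partial_{v_\alpha},\vec v_\beta)=\delta_{\alpha\beta}$, $\sigma(\partial_{u_i},\vec v_\alpha)=\sigma(\partial_{v_\alpha},\vec u_i)=0$; and $\sigma(\vec f,\vec g)=\{f,g\}$, so the remaining brackets are read off from the Poisson-bracket identities of the momentum functions proved above: $\sigma(\vec u_i,\vec u_j)=\{u_i,u_j\}=-2C_{ij}$ on the extremal, $\sigma(\vec v_\alpha,\vec v_\beta)=\{v_\alpha,v_\beta\}=2V_{\alpha\beta}$, $\sigma(\vec u_i,\vec v_\alpha)=\{u_i,v_\alpha\}=0$. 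In matrix notation this is exactly the displayed list.

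\textbf{The consequences.} Each remaining identity is obtained by expanding both arguments via bilinearity of $\sigma$, inserting the brackets above, and simplifying with Lemma~\ref{l:matrixId}; e.g. $\sigma(\ddot\partial_v,\ddot\partial_v)=\sigma(2\dot A\partial_u-2A\vec u,\,2\dot A\partial_u-2A\vec u)=-4\dot AA^*+4A\dot A^*-8ACA^*=24V$, and $G=\sigma(\dddot{\partial_v},\ddot\partial_v)$ is the same type of computation with the extra term $4A(4\|v\|^2+B)A^*+\dots$ giving $4(ABA^*+4\|v\|^2\mathbbold{1}-24V^2)$. Because $\sigma$ is invariant under the flow of $\vec H$ one has $\tfrac{d}{dt}\sigma(X,Y)=\sigma(\dot X,Y)+\sigma(X,\dot Y)$; combined with $\dot V=0$ and $\tfrac{d}{dt}\|v\|^2=0$ this recovers several entries for free, namely $\sigma(\ddddot{\partial_v},\partial_v)=-\sigma(\dddot{\partial_v},\dot\partial_v)=24V$, $\sigma(\ddddot{\partial_v},\dot\partial_v)=-G$ and $\sigma(\ddddot{\partial_v},\ddot\partial_v)=\dot G-P$, so only $G$, $P=\sigma(\dddot{\partial_v},\dddot{\partial_v})$ and $S=\sigma(\ddddot{\partial_v},\dddot{\partial_v})$ genuinely have to be computed from scratch.

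\textbf{Main obstacle.} There is no conceptual difficulty: the content is entirely algebraic bookkeeping. The heaviest pieces are $\ddddot{\partial_v}$ and, above all, $P$ and $S$, where terms such as $AB^2A^*$, $A\dot BA^*V$ and $VABA^*V$ appear and one must repeatedly use $vv^*=V^2+\|v\|^2\mathbbold{1}$, $V^3=-\|v\|^2V$, $Vv=0$, $A\dot\gamma=0$, $\dot A\dot\gamma=2v$, $A\ddot\gamma=-2v$, $2C\dot\gamma=\ddot\gamma$ to reduce everything to the canonical building blocks $ABA^*$, $A\dot BA^*$, $\|v\|^2$, $V$ and $V^2$. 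Keeping the antisymmetry $\sigma(X,Y)=-\sigma(Y,X)^*$, and the skew-versus-symmetric type of each expected answer, as a running consistency check is the practical way to control the computation.
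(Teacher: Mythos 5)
The paper does not actually write out a proof of this lemma; it states just before it that the identities ``follow from routine computations, that we omit.'' Your proposal is a correct and complete way to carry out those routine computations — the differentiation strategy using Lemma~\ref{l:fund}, the symplectic identities $\sigma(\partial_{u_i},\vec u_j)=\delta_{ij}$, $\sigma(\vec f,\vec g)=\{f,g\}$ and the preservation of $\sigma$ by the Hamiltonian flow (so $\tfrac{d}{dt}\sigma(X,Y)=\sigma(\dot X,Y)+\sigma(X,\dot Y)$), together with the constancy of $V$ and $\|v\|^2$ along the extremal from Lemma~\ref{l:costanza}, are exactly the ingredients needed, and your sample calculations ($\dddot\partial_v$, $\sigma(\ddot\partial_v,\ddot\partial_v)=24V$, and the ``for free'' identities $\sigma(\ddddot\partial_v,\partial_v)=24V$, $\sigma(\ddddot\partial_v,\dot\partial_v)=-G$, $\sigma(\ddddot\partial_v,\ddot\partial_v)=\dot G-P$) all check out.
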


\subsection{Canonical frame}
Following the general construction developed in \cite{lizel}, we recover the elements of the canonical frame in the following order:
\begin{equation}
E_a \rightarrow E_b \rightarrow F_b  \rightarrow E_c  \rightarrow F_c \rightarrow R_{bb}, R_{ba} \rightarrow  R_{cc}, R_{bc} \rightarrow F_a \rightarrow R_{aa},R_{ac}.
\end{equation}
The triplet $E_a$ is uniquely determined by the following conditions:
\begin{itemize}
	\item[(i)]$\pi_*E_a=0$,
	\item[(ii)]$\pi_*\dot{E}_a=0$,
	\item[(iii)] $\sigma(\ddot{E}_a,\dot{E}_a)=\mathbbold{1}$,
	\item[(iv)] $\sigma(\ddot{E}_a,\ddot{E}_a) =\mathbbold{0}$.
\end{itemize}
Conditions (i) and (ii) imply that $E^a = M \partial_v$ for $M \in \mathrm{GL}(3)$. Condition (iii) implies that $M = \tfrac{1}{2} O$ with $O \in \mathrm{O}(3)$. Finally, (iv) implies that $O$ satisfies
	\begin{equation}
	\dot{O} = \frac{1}{16} O \sigma(\ddot{\partial}_{v_\alpha},\ddot{\partial}_{v_\beta}) = \frac{3}{2}OV.
	\end{equation}
Its solution is unique up to an orthogonal transformation (the initial condition, that we set $O(0) = \mathbbold{1}$). Let us call $\V := \tfrac{3}{2} V$. Then $O(t) = e^{t \V}$ and, using the structural equations
\begin{align}
E_a & = \tfrac{1}{2} e^{t \V} \partial_{v},\\
E_b & = \dot{E}_a = \tfrac{1}{2} e^{t\V}(\V\partial_v + \dot\partial_v) , \\
F_b & = - \dot{E}_b = - \tfrac{1}{2} e^{t\V}(\V^2 \partial_v + 2 \V \dot\partial_v + \ddot{\partial}_v).  \label{eq:fb}
\end{align}
Thus we can also compute
\begin{align}
\dot{F}_b  & = - \tfrac{1}{2} e^{t\V}(\V^3 \partial_v + 3 \V^2 \dot\partial_v + 3 \V \ddot{\partial}_v + \dddot{\partial}_v), \\
\ddot{F}_b  & = - \tfrac{1}{2} e^{t\V}(\V^4 \partial_v + 4 \V^3 \dot\partial_v + 6 \V^2 \ddot\partial_v + 4 \V \dddot{\partial}_v + \ddddot{\partial_v}).
\end{align}
The next step is to compute $E_c$. It is determined by the following conditions:
	\begin{itemize}
		\item[(i)] $\pi_*E_c=0$,
		\item[(ii)] $\sigma(E_c,F_c)=\mathbbold{1}$ and $\sigma(E_c,F_b)=\sigma(E_c,F_a)=\mathbbold{0}$,
		\item[(iii)] $\pi_*\ddot{E}_c=0$.
	\end{itemize}
For (i) we can write $E_c=U\partial_u +W\partial_v$, where $U$ is a $(4d-3)\times 4d$ matrix and $W$ is a $(4d-3)\times 3$ matrix. Notice that to compute $\sigma(E_c,F_a)$ we only need to know $\pi_* F_a = -\pi_* \dot{F}_b$. Moreover, $F_c = - \dot{E}_c$. Hence, from (ii) we get
	\begin{equation}
		UU^*  =\mathbbold{1}, \qquad UA^*  =0, \qquad W  = U\dot\gamma v^*,
	\end{equation}
where $\dot\gamma$ represents, with no risk of confusion, the $4d$ dimensional column vector that represents $\dot\gamma$ in the frame $\{X_i\}$. Finally, using (iii) we get that $U$ must satisfy
	\begin{equation}\label{eq:eqforU}
	\dot{U}  =-U(\dot\gamma v^* A + C).
	\end{equation}
Observe that $U$ represents an orthogonal projection on $\distr \cap \spn\{\phi_I\dot\gamma,\phi_J\dot\gamma,\phi_K\dot\gamma\}^\perp$. Then
\begin{equation*}
	U^*U = \mathbbold{1} - \Pi_\phi = \mathbbold{1} - A^*A.
\end{equation*}
As a consequence, we have
	\begin{align}
	E_c & =U(\partial_u+\dot\gamma v^* \partial_v),\\
	F_c & =-\dot{E}_c = U[(C-\dot\gamma v^* A) \partial_u + \vec{u}], \label{eq:fc}\\
	\dot{F}_c & = U[B+\|v\|^2(1-\dot\gamma\dot\gamma^*)]\partial_u.
	\end{align}

\subsection{Sub-Riemannian curvatures}
Using the structural equations, we obtain the curvatures. We omit some very long algebraic computations, that follow using the expressions of the canonical frame obtained above.
\begin{align}
R_{bb}  & =  \sigma(\dot{F}_b,F_b) = e^{t\V}[ABA^* + 4\|v\|^2 \mathbbold{1} -\tfrac{3}{2}V^2] e^{-t\V}, \\
R_{cc} & =  \sigma(\dot{F}_c,F_c)=U[B+\|v\|^2(1-\dot\gamma\dot\gamma^*)]U^*,\\
R_{bc} & = \sigma(\dot{F}_b,F_c) = e^{t\V} A B U^*.
\end{align}
Moreover, using the structural equations and the condition $R_{ab} = -(R_{ab})^*$, we get
\begin{equation}
R_{ab}  = \tfrac{1}{2} \sigma(\dot{F}_b,\dot{F}_b) = e^{t\V}\left[\tfrac{3}{4} (VABA^*+ABA^*V)+ \tfrac{3}{2} \|v\|^2 V -4V \right]e^{-t\V}.
\end{equation}
Observe that $R_{ab}$ is correctly skew-symmetric. The last element of the frame is
\begin{equation}
F_a  = -\dot{F}_b + R_{bb}E_b + R_{ba}E_a + R_{bc}E_c \label{eq:fa}.
\end{equation}
We check that $\sigma(E_a,F_a)= \mathbbold{1}$ and $\sigma(F_a,E_b) = \sigma(F_a,F_b) = \sigma(F_a,F_a) = \sigma(F_a,F_c)= \mathbbold{0}$. Then
\begin{align}
	R_{ac} = (R_{ca})^* = \sigma(\dot{F}_c,F_a)^* =e^{t\V} \V A B U^* .
\end{align}
And finally
\begin{align}
R_{aa}  = \sigma(\dot{F}_a,F_a) & = e^{t\V}\left[\frac{3}{4} (A\dot{B}A^*V+V^*A\dot{B}A^*)+\frac{3}{8} (ABA^*V^2+ 
V^2ABA^*)  \right.\\
& \qquad \left. +3 VABA^*V^* +\left(12+\frac{45}{16} \|v\|^2 \right) V^2\right]e^{-t\V}.
\end{align}

\begin{proposition}[Canonical splitting for $3$-Sasakian structures]
	The canonical splitting along $\gamma(t)$ is $T_{\gamma(t)} M = S^a_{\gamma(t)} \oplus S^b_{\gamma(t)} \oplus S^c_{\gamma(t)}$, where
	\begin{align}
		S^a_{\gamma(t)} & = \spn\{2\xi_\alpha-2 v_\alpha\dot\gamma+\tfrac{3}{2}Z_\alpha \}_{\alpha =I,J,K},\\
		S^b_{\gamma(t)} & =  \spn\{\phi_I \dot\gamma,\phi_J\dot\gamma,\phi_K\dot\gamma\}, \\
		S^c_{\gamma(t)} & = \spn\{\phi_I \dot\gamma,\phi_J\dot\gamma,\phi_K\dot\gamma\}^\perp \cap \distr_{\gamma(t)},
	\end{align}
	where $Z_\alpha = -\sum_{\beta} v_{\alpha\beta} \phi_\beta\dot\gamma \in \distr_{\gamma(t)}$ for $\alpha = I,J,K$ and everything is computed at $\gamma(t)$.
\end{proposition}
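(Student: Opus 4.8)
The plan is to unwind the definition of the invariant subspaces: by construction $S^\mu_{\gamma(t)}=\spn\{\pi_*F_\mu(t)\}$ for $\mu=a,b,c$, so it suffices to compute the projections $\pi_*F_a$, $\pi_*F_b$, $\pi_*F_c$ of the explicit canonical frame elements obtained above. These are written in the Hamiltonian frame $\{\partial_{u_i},\partial_{v_\alpha},\vec u_i,\vec v_\alpha\}$, and the projection is governed by two elementary facts: $\partial_{u_i},\partial_{v_\alpha}$ span the vertical subspace, hence $\pi_*\partial_{u_i}=\pi_*\partial_{v_\alpha}=0$; and $\vec u_i,\vec v_\alpha$ are the Hamiltonian lifts of the fibre-linear functions $u_i=\langle\lambda,X_i\rangle$ and $v_\alpha=\langle\lambda,\xi_\alpha\rangle$, so that $\pi_*\vec u_i=X_i$ and $\pi_*\vec v_\alpha=\xi_\alpha$ along $\gamma(t)$. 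I will also use repeatedly the identity $\sum_i g(\phi_\beta\dot\gamma,X_i)X_i=\phi_\beta\dot\gamma$, valid because $\phi_\beta\dot\gamma\in\distr_{\gamma(t)}$ whenever $\dot\gamma\in\distr_{\gamma(t)}$.

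First I would handle $S^b$ and $S^c$, which are short. In $F_b=-\tfrac12 e^{t\V}(\V^2\partial_v+2\V\dot\partial_v+\ddot\partial_v)$ the first two summands are vertical (recall $\dot\partial_v=2A\partial_u$), so $f_b=\pi_*F_b=-\tfrac12 e^{t\V}\pi_*\ddot\partial_v$; substituting $\ddot\partial_v=2\dot A\partial_u-2A\vec u$ and projecting one finds $(f_b)_\beta=-\sum_\gamma(e^{t\V})_{\beta\gamma}\phi_\gamma\dot\gamma$, and since $e^{t\V}$ is invertible this gives $S^b=\spn\{\phi_I\dot\gamma,\phi_J\dot\gamma,\phi_K\dot\gamma\}$. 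Likewise, in $F_c=U[(C-\dot\gamma v^*A)\partial_u+\vec u]$ only the $\vec u$-part survives $\pi_*$, so $(f_c)_\ell=\sum_i U_{\ell i}X_i$; as $U$ represents the orthogonal projection of $\distr$ onto $\spn\{\phi_I\dot\gamma,\phi_J\dot\gamma,\phi_K\dot\gamma\}^\perp$ (as observed when $E_c$ was constructed), this yields $S^c=\distr_{\gamma(t)}\cap\spn\{\phi_I\dot\gamma,\phi_J\dot\gamma,\phi_K\dot\gamma\}^\perp$.

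The last and most delicate case is $S^a$. Since $\pi_*E_a=0$ by construction, and $E_b=\dot E_a$ and $E_c$ also project to zero (both lie in the vertical-plus-$\partial_u$ span), the relation $F_a=-\dot F_b+R_{bb}E_b+R_{ba}E_a+R_{bc}E_c$ reduces under $\pi_*$ to $f_a=-\pi_*\dot F_b$. Substituting $\dot F_b=-\tfrac12 e^{t\V}(\V^3\partial_v+3\V^2\dot\partial_v+3\V\ddot\partial_v+\dddot\partial_v)$ and the formula for $\dddot\partial_v$ from Lemma~\ref{l:dots}, I would discard the vertical terms; the surviving $\vec u$-coefficient of $3\V\ddot\partial_v+\dddot\partial_v$ simplifies to $3VA-4v\dot\gamma^*$, which projects to $3Z_\tau-4v_\tau\dot\gamma$ (using $\sum_i(VA)_{\tau i}X_i=-\sum_\beta v_{\tau\beta}\phi_\beta\dot\gamma=Z_\tau$ and $\sum_i(v\dot\gamma^*)_{\tau i}X_i=v_\tau\dot\gamma$), while the $\vec v$-term contributes $4\xi_\tau$. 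Hence $(f_a)_\tau=\tfrac12\sum_\beta(e^{t\V})_{\tau\beta}\bigl(4\xi_\beta-4v_\beta\dot\gamma+3Z_\beta\bigr)=\sum_\beta(e^{t\V})_{\tau\beta}\bigl(2\xi_\beta-2v_\beta\dot\gamma+\tfrac32 Z_\beta\bigr)$, so $S^a=\spn\{2\xi_\alpha-2v_\alpha\dot\gamma+\tfrac32 Z_\alpha\}_{\alpha=I,J,K}$. The assertions that the sum is direct and that the dimensions $3,3,4d-3$ add to $4d+3$ are then automatic from Theorem~\ref{p:can}, since $\{f_a,f_b,f_c\}$ is the projection of a Darboux frame. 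I expect the only real obstacle to be careful bookkeeping: tracking which of the lengthy Hamiltonian-frame expressions are vertical and hence killed by $\pi_*$, and combining the $3\V\ddot\partial_v$ and $\dddot\partial_v$ contributions so that the $\xi_\alpha$, $Z_\alpha$ and $\dot\gamma$ coefficients reorganize, up to the harmless overall factor $2$, into the stated generators.
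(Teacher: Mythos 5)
Your proposal is correct and follows essentially the same route as the paper: project the explicit canonical frame elements $F_a,F_b,F_c$ via $\pi_*$, kill the vertical $\partial_u,\partial_v$ contributions, and use $\pi_*\vec u_i = X_i$, $\pi_*\vec v_\alpha = \xi_\alpha$ together with $\sum_i(VA)_{\alpha i}X_i = Z_\alpha$ and $\sum_i(v\dot\gamma^*)_{\alpha i}X_i = v_\alpha\dot\gamma$ to identify the spans. The paper states the resulting formulas $f_a = e^{t\V}(\tfrac{3}{2}VAX - 2v\dot\gamma + 2\xi)$, $f_b = e^{t\V}AX$, $f_c = UX$ directly without the intermediate bookkeeping you write out, but the underlying computation is the same.
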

\begin{rmk}
By Lemma~\ref{l:secder}, $Z_\alpha = \tfrac{1}{2} \phi_\alpha \nabla_{\dot\gamma}\dot\gamma + v_\alpha \dot{\gamma}$, where $\nabla$ is the Levi-Civita connection of the $3$-Sasakian structure. More explicitly
\begin{equation*}
Z_I := (v_J \phi_K  - v_K \phi_J) \dot\gamma, \qquad Z_J := (v_K \phi_I  - v_I \phi_K) \dot\gamma, \qquad Z_K := (v_I \phi_J  - v_J \phi_I) \dot\gamma.
\end{equation*}
\end{rmk}
\begin{proof}
	We project $F_a, F_b$ and $F_c$ on $T_{\gamma(t)}M$. From~\eqref{eq:fa}, \eqref{eq:fb} and \eqref{eq:fc} we get
	\begin{align}
		f_a & = \pi_*F_a = e^{t\V}(\tfrac{3}{2}VAX - 2v\dot\gamma + 2\xi), \\
	f_b & = \pi_*F_b = e^{t\V}AX, \\
	f_c & = \pi_*F_c = UX,
	\end{align}
where we recall that $X$ and $\xi$ are the tuples $\{X_i\}$ and $\{\xi_\alpha\}$ respectively. Thus, 
	\begin{align}
		\spn\{f_a\} &= \spn\{2\xi - 2 v\dot\gamma + \tfrac{3}{2}VAX \} = \spn\{2\xi_\alpha-2 v_\alpha\dot\gamma+\tfrac{3}{2} Z_\alpha\}_{\alpha =I,J,K},\\
		\spn\{f_b\} & = \spn\{AX\} = \spn\{\phi_I \dot\gamma,\phi_J\dot\gamma,\phi_K\dot\gamma\},\\
		\spn\{f_c\} & = \spn\{UX\} = \spn\{\phi_I \dot\gamma,\phi_J\dot\gamma,\phi_K\dot\gamma\}^\perp \cap \distr_{\gamma(t)}. 
	\end{align}
	Here we used the definition of $V,A$ and the fact that $U$ is a projection on the subspace of horizontal directions orthogonal to $\spn\{\phi_I \dot\gamma,\phi_J\dot\gamma,\phi_K\dot\gamma\}$.
\end{proof}

Furthermore, we summarize below the expressions for the curvature.

\begin{proposition}
	Let $M$ be a $3$-Sasakian manifold of dimension $4d+3$. In terms of the base $\{f_a(t),f_b(t),f_c(t)\}$ along a geodesic $\gamma(t)$, the canonical sub-Riemannian curvature operators $\mathfrak{R}_{\lambda(t)}^{\mu\nu} : S^\mu_{\gamma(t)} \to S^\nu_{\gamma(t)}$, for $\mu,\nu= a,b,c$, are represented by the matrices
	\begin{align}
		R_{aa}(t) & = e^{\tfrac{3}{2}tV}\left[\tfrac{3}{4} (A\dot{B}A^*V+V^*A\dot{B}A^*)+\tfrac{3}{8} (ABA^*V^2+ 
		V^2ABA^*)\right.\\
		& \qquad \left. +3 VABA^*V^*+\left(12+\tfrac{45}{16} \|v\|^2 \right) V^2\right]e^{-\tfrac{3}{2}tV},\\
		R_{ab}(t) & = e^{\tfrac{3}{2}tV}\left[\tfrac{3}{4} (VABA^*+ABA^*V) + \tfrac{3}{2}\|v\|^2 V - 4V \right]e^{-\tfrac{3}{2}tV},\\
		R_{ac}(t) & = \tfrac{3}{2}e^{\tfrac{3}{2}tV}VABU^*,\\
		R_{bb}(t) & = e^{\tfrac{3}{2}tV}[ABA^* + 4\|v\|^2 \mathbbold{1} -\tfrac{3}{2}V^2] e^{-\tfrac{3}{2}tV}, \\
		R_{bc}(t) & = e^{\tfrac{3}{2}tV}ABU^*,\\
		R_{cc}(t) & = U[B+\|v\|^2(\mathbbold{1}-\dot\gamma\dot\gamma^*)]U^*.
	\end{align}
\end{proposition}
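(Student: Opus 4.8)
The plan is to assemble the curvature blocks from the canonical frame already constructed in this section, so the argument is essentially bookkeeping; I record below the steps I would carry out, in order.

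First I would recall the explicit canonical frame of Section~\ref{s:CanFrame}: the triples $E_a,E_b,F_b,E_c,F_c$ and the last element $F_a$ have been produced by imposing the defining conditions (verticality, Darboux normalization, and the structural constraints of Theorem~\ref{p:can}) and solving the resulting linear ODEs along the extremal, with the one-parameter family of rotations entering through the equation $\dot O=\tfrac{3}{2}OV$, i.e. $O(t)=e^{t\V}$ with $\V=\tfrac{3}{2}V$. Consequently $E_a,\dots,F_a$ are all expressed in terms of the Hamiltonian frame $\{\partial_u,\partial_v,\vec u,\vec v\}$ and of the matrices $A,B,C,D,V$ of Lemma~\ref{l:fund}, and their derivatives (up to fourth-order derivatives of $\partial_v$) are collected in Lemma~\ref{l:dots}.

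Second I would extract the blocks directly from the structural equations of Theorem~\ref{p:can}, namely $\dot F_b=\sum_\mu R_{b\mu}E_\mu-F_a$, $\dot F_c=\sum_\mu R_{c\mu}E_\mu$ and $\dot F_a=\sum_\mu R_{a\mu}E_\mu$. Pairing with $E_\nu$ via $\sigma$ and using the Darboux relations $\sigma(E_\mu,F_\nu)=\delta_{\mu\nu}\mathbbold{1}$, $\sigma(E_\mu,E_\nu)=0$, one reads off $R_{bb}=\sigma(\dot F_b,F_b)$, $R_{bc}=\sigma(\dot F_b,F_c)$, $R_{cc}=\sigma(\dot F_c,F_c)$, $R_{ac}=R_{ca}^*=\sigma(\dot F_c,F_a)^*$, $R_{aa}=\sigma(\dot F_a,F_a)$, while the remaining block follows from $R_{ab}=\tfrac{1}{2}\sigma(\dot F_b,\dot F_b)$ together with the extra antisymmetry constraint $R_{ab}=-R_{ab}^*$. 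Along the way I would check that $F_a$, defined by $F_a=-\dot F_b+R_{bb}E_b+R_{ba}E_a+R_{bc}E_c$, satisfies $\sigma(E_a,F_a)=\mathbbold{1}$ and $\sigma(F_a,E_b)=\sigma(F_a,F_b)=\sigma(F_a,F_c)=\sigma(F_a,F_a)=0$; this confirms that the frame is genuinely canonical, so that each $R_{\mu\nu}(t)$ is, by definition, the matrix of $\mathfrak{R}^{\mu\nu}_{\lambda(t)}$ in the basis $\{f_a,f_b,f_c\}$ with $f_\mu=\pi_*F_\mu$.

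Third, each symplectic pairing above is evaluated by substituting the explicit $\dot F_\mu$ and collapsing the result using the table of $\sigma$-pairings in Lemma~\ref{l:dots} and the matrix identities of Lemma~\ref{l:matrixId} --- e.g. $AA^*=\mathbbold{1}$, $A^*A=\Pi_\phi$, $C^2=-\|v\|^2\mathbbold{1}$, $V^3=-\|v\|^2V$, $AC=\tfrac12\dot A-2v\dot\gamma^*$, $ACA^*=-V$, $B\dot\gamma=0$, $UU^*=\mathbbold{1}$, $UA^*=0$, $U^*U=\mathbbold{1}-A^*A$ --- until the closed forms of the statement appear; replacing $\V=\tfrac{3}{2}V$ turns $e^{t\V}$ into $e^{\tfrac{3}{2}tV}$ and yields the displayed expressions verbatim. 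The only obstacle is the length of this reduction: it is precisely the ``very long algebraic computation'' the text alludes to, with no conceptual content beyond a careful, repeated use of Lemmas~\ref{l:matrixId} and~\ref{l:dots}; isolating $R_{aa}$, which mixes the $A\dot B A^*V$ terms, the conjugates of $ABA^*$ by powers of $V$, and the $\|v\|^2V^2$ term, is the most delicate instance.
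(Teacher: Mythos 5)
Your proposal is correct and follows essentially the same route as the paper: you reconstruct the canonical frame $E_a\to E_b\to F_b\to E_c\to F_c\to F_a$ from the defining conditions (with $O(t)=e^{t\V}$, $\V=\tfrac{3}{2}V$), read off each block as $R_{bb}=\sigma(\dot F_b,F_b)$, $R_{bc}=\sigma(\dot F_b,F_c)$, $R_{cc}=\sigma(\dot F_c,F_c)$, $R_{ab}=\tfrac12\sigma(\dot F_b,\dot F_b)$ (using $R_{ab}=-R_{ab}^*$), $R_{ac}=\sigma(\dot F_c,F_a)^*$, $R_{aa}=\sigma(\dot F_a,F_a)$, verify the Darboux normalization of $F_a$, and collapse everything with the identities of Lemmas~\ref{l:matrixId} and~\ref{l:dots}. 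This is exactly the derivation in Section~\ref{s:CanFrame}, so nothing is missing beyond the (admittedly lengthy) algebraic reduction the paper itself omits.
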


\subsection{Proof of Theorem~\ref{t:ricci-3-sas-intro}}
We only have to compute the traces of $R_{aa}$, $R_{bb}$ and $R_{cc}$ above.  
\begin{align}
	\Riccan^a(\lambda(t)) & = \trace(R_{aa}(t)) = \tfrac{9}{4}\trace(VABA^*V^*) - (12+\tfrac{45}{16}\|v\|^2)\trace(VV^*)\\
	& = \tfrac{9}{4} \sum_{\alpha}R^\nabla(\dot\gamma,v_{\alpha\beta}\phi_{\beta}\dot\gamma,v_{\alpha\beta}\phi_{\beta}\dot\gamma,\dot\gamma) + \tfrac{27}{4}\trace(VV^*) - (12+\tfrac{45}{16}\|v\|^2)\trace(VV^*)\\
	& = \tfrac{9}{4} \sum_{\alpha}R^\nabla(\dot\gamma,Z_{\alpha},Z_{\alpha},\dot\gamma) -\tfrac{21}{2}\|v\|^2-\tfrac{45}{8}\|v\|^4, 	
\end{align}
where we used that $\trace(VV^*)=2\|v\|^2$ and we set $Z_\alpha= - v_{\alpha\beta}\phi_{\beta}\dot\gamma \in\distr_{\gamma(t)}$, for $\alpha=I,J,K$.
\begin{align}
	\Riccan^b(\lambda(t)) &= \trace(R_{bb}(t)) = \trace(ABA^*) + 12\|v\|^2 + 3\|v\|^2\\
	& = \sum_{\alpha} R^\nabla(\dot\gamma,\phi_\alpha \dot\gamma,\phi_\alpha \dot\gamma,\dot\gamma) + 9 + 15\|v\|^2 = 3(4+5\|v\|^2),	
\end{align}
where we used Proposition~\ref{p:sumholo}. Finally,
\begin{align}
	\Riccan^c(\lambda(t)) &= \trace(R_{cc}(t))=\trace((B + \|v\|^2(\mathbbold{1}-\dot\gamma\dot\gamma^*))U^*U)\\
	& = \trace((B + \|v\|^2(\mathbbold{1}-\dot\gamma\dot\gamma^*))(\mathbbold{1}-A^*A))\\
	& = \trace(B) - \trace(ABA^*) + \|v\|^2\trace(\mathbbold{1}-\dot\gamma\dot\gamma^*) - \|v\|^2\trace(A^*A)\\
	& = \sum_{i}R^\nabla(\dot\gamma,X_i,X_i,\dot\gamma) + 3\sum_{i}g(X_i,\Pi_\phi X_i)- \sum_{\alpha} R^\nabla(\dot\gamma,\phi_\alpha\dot\gamma,\phi_\alpha\dot\gamma,\dot\gamma)\\
	&\quad -9 + \|v\|^2(4d-1) - 3\|v\|^2\\
	%& = \sum_{i}R^\nabla(\dot\gamma,X_i,X_i,\dot\gamma) - \sum_{\alpha} R^\nabla(\dot\gamma,\phi_\alpha\dot\gamma,\phi_\alpha\dot\gamma,\dot\gamma) + (4d-4)\|v\|^2\\
	& = \Ric^\nabla(\dot\gamma)  - \sum_{\alpha}\Sec(\dot\gamma,\xi_\alpha)- \sum_{\alpha}\Sec(\dot\gamma,\phi_\alpha\dot\gamma) + (4d-4)\|v\|^2 \\
	& = 4d+2 - 3-3+ (4d-4)\|v\|^2 = (4d-4)(1+\|v\|^2),
\end{align}
where $\Ric^\nabla(\dot\gamma)=4d+2$ by (i) of Theorem~\ref{t:BGM} and we used Propositions~\ref{p:sumholo}-\ref{p:transversecurv}. $\qed$

\section*{Acknowledgments}
This research has  been supported by the European Research Council, ERC StG 2009 ``GeCoMethods'', contract n. 239748. The first author was supported by the Grant ANR-15-CE40-0018 of the ANR, by the iCODE institute (research project of the Idex Paris-Saclay), by the SMAI project ``BOUM''. This work was supported by a public grant as part of the Investissement d'avenir project, reference ANR-11-LABX-0056-LMH, LabEx LMH, in a joint call with Programme Gaspard Monge en Optimisation et Recherche Opérationnelle.

\bibliographystyle{abbrv}
\bibliography{biblio-fat-comparison}

\end{document}